\theoremstyle{plain}
\newtheorem{theorem}{Theorem}[section]
\newtheorem{proposition}[theorem]{Proposition}
\newtheorem{corollary}[theorem]{Corollary}
\newtheorem{lemma}[theorem]{Lemma}
\newenvironment{assumptionp}[1]{
	
	\assumptionalt
}{\endassumptionalt}
\newtheorem{remark}[theorem]{Remark}
\newtheorem{example}[theorem]{Example}
\newcommand{\N}{\mathbb{N}}
\newcommand{\R}{\mathbb{R}}
\newcommand{\PP}{\mathbb{P}}
\renewcommand{\P}{\PP}
\newcommand{\F}{\mathcal{F}}
\newcommand{\E}{\mathbb{E}}
\newcommand{\e}[1]{\mathrm{e}^{#1}}
\newcommand{\loc}{\mathrm{loc}}
\newcommand{\dd}{\mathrm{d}}
\renewcommand{\d}{\dd}
\DeclarePairedDelimiter{\norm}{\|}{\|}
\newcommand{\1}{\mathbbm{1}}
\DeclarePairedDelimiterX{\vertiii}[1]
{\vvvert}
{\vvvert}
{\ifblank{#1}{\:\cdot\:}{#1}}
\title[Limit theorems for stochastic Volterra processes]{Limit theorems for stochastic Volterra processes}
\author{Luigi Amedeo Bianchi}
\address[Luigi Amedeo Bianchi]{Department of Mathematics
	\\ University of Trento
	\\ Trento, Italy}
\email{luigiamedeo.bianchi@unitn.it}
\author{Stefano Bonaccorsi}
\address[Stefano Bonaccorsi]{Department of Mathematics
	\\ University of Trento
	\\ Trento, Italy}
\email{stefano.bonaccorsi@unitn.it}
\author{Ole Ca\~nadas}
\address[Ole Ca\~nadas]{School of Mathematical Sciences, Dublin City University, Dublin, Ireland}
\curraddr{}
\email{ole.canadas2@mail.dcu.ie}
\author{Martin Friesen}
\address[Martin Friesen]{School of Mathematical Sciences, Dublin City University, Dublin, Ireland}
\email{martin.friesen@dcu.ie}
\date{\today}
\thanks{O.C.~is funded by Research Ireland grant GOIPG/2023/3129. L.A.B.~is partially funded by INdAM - GNAMPA Grant. L.A.B.~and S.B.~have been partially funded by the European Union under NextGenerationEU PRIN 2022 PNRR Prot P2022TX4FE. Financial support by ECIU is gratefully recognised.}
\begin{document}

\begin{abstract}
		We introduce an abstract Hilbert space-valued framework of Markovian lifts for stochastic Volterra equations with operator-valued Volterra kernels. Our main results address the existence and characterisation of possibly multiple limit distributions and stationary processes, a law of large numbers including a convergence rate, and the central limit theorem for time averages of the process within the Gaussian domain of attraction. As particular examples, we study Markovian lifts based on Laplace transforms in a weighted Hilbert space of densities and Markovian lifts based on the shift semigroup on the Filipovi\'c space. We illustrate our results for the case of fractional stochastic Volterra equations with additive or multiplicative Gaussian noise.
\end{abstract}

\maketitle
\noindent
\textbf{Keywords:} stochastic Volterra process; Markovian lift; stationarity; invariant measure; Law of Large Numbers; Central Limit Theorem; rate of convergence\\
\noindent
\textbf{MSC 2020 Classification:} 60G22; 45D05; 60H15; 60G10; 60B10; 60F25; 60F05.

\section{Introduction}

\subsection{Overview}

The class of \emph{stochastic Volterra processes} provides a flexible and popular way to introduce path-dependence, but also allows for modelling the regularity of sample paths on small time-scales. The applications of Volterra processes extend across a diverse range of fields, including boundary-value problems for Partial Differential Equations or age-structured population dynamics \cite{grippenberg}, measure-valued Markov processes and superprocesses \cite{AbiJaber21,MytnikSalisbury15}, and Stochastic Partial Differential Equations of Volterra type for modeling materials with memory \cite{MR1658451, MR1454409, Pruss93}, but also may be introduced to obtain improved fits to empirical data exhibiting long- or short-range dependence~\cite{BayerFrizGatheral16, Bennedsen17, ElEuchFukasawRosenbaum18, GatheralJaissonRosenbaum18}.

Below, we introduce the general form of the stochastic Volterra equations studied in this work. Let $H, V, H_b, H_{\sigma}$ be separable Hilbert spaces with continuous embedding
\begin{align}\label{eq: inclusion}
        (V, \| \cdot \|_V) \hookrightarrow (H, \| \cdot \|_H).
\end{align}
Let $U$ be another separable Hilbert space, and $(W_t)_{t \geq 0}$ a cylindrical Wiener process on $U$. Given an $\F_0$-measurable $G \in C(\R_+; V)$, a drift $b\colon H \longrightarrow H_b$ and diffusion operator $\sigma\colon H \longrightarrow L(U,H_{\sigma})$, we study limit distributions, stationary solutions, and limit theorems for the stochastic Volterra equation
\begin{equation}\label{eq: mild formulation}
		u(t) = G(t) + \int_0^t E_b(t-s)b(u(s))\, \mathrm{d}s + \int_0^t E_{\sigma}(t-s)\sigma(u(s))\, \mathrm{d}W_s.
\end{equation}
The operators $E_b, E_{\sigma}$ satisfy at least $E_b \in L_{\mathrm{loc}}^1(\R_+; L(H_b, V))$ and $E_{\sigma} \in L^2_{\mathrm{loc}}(\R_+; L(H_{\sigma}, V))$. A solution of \eqref{eq: mild formulation} is an $(\F_t)_{t \geq 0}$-adapted process $u$ with continuous sample paths that satisfies \eqref{eq: mild formulation} a.s., where it is implicitly assumed that all integrals are well-defined. In this formulation, the space $V$ allows for additional spatial regularity inherited from the operators $E_b, E_{\sigma}$. Note that solutions of \eqref{eq: mild formulation} are typically neither Markov processes nor semimartingales.

For many models, formulation \eqref{eq: mild formulation} appears as the mild formulation of the stochastic Volterra equation
\begin{align}\label{VSPDE}
		u(t) = g(t) + \int_0^t k(t-s) \left( Au(s) + b(u(s)) \right) \mathrm{d}s + \int_0^t h(t-s)\sigma(u(s))\, \mathrm{d}W_s
\end{align}
where $(A,D(A))$ is a closed and densely defined linear operator on $H$, $k \in L_{\loc}^1(\R_+)$, and $h\in L^2_\loc(\R_+)$. Following \cite{BBF23}, the relation between $g$ and $G$ is given by
\begin{equation*}\label{eq:G}
        G(t) = g(t) + A \int_0^t E_b(t-s)g(s)\, \mathrm{d}s,
\end{equation*}
and $E_b, E_{\sigma}$ are resolvent operators given as unique solutions to the linear deterministic Volterra equation
\begin{align}\label{eq: resolvent equation}
        E_{\rho}(t) = \rho(t) + A \int_0^t k(t-s)E_{\rho}(s)\, \mathrm{d}s, \qquad \rho \in \{k,h\}.
\end{align}
Stochastic Volterra equations have been studied through their mild formulations in various settings; see, e.g., \cite{MR3274768, MR1658451, MR1454409, FHK23, MR3223334, MR3605714, MR2565842}. If $(A,D(A))$ is the generator of a $C_0$-semigroup $(\e{tA})_{t \geq 0}$ on $H$, $k(t) = h(t) = 1$, and $g(t) = u_0$, then $E_b(t) = E_{\sigma}(t) = \e{tA}$ and $G(t) = \e{tA}u_0$, and \eqref{eq: mild formulation} reduces to the mild formulation of the classical stochastic evolution equation
\[
        \mathrm{d}u(t) = \left( Au(t) + b(u(t)) \right)\, \mathrm{d}t + \sigma(u(t))\, \mathrm{d}W_t
\]
which, under appropriate uniqueness assumptions, determines a Markov process.

The study of long-time behaviour, invariant measures, and limit theorems for stochastic evolution equations forms a central part of the mathematical analysis of stochastic models. In this work, we focus on the characterisation of stationary processes, their associated invariant measures, and investigate convergence towards limit distributions in the Wasserstein distance. In addition, we establish a Law of Large Numbers with an explicit convergence rate, and we derive a Central Limit Theorem within the Gaussian domain of attraction. These limit theorems play a key role in the statistical estimation of model parameters, see \cite{MR2144185} for the general theory of Markov diffusion processes, and \cite{BFK24} for the case of stochastic Volterra processes.

For Markovian models, the long-time behaviour is a classical topic with many powerful techniques and results available, see e.g.~\cite{MR4068305, MR1417491, Kulik}. However, for stochastic Volterra processes \eqref{eq: mild formulation}, results concerning the long-time behaviour are much less developed. Limit distributions and stationary processes have been studied in \cite{FJ24} for multivariate affine Volterra processes on $\R_+^m$, in \cite{MR4503737} for regular kernel with sufficient decay at infinity, in \cite{huber2024} for completely monotone kernels with $H = \R^d$, while \cite{BBF23} addresses limit distributions for a general class of stochastic Volterra processes of the form \eqref{eq: mild formulation}. Concerning limit theorems, the Law of Large Numbers without a convergence rate was recently established in \cite{BFK24a} for an affine process on $\R_+$, while other models, a convergence rate for the Law of Large Numbers, or the Central Limit Theorem for stochastic Volterra processes have, up to our knowledge, not been considered in any meaningful general setting.

\subsection{Methodology and results}

When investigating the long-time behaviour for stochastic Volterra equations, new obstacles arise firstly from the absence of the semimartingale property and secondly from the failure of the Markov property. As a consequence, we cannot apply the (mild) It\^{o} formula to study limit distributions via well-established contraction methods as done, e.g., in \cite{MR4241464, MR2675108, MR2211714}. Due to the path dependence introduced via the Volterra kernels, such processes typically do not possess the Markov property. Thus, the one-dimensional time marginals do not determine the law of the process, which rules out methods based on Kolmogorov equations and successful couplings for Markov processes. Such path-dependence is, for example, reflected in the observation that limit distributions are generally not unique, see \cite{BBF23, FJ24}. Finally, while for Markov processes subgeometric convergence rates are a consequence of nonlinear drifts, see \cite{MR3178490, Kulik}; in the setting of Volterra processes, we observe the emergence of subgeometric (often polynomial) convergence rates even for linear models.

To overcome these obstacles, it is natural to capture the dynamics of stochastic Volterra processes along the whole trajectory, including their past evolution. The latter often allows us to recover the Markov property on an enlarged state space. We call a Markov process $X$ obtained by such a procedure \textit{Markovian lift}. Markovian lifts have been studied, e.g., in \cite{MR3057145, MR2511555, MR1658690, MR1982156,  H23, MR3926553, huber2024} for the context of completely monotone Volterra kernels, \cite{MR4503737} for regular kernels based on the shift semigroup, and \cite{MR3911660, MR4181950} in the context of affine Volterra processes, and in \cite{DG23} for an application of such lifts towards optimal control of Volterra processes. In any case, the choice of such a Markovian lift is certainly not unique and depends on the class of Volterra kernels. We propose an abstract functional analytic framework for Hilbert-space valued Markovian lifts with Volterra kernels that have a weak singularity of order $t^{-\rho}$ with $\rho \in [0,1/2)$ as $t \to 0$. In this framework, we study the long-time behaviour for the corresponding Markovian lift, while results for the stochastic Volterra process are obtained by projection with the operator $\Xi$ given below.

Let $\mathcal{H}, \mathcal{V}$ be separable Hilbert spaces, and let $(S(t))_{t \geq 0}$ be a strongly continuous semigroup on $\mathcal{H}$ which leaves $\mathcal{V}$ invariant, satisfies $S(t) \in L(\mathcal{H}, \mathcal{V})$ for $t > 0$, and
\begin{align}\label{eq: rho}
        \| S(t)\|_{L(\mathcal{H}, \mathcal{V})} \lesssim 1 + t^{- \rho}, \qquad t \in (0,1)
\end{align}
holds for some $\rho \in [0,1/2)$. Let us suppose that the Volterra kernels $E_b, E_{\sigma}$ in \eqref{eq: mild formulation} have the following representation with respect to the semigroup
\begin{align}\label{eq: Volterra kernel lift}
        E_b(t) = \Xi S(t)\xi_b \ \text{ and } \ E_{\sigma}(t) = \Xi S(t)\xi_{\sigma}, \qquad t > 0.
\end{align}
Here $\Xi\colon \mathcal{V} \longrightarrow V$ is a bounded linear operator, $\xi_b \in L(H_b, \mathcal{H})$ and $\xi_{\sigma} \in L(U, \mathcal{H})$ denote \textit{abstract Markovian lifts} of the Volterra kernels. The Hilbert space $\mathcal{H}$ encodes the small-time regularity of the kernels, while $\mathcal{V}$ is chosen in such a way that $\Xi$ is bounded on $\mathcal{V}$. In particular, for regular Volterra kernels, one may take $\mathcal{V} = \mathcal{H}$ with $\rho = 0$, while for the more interesting and challenging case of singular kernels, the operator $\Xi$ is not bounded on $\mathcal{H}$. To treat such cases, additional regularisation properties of the semigroup reflected by \eqref{eq: rho} are essential. Finally, the parameter $\rho$ allows us to prove that the Markovian lift introduced below has continuous sample paths.

For given drift $b\colon H \longrightarrow H_b$ and diffusion operator $\sigma\colon H \longrightarrow L(U, H_{\sigma})$ such that $\xi_{\sigma} \sigma\colon H \longrightarrow L_2(U, \mathcal{H})$, we study the abstract stochastic evolution equation in its mild formulation
\begin{align}\label{eq: abstract mild formulation Markovian lift}
        X_t = S(t)\xi + \int_0^t S(t-s)\xi_b\, b(\Xi X_s)\, \mathrm{d}s + \int_0^t S(t-s)\xi_{\sigma}\, \sigma(\Xi X_s)\, \mathrm{d}W_s.
\end{align}
Such an equation is closely linked to the original stochastic Volterra equation \eqref{eq: mild formulation} via the operator $\Xi$. Namely, if $X$ is a solution of \eqref{eq: abstract mild formulation Markovian lift}, $u(t) \coloneqq \Xi X_t$ solves \eqref{eq: mild formulation}. On the other side, if $u$ is a solution of \eqref{eq: mild formulation}, then defining
\[
        X_t = S(t)\xi + \int_0^t S(t-s)\xi_b\, b(u(s))\, \mathrm{d}s + \int_0^t S(t-s)\xi_{\sigma}\, \sigma(u(s))\, \mathrm{d}W_s
\]
one can verify that $\Xi X = u$ and hence is a solution of \eqref{eq: abstract mild formulation Markovian lift}. Details on this construction, the existence and uniqueness of solutions for \eqref{eq: abstract mild formulation Markovian lift}, and the Markov property are discussed in Section~\ref{sec:markovian_lift}.

One natural and flexible choice of Markovian lift is based on the representation of completely monotone operators in terms of their Bernstein measures. Such types of lifts have been used, e.g., in \cite{H23}. Below, we provide a modified version of this Markovian lift for our infinite-dimensional setting that also allows us to study the long-time behaviour for models that exhibit polynomial rates of convergence.

\begin{example}\label{example 1}
    Let $\mu$ be a $\sigma$-finite Borel measure on $\R_+$, and suppose that $E_b, E_{\sigma}$ have representation $E_b(t) = \int_{\R_+} \e{-xt}\xi_b(x)\, \mu(\mathrm{d}x)$ and $E_{\sigma}(t) = \int_{\R_+} \e{-xt}\xi_{\sigma}(x)\, \mu(\mathrm{d}x)$. For given $\delta, \eta \in \R$, let $\mathcal{H}_{\delta, \eta}$ be the Hilbert space of functions $y\colon \R_+ \longrightarrow V$ with finite norm
    \[
                \vertiii{y}_{\delta, \eta}^2 = \int_{\R_+} \| y(x)\|_V^2 \left( \1_{\{0\}}(x) + \1_{(0,1]}(x)x^{-\delta} + \1_{(1,\infty)}(x)x^{\eta} \right)\, \mu(\mathrm{d}x).
    \]
    Then, we may choose $\mathcal{V}, \mathcal{H}$ as realizations of $\mathcal{H}_{\delta, \eta}$ with suitable $\delta, \eta$, define the $C_0$-semigroup by $S(t)y(x) = \e{-xt}y(x)$, and let $\Xi y = \int_{\R_+} y(x)\, \mu(\mathrm{d}x)$. Further details on this construction are given in Section~\ref{section markovia lift cm}.
\end{example}

Another approach that does not rely on complete monotonicity, but directly works with the kernels $E_b, E_{\sigma}$, is based on the shift-semigroup in the so-called Filipovi\'c space, see~\cite{MR4503737}. Below, we state a modification of their lift that allows for weakly singular kernels and a polynomial rate of convergence to equilibrium.

\begin{example}\label{example 2}
    For given $\delta, \eta \geq 0$, let $\mathcal{H}_{\delta, \eta}$ be the Hilbert space of absolutely continuous functions $y\colon (0,\infty) \longrightarrow V$ with finite norm
    \[
                \vertiii{y}_{\delta, \eta}^2 = \| y(1)\|_V^2 + \int_{\R_+} \| y'(x)\|_V^2 \left( \1_{(0,1]}(x)x^{\eta} + \1_{(1,\infty)}(x)x^{\delta} \right)\, \mathrm{d}x.
    \]
    Let $\mathcal{V}, \mathcal{H}$ be realizations of $\mathcal{H}_{\delta, \eta}$ with suitable values for $\delta, \eta \geq 0$, define the $C_0$-semigroup by $S(t)y(x) = y(x+t)$, and projection operator by $\Xi y = y(0) = y(1) - \int_0^1 y'(x)\, \mathrm{d}x$. Then \eqref{eq: Volterra kernel lift} holds for $\xi_b = E_b$ and $\xi_{\sigma} = E_{\sigma}$. Details are discussed in Section~\ref{section HJM lift}.
\end{example}

Remark that, although the lift discussed in Example \ref{example 2} appears to be more general, the lift described in Example \ref{example 1} is not redundant. For instance, the semigroup defined in Example \ref{example 1} is analytic, which is not the case for Example \ref{example 2}. Moreover, the lift discussed in Example \ref{example 1} can be used to develop a flexible framework for finite-dimensional Markovian approximations, see \cite{MR3934104, MR1658690, MR3926553}.

Since equation \eqref{eq: abstract mild formulation Markovian lift} determines a Markov process, we may use tools from the Markovian framework to study its long-time behaviour. However, such methods typically rely on spectral conditions on the operator $(A, D(A))$ and generally require the associated semigroup $(S(t))_{t \geq 0}$ to exhibit exponential uniform stability, or at least exponential uniform ergodicity in the sense that
\[
    \|S(t) - S_{\infty}\|_{L(\mathcal{V})} \lesssim \e{-\lambda t}, \qquad t > 0,
\]
see \cite{MR1417491, MR4503737, MR4241464}. In both cases, one obtains geometric convergence to equilibrium, which excludes several interesting examples arising in the context of stochastic Volterra processes. Moreover, due to the strong continuity of the semigroup $(S(t))_{t \geq 0}$, any form of uniform stability or ergodicity on $\mathcal{V}$ necessarily entails an exponential rate.

To obtain results for subgeometric convergence rates, in Section~\ref{sec:limit_dist_inv_meas}, we develop a contraction method for \eqref{eq: abstract mild formulation Markovian lift} under the weaker condition
\[
    \| S(t) - S_{\infty}\|_{L(\mathcal{V}, \mathcal{V}_0)} \lesssim (1 \vee t)^{- \lambda}
\]
where $\lambda > 0$, and $\mathcal{V}_0$ is another separable Hilbert space such that $\mathcal{V} \hookrightarrow \mathcal{V}_0$. The operator $S_{\infty}$ determines the structure of all invariant measures and hence encodes memory effects that persist in the large time asymptotics. In this framework, we study in Section~\ref{sec:limit_dist_inv_meas} limit distributions in the Wasserstein distance for the case of small nonlinearities. In our first main result, Theorem~\ref{theorem_limit_distribution}, we establish for each initial condition $\xi$ the existence of a unique limit distribution $\pi_{\xi}$ (which is also an invariant measure) with the property
\begin{align}\label{eq: Wasserstein intro}
    \mathcal{W}_{p, \mathcal{V}_0}( X^{\xi}_t, \pi_{\xi} ) \lesssim (1 \vee t)^{- \chi}
\end{align}
where $\chi > 0$ denotes the polynomial rate of convergence. The dependence of $\pi_{\xi}$ on $\xi$ reflects the presence of memory. In particular, we find $\pi_{\xi} = \pi_{S_{\infty}\xi}$, i.e.~all limit distributions are fully parameterised by the range of $S_{\infty}$. In Corollary \ref{corollary Pi operator}, we provide another characterisation of limit distributions/invariant measures in terms of the operator $\Pi$ formally given as the limit of the transition semigroup $\lim_{t \to \infty}P_t = \Pi$. Finally, we show that this limit $\Pi$ is an integral operator with respect to a subclass of invariant measures $\pi_{\xi}$ with $\xi$ deterministic.

Under the same conditions, in Section~\ref{sec:limit_theorems}, we proceed to study corresponding limit theorems. Firstly, in Theorem \ref{thm: LLN abstract} we prove an abstract Law of Large Numbers for Markov processes that admit a unique invariant measure and satisfy \eqref{eq: Wasserstein intro}. Afterwards, in Theorem \ref{theorem LLN lift} we derive the desired Law of Large Numbers for \eqref{eq: abstract mild formulation Markovian lift}, i.e.~
\begin{equation*}\label{eq: LLN intro}
    \E\left[ \left |\frac{1}{t}\int_0^t f(X^{\xi}_s)\, \mathrm{d}s - (\Pi f)(\overline{\xi}) \right|^2 \right] \lesssim t^{- \vartheta}
\end{equation*}
where $\vartheta > 0$ denotes the rate of convergence, and $\overline{\xi}$ denotes the stationary process associated with the limit distribution of $\pi_{\xi}$. Finally, in Theorem \ref{theorem CLT general}, we show that, if \eqref{eq: Wasserstein intro} holds with $\chi > 1$, then the time averages lie in the Gaussian domain of attraction, i.e.~
\begin{equation*}\label{eq: CLT intro}
    \sqrt{t}\left( \frac{1}{t}\int_0^t f(X_s)\, \mathrm{d}s - (\Pi f)(\xi) \right) \Longrightarrow \sigma(\xi)Z
\end{equation*}
where $Z \sim \mathcal{N}(0,1)$ and $\sigma \geq 0$ denotes the standard deviation. Remark that, due to the occurrence of multiple invariant measures, $\Pi f$ and $\sigma$ are not constants but functions evaluated at the stationary process $\overline{\xi}$. Finally, in Sections \ref{section markovia lift cm} and \ref{section HJM lift}, we illustrate our results for the case of Markovian lifts based on fractional Volterra kernels.

\subsection{Structure of the work}

In Section~\ref{sec:markovian_lift}, we introduce the abstract Markovian lift framework, discuss properties of stochastic convolutions, and show that under Lipschitz conditions, equation \eqref{eq: abstract mild formulation Markovian lift} has a unique solution that determines a Markov process. Based on the contraction method, limit distributions and a characterisation of invariant measures are then given in Section~\ref{sec:limit_dist_inv_meas}. The Law of Large Numbers, including a convergence rate, and the Central Limit Theorem for abstract Markovian lifts are studied in Section~\ref{sec:limit_theorems}. Examples of Markovian lifts are subsequently discussed in Sections~\ref{section markovia lift cm} and~\ref{section HJM lift}. Finally, some auxiliary results are collected in the appendix.

\subsection{Notation}
	We write $|\cdot|$ for the standard Euclidean norm on $\R^d$. Moreover, we denote for a Banach space $Y$ by $C([0,T];Y)$, $C^\theta([0,T];Y)$, $\theta\in(0,1]$, the spaces consisting of functions $f\colon[0,T]\longrightarrow Y$ that are continuous, $\theta$-H\"older continuous, respectively. For a measure space $(A,\mathcal{A},\mu)$, we write $L^p(A,\mathcal{A},\mu;Y) = L^p(A, \mu ; Y) = L^p(A; Y)$ for the space of all equivalence classes of Bochner $p$-integrable functions $f\colon(A,\mathcal{A},\mu)\longrightarrow(Y,\mathcal{B}(Y))$, where $\mathcal{B}(Y)$ denotes the Borel-$\sigma$-algebra over $Y$. If $Y = \R$, we also write $L^p(A;\R) = L^p(A)$. For separable Hilbert spaces $H_1, H_2$ and $p\in[1,\infty]$, we let $L_p(H_1, H_2)$ be the $p$-th Schatten class where $L_\infty(H_1, H_2)\coloneqq L(H_1, H_2)$ is the space of all bounded and linear operators equipped with the operator norm, and
    \[
    L_p(H_1,H_2) \coloneqq \left\{A\colon H_1\longrightarrow H_2 \text{ compact}: \norm{A}_{L_p(H_1,H_2)}^p \coloneqq \sum_{\lambda\in\sigma(A^\ast A)}\lambda^{p/2}<\infty\right\}
    \]
    when $p \in [1,\infty)$. Here $\sigma(A^\ast A)$ denotes the spectrum of the (compact and positive) operator $A^\ast A$. In particular, $L_2(H_1,H_2)$ denotes the space of Hilbert-Schmidt operators from $H_1$ to~$H_2$. Let $H_3$ be another separable Hilbert space. Then, the Schatten norms obey H\"older's inequality, i.e.,
	for $p,q,r\in[1,\infty]$ with $\frac{1}{r}=\frac{1}{p}+\frac{1}{q}$ and $A\in L_p(H_2,H_3)$, $B\in L_q(H_1,H_2)$ holds
		\[
		\norm{AB}_{L_r(H_1,H_3)}\leq \norm{A}_{L_p(H_2,H_3)}\norm{B}_{L_q(H_1,H_2)}.
		\]
    Finally, to avoid the introduction of several (irrelevant) multiplicative constants, we use the symbol $\lesssim$, which stands for inequality up to a multiplicative constant, i.e., $x\lesssim y$ if $x\leq cy$.

\section{Abstract Markovian lift}\label{sec:markovian_lift}

\subsection{Functional analytic framework}

	Let $V, H$ be separable Hilbert spaces such that $V \hookrightarrow H$ continuously. On a filtered probability space $(\Omega,\F,(\F_t)_{t\geq0},\P)$ let $(W_t)_{t\geq0}$ be a cylindrical $(\F_t)_{t\geq0}$-Wiener process on another separable Hilbert space $U$. We study the abstract Markovian lift \eqref{eq: abstract mild formulation Markovian lift} as a stochastic evolutionary equation under the following set of assumptions.

	\begin{assumptionp}{A}\label{assumption SEE}
		There exist separable Hilbert spaces $\mathcal{H}, \mathcal{V}$, a bounded linear operator $\Xi\colon \mathcal{V} \longrightarrow V$, and a $C_0$-semigroup $(S(t))_{t\geq0}$ on $\mathcal{H}$ such that $S(t) \in L(\mathcal{H}, \mathcal{V})$ for $t > 0$, and there exists $\rho \in[0,1/2)$ and for each $T > 0$ a constant $C_0(\mathcal{H},\mathcal{V}, T)>0$ such that
		\begin{align}\label{eq: semigroup regularization}
			\norm{S(t)}_{L(\mathcal{H},\mathcal{V})}\leq C_0(\mathcal{H},\mathcal{V},T)(1+t^{-\rho}), \qquad t \in [0,T].
		\end{align}
        Moreover, $\mathcal{H} \cap \mathcal{V} \subset \mathcal{V}$ is dense, and $(S(t)|_{\mathcal{H} \cap \mathcal{V}})_{t \geq 0}$ extends to a $C_0$-semigroup on $\mathcal{V}$ which we again denote by $(S(t))_{t \geq 0}$.
	\end{assumptionp}

	Assumption \ref{assumption SEE} provides a minimal set of assumptions under which we can study \eqref{eq: abstract mild formulation Markovian lift}. Namely, we impose the existence of an abstract projection operator $\Xi\colon \mathcal{V} \longrightarrow V$ that relates the abstract Markovian lift with the original stochastic Volterra equation. For regular kernels, we may take $\mathcal{V} = \mathcal{H}$, while $\mathcal{V} \neq \mathcal{H}$ allows for singular kernels. Secondly, for the strongly continuous semigroup $(S(t))_{t \geq 0}$ on $\mathcal{H}$, we suppose that it is regularizing in the sense that $S(t) \in L(\mathcal{H}, \mathcal{V})$ for $t > 0$. The latter is necessary for singular kernels, in which case $\rho > 0$, while for regular kernels we have $\mathcal{V} = \mathcal{H}$ and hence $\rho = 0$.

    The composition $\Xi S(\cdot)$ plays a central role in the study of Markovian lifts. Its mapping properties are summarised in the following remark.
    \begin{remark}
        Under Assumption \ref{assumption SEE}, let $y \in \mathcal{H}$ and set $g(t) = \Xi S(t)y$. Then
        \[
            \|g(t)\|_{V} \leq \| \Xi\|_{L(\mathcal{V}, V)}C(\mathcal{H}, \mathcal{V}, T)(1 + t^{-\rho})\norm{y}_{\mathcal{H}}, \qquad t \in [0,T].
        \]
        Thus, for regular Volterra kernels (where $\rho = 0$), the function $g$ is bounded, while for singular kernels with $\rho > 0$, it has a singularity at $t = 0$. Moreover, if $y \in \mathcal{V}$, then
        \[
            \| g(t)\|_V \leq \| \Xi\|_{L(\mathcal{V}, V)} \sup_{t \in [0,T]}\|S(t)\|_{L(\mathcal{V})} \| y \|_{\mathcal{V}}.
        \]
    \end{remark}

	As a first step, we study the mapping properties of the convolution operators
	\begin{align}\label{eq: convolutions}
		t \longmapsto I_b(t) \coloneqq \int_0^t S(t-s)\xi_b(s)\, \mathrm{d}s \ \text{ and } \ t \longmapsto I_{\sigma}(t) \coloneqq \int_0^t S(t-s)\xi_{\sigma}(s)\, \mathrm{d}W_s
	\end{align}
	as $\mathcal{V}$-valued processes.

	\begin{proposition}\label{proposition_properties_lift}
		Suppose that Assumption \ref{assumption SEE} is satisfied. Then for each $p\in[1,\infty)$ with $\frac{1}{p} + \rho < 1$ and each $T > 0$ we obtain
		\[
		      \norm{I_b}_{L^p(\Omega;C([0,T]);\mathcal{V})}
              \leq \left( \int_0^T \|S(r)\|_{L(\mathcal{H}, \mathcal{V})}^{\frac{p}{p-1}}\, \mathrm{d}r \right)^{p-1} \| \xi_{b} \|_{L^{p}([0,T]; L^p(\Omega; \mathcal{H}))}
		\]
		for each predictable process $\xi_b \in L^p(\Omega,\P;L^p([0,T]; \mathcal{H}))$. Furthermore, for each $p\in(2,\infty)$ with $\frac{1}{p} + \rho < \frac{1}{2}$ and each $0 < \alpha < \frac{1}{2} - \rho$ there exists a constant $A(p,\rho, \alpha, T)>0$ such that
		\[
		\norm{I_\sigma}_{L^p(\Omega;C([0,T];\mathcal{V}))} \leq A(p,\rho, \alpha, T) \left( \int_0^T r^{-2\alpha} \|S(r)\|_{L(\mathcal{H}, \mathcal{V})}^2\, \mathrm{d}r \right)^{\frac{1}{2}} \| \xi_{\sigma} \|_{L^{\infty}([0,T]; L^p (\Omega; L_2(U,\mathcal{H})))}
		\]
		holds for each predictable process $\xi_{\sigma} \in L^{\infty}([0,T]; L^p(\Omega,\P; L_2(U,\mathcal{H})))$.
	\end{proposition}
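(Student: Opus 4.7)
The two bounds are proved separately: $I_b$ is a deterministic-style convolution handled by Hölder's inequality, while $I_\sigma$ requires the factorisation method of Da Prato--Kwapień--Zabczyk to upgrade pointwise $L^p(\Omega)$ bounds to control of the supremum in $t$.

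For $I_b$, I first estimate pointwise using the strong measurability of the integrand:
\[
\|I_b(t)\|_{\mathcal{V}} \leq \int_0^t \|S(t-s)\|_{L(\mathcal{H},\mathcal{V})}\|\xi_b(s)\|_{\mathcal{H}}\,\mathrm{d}s.
\]
I apply Hölder's inequality with conjugate exponents $p/(p-1)$ and $p$ so that the kernel factor sits in $L^{p/(p-1)}([0,t])$ and $\xi_b$ in $L^p([0,t];\mathcal{H})$. Raising to the $p$-th power, taking the supremum over $t\in[0,T]$ (which only enlarges the domain of integration in the kernel factor to $[0,T]$ after the substitution $r = t-s$), integrating over $\Omega$, and applying Fubini produces the stated bound. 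The condition $\frac{1}{p}+\rho < 1$ is exactly what makes $\int_0^T r^{-\rho p/(p-1)}\,\mathrm{d}r$ finite via the singular estimate \eqref{eq: semigroup regularization}.

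For $I_\sigma$, the core obstacle is that a direct BDG estimate only controls the one-dimensional marginals $\mathbb{E}\|I_\sigma(t)\|_{\mathcal{V}}^p$. I would therefore choose $\alpha\in(1/p, 1/2-\rho)$, whose existence is guaranteed precisely by $\frac{1}{p}+\rho < \frac{1}{2}$, and invoke the stochastic Fubini / factorisation identity
\[
I_\sigma(t) = \frac{\sin(\pi\alpha)}{\pi}\int_0^t (t-s)^{\alpha-1} S(t-s) Y(s)\,\mathrm{d}s, \qquad Y(s) \coloneqq \int_0^s (s-r)^{-\alpha}S(s-r)\xi_\sigma(r)\,\mathrm{d}W_r.
\]
The process $Y$ is well-defined as a $\mathcal{V}$-valued stochastic integral because $2\alpha + 2\rho < 1$ ensures that $(s-r)^{-\alpha}S(s-r)\xi_\sigma(r)$ lies in $L^2([0,s]; L_2(U,\mathcal{V}))$. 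A BDG inequality in $L^p$ combined with Minkowski yields
\[
\bigl(\mathbb{E}\|Y(s)\|_{\mathcal{V}}^p\bigr)^{1/p} \lesssim \left(\int_0^s r^{-2\alpha}\|S(r)\|_{L(\mathcal{H},\mathcal{V})}^2\,\mathrm{d}r\right)^{1/2} \|\xi_\sigma\|_{L^\infty([0,T]; L^p(\Omega; L_2(U,\mathcal{H})))}.
\]
I then treat the factorisation identity as a deterministic convolution: the factor $S(t-s)$ is uniformly bounded on $[0,T]$ as an operator on $\mathcal{V}$ (using the $C_0$-semigroup extension to $\mathcal{V}$ from Assumption~\ref{assumption SEE}), and Hölder's inequality applied to $(t-s)^{\alpha-1} Y(s)$ with exponents $p/(p-1)$ and $p$ is finite exactly because $\alpha > 1/p$. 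Taking the supremum over $t\in[0,T]$ and then the $L^p(\Omega)$ norm, absorbing the sine factor, the $L^{p/(p-1)}$-norm of $r\mapsto r^{\alpha-1}$, and $\sup_{[0,T]}\|S(\cdot)\|_{L(\mathcal{V})}$ into the constant $A(p,\rho,\alpha,T)$, yields the second estimate.

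The main technical obstacle is ensuring that the factorisation method operates in $\mathcal{V}$ rather than $\mathcal{H}$: one must verify that $Y$ is $\mathcal{V}$-valued using the regularisation \eqref{eq: semigroup regularization}, and that the extended semigroup on $\mathcal{V}$ granted by Assumption~\ref{assumption SEE} provides the uniform operator-norm bound needed in the second step of the factorisation. Once these two points are handled, the remaining ingredients are the standard BDG, Hölder, Fubini, and Minkowski inequalities.
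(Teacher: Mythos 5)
Your proof is correct and follows essentially the same route as the paper: Hölder's inequality for the $I_b$ bound, and the Da Prato--Zabczyk factorisation method with the same auxiliary process $Y_\alpha$ for $I_\sigma$ (the paper uses Jensen where you use Minkowski, and invokes the deterministic convolution estimate of Da Prato--Zabczyk, Proposition 5.9, where you carry out the Hölder step with $\alpha>1/p$ by hand). The only points you gloss over are routine and are handled in the paper by explicit verification or citation: checking the integrability hypothesis of the stochastic-Fubini/factorisation theorem, and the pathwise continuity of $I_b$ and $I_\sigma$ in $\mathcal{V}$ needed for the $C([0,T];\mathcal{V})$-valued claim rather than just the supremum bound.
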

	\begin{proof}
		Firstly, $I_b$ has a continuous version by \cite[Lemma A.2]{BBF23}. The inequality given therein also yields for $\frac{1}{p} + \frac{1}{q} = 1$
		\begin{align}\nonumber
			\E\left[ \sup_{t \in [0,T]}\|I_b(t)\|_{\mathcal{V}}^p \right]
			&\leq \left(\int_0^T \|S(r)\|_{L(\mathcal{H}, \mathcal{V})}^q\, \mathrm{d}r\right)^{\frac{p}{q}}  \int_0^T \E\left[\|\xi_b(r) \|_{\mathcal{H}}^p\right] \, \mathrm{d}r
		\end{align}
        This proves the first assertion since $q = p/(p-1)$ and $p > 1$ due to $1/p + \rho < 1$.

		We apply the factorisation method from~\cite[Theorem 5.10]{DaPrato_Zabczyk_2014} for the second inequality. Fix $T > 0$ and since $\rho \in (0,1/2)$, we may take $0 < \alpha < \frac{1}{2} - \rho$. Let us show that
		\[
		Y_{\alpha}(t) = \int_0^t (t-s)^{-\alpha}S(t-s)\xi_{\sigma}(s)\, \mathrm{d}W_s, \qquad t \in [0,T]
		\]
		satisfies $Y_{\alpha} \in L^{\infty}([0,T]; L^p(\Omega,\P; \mathcal{V}))$. Indeed, by an application of \cite[Theorem 4.36]{DaPrato_Zabczyk_2014} and then Jensen inequality we obtain
		\begin{align*}
			& \E\left[\|Y_{\alpha}(t)\|_{\mathcal{V}}^p \right]
			\\ &\quad\lesssim \E\left[ \left( \int_0^t (t-s)^{-2\alpha} \|S(t-s)\xi_\sigma(s)\|_{L_2(U, \mathcal{V})}^2\, \mathrm{d}s\right)^{p/2} \right]\nonumber
			\\ &\quad\lesssim \E\left[ \left( \int_0^t (t-s)^{-2\alpha}\|S(t-s)\|_{L(\mathcal{H}, \mathcal{V})}^2 \|\xi_{\sigma}(s)\|_{L_2(U,\mathcal{H})}^2\, \mathrm{d}s\right)^{p/2} \right] \nonumber
            \\ &\quad\lesssim \left( \int_0^t r^{-2\alpha} \|S(r)\|_{L(\mathcal{H}, \mathcal{V})}^2\, \mathrm{d}r \right)^{\frac{p}{2} - 1} \int_0^t (t-s)^{-2\alpha} \|S(t-s)\|_{L(\mathcal{H}, \mathcal{V})}^2 \E\left[ \|\xi_{\sigma}(s)\|_{L_2(U,\mathcal{H})}^p\right] \, \mathrm{d}s \notag
            \\ &\quad\lesssim \left( \int_0^T r^{-2\alpha} \|S(r)\|_{L(\mathcal{H}, \mathcal{V})}^2\, \mathrm{d}r \right)^{\frac{p}{2}} \| \xi_{\sigma} \|_{L^{\infty}([0,T]; L^p(\Omega; L_2(U,\mathcal{H})))}^p
		\end{align*}
		for $t \in [0, T]$. Since $2(\alpha + \rho) < 1$, the integral on the right-hand side is well-defined, and hence $Y_{\alpha} \in L^{\infty}([0, T]; L^p(\Omega,\P; \mathcal{V}))$. Similarly, we show that
		\begin{align*}
			& \int_0^s (s-r)^{-2\alpha} \E\left[ \|S(t-r)\xi_{\sigma}(r) \|_{L_2(U, \mathcal{V})}^2 \right] \mathrm{d}r
            \\ &\quad\lesssim \| \xi_{\sigma} \|_{L^{\infty}([0,T]; L^2(\Omega; L_2(U,\mathcal{H})))}^2 \int_0^s (s-r)^{-2\alpha} \|S(t-r)\|_{L(\mathcal{H}, \mathcal{V})}^2 \mathrm{d}r
            \\ &\quad\lesssim \| \xi_{\sigma} \|_{L^{\infty}([0,T]; L^2(\Omega; L_2(U,\mathcal{H})))}^2 \int_0^s r^{-2\alpha} \|S(t-s + r)\|_{L(\mathcal{H}, \mathcal{V})}^2 \mathrm{d}r
		\end{align*}
		which implies for each $t \in [0,T]$
		\begin{align*}
			& \int_0^t (t-s)^{\alpha - 1}\left( \int_0^s (s-r)^{-2\alpha} \E\left[ \|S(t-r)\xi_{\sigma}(r) \|_{L_2(U, \mathcal{V})}^2 \right]\, \mathrm{d}r \right)^{1/2}\, \mathrm{d}s
            \\ &\quad\lesssim \| \xi_{\sigma} \|_{L^{\infty}([0,T]; L^2(\Omega; L_2(U,\mathcal{H})))} \int_0^t (t-s)^{\alpha-1} \left( \int_0^s r^{-2\alpha} \|S(t-s + r)\|_{L(\mathcal{H}, \mathcal{V})}^2 \mathrm{d}r \right)^{1/2}\, \mathrm{d}s
			\\ &\quad\lesssim \| \xi_{\sigma} \|_{L^{\infty}([0,T]; L^2(\Omega; L_2(U,\mathcal{H})))}  \int_0^t (t-s)^{\alpha - 1}(s^{\frac{1}{2}-\alpha}+s^{\frac{1}{2} - (\alpha + \rho)})\, \mathrm{d}s
			\\ &\quad\lesssim \| \xi_{\sigma} \|_{L^{\infty}([0,T]; L^2(\Omega; L_2(U,\mathcal{H})))} (t^{\frac{1}{2}}+t^{\frac{1}{2} - \rho}) < \infty.
		\end{align*}
		Hence the factorization formula \cite[Theorem 5.10]{DaPrato_Zabczyk_2014} yields
		\[
		\int_0^t S(t-s)\xi_{\sigma} \sigma_s\, \mathrm{d}W_s = \frac{\sin(\alpha \pi)}{\pi}\int_0^t (t-s)^{\alpha - 1}S(t-s)Y_{\alpha}(s)\, \mathrm{d}s.
		\]
		An application of \cite[Proposition 5.9]{DaPrato_Zabczyk_2014} for $E_1 = E_2 = \mathcal{V}$ and $r = 0$ shows that the right-hand side is continuous in $t$, which provides the desired continuous modification. For the inequality, let us note that
		\begin{align*}
			\norm{I_\sigma}_{L^p(\Omega;C([0,T];\mathcal{V}))}^p
			&= \E\left[ \sup_{t \in [0,T]}\left\| \frac{\sin(\alpha \pi)}{\pi}\int_0^t (t-s)^{\alpha - 1}S(t-s)Y_{\alpha}(s)\, \mathrm{d}s \right\|_{\mathcal{V}}^p \right] \notag
			\\ \notag &\lesssim \E\left[ \|Y_{\alpha}\|_{L^p([0,T]; \mathcal{V})}^p \right]
			\\ &\lesssim \left( \int_0^T r^{-2\alpha} \|S(r)\|_{L(\mathcal{H}, \mathcal{V})}^2\, \mathrm{d}r \right)^{\frac{p}{2}} \| \xi_{\sigma} \|_{L^{\infty}([0,T]; L^p(\Omega; L_2(U,\mathcal{H})))}^p
		\end{align*}
		where the first inequality follows from \cite[Proposition 5.9]{DaPrato_Zabczyk_2014}. This proves the assertion.
	\end{proof}

	The next proposition strengthens the bounds to H\"older continuous sample paths provided that the semigroup has additional regularity. Such a condition is similar to the factorisation lemma used in \cite{MR1417491} for analytic semigroups.

	\begin{proposition}\label{prop: Hoelder}
		Suppose that, additionally to Assumption \ref{assumption SEE}, $(0,\infty) \ni t \longmapsto S(t) \in L(\mathcal{H}, \mathcal{V})$ is differentiable, and for each $T > 0$ there exists a constant $C_1(\mathcal{H}, \mathcal{V}, T) > 0$ such that
		\[
		\norm{\Dot{S}(t)}_{L(\mathcal{H},\mathcal{V})}\leq C_1(\mathcal{H},\mathcal{V}, T)(1+t^{-1-\rho}), \qquad t \in [0,T].
		\]
		Then for each $p\in[1,\infty)$ with $\frac{1}{p} + \rho < 1$, and $\theta \in (0,1-\rho)$ there exists some constant $A_0=A_0(\theta,\mathcal{H},\mathcal{V},\rho,T)>0$ such that
		\[
		\norm{I_b}_{L^p(\Omega;C^\theta([0,T],\mathcal{V}))}\leq A_0\norm{\xi_b}_{L^p(\Omega;L^\infty([0,T]; \mathcal{H}))}
		\]
		holds for each predictable process $b\in L^p(\Omega,\P;L^\infty([0,T]; \mathcal{H}))$. Furthermore, for each $p\in(2 ,\infty)$ and $\theta$ satisfying  $0<\theta<\frac{1}{2}-\frac{1}{p}-\rho$ there exists some constant $A_1=A_1(\theta,\mathcal{H},\mathcal{V},\rho,T)>0$ such that
		\[
		\norm{I_\sigma}_{L^p(\Omega;C^\theta([0,T],\mathcal{V}))} \leq A_1\norm{\xi_{\sigma}}_{L^p(\Omega;L^\infty([0,T];L_2(U,\mathcal{H})))}
		\]
		holds for each predictable process $\xi_{\sigma} \in L^p(\Omega,\P;L^\infty([0,T];L_2(U,\mathcal{H})))$.
	\end{proposition}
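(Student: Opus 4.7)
The plan is to upgrade the continuity statements of Proposition \ref{proposition_properties_lift} by using the new bound on $\dot{S}$ to compare $S(t-r)$ with $S(s-r)$. For $0 \leq s < t \leq T$ and both $I_b$ and $I_\sigma$, I would use the splitting
\[
I_\ast(t) - I_\ast(s) = \int_s^t S(t-r)\xi_\ast(r)\,\mathrm{d}\nu_r + \int_0^s [S(t-r) - S(s-r)]\xi_\ast(r)\,\mathrm{d}\nu_r =: J_1 + J_2,
\]
where $\mathrm{d}\nu_r$ is Lebesgue measure for $\ast = b$ and stochastic integration against $W$ for $\ast = \sigma$. For $I_b$ the estimates are pathwise; for $I_\sigma$ the Burkholder--Davis--Gundy inequality reduces the problem to $L^2$-in-$r$ estimates and Kolmogorov's continuity criterion then supplies the Hölder modification.

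For $I_b$, I would bound $\|J_1\|_{\mathcal{V}}$ directly from \eqref{eq: semigroup regularization}: $\|J_1\|_{\mathcal{V}} \lesssim \|\xi_b\|_{L^\infty([0,T];\mathcal{H})}\int_0^{t-s}(1+u^{-\rho})\,\mathrm{d}u \lesssim \|\xi_b\|_{L^\infty([0,T];\mathcal{H})}(t-s)^{1-\rho}$. For $J_2$, insert the identity $S(t-r) - S(s-r) = \int_{s-r}^{t-r}\dot{S}(u)\,\mathrm{d}u$ and the bound on $\|\dot{S}\|_{L(\mathcal{H},\mathcal{V})}$, which reduces matters to evaluating $\int_0^s\int_{s-r}^{t-r}(1+u^{-1-\rho})\,\mathrm{d}u\,\mathrm{d}r$: the singular piece equals $\rho^{-1}(1-\rho)^{-1}[s^{1-\rho} - t^{1-\rho} + (t-s)^{1-\rho}] \leq c(t-s)^{1-\rho}$ (since $s \leq t$), and the regular piece $s(t-s)$ is bounded by $T^{1+\rho}(t-s)^{1-\rho}$. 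Hence $\|I_b(t) - I_b(s)\|_{\mathcal{V}} \lesssim \|\xi_b\|_{L^\infty([0,T];\mathcal{H})}(t-s)^{1-\rho}$ pathwise, which yields the Hölder bound for every $\theta \in (0,1-\rho)$ after taking $L^p$-norms in $\omega$.

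For $I_\sigma$, BDG in Hilbert space gives $\mathbb{E}[\|J_1\|_{\mathcal{V}}^p] \lesssim \|\xi_\sigma\|^p_{L^p(\Omega;L^\infty([0,T];L_2(U,\mathcal{H})))}\bigl(\int_0^{t-s}\|S(u)\|^2_{L(\mathcal{H},\mathcal{V})}\,\mathrm{d}u\bigr)^{p/2} \lesssim (t-s)^{p(1/2-\rho)}$. The key step is $J_2$: I would interpolate between the triangle bound $\|S(t-r)-S(s-r)\|_{L(\mathcal{H},\mathcal{V})} \lesssim 1 + (s-r)^{-\rho}$ and the derivative bound $\|S(t-r)-S(s-r)\|_{L(\mathcal{H},\mathcal{V})} \lesssim (t-s)(1+(s-r)^{-1-\rho})$ (the latter from the integral representation together with monotonicity of $u\mapsto u^{-1-\rho}$). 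Raising these to powers $1-\alpha$ and $\alpha$ and multiplying yields, for every $\alpha \in (0,1)$,
\[
\|S(t-r)-S(s-r)\|_{L(\mathcal{H},\mathcal{V})} \lesssim (t-s)^{\alpha}\bigl(1+(s-r)^{-\alpha-\rho}\bigr).
\]
Squaring and integrating over $r\in[0,s]$ stays finite exactly when $\alpha + \rho < 1/2$, yielding $\int_0^s\|S(t-r)-S(s-r)\|^2_{L(\mathcal{H},\mathcal{V})}\,\mathrm{d}r \lesssim (t-s)^{2\alpha}$. BDG then produces $\mathbb{E}[\|I_\sigma(t)-I_\sigma(s)\|_{\mathcal{V}}^p] \lesssim \|\xi_\sigma\|^p_{L^p(\Omega;L^\infty)}(t-s)^{p\alpha}$ for every $\alpha < 1/2 - \rho$. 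Kolmogorov's continuity criterion then supplies a $\theta$-Hölder modification for every $\theta < \alpha - 1/p$, and letting $\alpha \uparrow 1/2 - \rho$ gives the claimed range $\theta \in (0, 1/2 - 1/p - \rho)$.

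The main obstacle is precisely the interpolation for $\|S(t-r)-S(s-r)\|_{L(\mathcal{H},\mathcal{V})}$: the triangle bound alone provides no smallness as $t\to s$, while the naive mean-value bound $(t-s)\sup_{u\in[s-r,t-r]}\|\dot{S}(u)\|_{L(\mathcal{H},\mathcal{V})}$ is too singular in $r$ to be square-integrable when $\rho \geq 0$. The geometric mean of the two balances these competing effects and delivers the sharp exponent; the condition $\alpha + \rho < 1/2$ comes from square-integrability in $r$, and the additional $1/p$ loss in the final Hölder exponent is the standard and unavoidable cost of Kolmogorov's moment-based criterion.
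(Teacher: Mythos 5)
Your argument is correct, but it takes a genuinely different route from the paper. The paper's proof is a verification-plus-citation: it checks that
\[
t^{\theta}\|\dot S(t)x\|_{\mathcal{V}}+t^{\theta-1}\|S(t)x\|_{\mathcal{V}}\lesssim \bigl(t^{\theta}+t^{\theta-1}+t^{\theta-1-\rho}\bigr)\|x\|_{\mathcal{H}},
\]
notes that the right-hand side is locally integrable for $\theta>\rho$, and then invokes the abstract convolution regularity lemmas \cite[Lemmas 2.7 and 2.8]{FHK23} with $\Psi=S$ and $\Phi=\xi_b$, respectively $\xi_\sigma$, which deliver both Hölder bounds in one stroke. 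You instead prove the increment estimates by hand: the $J_1/J_2$ splitting, the geometric-mean interpolation between the crude bound $1+(s-r)^{-\rho}$ and the mean-value bound $(t-s)\bigl(1+(s-r)^{-1-\rho}\bigr)$, then BDG and Kolmogorov. Your version is self-contained and makes the origin of the exponent $\tfrac12-\tfrac1p-\rho$ transparent (the square-integrability threshold $\alpha+\rho<\tfrac12$ plus the unavoidable $1/p$ loss from the moment criterion), at the cost of redoing what the cited lemmas encapsulate; the paper's route is shorter and reuses machinery (essentially a factorisation-type regularity lemma) already needed elsewhere in the paper.

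Two small points to tighten. First, your closed-form evaluation of the singular piece in the drift estimate divides by $\rho$ and therefore breaks down at $\rho=0$, which Assumption \ref{assumption SEE} allows (the primitive of $u^{-1}$ is a logarithm). Since the claimed range $\theta\in(0,1-\rho)$ is open this is harmless: either treat $\rho=0$ separately, absorbing the logarithmic factor via $(t-s)\log\frac{T}{t-s}\lesssim_{\theta}(t-s)^{\theta}$, or bound $u^{-1}\leq 1+u^{-1-\rho'}$ for a small $\rho'>0$ and run your computation with $\rho'$ in place of $\rho$. Second, the assertion is a norm bound in $L^p(\Omega;C^{\theta}([0,T];\mathcal{V}))$, not merely the existence of a Hölder continuous modification; so you should invoke the quantitative form of Kolmogorov's criterion (or the Garsia--Rodemich--Rumsey inequality), which bounds the $p$-th moment of the Hölder seminorm by the constant in your increment estimate. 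Together with $I_\sigma(0)=0$ this yields exactly the stated inequality with $A_1$ depending only on $\theta,\mathcal{H},\mathcal{V},\rho,T$.
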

	\begin{proof}
		Using Assumption \ref{assumption SEE} we arrive for $x\in \mathcal{H}$ at the inequality
		\begin{equation*}
			t^\theta\norm{\Dot{S}(t)x}_{\mathcal{V}}
			+ t^{\theta-1}\norm{S(t)x}_{\mathcal{V}}
			\leq \widetilde{C}(\mathcal{H},\mathcal{V},T)\left( t^{\theta} + t^{\theta-1} + t^{\theta - 1 - \rho}\right)\norm{x}_{\mathcal{H}}
		\end{equation*}
		where $\theta \in (0,1)$, and $\widetilde{C}(\mathcal{H},\mathcal{V},T)$ can be computed from the constants in Assumption \ref{assumption SEE}. In particular, the right-hand side is locally integrable for $\theta>\rho$. Hence, the first inequality follows from \cite[Lemma 2.7]{FHK23} applied to $Y_1 = \mathcal{H}$, $Y_2=\mathcal{V}$, $\Psi(t)=S(t)$, $\Phi(t)= \xi_b(t)$. Similarly, the second inequality follows from \cite[Lemma 2.8]{FHK23} applied to $Y_1 = \mathcal{H}$, $Y_2=\mathcal{V}$, $\Psi(t)=S(t)$, $\Phi_t(t)= \xi_{\sigma}(t)$.
	\end{proof}

These propositions form the central tool to construct a Markovian lift with continuous sample paths from a given solution $u$ of the stochastic Volterra equation \eqref{eq: mild formulation}. Let us remark that the regularisation property \eqref{eq: semigroup regularization} can be replaced by the weaker assumption
\[
    \int_0^T \|S(t)\|_{L(\mathcal{H}, \mathcal{V})}^2\, \mathrm{d}t < \infty.
\]
However, in this case, $I_b, I_{\sigma}$ are only defined as elements in $L^2( [0, T]; \mathcal{V})$ a.s., see also \cite{H23} for the case of completely monotone Volterra kernels and finite-dimensional stochastic Volterra equations. Thus, our slightly stronger assumption \eqref{eq: semigroup regularization} allows us to study Markovian lifts with continuous sample paths.

\subsection{Markov solutions}

In this section, we study the existence and uniqueness of solutions to the abstract Markovian lift with time-inhomogeneous coefficients given by
\begin{align}\label{eq: abstract Markov lift time inhomogeneous}
            X(t;s,\xi) &= S(t-s)\xi + \int_s^t S(t-r)\xi_b(r,\Xi X(r; s,\xi))\, \mathrm{d}r
            \\ &\qquad \qquad \qquad \qquad  + \int_s^t S(t-r)\xi_{\sigma}(r, \Xi X(r; s,\xi))\, \mathrm{d}W_r, \qquad 0 \leq s \leq t \leq T. \nonumber
\end{align}
Remark that, formally, $u(t) = \Xi X_t$ satisfies the stochastic Volterra equation
\[
    u(t) = G(t) + \int_0^t E_b(t,s, u(s))\, \mathrm{d}s + \int_0^t E_{\sigma}(t,s,u(s))\, \mathrm{d}W_s
\]
where we have set $G(t) = \Xi S(t)\xi$, $E_b(t,s,u) = \Xi S(t-s)\xi_b(s,u)$, and $E_{\sigma}(t,s,u) = \Xi S(t-s)\xi_{\sigma}(s,u)$. For the rigorous treatment of such equations, let us suppose, in addition to Assumption \ref{assumption SEE}, the following set of conditions on a fixed interval $[0, T]$ with $T>0$:
\begin{assumptionp}{B}\label{assumption Lipschitz}
		There exist measurable functions $[0,T] \times H \ni (t,u) \longmapsto \xi_b(t,u) \in \mathcal{H}$ and $[0,T] \times H \longmapsto \xi_{\sigma}(t,u) \in L_2(U, \mathcal{H})$ such that for some $C_{\mathrm{lip}}(T) \geq 0$
        \[
            \| \xi_b(t,u) - \xi_b(t,v)\|_{\mathcal{H}} + \| \xi_{\sigma}(t,u) - \xi_{\sigma}(t,v)\|_{L_2(U, \mathcal{H})} \leq C_{\mathrm{lip}}(T)\|u-v\|_H
        \]
        holds for all $u,v \in H$ and $t \in [0,T]$.
	\end{assumptionp}

Based on the bounds for the convolutions \eqref{eq: convolutions}, the existence and uniqueness for the Markovian lift given by the stochastic evolution equation \eqref{eq: abstract mild formulation Markovian lift} can be obtained by the usual fixed-point procedure.

	\begin{theorem}\label{label:existence_uniqueness_VSPDE}
		Suppose that Assumptions \ref{assumption SEE} and \ref{assumption Lipschitz} are satisfied. Then for each $s \in [0,T)$ and each $\xi \in L^p(\Omega, \F_s, \P; \mathcal{V})$ with $p \in (2,\infty)$ satisfying
		\begin{align}\label{eq: continuity}
			\rho + \frac{1}{p} < \frac{1}{2},
		\end{align}
		there exists a unique solution $X(\cdot; s, \xi) \in L^p(\Omega,\P; C([s,T]; \mathcal{V}))$ of \eqref{eq: abstract Markov lift time inhomogeneous}. Moreover, there exists a constant $C_T > 0$ independent of $\xi$ and $s$, such that
		\begin{equation}\label{eq:H-SVE lift initial dependence}
			\E\left[ \sup_{t \in [s,T]}\| X(t; s, \xi) - X(t; s, \widetilde{\xi})\|_{\mathcal{V}}^p \right] \leq C_T \E\left[ \| \xi - \widetilde{\xi} \|_{\mathcal{V}}^p \right].
		\end{equation}
	\end{theorem}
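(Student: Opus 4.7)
The plan is to apply a Banach fixed-point argument in the Banach space $\mathcal{E}_{a,b} := L^p(\Omega; C([a,b]; \mathcal{V}))$ on a short interval $[s, s+\tau]$, then iterate to cover all of $[s,T]$. Introduce the map $\mathcal{T}\colon \mathcal{E}_{s,s+\tau} \longrightarrow \mathcal{E}_{s,s+\tau}$ via
\begin{align*}
(\mathcal{T} Y)(t) &= S(t-s)\xi + \int_s^t S(t-r)\xi_b(r, \Xi Y(r))\, \mathrm{d}r\\
&\qquad + \int_s^t S(t-r)\xi_\sigma(r, \Xi Y(r))\, \mathrm{d}W_r.
\end{align*}
The term $t \mapsto S(t-s)\xi$ lies in $\mathcal{E}_{s,s+\tau}$ by the strong continuity of $(S(t))_{t\ge 0}$ on $\mathcal{V}$ from Assumption \ref{assumption SEE}. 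For the drift and diffusion convolutions, the Lipschitz property in Assumption \ref{assumption Lipschitz}, together with the inclusion \eqref{eq: inclusion} and $\Xi \in L(\mathcal{V}, V)$, gives linear growth of $\|\xi_b(r, \Xi Y(r))\|_{\mathcal{H}}$ and $\|\xi_\sigma(r, \Xi Y(r))\|_{L_2(U,\mathcal{H})}$ in $\|Y(r)\|_{\mathcal{V}}$. Condition \eqref{eq: continuity} implies both $1/p + \rho < 1$ (needed for the drift bound) and $1/p + \rho < 1/2$ (needed for the stochastic convolution bound), so by the time-homogeneity of $(S(t))_{t\ge 0}$, Proposition \ref{proposition_properties_lift} applies on the shifted interval $[s, s+\tau]$, and $\mathcal{T}$ is well-defined with samples in $C([s, s+\tau];\mathcal{V})$.

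For the contraction step, one applies the same proposition to $\mathcal{T}(Y_1) - \mathcal{T}(Y_2)$ together with the Lipschitz estimate from Assumption \ref{assumption Lipschitz}, obtaining
\[
\|\mathcal{T}(Y_1) - \mathcal{T}(Y_2)\|_{\mathcal{E}_{s,s+\tau}} \leq \Phi(\tau)\, C_{\mathrm{lip}}\, \|\Xi\|_{L(\mathcal{V}, V)}\, \|Y_1 - Y_2\|_{\mathcal{E}_{s,s+\tau}},
\]
where $\Phi(\tau)$ is built from $\bigl(\int_0^\tau \|S(r)\|_{L(\mathcal{H},\mathcal{V})}^{p/(p-1)}\mathrm{d}r\bigr)^{(p-1)/p}$ and $A(p,\rho,\alpha,\tau)\bigl(\int_0^\tau r^{-2\alpha}\|S(r)\|_{L(\mathcal{H},\mathcal{V})}^2\mathrm{d}r\bigr)^{1/2}$ for a fixed $\alpha \in (1/p,\ 1/2 - \rho)$, the interval being nonempty by \eqref{eq: continuity}. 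Both integrals are finite by \eqref{eq: semigroup regularization} and tend to $0$ as $\tau \downarrow 0$ by dominated convergence, so one can fix $\tau_* > 0$ depending only on the parameters in Assumptions \ref{assumption SEE} and \ref{assumption Lipschitz} (in particular independent of $s$ and $\xi$) such that $\Phi(\tau_*)\, C_{\mathrm{lip}}\, \|\Xi\|_{L(\mathcal{V}, V)} \leq 1/2$. The Banach fixed-point theorem then yields a unique $X(\cdot; s, \xi) \in \mathcal{E}_{s, s+\tau_*}$. Since $X(s+\tau_*; s, \xi)$ is again an admissible $\mathcal{V}$-valued initial condition for a further application of $\mathcal{T}$ starting from time $s+\tau_*$, iterating over a partition of $[s,T]$ with step $\tau_*$ and gluing the pieces produces the unique global fixed point in $L^p(\Omega; C([s,T]; \mathcal{V}))$.

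Finally, \eqref{eq:H-SVE lift initial dependence} is proved by the same contraction estimate applied to the difference $X(\cdot; s, \xi) - X(\cdot; s, \widetilde\xi)$: on $[s, s+\tau_*]$ the argument above gives
\[
\|X(\cdot; s, \xi) - X(\cdot; s, \widetilde\xi)\|_{\mathcal{E}_{s, s+\tau_*}} \le 2\, M_T\, \|\xi - \widetilde\xi\|_{L^p(\Omega;\mathcal{V})}, \qquad M_T := \sup_{u \in [0,T]}\|S(u)\|_{L(\mathcal{V})},
\]
and iterating this bound across $N = \lceil T/\tau_*\rceil$ sub-intervals gives the conclusion with $C_T = (2M_T)^{Np}$. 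The main obstacle is that all constants must be made uniform in $s$ and in $\xi$; this rests on the time-homogeneity of $(S(t))_{t\ge 0}$, which turns the bounds of Proposition \ref{proposition_properties_lift} on $[s, s+\tau]$ into the same bounds as on $[0,\tau]$, combined with the sharp integrability thresholds encoded in \eqref{eq: continuity}.
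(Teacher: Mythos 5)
Your proof is correct, but it follows a genuinely different route from the paper. The paper does not localise in time: it passes to the rescaled semigroup $S_\lambda(t)=\e{\lambda t}S(t)$ with $\lambda<0$, rewrites \eqref{eq: abstract Markov lift time inhomogeneous} as an equivalent fixed-point problem for $X_\lambda(t)=\e{\lambda(t-s)}X(t)$ with rescaled coefficients, and chooses $\lambda$ sufficiently negative so that the map $\mathcal{T}_\lambda$ is a contraction on all of $L^p(\Omega;C([s,T];\mathcal{V}))$ at once (dominated convergence makes the constants from Proposition \ref{proposition_properties_lift} small as $\lambda\to-\infty$); the estimate \eqref{eq:H-SVE lift initial dependence} then drops out of the global contraction in one step, with constant $\e{|\lambda|(T-s)}\sup_{r}\|S_\lambda(r)\|_{L(\mathcal{V})}/(1-C(\lambda))$. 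You instead make the constants small by shrinking the time horizon, obtaining a contraction on $[s,s+\tau_*]$ with $\tau_*$ uniform in $s$ and $\xi$ (correctly noting that $\alpha$ must be taken in $(1/p,\tfrac12-\rho)$, which is exactly what the factorisation argument behind Proposition \ref{proposition_properties_lift} requires), and then iterate. What the paper's exponential-weight trick buys is that it avoids the concatenation step entirely; in your scheme the statement that the glued process solves \eqref{eq: abstract Markov lift time inhomogeneous} on $[s,T]$ is a real (if routine) verification — one has to use the semigroup identity $S(t-u)S(u-r)=S(t-r)$, the consistency of the semigroup on $\mathcal{H}$ with its extension to $\mathcal{V}$ from Assumption \ref{assumption SEE}, and the fact that the bounded operator $S(t-u)$ can be pulled inside the stochastic integral — and uniqueness must likewise be propagated interval by interval. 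Your approach in return is more elementary (no equivalence-under-rescaling argument) and makes the uniformity of $\tau_*$ in $s,\xi$ transparent; the price is the stability constant $C_T\sim(2M_T)^{Np}$ with $N=\lceil T/\tau_*\rceil$, which, like the paper's constant, is exponential in $T$, so nothing essential is lost. Minor bookkeeping: since $\sup_{[s,T]}=\max_k\sup_{I_k}$, your final bound should carry an extra harmless factor (e.g. $N$ or a geometric sum $\sum_{k\le N}(2M_T)^{kp}$), and the fixed-point space should be specified as adapted processes so the stochastic convolution is well defined — both are cosmetic.
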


    A detailed proof is given in the appendix. Using stronger conditions on the semigroup and additional conditions on the initial condition, we also obtain H\"older continuous sample paths as stated in Proposition \ref{prop: Hoelder}. As usual for unique solutions of (differential) stochastic equations (see e.g.~\cite[Theorem 9.14]{DaPrato_Zabczyk_2014}), also the process given by \eqref{eq: abstract Markov lift time inhomogeneous} determines a (time-inhomogeneous) Markov process. Denote by $B_b(\mathcal{V}), C_b(\mathcal{V})$ the Banach space of bounded (respectively continuous and bounded) functions $f\colon \mathcal{V} \longrightarrow \R$.

	\begin{corollary}\label{cor: Markov}
		Suppose that Assumptions \ref{assumption SEE} and \ref{assumption Lipschitz} are satisfied. Then \eqref{eq: abstract Markov lift time inhomogeneous} determines a time-inhomogeneous Markov process with transition family $P_{s,t}\colon B_b(\mathcal{V}) \longrightarrow B_b(\mathcal{V})$
	      \begin{equation*}\label{eq: transition semigroup}
    		P_{s,t} f(\eta) \coloneqq \E[f(X(t; s,\eta))], \qquad  0 \leq s \leq t \leq T,\ \eta\in\mathcal{V}.
    	\end{equation*}
        This transition family is $C_b$-Feller in the sense that $P_{s,t}$ leaves $C_b(\mathcal{V})$ invariant.
	\end{corollary}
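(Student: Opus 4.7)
The plan is a three-step argument: first establish the flow/cocycle property of the solution map, then deduce the Markov property by a substitution/freezing argument for conditional expectations, and finally derive the $C_b$-Feller property from the continuity estimate \eqref{eq:H-SVE lift initial dependence}.

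For the cocycle property, fix $0 \leq s \leq r \leq t \leq T$ and $\xi \in L^p(\Omega,\F_s,\P;\mathcal{V})$. I would verify that the restriction $t \longmapsto X(t;s,\xi)\big|_{[r,T]}$ solves the mild equation \eqref{eq: abstract Markov lift time inhomogeneous} on $[r,T]$ with initial datum $X(r;s,\xi) \in L^p(\Omega,\F_r,\P;\mathcal{V})$ at time $r$; this is a matter of splitting the integrals at $r$ and using the semigroup identity $S(t-s)=S(t-r)S(r-s)$. Since $t \longmapsto X(t;r, X(r;s,\xi))$ solves the same equation by definition, pathwise uniqueness from Theorem~\ref{label:existence_uniqueness_VSPDE} applied on $[r,T]$ gives
\[
X(t;s,\xi) = X(t;r,X(r;s,\xi)) \quad \text{a.s. for every } t \in [r,T].
\]
Next, for fixed $\eta \in \mathcal{V}$, the Picard iteration used in Theorem~\ref{label:existence_uniqueness_VSPDE} produces $X(\cdot;r,\eta)$ as a measurable functional of $\eta$ and of the Wiener increments $(W_u - W_r)_{u\in[r,T]}$, which are independent of $\F_r$; continuity of $\eta\longmapsto X(t;r,\eta)$ in $L^p(\Omega;\mathcal{V})$ provided by \eqref{eq:H-SVE lift initial dependence} then makes $\phi(\eta):=P_{r,t}f(\eta)$ Borel measurable for every $f\in B_b(\mathcal{V})$. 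Setting $\xi':=X(r;s,\xi)$ and applying the standard substitution lemma for conditional expectations yields $\E[f(X(t;r,\xi'))\mid\F_r] = \phi(\xi')$, which combined with the cocycle property gives $\E[f(X(t;s,\xi))\mid\F_r]=(P_{r,t}f)(X(r;s,\xi))$. This is the Markov property and, in particular, the Chapman–Kolmogorov identity $P_{s,t} = P_{s,r}P_{r,t}$.

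For the $C_b$-Feller property, if $\eta_n \to \eta$ in $\mathcal{V}$ then \eqref{eq:H-SVE lift initial dependence} gives $X(t;s,\eta_n) \to X(t;s,\eta)$ in $L^p(\Omega;\mathcal{V})$ and hence in probability; for $f\in C_b(\mathcal{V})$, continuity and boundedness combined with dominated convergence yield $P_{s,t}f(\eta_n)\to P_{s,t}f(\eta)$, so $P_{s,t}$ leaves $C_b(\mathcal{V})$ invariant. The main obstacle is the measurability and independence structure required in the substitution step, which rests on the joint measurability of the solution in the initial condition and the driving noise; this joint measurability is inherited from the contractive Picard scheme behind Theorem~\ref{label:existence_uniqueness_VSPDE} (and could alternatively be obtained by following the abstract argument in~\cite[Theorem 9.14]{DaPrato_Zabczyk_2014}).
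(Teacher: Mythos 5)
Your proposal is correct and follows essentially the same route as the paper: flow property of the solution map, a substitution argument for the conditional expectation yielding the Markov property, and the $C_b$-Feller property from the continuity estimate \eqref{eq:H-SVE lift initial dependence} together with dominated convergence. The only cosmetic difference is that the paper establishes the substitution identity \eqref{eq: inhom. markov} by first treating simple $\F_r$-measurable initial conditions and then approximating via \eqref{eq:H-SVE lift initial dependence}, whereas you invoke the freezing lemma directly, justified by joint measurability of the solution in the initial datum and the Wiener increments from the Picard scheme; both are standard ways of proving the same step.
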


    The proof of this statement is postponed to the appendix. If the coefficients $\xi_b, \xi_{\sigma}$ appearing in \eqref{eq: abstract Markov lift time inhomogeneous} are time-homogeneous, then Assumption \ref{assumption Lipschitz} holds for each $T > 0$. In particular, for each $\xi$ there exists a unique global solution $X(\cdot; s, \xi) \in L^p(\Omega,\P; C([s, \infty); \mathcal{V}))$ which forms a time-homogeneous Markov process.

    \begin{corollary}\label{cor: time homogeneous Markov}
        Suppose that Assumptions \ref{assumption SEE} and \ref{assumption Lipschitz} are satisfied, and assume that $\xi_b, \xi_{\sigma}$ are time-homogeneous. Then \eqref{eq: abstract Markov lift time inhomogeneous} determines a time-homogeneous Markov process with $C_b(\mathcal{V})$-Feller transition semigroup $(P_t)_{t \geq 0}$ given by $P_tf(\eta) = P_{0,t}f(\eta)$.
    \end{corollary}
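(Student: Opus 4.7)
The plan is to bootstrap from Theorem~\ref{label:existence_uniqueness_VSPDE} and Corollary~\ref{cor: Markov}, whose hypotheses already hold on every bounded interval, and then upgrade the time-inhomogeneous Markov family $(P_{s,t})_{0 \le s \le t}$ to a semigroup via time-homogeneity.

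First, I would argue the existence of a global solution. Since $\xi_b, \xi_\sigma$ do not depend on time, Assumption~\ref{assumption Lipschitz} holds on every interval $[0,T]$ with a constant $C_{\mathrm{lip}}$ that can be chosen independent of $T$. Fixing $s \ge 0$ and $\xi \in L^p(\Omega, \F_s, \P; \mathcal{V})$, Theorem~\ref{label:existence_uniqueness_VSPDE} produces, for every $T > s$, a unique solution $X(\,\cdot\,;s,\xi) \in L^p(\Omega; C([s,T]; \mathcal{V}))$ of \eqref{eq: abstract Markov lift time inhomogeneous}. By uniqueness these local solutions are consistent under restriction, so concatenating them yields a global solution in $L^p(\Omega; C([s, \infty); \mathcal{V}))$.

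Next I would establish the time-homogeneity of the family in law. Define $\widetilde{W}_r := W_{s+r} - W_s$ for $r \ge 0$; then $(\widetilde{W}_r)_{r \ge 0}$ is a cylindrical Wiener process on $U$ with respect to the shifted filtration $(\F_{s+r})_{r \ge 0}$, and it is independent of $\F_s$. Substituting $r = s + r'$ in \eqref{eq: abstract Markov lift time inhomogeneous} and using that $\xi_b, \xi_\sigma$ are time-homogeneous, the process $Y_{r} := X(s+r; s, \xi)$ satisfies
\[
    Y_r = S(r)\xi + \int_0^r S(r - r')\xi_b(\Xi Y_{r'})\, \mathrm{d}r' + \int_0^r S(r - r')\xi_\sigma(\Xi Y_{r'})\, \mathrm{d}\widetilde{W}_{r'},
\]
which is exactly \eqref{eq: abstract Markov lift time inhomogeneous} with $s = 0$ and Wiener process $\widetilde{W}$. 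By the uniqueness in law of solutions (which follows from the pathwise uniqueness given by \eqref{eq:H-SVE lift initial dependence}) we obtain $\law(X(s+r; s, \xi)) = \law(X(r; 0, \xi))$, and more generally, conditional on $\F_s$, the law of $X(s+r; s, \xi)$ given $\xi$ equals the law of $X(r; 0, \eta)$ evaluated at $\eta = \xi$. Hence, for every $f \in B_b(\mathcal{V})$ and $\eta \in \mathcal{V}$,
\[
    P_{s,s+r} f(\eta) = \E[f(X(s+r; s, \eta))] = \E[f(X(r; 0, \eta))] = P_{0,r} f(\eta),
\]
so setting $P_t := P_{0,t}$ the identity $P_{s,t} = P_{t-s}$ holds for all $0 \le s \le t$.

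Finally, the semigroup property follows from the Chapman–Kolmogorov relations for the time-inhomogeneous family supplied by Corollary~\ref{cor: Markov}: for $0 \le s \le t \le u$,
\[
    P_{s,u} = P_{s,t}\, P_{t,u}, \qquad \text{hence} \qquad P_{u-s} = P_{t-s}\, P_{u-t},
\]
which, writing $a = t-s$ and $b = u-t$, gives $P_{a+b} = P_a P_b$, and $P_0 = \mathrm{Id}$ is immediate from \eqref{eq: abstract Markov lift time inhomogeneous}. The $C_b(\mathcal{V})$-Feller property of $P_t$ is inherited from the corresponding property of $P_{0,t}$ already recorded in Corollary~\ref{cor: Markov}. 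The main technical point to spell out carefully is the uniqueness-in-law argument that converts pathwise uniqueness for the shifted SPDE into the identity $\law(X(s+r;s,\eta)) = \law(X(r;0,\eta))$; everything else is essentially a translation of the time-inhomogeneous statement.
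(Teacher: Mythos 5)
Your proposal is correct and takes essentially the same route as the paper: restart the Wiener process as $W^s_r = W_{s+r}-W_s$, observe that the shifted solution solves the $s=0$ equation driven by the restarted noise, conclude equality of laws, deduce $P_{s,s+t}f(\eta)=P_{0,t}f(\eta)$, and combine this with the inhomogeneous Markov property from Corollary~\ref{cor: Markov}. The one step the paper treats more carefully is the passage from pathwise uniqueness to equality in law of solutions driven by \emph{different} Wiener processes (with respect to different filtrations), which it settles by invoking Ondrej\'at's infinite-dimensional Yamada--Watanabe-type theorem \cite{MartinOndreját2004} -- precisely the point you flag as the main technical issue, and which your appeal to \eqref{eq:H-SVE lift initial dependence} alone does not quite cover.
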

    \begin{proof}
        Note that the unique solution of \eqref{eq: abstract Markov lift time inhomogeneous} satisfies
		\begin{multline*}
			X(t+s; t, \eta) =  S(s)\eta + \int_{0}^s S(s-r)\xi_bb(\Xi X(r+t; s, \eta))\,\d r\\ + \int_{0}^s S(s-r)\xi_\sigma\sigma(\Xi X(r+t; s, \eta))\,\d W^t_r
		\end{multline*}
		where $W^t_r=W_{t+r}-W_t$ denotes the restarted Wiener process with respect to the shifted filtration $(\F_{t+r})_{r\geq0}$. Consequently, by Theorem \ref{label:existence_uniqueness_VSPDE} and \cite[Theorem 2]{MartinOndreját2004}, $X(t+s; t, \eta)$ and $X(s; 0, \eta)$ have the same law. This shows that $P_{0,s}f(\eta) = P_{s, s+t}f(\eta)$ holds for all $t,s \geq 0$ and $\eta \in \mathcal{V}$. In particular, \eqref{eq: inhom. markov} yields the time-homogeneous Markov property.
    \end{proof}

    As usual for time-homogeneous Markov processes, all distributional properties are already captured by $X(t;\xi) \coloneqq X(t; 0,\xi)$ and hence the associated transition semigroup $(P_t)_{t \geq 0}$.


	\section{Limit distributions and invariant measures}\label{sec:limit_dist_inv_meas}

	In this section, we study the long-time behaviour of the Markovian lift with time-homogeneous coefficients under the additional structural assumption
    \begin{align}\label{eq: timehomogeneous}
        \xi_b(t,u) = \xi_b\, b(u) \ \text{ and } \ \xi_{\sigma}(t,u) = \xi_{\sigma}\, \sigma(u)
    \end{align}
    where $b$ denotes the drift and $\sigma$ the diffusion operator. Moreover, we suppose that the semigroup $(S(t))_{t \geq 0}$ has additional regularisation properties for large time $t \gg 1$ as stated below.

	\begin{assumptionp}{C}\label{assumption long-time}
		The following conditions hold:
		\begin{enumerate}
			\item[(a)] There exist separable Hilbert spaces $H_b, H_{\sigma}, V$ satisfying \eqref{eq: inclusion} such that $\xi_b \in L(H_b, \mathcal{H})$ and $\xi_{\sigma} \in L_q(H_{\sigma}, \mathcal{H})$, where $\frac{1}{q} + \frac{1}{q'} = \frac{1}{2}$. Moreover, there exist Lipschitz constants $C_{b,\mathrm{lip}}, C_{\sigma, \mathrm{lip}} \geq 0$ such that for all $u,v \in H$
            \begin{align*}
                \| b(u) - b(v)\|_{H_b} &\leq C_{b,\mathrm{lip}}\|u - v\|_H,
                \\ \| \sigma(u) - \sigma(v)\|_{L_{q'}(U, H_{\sigma})} &\leq C_{\sigma, \mathrm{lip}}\|u-v\|_H.
            \end{align*}

            \item[(b)] There exists a projection operator $S_\infty \in L(\mathcal{V})$ with $S(t) \longrightarrow S_\infty$ strongly as $t \to \infty$. This projection operator satisfies for each $t > 0$
            \[
                S_\infty S(t)\xi_b = 0 \ \text{ and } \ S_\infty S(t) \xi_{\sigma} = 0.
            \]
            Finally, the semigroup satisfies the integrability condition
            \begin{align}\label{eq:assumption long-timme - integrability}
                \int_0^{\infty} \left( \|S(t)\xi_b \|_{L(H_b, \mathcal{V})} + \|S(t)\xi_{\sigma}\|_{L_q(H_{\sigma}, \mathcal{V})}^2 \right)\, \mathrm{d}t < \infty.
            \end{align}

			\item[(c)] There exists a separable Hilbert space $\mathcal{V}_0$ such that $\mathcal{V} \hookrightarrow \mathcal{V}_0$ dense and it holds that $\mathcal{V} = \{ y \in \mathcal{V}_0 \ : \ \|y \|_{\mathcal{V}} < \infty \}$. Furthermore, there are constants $\lambda, C(\mathcal{V}, \mathcal{V}_0) > 0$ such that
			\begin{equation}\label{eq: rate S to P}
			\| S(t) - S_\infty \|_{L(\mathcal{V}, \mathcal{V}_0)} \leq C(\mathcal{V}, \mathcal{V}_0) (1 \lor t)^{-\lambda}, \qquad t > 0.
			\end{equation}
            The operator $\Xi\colon \mathcal{V} \longrightarrow V$ admits a unique continuous extension $\Xi \in L(\mathcal{V}_0, V)$.
		\end{enumerate}
	\end{assumptionp}

    Condition (a) is a slight modification of Assumption \ref{assumption Lipschitz} for time-homogeneous coefficients under the structural condition \eqref{eq: timehomogeneous}. Condition (b) replaces the dissipativity conditions from \cite[Section 6.3]{MR1417491} and \cite{MR4241464} in terms of the integrability condition \eqref{eq:assumption long-timme - integrability}. For Markov models it was shown in \cite{MR4241464} that multiple limit distributions appear whenever the semigroup $(S(t))_{t \geq 0}$ is not exponentially stable on the full space, but is instead \textit{exponentially ergodic} with limit $S_\infty \coloneqq \lim_{t \to \infty}S(t)$. Thus, in this section, we extend \cite{MR4241464} towards polynomial rates of convergence. Markovian lifts of stochastic Volterra processes constitute an interesting class of Markov processes with multiple limit distributions characterised by the range of $S_\infty$ that falls into this class of processes.

    Let us remark that the convergence rate for $S(t)y \longrightarrow S_\infty y$ depends on the choice of $y \in \mathcal{V}$. Thus, to obtain a rate of convergence uniformly in all $y \in \mathcal{V}$, in condition (c) we introduce the larger space $\mathcal{V} \hookrightarrow \mathcal{V}_0$ with a \textit{polynomial rate of convergence} determined by \eqref{eq: rate S to P} which is a characteristic feature for many stochastic Volterra equations.

    Finally, let $C_{b,\mathrm{lin}},C_{\sigma,\mathrm{lin}}>0$ be the linear growth constants given by
	\[
	C_{b,\mathrm{lin}} \coloneqq \sup_{u\in H}\frac{\norm{b(u)}_{H_b}}{1+\norm{u}_H} \ \text{ and } \ C_{\sigma,\mathrm{lin}}\coloneqq \sup_{u\in H}\frac{\norm{\sigma(u)}_{L_{q'}(U, H_{\sigma})}}{1+\norm{u}_H}.
	\]
    Note that these constants are finite due to Assumption \ref{assumption long-time}.(a). Moreover, by Assumption \ref{assumption long-time}.(b), $S(t) \longrightarrow S_{\infty}$ strongly on $\mathcal{V}$, and hence the uniform boundedness principle gives $\sup_{t \geq 0}\|S(t)\|_{L(\mathcal{V})} < \infty$. Finally, under Assumptions \ref{assumption SEE} and \ref{assumption long-time}, there exists a unique time-homogeneous Markov process $X(t;\xi) \coloneqq X(t;0,\xi)$ obtained from \eqref{eq: abstract Markov lift time inhomogeneous} with $s = 0$, see Corollary \ref{cor: time homogeneous Markov}.

	\subsection{Uniform contraction estimates}

	In this section, we prove uniform bounds on the $L^p$-norm of the unique solution of \eqref{eq: abstract mild formulation Markovian lift} and subsequently derive a contraction estimate in the spirit of \eqref{eq:H-SVE lift initial dependence} but with a constant that decays polynomially in time. Below, we start with the general case, and later on, show how the result can be strengthened for the case of additive noise or when the drift $b$ vanishes.

	For this purpose, let us define a constant $c_p$ by $c_2 = 1$, and
	\begin{equation}\label{eq: constant BDG}
	c_p=\left(\frac{p(p-1)}{2}\right)^p\left(\frac{p}{p-1}\right)^{\frac{p^2}{2}}, \qquad p \in (2,\infty).
	\end{equation}
	Note that this constant appears in the BDG-inequality for the stochastic integral against the cylindrical Wiener process $W$, see e.g.~\cite[Section 4.6]{DaPrato_Zabczyk_2014}. Let us define
	\begin{multline*}
		\rho_{\mathrm{gen}}(t) = 3^{p-1}\norm{\Xi}_{L(\mathcal{V}_0,V)}^p \bigg(C_{b,\text{lip}}^p\norm{S(\cdot)\xi_b}_{L^1(\R_+;L(H_b,\mathcal{V}_0))}^{p-1}\norm{S(t)\xi_b}_{L(H_b,\mathcal{V}_0)}\\
		+ c_pC_{\sigma,\text{lip}}^p \norm{S(\cdot)\xi_\sigma}_{L^2(\R_+;L_q(H_{\sigma},\mathcal{V}_0))}^{p-2}\norm{S(t)\xi_\sigma}_{L_q(H_{\sigma},\mathcal{V}_0)}^2 \bigg).
	\end{multline*}
	Since the inclusion $\iota\colon \mathcal{V} \hookrightarrow \mathcal{V}_0$ is bounded, we get $\|S(t)\xi_b \|_{L(H_b, \mathcal{V}_0)} \leq \| \iota \|_{L(\mathcal{V}, \mathcal{V}_0)}  \|S(t)\xi_b \|_{L(H_b, \mathcal{V})}$ and $\|S(t)\xi_{\sigma}\|_{L_q(H_{\sigma}, \mathcal{V}_0)} \leq \| \iota \|_{L(\mathcal{V}, \mathcal{V}_0)}  \|S(t)\xi_{\sigma}\|_{L_q(H_{\sigma}, \mathcal{V})}$, which implies $\| S(\cdot)\xi_b\|_{L(H_b, \mathcal{V}_0)} \in L^1(\R_+)$ and $\| S(\cdot)\xi_{\sigma}\|_{L_q(H_{\sigma}, \mathcal{V}_0)} \in L^2(\R_+)$ by Assumption \ref{assumption long-time}. In particular, $\rho_{\mathrm{gen}} \in L^1(\R_+)$ is well-defined. Denote by $r_{\mathrm{gen}} \in L_{\loc}^1(\R_+)$ the unique solution of the Volterra convolution equation
	\begin{equation}\label{eq: r general}
		r_{\mathrm{gen}}(t) = \rho_{\mathrm{gen}}(t) + \int_0^t r_{\mathrm{gen}}(t-s)\rho_{\mathrm{gen}}(s)\, \mathrm{d}s.
	\end{equation}
    Remark that $\rho_{\mathrm{gen}} = \rho_{\mathrm{gen}}^{(p)}, r_{\mathrm{gen}} = r_{\mathrm{gen}}^{(p)}$ implicitly depend on $p > 2$. Let us define for $p > 2$
	\[
	\mathcal{R}^{p}_{\mathrm{gen}}(t) = (1 \vee t)^{- \lambda p} + \int_0^t r_{\mathrm{gen}}^{(p)}(t-s)(1 \vee s)^{- \lambda p} \, \mathrm{d}s.
	\]
    Then we obtain the following sufficient conditions for uniform boundedness of moments and global contraction estimates.

	\begin{lemma}\label{lemma: general case}
		Suppose that Assumptions \ref{assumption SEE} and \ref{assumption long-time} hold. Let $p\in(2,\infty)$ satisfy \eqref{eq: continuity}. If
		\begin{align}\label{eq:bounded_moment_condi_multi}
				6^{p-1}\norm{\Xi}_{L(\mathcal{V},V)}^p \left( C_{b,\mathrm{lin}}^p \| S(\cdot)\xi_b\|_{L^1(\R_+;L(H_b,\mathcal{V}))}^p + c_p C_{\sigma,\mathrm{lin}}^p \| S(\cdot)\xi_{\sigma}\|_{L^2(\R_+;L_q(H_\sigma,\mathcal{V}))}^{p} \right) < 1,
		\end{align}
		then for each $\xi\in L^p(\Omega, \F_0, \P;\mathcal{V})$
		\begin{align}\label{eq: uniform norm bound}
				\sup_{t\geq 0}\E\left[\norm{X(t; \xi)}_{\mathcal{V}}^p\right]\lesssim 1 + \E\left[\norm{\xi}_{\mathcal{V}}^p\right].
		\end{align}
		Likewise, if
		\begin{align}\label{eq:contraction_estimate_condi_multi}
				3^{p-1}\norm{\Xi}_{L(\mathcal{V}_0,V)}^p \left( C_{b,\mathrm{lip}}^p \| S(\cdot)\xi_b\|_{L^1(\R_+;L(H_b,\mathcal{V}))}^p + c_pC_{\sigma,\mathrm{lip}}^p \| S(\cdot)\xi_{\sigma}\|_{L^2(\R_+;L_q(H_\sigma,\mathcal{V}))}^{p} \right)<1,
		\end{align}
		then for all $\xi,\widetilde{\xi}\in L^p(\Omega, \F_0, \P;\mathcal{V})$ one has
		\begin{equation}\label{eq: uniform contraction estimate}
				\norm{X(t; \xi)-X(t; \widetilde{\xi})}_{L^p(\Omega;\mathcal{V}_0)}^p \lesssim \norm{\xi-\widetilde{\xi}}_{L^p(\Omega;\mathcal{V})}^p \mathcal{R}_{\mathrm{gen}}^{p}(t) + \|S_\infty\xi - S_\infty\widetilde{\xi}\|_{L^p(\Omega;\mathcal{V}_0)}^p.
		\end{equation}
	\end{lemma}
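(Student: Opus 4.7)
The plan is to derive two Volterra-type integral inequalities---one for $m(t) \coloneqq \E[\|X(t;\xi)\|_{\mathcal{V}}^p]$ governing \eqref{eq: uniform norm bound}, and one for $g(t) \coloneqq \E[\|X(t;\xi)-X(t;\widetilde\xi)\|_{\mathcal{V}_0}^p]$ governing \eqref{eq: uniform contraction estimate}---directly from the mild formulation \eqref{eq: abstract mild formulation Markovian lift}, and then close them using the smallness assumptions together with the resolvent equation \eqref{eq: r general}. Applying $(a+b+c)^p \leq 3^{p-1}(a^p+b^p+c^p)$ to the three terms in \eqref{eq: abstract mild formulation Markovian lift}, I would estimate the drift term by H\"older's inequality against the normalised measure $\phi_b(t-s)\,\mathrm{d}s/\|\phi_b\|_{L^1}$ with $\phi_b(t) \coloneqq \|S(t)\xi_b\|_{L(H_b,\mathcal{V})}$, and the stochastic convolution by the Burkholder--Davis--Gundy inequality (yielding the constant $c_p$ from \eqref{eq: constant BDG}) combined with the Schatten H\"older bound $\|S(t-s)\xi_\sigma \sigma(u)\|_{L_2(U,\mathcal{V})} \leq \|S(t-s)\xi_\sigma\|_{L_q(H_\sigma,\mathcal{V})}\|\sigma(u)\|_{L_{q'}(U,H_\sigma)}$ from Assumption \ref{assumption long-time}.(a), followed by H\"older against the weight $\phi_\sigma(t-s)^2\,\mathrm{d}s/\|\phi_\sigma\|_{L^2}^2$ with $\phi_\sigma(t) \coloneqq \|S(t)\xi_\sigma\|_{L_q(H_\sigma,\mathcal{V})}$. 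This produces integrands proportional to $\|b(\Xi X(s))\|_{H_b}^p$ and $\|\sigma(\Xi X(s))\|_{L_{q'}(U,H_\sigma)}^p$ weighted by $\phi_b(t-s)$ and $\phi_\sigma(t-s)^2$ respectively, while the leftover $(p-1)$-th and $(p-2)$-th powers of $\|\phi_b\|_{L^1}$ and $\|\phi_\sigma\|_{L^2}$ appear as multiplicative constants.

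For the moment bound, I would insert the linear growth estimates $\|b(u)\|_{H_b} \leq C_{b,\mathrm{lin}}(1+\|u\|_H)$, $\|\sigma(u)\|_{L_{q'}(U,H_\sigma)} \leq C_{\sigma,\mathrm{lin}}(1+\|u\|_H)$, together with $\|\Xi X(s)\|_H \lesssim \|\Xi\|_{L(\mathcal{V},V)}\|X(s)\|_{\mathcal{V}}$ (using the continuous embedding $V\hookrightarrow H$), and apply $(1+x)^p \leq 2^{p-1}(1+x^p)$; the split $3^{p-1}\cdot 2^{p-1} = 6^{p-1}$ produces exactly the constant in \eqref{eq:bounded_moment_condi_multi}. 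Setting $M(T) = \sup_{t\in[0,T]}m(t)$, which is finite by Theorem \ref{label:existence_uniqueness_VSPDE}, and bounding $\int_0^t \phi_b(t-s)\,\mathrm{d}s \leq \|\phi_b\|_{L^1(\R_+)}$ and likewise for $\phi_\sigma^2$, one obtains
\[
M(T) \leq C_0\bigl(1+\E[\|\xi\|_{\mathcal{V}}^p]\bigr) + \kappa\bigl(1+M(T)\bigr),
\]
with $\kappa$ equal to the left-hand side of \eqref{eq:bounded_moment_condi_multi}. Since $\kappa<1$, rearrangement gives a uniform-in-$T$ bound on $M(T)$, yielding \eqref{eq: uniform norm bound}.

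For the contraction estimate, the initial-condition contribution is handled by decomposing $S(t)(\xi-\widetilde\xi) = S_\infty(\xi-\widetilde\xi) + (S(t)-S_\infty)(\xi-\widetilde\xi)$ and invoking \eqref{eq: rate S to P}, which gives $\|S(t)(\xi-\widetilde\xi)\|_{\mathcal{V}_0}^p \lesssim \|S_\infty(\xi-\widetilde\xi)\|_{\mathcal{V}_0}^p + (1\vee t)^{-\lambda p}\|\xi-\widetilde\xi\|_{\mathcal{V}}^p$. Running the same Jensen/BDG/Schatten scheme for the drift and diffusion differences measured in $\mathcal{V}_0$, but using the Lipschitz constants $C_{b,\mathrm{lip}}, C_{\sigma,\mathrm{lip}}$ and $\|\Xi\|_{L(\mathcal{V}_0,V)}$ in place of the linear-growth data, yields exactly the kernel $\rho_{\mathrm{gen}}$ defined before the lemma. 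Hence $g$ satisfies the Volterra inequality
\[
g(t) \leq A(t) + \int_0^t \rho_{\mathrm{gen}}(t-s)\,g(s)\,\mathrm{d}s, \qquad A(t) = C_1\|\xi-\widetilde\xi\|_{L^p(\Omega;\mathcal{V})}^p(1\vee t)^{-\lambda p} + C_2\|S_\infty(\xi-\widetilde\xi)\|_{L^p(\Omega;\mathcal{V}_0)}^p.
\]
Condition \eqref{eq:contraction_estimate_condi_multi} asserts $\|\rho_{\mathrm{gen}}\|_{L^1(\R_+)}<1$, so the resolvent $r_{\mathrm{gen}}$ from \eqref{eq: r general} belongs to $L^1(\R_+)$; the standard Volterra comparison $g \leq A + r_{\mathrm{gen}}*A$ then produces $\mathcal{R}_{\mathrm{gen}}^p(t)$ multiplied by the $\mathcal{V}$-norm term (since convolving $r_{\mathrm{gen}}$ with $(1\vee\cdot)^{-\lambda p}$ gives precisely $\mathcal{R}_{\mathrm{gen}}^p - (1\vee\cdot)^{-\lambda p}$) plus a bounded multiple of the $S_\infty$-term, which is \eqref{eq: uniform contraction estimate}.

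The principal technical nuisance is calibrating the Jensen/H\"older exponents so that the leading constants $\|\phi_b\|_{L^1}^{p-1}$ and $\|\phi_\sigma\|_{L^2}^{p-2}$ combine with one further factor of $\|\phi_b\|_{L^1}$ (respectively $\|\phi_\sigma\|_{L^2}^{2}$) coming from integrating out the kernel to reproduce exactly the powers $\|\phi_b\|_{L^1}^p$ and $\|\phi_\sigma\|_{L^2}^p$ appearing in \eqref{eq:bounded_moment_condi_multi} and \eqref{eq:contraction_estimate_condi_multi}; a secondary point is that the a priori finiteness of $M(T)$ via Theorem \ref{label:existence_uniqueness_VSPDE} is essential to justify the absorption step in the moment estimate, and that $\rho_{\mathrm{gen}} \in L^1(\R_+)$ must be checked from the integrability condition \eqref{eq:assumption long-timme - integrability} in Assumption \ref{assumption long-time}.(b) in order for $r_{\mathrm{gen}}$ to be globally integrable.
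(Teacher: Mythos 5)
Your proposal is correct, and the core of it -- the $3^{p-1}$ splitting of the mild formulation, Jensen against the normalised measures built from $\|S(\cdot)\xi_b\|_{L(H_b,\cdot)}$ and $\|S(\cdot)\xi_\sigma\|_{L_q(H_\sigma,\cdot)}^2$, BDG with the constant $c_p$, the Schatten--H\"older step $\|S\xi_\sigma\sigma(u)\|_{L_2(U,\mathcal{V})}\leq\|S\xi_\sigma\|_{L_q(H_\sigma,\mathcal{V})}\|\sigma(u)\|_{L_{q'}(U,H_\sigma)}$, and for the contraction the splitting $S(t)(\xi-\widetilde\xi)=(S(t)-S_\infty)(\xi-\widetilde\xi)+S_\infty(\xi-\widetilde\xi)$ combined with \eqref{eq: rate S to P} and the resolvent comparison $g\leq A+r_{\mathrm{gen}}\ast A$ -- is exactly the paper's argument, including the bookkeeping that turns $\|S(\cdot)\xi_b\|_{L^1}^{p-1}$ and $\|S(\cdot)\xi_\sigma\|_{L^2}^{p-2}$ into the $p$-th powers in \eqref{eq:bounded_moment_condi_multi} and \eqref{eq:contraction_estimate_condi_multi}. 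The one place you deviate is in closing the moment bound: the paper keeps the full Volterra inequality $m(t)\leq B+A+(\rho_{\mathrm{lin}}\ast m)(t)$ and invokes Paley--Wiener to get $r_{\mathrm{lin}}\in L^1(\R_+)$ together with the Volterra Gronwall lemma, whereas you take $M(T)=\sup_{t\leq T}m(t)$, bound the convolution by $\|\rho_{\mathrm{lin}}\|_{L^1(\R_+)}M(T)$, and absorb using $\|\rho_{\mathrm{lin}}\|_{L^1(\R_+)}<1$; that is a legitimate and slightly more elementary alternative, and you correctly identify its one genuine prerequisite, namely the a priori finiteness of $M(T)$ supplied by Theorem \ref{label:existence_uniqueness_VSPDE} (without which the absorption step would be vacuous). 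The buy-off is symmetric: your route avoids the Paley--Wiener theorem for the moment bound but cannot replace the resolvent argument in the contraction estimate, where the time-decaying forcing $(1\vee t)^{-\lambda p}$ must be convolved with $r_{\mathrm{gen}}$ to produce $\mathcal{R}_{\mathrm{gen}}^{p}(t)$ -- which you indeed handle exactly as the paper does.
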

	\begin{proof}
		Let us first prove \eqref{eq: uniform norm bound} under assumption \eqref{eq:bounded_moment_condi_multi}. Using \eqref{eq: abstract Markov lift time inhomogeneous} we obtain for $t \geq 0$
		\begin{align*}
			\norm{X(t; \xi)}_{L^p(\Omega;\mathcal{V})}^p
			&\leq 3^{p-1}\norm{S(t)\xi}_{L^p(\Omega;\mathcal{V})}
			+ 3^{p-1}\E\left[\left(\int_0^t \norm*{S(t-s)\xi_b\, b(\Xi X(s; \xi))}_{\mathcal{V}}\d s\right)^p\right]
			\\ &\quad+ 3^{p-1}\E\left[ \left\| \int_0^t S(t- s)\xi_\sigma\, \sigma(\Xi X(s; \xi))\,\d W_s \right\|_{\mathcal{V}}^p\right].
		\end{align*}
		For the first term we obtain $\|S(t)\xi\|_{L^p(\Omega; \mathcal{V})} \leq (\sup_{t \geq 0}\|S(t)\|_{L(\mathcal{V})})\|\xi\|_{L^p(\Omega; \mathcal{V})}$. For the second term, we use Jensen's inequality twice to find that
		\begin{align*}
			\E&\left[\left(\int_0^t \norm*{S(t-s)\xi_b\, b(\Xi X(s; \xi))}_{\mathcal{V}}\d s\right)^p\right]
			\\ &\leq C_{b,\text{lin}}^p\E\left[\left(\int_0^t \norm{S(t - s)\xi_b}_{L(H_b,\mathcal{V})}\left(1+\norm{\Xi}_{L(\mathcal{V},V)}\norm{X(s; \xi)}_{\mathcal{V}}\right)\d s\right) ^p\right]
			\\ &\leq C_{b,\text{lin}}^p \left(\int_0^t \norm{S(t-s)\xi_b}_{L(H_b,\mathcal{V})}\,\d s\right) ^{p-1}\int_0^t \norm{S(t-s)\xi_b}_{L(H_b,\mathcal{V})}
			\\ &\hskip75mm \cdot \E\left[\left(1+\norm{\Xi}_{L(\mathcal{V},V)}\norm{X(s; \xi)}_{\mathcal{V}}\right)^p\right]\,\d s
			\\ &\leq 2^{p-1}C_{b,\text{lin}}^p \left(\int_0^\infty \norm{S(\tau)\xi_b}_{L(H_b,\mathcal{V})}\,\d \tau \right)^p
			\\ &\qquad+ 2^{p-1} C_{b,\text{lin}}^p\left( \int_0^\infty \norm{S(\tau)\xi_b}_{L(H_b,\mathcal{V})}\,\d \tau \right)^{p-1}
			\\ &\hskip45mm \cdot\norm{\Xi}_{L(\mathcal{V},V)}^p\int_0^t \norm{S(t-s)\xi_b}_{L(H_b,\mathcal{V})}\norm{X(s; \xi)}_{L^p(\Omega;\mathcal{V})}^p\,\d s.
		\end{align*}
		For the third term, we use the BDG inequality and Jensen's inequality to find that
		\begin{align*}
			\E&\left[ \left\| \int_0^t S(t-s)\xi_\sigma\, \sigma(\Xi X(s; \xi))\,\d W_s \right\|_{\mathcal{V}}^p\right]
			\\ &\leq c_p C_{\sigma,\text{lin}}^p  \E\left[\left(\int_0^t \norm{S(t-s)\xi_\sigma}_{L_q(H_\sigma,\mathcal{V})}^2\left(1+\norm{\Xi}_{L(\mathcal{V},V)}\norm{X(s; \xi)}_{\mathcal{V}}\right)^2\d s\right)^{p/2}\right]
			\\ &\leq c_pC_{\sigma,\text{lin}}^p  \left(\int_0^t \norm{S(t-s)\xi_\sigma}_{L_q(H_\sigma,\mathcal{V})}^2\,\d s  \right)^{p/2-1}
			\\ &\qquad\cdot\int_0^t \norm{S(t-s)\xi_\sigma}_{L_q(H_\sigma,\mathcal{V})}^2 \E\left[\left(1+\norm{\Xi}_{L(\mathcal{V},V)}\norm{X(s; \xi)}_{\mathcal{V}}\right)^p\right]\d s
			\\ &\leq 2^{p-1}c_pC_{\sigma,\text{lin}}^p \left(\int_0^\infty \norm{S(\tau)\xi_\sigma}_{L_q(H_\sigma,\mathcal{V})}^2\,\d \tau  \right)^{p/2}
			\\ &\qquad+ 2^{p-1}c_pC_{\sigma,\text{lin}}^p  \left(\int_0^\infty \norm{S(\tau)\xi_\sigma}_{L_q(H_\sigma,\mathcal{V})}^2\,\d \tau  \right)^{p/2-1}
			\\ &\hskip45mm \cdot\norm{\Xi}_{L(\mathcal{V},V)}^p \int_0^t \norm{S(t-s)\xi_\sigma}_{L_q(H_\sigma,\mathcal{V})}^2\norm{X(s; \xi)}_{L^p(\Omega;\mathcal{V})}^p\,\d s.
		\end{align*}
		Hence, we arrive at the inequality
		\begin{align*}
			\norm{X(t; \xi)}_{L^p(\Omega;\mathcal{V})}^p
			\leq 3^{p-1}\sup_{t \geq 0}\|S(t)\|_{L(\mathcal{V})}\norm{\xi}_{L^p(\Omega;\mathcal{V})}^p + A + \int_0^t \rho_{\text{lin}}(t-s)\norm{X(s; \xi)}_{L^p(\Omega;\mathcal{V})}^p\,\d s
		\end{align*}
		where we have set
		\begin{align*}
			A &= 6^{p-1}C_{b,\text{lin}}^p \norm{S(\cdot)\xi_b}_{L^1(\R_+;L(H_b,\mathcal{V}))}^p + 6^{p-1}c_pC_{\sigma,\text{lin}}^p  \norm{S(\cdot)\xi_\sigma}_{L^2(\R_+;L_q(H_\sigma,\mathcal{V}))}^p,
			\\ \rho_{\text{lin}}(t) &= 6^{p-1}C_{b,\text{lin}}^p \norm{S(\cdot)\xi_b}_{L^1(\R_+;L(H_b,\mathcal{V}))}^{p-1}\norm{\Xi}_{L(\mathcal{V},V)}^p \norm{S(t)\xi_b}_{L(H_b,\mathcal{V})}
			\\ &\quad+ 6^{p-1}c_p C_{\sigma,\text{lin}}^p \norm{S(\cdot)\xi_\sigma}_{L^2(\R_+;L_q(H_\sigma,\mathcal{V}))}^{p-2}\norm{\Xi}_{L(\mathcal{V},V)}^p \norm{S(t)\xi_\sigma}_{L_q(H_\sigma,\mathcal{V})}^2.
		\end{align*}
		Note that $A$ is finite due to Assumption \ref{assumption long-time}. Let $r_{\mathrm{lin}} \in L_{\loc}^1(\R_+)$ be the unique nonnegative solution of $r_{\mathrm{lin}} = \rho_{\text{lin}} + \rho_{\text{lin}} \ast r_{\mathrm{lin}}$. Since $\int_{0}^{\infty}\rho_{\text{lin}}(t)\, \mathrm{d}t < 1$ by assumption \eqref{eq:bounded_moment_condi_multi}, the Paley-Wiener theorem implies that $r_{\mathrm{lin}} \in L^1(\R_+)$. An application of the Volterra Gronwall inequality (see e.g.~\cite[Lemma A.1]{BBF23}) yields
		\[
		\norm{X(t; \xi)}_{L^p(\Omega;\mathcal{V})}^p \leq \left(3^{p-1}\sup_{t \geq 0}\|S(t)\|_{L(\mathcal{V})}\norm{\xi}_{L^p(\Omega;\mathcal{V})}^p + A \right)\left(1 +  \int_0^{\infty}r_{\mathrm{lin}}(\tau)\, \mathrm{d}\tau\right) < \infty
		\]
        for $t \geq 0$. This proves the desired uniform moment bound.

		Next, we prove \eqref{eq: uniform contraction estimate} under the assumption \eqref{eq:contraction_estimate_condi_multi}. Here, using the BDG-inequality and the Lipschitz continuity of $b,\sigma$, we find
		\begin{align*}
			&\norm{X(t; \xi)-X(t; \widetilde{\xi})}_{L^p(\Omega;\mathcal{V}_0)}^p
			\\ &\quad\leq 3^{p-1}\norm{S(t)(\xi-\widetilde{\xi})}_{L^p(\Omega;\mathcal{V}_0)}^p
			\\ &\qquad + 3^{p-1}C^p_{b,\text{lip}}\norm{\Xi}^p_{L(\mathcal{V}_0,V)}\E\left[\left(\int_0^t \norm{S(t-s)\xi_b}_{L(H_b,\mathcal{V}_0)}\norm{X(s; \xi)-X(s; \widetilde{\xi})}_{\mathcal{V}_0}\,\d s\right)^p\right]
			\\ &\qquad +3^{p-1}c_pC^p_{\sigma,\text{lip}}\norm{\Xi}^p_{L(\mathcal{V}_0,V)} \E\left[\left(\int_0^t \norm{S(t-s)\xi_\sigma}^2_{L_q(H_\sigma,\mathcal{V}_0)}\norm{X(s; \xi) - X(s; \widetilde{\xi})}_{\mathcal{V}_0}^2\,\d s\right)^{p/2}\right].
		\end{align*}
		Using Jensen's inequality and performing a similar substitution to the one above, we obtain
		\begin{align*}
			&\ \norm{X(t; \xi) - X(t; \widetilde{\xi})}_{L^p(\Omega;\mathcal{V}_0)}^p
			\\ &\qquad \leq 3^{p-1}\norm{S(t)(\xi-\widetilde{\xi})}_{L^p(\Omega;\mathcal{V}_0)}^p
			+ \int_0^t \rho_{\mathrm{gen}}(t - s)\norm{X(s; \xi) - X(s; \widetilde{\xi})}_{L^p(\Omega;\mathcal{V}_0)}^p\,\d s.
		\end{align*}
		Moreover, since $\norm{\rho_{\mathrm{gen}}}_{L^1(\R_+)}<1$ by assumption \eqref{eq:contraction_estimate_condi_multi}, we obtain $r_{\mathrm{gen}} \in L^1(\R_+)$. An application of the Volterra version of the Gronwall lemma yields
		\begin{align*}
			&\norm{X(t; \xi)-X(t; \widetilde{\xi})}_{L^p(\Omega;\mathcal{V}_0)}^p
            \\ &\quad\leq 3^{p-1}\norm{S(t)(\xi-\widetilde{\xi})}_{L^p(\Omega;\mathcal{V}_0)}^p
            + 3^{p-1}\int_0^t r_{\mathrm{gen}}(t - \tau)\norm{S(\tau)(\xi-\widetilde{\xi})}_{L^p(\Omega;\mathcal{V}_0)}^p\,\d \tau
			\\ &\quad\leq 6^{p-1} C(\mathcal{V}, \mathcal{V}_0) \norm{\xi-\widetilde{\xi}}_{L^p(\Omega;\mathcal{V})}^p \left( (1\lor t)^{-\lambda p} + \int_0^t r_{\mathrm{gen}}(t-\tau)(1\lor \tau)^{-\lambda p} \,\d \tau \right)
			\\ &\qquad + 6^{p-1}\| S_\infty\xi - S_\infty\widetilde{\xi}\|^p_{\mathcal{V}_0}\left( 1 + \int_0^{\infty} r_{\mathrm{gen}}(\tau)\, \mathrm{d}\tau \right)
		\end{align*}
		where we have used
		\begin{align*}
			\| S(\tau)(\xi - \widetilde{\xi})\|_{\mathcal{V}_0}
			&\leq \| (S(\tau) - S_\infty)(\xi - \widetilde{\xi})\|_{\mathcal{V}_0} + \| S_\infty\xi - S_\infty\widetilde{\xi}\|_{\mathcal{V}_0}
			\\ &\lesssim (1\lor \tau)^{-\lambda} \| \xi - \widetilde{\xi}\|_{\mathcal{V}} + \| S_\infty\xi - S_\infty\widetilde{\xi}\|_{\mathcal{V}_0}.
		\end{align*}
		This proves the assertion.
	\end{proof}
	Remark that, in contrast to dissipative systems with a unique invariant measure, here an additive term $\|S_\infty\xi - S_\infty\widetilde{\xi}\|_{\mathcal{V}_0}^p$ is present, which characterises the occurrence of multiple limit distributions.

    For the case $b \equiv 0$, all estimates can be improved since then we may choose $\xi_b = 0$ and hence \eqref{eq:assumption long-timme - integrability} reduces to an integral solely against $S(t)\xi_{\sigma}$. For the precise statement, let us define
    \begin{align}\label{eq: rho b zero}
		          \rho_{\mathrm{b=0}}(t) = 2^{p-1}\norm{\Xi}_{L(\mathcal{V}_0, V)}^p c_pC_{\sigma,\text{lip}}^p \norm{S(\cdot)\xi_\sigma}_{L^2(\R_+;L_q(H_{\sigma},\mathcal{V}_0))}^{p-2}\norm{S(t)\xi_\sigma}_{L_q(H_{\sigma},\mathcal{V}_0)}^2,
	\end{align}
    and let $r_{\mathrm{b=0}} = r_{\mathrm{b=0}}^{(p)}$ be the unique solution of \eqref{eq: r general} with $\rho_{\mathrm{gen}} = \rho_{\mathrm{gen}}^{(p)}$ replaced by $\rho_{\mathrm{b=0}} = \rho_{\mathrm{b=0}}^{(p)}$, i.e.~$r_{\mathrm{b=0}}(t) = \rho_{\mathrm{b=0}}(t) + \int_0^t r_{\mathrm{b=0}}(t-s)\rho_{\mathrm{b=0}}(s)\, \mathrm{d}s$. Finally, define
    \[
        \mathcal{R}_{\mathrm{b=0}}^{p}(t) = (1\vee t)^{- \lambda p} + \int_0^t r_{\mathrm{b=0}}^{(p)}(t-s)(1\vee s)^{- \lambda p}\, \mathrm{d}s.
    \]
    Then we obtain the following uniform contraction estimate.
    \begin{lemma}\label{lemma: general case b zero}
		Suppose that Assumptions \ref{assumption SEE} and \ref{assumption long-time} hold with $b = 0$ and $\xi_b = 0$. Let $p\in(2,\infty)$ satisfy \eqref{eq: continuity}. If
		\begin{align}\label{eq:bounded_moment_condi_multi b zero}
				4^{1 - \frac{1}{p}} C_{\sigma, \mathrm{lin}} \norm{\Xi}_{L(\mathcal{V},V)} \left( \int_0^{\infty} \| S(t)\xi_\sigma \|_{L_q(H_\sigma,\mathcal{V})}^2\, \mathrm{d}t \right)^{\frac{1}{2}} c_p^{\frac{1}{p}} < 1,
		\end{align}
		then \eqref{eq: uniform norm bound} holds for each $\xi\in L^p(\Omega, \F_0, \P;\mathcal{V})$. Likewise, if
		\begin{align}\label{eq:contraction_estimate_condi_multi b zero}
				2^{1-\frac{1}{p}}C_{\sigma,\mathrm{lip}} \norm{\Xi}_{L(\mathcal{V}_0,V)} \left( \int_0^{\infty} \| S(t)\xi_\sigma\|_{L_q(H_{\sigma},\mathcal{V}_0))}^{2}\, \mathrm{d}t \right)^{\frac{1}{2}} c_p^{\frac{1}{p}} < 1,
		\end{align}
		 then \eqref{eq: uniform contraction estimate} holds for all $\xi,\widetilde{\xi}\in L^p(\Omega, \F_0, \P;\mathcal{V})$ with $\mathcal{R}_{\mathrm{gen}}^{p}$ is replaced by $\mathcal{R}_{\mathrm{b=0}}^{p}$.
	\end{lemma}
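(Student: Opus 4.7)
The plan is to adapt the proof of Lemma~\ref{lemma: general case} by specialising to $b = 0$ and $\xi_b = 0$, which removes the drift contribution from the mild formulation \eqref{eq: abstract Markov lift time inhomogeneous} and reduces the three-term splitting to a two-term one. This replaces the combinatorial factor $3^{p-1}$ by $2^{p-1}$ throughout, and, as the integrand against $S(t-s)\xi_b$ disappears entirely, one no longer needs to combine two Volterra kernels into the single auxiliary function $\rho_{\mathrm{gen}}$; only the stochastic-integral kernel $\rho_{\mathrm{b=0}}$ of \eqref{eq: rho b zero} remains.

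For the uniform moment bound I start from
\[
    X(t;\xi) = S(t)\xi + \int_0^t S(t-s)\xi_\sigma\, \sigma(\Xi X(s;\xi))\, \mathrm{d}W_s,
\]
apply the elementary inequality $(a+b)^p \leq 2^{p-1}(a^p + b^p)$, the BDG inequality with constant $c_p$, Jensen's inequality with respect to the finite measure $\|S(t-s)\xi_\sigma\|_{L_q(H_\sigma,\mathcal{V})}^2\, \mathrm{d}s$, and the linear growth of $\sigma$ in combination with $(1+\|\Xi\|_{L(\mathcal{V},V)}\|X(s;\xi)\|_{\mathcal{V}})^p \leq 2^{p-1}(1 + \|\Xi\|_{L(\mathcal{V},V)}^p \|X(s;\xi)\|_{\mathcal{V}}^p)$. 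Collecting terms, this gives a Volterra integral inequality of the form
\[
    \|X(t;\xi)\|_{L^p(\Omega;\mathcal{V})}^p \leq C(\xi) + A + \int_0^t \widetilde{\rho}_{\mathrm{b=0}}(t-s)\, \|X(s;\xi)\|_{L^p(\Omega;\mathcal{V})}^p\, \mathrm{d}s,
\]
with $\widetilde{\rho}_{\mathrm{b=0}}(t) = 4^{p-1}\|\Xi\|_{L(\mathcal{V},V)}^p c_p C_{\sigma,\mathrm{lin}}^p \|S(\cdot)\xi_\sigma\|_{L^2(\R_+;L_q(H_\sigma,\mathcal{V}))}^{p-2} \|S(t)\xi_\sigma\|_{L_q(H_\sigma,\mathcal{V})}^2$; the factor $4^{p-1} = 2^{p-1}\cdot 2^{p-1}$ arises from the combination of the two-term triangle inequality and the linearisation of $(1+x)^p$. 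Assumption \eqref{eq:bounded_moment_condi_multi b zero}, after taking $p$-th roots, is exactly $\|\widetilde{\rho}_{\mathrm{b=0}}\|_{L^1(\R_+)} < 1$, so the associated Volterra resolvent lies in $L^1(\R_+)$ by the Paley--Wiener theorem, and the Volterra--Gronwall inequality \cite[Lemma A.1]{BBF23} yields \eqref{eq: uniform norm bound}.

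For the contraction estimate I write $X(t;\xi) - X(t;\widetilde{\xi}) = S(t)(\xi-\widetilde{\xi}) + \int_0^t S(t-s)\xi_\sigma(\sigma(\Xi X(s;\xi)) - \sigma(\Xi X(s;\widetilde{\xi})))\, \mathrm{d}W_s$ and apply the same strategy with two crucial differences: the Lipschitz bound for $\sigma$ produces no additive $1$, so the second factor of $2^{p-1}$ does not appear, and the constants $C_{\sigma,\mathrm{lin}}$, $\|\Xi\|_{L(\mathcal{V},V)}$ are replaced by $C_{\sigma,\mathrm{lip}}$, $\|\Xi\|_{L(\mathcal{V}_0,V)}$ because the contraction is estimated in $\mathcal{V}_0$. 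This produces the Volterra integral inequality
\[
    \|X(t;\xi) - X(t;\widetilde{\xi})\|_{L^p(\Omega;\mathcal{V}_0)}^p \leq 2^{p-1}\|S(t)(\xi-\widetilde{\xi})\|_{L^p(\Omega;\mathcal{V}_0)}^p + \int_0^t \rho_{\mathrm{b=0}}(t-s)\, \|X(s;\xi) - X(s;\widetilde{\xi})\|_{L^p(\Omega;\mathcal{V}_0)}^p\, \mathrm{d}s
\]
with $\rho_{\mathrm{b=0}}$ exactly as in \eqref{eq: rho b zero}. Assumption \eqref{eq:contraction_estimate_condi_multi b zero} is equivalent to $\|\rho_{\mathrm{b=0}}\|_{L^1(\R_+)} < 1$, so $r_{\mathrm{b=0}} \in L^1(\R_+)$ and the Volterra--Gronwall inequality applies. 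Finally, splitting $S(\tau) = (S(\tau) - S_\infty) + S_\infty$ and invoking \eqref{eq: rate S to P}, exactly as in the closing step of Lemma \ref{lemma: general case}, converts the inhomogeneous term into the sum of a contribution bounded by $\|\xi-\widetilde{\xi}\|_{L^p(\Omega;\mathcal{V})}^p\mathcal{R}_{\mathrm{b=0}}^{p}(t)$ and a memory contribution bounded by $\|S_\infty\xi - S_\infty\widetilde{\xi}\|_{L^p(\Omega;\mathcal{V}_0)}^p$.

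No genuine obstacle is expected: the argument is a streamlined copy of Lemma \ref{lemma: general case}, and the only point requiring care is the bookkeeping of combinatorial constants. One should verify that $\|\widetilde{\rho}_{\mathrm{b=0}}\|_{L^1(\R_+)} = \bigl(4^{1-1/p}\|\Xi\|_{L(\mathcal{V},V)} C_{\sigma,\mathrm{lin}} c_p^{1/p}(\int_0^\infty \|S(t)\xi_\sigma\|_{L_q(H_\sigma,\mathcal{V})}^2\, \mathrm{d}t)^{1/2}\bigr)^p$ and $\|\rho_{\mathrm{b=0}}\|_{L^1(\R_+)} = \bigl(2^{1-1/p}\|\Xi\|_{L(\mathcal{V}_0,V)} C_{\sigma,\mathrm{lip}} c_p^{1/p}(\int_0^\infty \|S(t)\xi_\sigma\|_{L_q(H_\sigma,\mathcal{V}_0)}^2\, \mathrm{d}t)^{1/2}\bigr)^p$, so that the hypotheses \eqref{eq:bounded_moment_condi_multi b zero} and \eqref{eq:contraction_estimate_condi_multi b zero} are exactly the sharpened thresholds that make the Paley--Wiener argument work.
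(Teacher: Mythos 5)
Your proposal is correct and follows essentially the same route as the paper: the paper's own proof likewise specialises the argument of Lemma~\ref{lemma: general case} to the two-term decomposition without drift, obtaining the Volterra inequality with kernel $4^{p-1}c_pC_{\sigma,\mathrm{lin}}^p\norm{S(\cdot)\xi_\sigma}_{L^2(\R_+;L_q(H_\sigma,\mathcal{V}))}^{p-2}\norm{\Xi}_{L(\mathcal{V},V)}^p\norm{S(t)\xi_\sigma}_{L_q(H_\sigma,\mathcal{V})}^2$ for the moment bound and with $\rho_{\mathrm{b=0}}$ for the contraction, and then invoking Paley--Wiener, the Volterra--Gronwall lemma, and the splitting $S(\tau)=(S(\tau)-S_\infty)+S_\infty$ exactly as you describe. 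Your bookkeeping of the constants, showing that \eqref{eq:bounded_moment_condi_multi b zero} and \eqref{eq:contraction_estimate_condi_multi b zero} are precisely the conditions $\norm{\widetilde{\rho}_{\mathrm{b=0}}}_{L^1(\R_+)}<1$ and $\norm{\rho_{\mathrm{b=0}}}_{L^1(\R_+)}<1$, is accurate.
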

	\begin{proof}
		For the first assertion, we use again \eqref{eq: abstract mild formulation Markovian lift} and argue as in the proof of Lemma \ref{lemma: general case} to obtain for $t \geq 0$
		\begin{align*}
			\norm{X(t; \xi)}_{L^p(\Omega;\mathcal{V})}^p
			&\leq 2^{p-1}\norm{S(t)\xi}_{L^p(\Omega;\mathcal{V})} ^p
			 + 2^{p-1} \E\left[ \left\| \int_0^t S(t-s)\xi_\sigma\, \sigma(\Xi X(s; \xi))\,\d W_{s} \right\|_{\mathcal{V}}^p\right]
            \\ &\leq 2^{p-1}\sup_{t \geq 0}\|S(t)\|_{L(\mathcal{V})}\norm{\xi}_{L^p(\Omega;\mathcal{V})}^p + A + \int_0^t \rho_{\text{lin}}(t-s)\norm{X(s; \xi)}_{L^p(\Omega;\mathcal{V})}^p\,\d s
		\end{align*}
		where we have set $A = 4^{p-1}c_pC_{\sigma,\text{lin}}^p  \norm{S(\cdot)\xi_\sigma}_{L^2(\R_+;L_q(H_\sigma,\mathcal{V}))}^p$ and
		\begin{align*}
		      \rho_{\text{lin}}(t) = 4^{p-1}c_p C_{\sigma,\mathrm{lin}}^p \norm{S(\cdot)\xi_\sigma}_{L^2(\R_+;L_q(H_\sigma,\mathcal{V}))}^{p-2}\norm{\Xi}_{L(\mathcal{V}, V)}^p \norm{S(t)\xi_\sigma}_{L_q(H_\sigma,\mathcal{V})}^2.
		\end{align*}
		Note that $A$ is finite due to Assumption \ref{assumption long-time}. The assertion can now be deduced as in the proof of Lemma \ref{lemma: general case}. For the second assertion, we use the BDG-inequality and the Lipschitz continuity of $b,\sigma$ to find
		\begin{align*}
			&\norm{X(t; \xi) - X(t; \widetilde{\xi})}_{L^p(\Omega;\mathcal{V}_0)}^p
			\\ &\quad\leq 2^{p-1}\norm{S(t)(\xi-\widetilde{\xi})}_{L^p(\Omega;\mathcal{V}_0)}^p
			\\ &\qquad + 2^{p-1}c_pC^p_{\sigma,\mathrm{lip}}\norm{\Xi}^p_{L(\mathcal{V}_0,V)} \E\left[\left(\int_0^t \norm{S(t-s)\xi_\sigma}^2_{L_q(H_\sigma,\mathcal{V}_0)}\norm{X(s; \xi) - X(s; \widetilde{\xi})}_{\mathcal{V}_0}^2\,\d s\right)^{p/2}\right]
            \\ &\quad\leq 2^{p-1}\norm{S(t)(\xi-\widetilde{\xi})}_{L^p(\Omega;\mathcal{V}_0)}^p + \int_0^t \rho_{\mathrm{gen}}(t-s)\norm{X(s; \xi) - X(s; \widetilde{\xi})}_{L^p(\Omega;\mathcal{V}_0)}^p\,\d s.
		\end{align*}
        Arguing similarly to the proof of Lemma \ref{lemma: general case} proves the assertion.
	\end{proof}

	Finally, let us outline how, for additive noise where $\sigma \equiv \sigma_0$ is constant, Lemma \ref{lemma: general case} can be strengthened with respect to conditions \eqref{eq:bounded_moment_condi_multi} and \eqref{eq:contraction_estimate_condi_multi}. In this case, we define
	\begin{align*}
		\rho_{\text{add}}(t) = C_{b,\text{lip}} \norm{\Xi}_{L(\mathcal{V}_0, V)}\|S(t)\xi_b\|_{L(H_b,\mathcal{V}_0)},
	\end{align*}
	and let $r_{\mathrm{add}}$ be given by \eqref{eq: r general} with $\rho_{\mathrm{gen}}$ replaced by $\rho_{\mathrm{add}}$. Finally, let us define
	\[
	\mathcal{R}_{\mathrm{add}}(t) = (1 \vee t)^{- \lambda} + \int_0^t r_{\mathrm{add}}(t-s)(1 \vee s)^{- \lambda} \, \mathrm{d}s.
	\]
	Then we obtain the following analogue for the case of additive noise.

	\begin{lemma}\label{lemma contraction estimate}
		Suppose that Assumptions \ref{assumption SEE} and \ref{assumption long-time} are satisfied, and that $\sigma = \sigma_0 \in L_{q'}(U,H_{\sigma})$ does not depend on $u \in H$. Fix $p \in (2,\infty)$ such that \eqref{eq: continuity} holds. If
		\[
		      C_{b,\mathrm{lin}}\norm{\Xi}_{L(\mathcal{V},V)} \int_0^\infty \norm{S(t)\xi_b}_{L(H_b, \mathcal{V})}\,\d t < 1,
		\]
		then \eqref{eq: uniform norm bound} holds for each $\xi\in L^p(\Omega, \F_0, \P;\mathcal{V})$. Likewise, if
		\begin{align}\label{eq: contraction estimate additive}
			C_{b,\mathrm{lip}}\norm{\Xi}_{L(\mathcal{V}_0,V)} \int_0^\infty \norm{S(t)\xi_b}_{L(H_b, \mathcal{V}_0)}\,\d t < 1,
		\end{align}
        then for all $\xi,\widetilde{\xi}\in L^p(\Omega, \F_0, \P;\mathcal{V})$ we get
        \[
            \norm{X(t; \xi)-X(t; \widetilde{\xi})}_{L^p(\Omega;\mathcal{V}_0)} \lesssim \norm{\xi-\widetilde{\xi}}_{L^p(\Omega;\mathcal{V})} \mathcal{R}_{\mathrm{add}}(t) + \|S_\infty\xi - S_\infty\widetilde{\xi}\|_{L^p(\Omega;\mathcal{V}_0)}.
        \]
	\end{lemma}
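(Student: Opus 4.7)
The plan is to parallel the proof of Lemma \ref{lemma: general case} but exploit two features specific to the additive-noise setting that together yield the sharpened thresholds. First, because $\sigma \equiv \sigma_0 \in L_{q'}(U, H_\sigma)$ does not depend on $u$, the stochastic convolution
\[
Z(t) \coloneqq \int_0^t S(t-s)\xi_\sigma \sigma_0 \, \mathrm{d}W_s
\]
neither feeds back into the Volterra inequality nor depends on $\xi$, so it cancels identically in any difference $X(t;\xi) - X(t;\widetilde{\xi})$. Second, I would work with the $L^p(\Omega;\mathcal{V})$-norm directly via Minkowski's integral inequality, rather than raising to the $p$-th power and applying Jensen with a weight as in Lemma \ref{lemma: general case}. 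This replaces the products $C_{b,\mathrm{lin}}^p$, $c_p$, $3^{p-1}$ appearing in \eqref{eq:bounded_moment_condi_multi} by their un-powered counterparts and is precisely what produces the clean $L^1$-type thresholds in the statement.

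For the uniform moment bound, decompose $X(t;\xi) = S(t)\xi + I_b(t) + Z(t)$. By the BDG-inequality and Assumption \ref{assumption long-time}.(b),
\[
\sup_{t \geq 0} \|Z(t)\|_{L^p(\Omega;\mathcal{V})}^p \leq c_p \|\sigma_0\|_{L_{q'}(U, H_\sigma)}^p \Bigl( \int_0^\infty \|S(s)\xi_\sigma\|_{L_q(H_\sigma,\mathcal{V})}^2 \, \mathrm{d}s \Bigr)^{p/2} < \infty,
\]
while $\|S(t)\xi\|_{L^p(\Omega;\mathcal{V})}$ is uniformly bounded in $t$ since $\sup_{t \geq 0}\|S(t)\|_{L(\mathcal{V})} < \infty$. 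For the drift piece, Minkowski's integral inequality together with the linear growth of $b$ gives
\[
\|I_b(t)\|_{L^p(\Omega;\mathcal{V})} \leq C_{b,\mathrm{lin}} \int_0^t \|S(t-s)\xi_b\|_{L(H_b,\mathcal{V})} \bigl( 1 + \|\Xi\|_{L(\mathcal{V},V)} \|X(s;\xi)\|_{L^p(\Omega;\mathcal{V})} \bigr) \, \mathrm{d}s.
\]
Setting $f(t) = \|X(t;\xi)\|_{L^p(\Omega;\mathcal{V})}$ one thus arrives at a scalar Volterra inequality $f(t) \leq C + \int_0^t \widetilde{\rho}(t-s) f(s) \, \mathrm{d}s$ with $\widetilde{\rho}(t) = C_{b,\mathrm{lin}} \|\Xi\|_{L(\mathcal{V},V)} \|S(t)\xi_b\|_{L(H_b,\mathcal{V})}$. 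By hypothesis $\|\widetilde{\rho}\|_{L^1(\R_+)} < 1$, so the Paley--Wiener theorem places the associated resolvent in $L^1(\R_+)$ and the Volterra Gronwall lemma \cite[Lemma A.1]{BBF23} yields $\sup_{t \geq 0} f(t) < \infty$, proving \eqref{eq: uniform norm bound}.

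For the contraction estimate, cancellation of $Z(t)$ leaves
\[
X(t;\xi) - X(t;\widetilde{\xi}) = S(t)(\xi - \widetilde{\xi}) + \int_0^t S(t-s)\xi_b \bigl( b(\Xi X(s;\xi)) - b(\Xi X(s;\widetilde{\xi})) \bigr) \, \mathrm{d}s.
\]
Taking $L^p(\Omega;\mathcal{V}_0)$-norms, applying Minkowski and the Lipschitz continuity of $b$, one obtains the Volterra inequality $f(t) \leq g(t) + \int_0^t \rho_{\mathrm{add}}(t-s) f(s) \, \mathrm{d}s$ with $f(t) = \|X(t;\xi) - X(t;\widetilde{\xi})\|_{L^p(\Omega;\mathcal{V}_0)}$ and $g(t) = \|S(t)(\xi - \widetilde{\xi})\|_{L^p(\Omega;\mathcal{V}_0)}$. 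Since \eqref{eq: contraction estimate additive} is exactly $\|\rho_{\mathrm{add}}\|_{L^1(\R_+)} < 1$, the resolvent $r_{\mathrm{add}}$ lies in $L^1(\R_+)$ and Volterra Gronwall gives $f(t) \leq g(t) + \int_0^t r_{\mathrm{add}}(t-s) g(s) \, \mathrm{d}s$. The final step, as in Lemma \ref{lemma: general case}, is to split $S(t)(\xi - \widetilde{\xi}) = (S(t) - S_\infty)(\xi - \widetilde{\xi}) + S_\infty(\xi - \widetilde{\xi})$ and invoke \eqref{eq: rate S to P}, yielding
\[
g(t) \lesssim (1 \vee t)^{-\lambda} \|\xi - \widetilde{\xi}\|_{L^p(\Omega;\mathcal{V})} + \|S_\infty \xi - S_\infty \widetilde{\xi}\|_{L^p(\Omega;\mathcal{V}_0)}.
\]
Convolving with $1 + r_{\mathrm{add}}$ and using $r_{\mathrm{add}} \in L^1(\R_+)$, the polynomial piece assembles into $\mathcal{R}_{\mathrm{add}}(t)$ and the constant piece remains proportional to $\|S_\infty \xi - S_\infty \widetilde{\xi}\|_{L^p(\Omega;\mathcal{V}_0)}$, which is the asserted bound. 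No serious obstacle is expected; the only point requiring care is the systematic use of Minkowski's integral inequality in place of Jensen so as not to reintroduce the $p$-th-power constants that the lemma specifically avoids.
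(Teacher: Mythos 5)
Your proposal is correct and follows essentially the same route as the paper's proof: work at the level of first-power $L^p(\Omega;\mathcal{V})$-norms (Minkowski plus BDG with the constant $c_p^{1/p}$), exploit the cancellation of the additive stochastic convolution in the difference, and close via the Volterra Gronwall/Paley--Wiener argument followed by the $S_\infty$-splitting with \eqref{eq: rate S to P}. No gaps; the only cosmetic difference is that the paper states the reduction to the scalar Volterra inequality by reference to Lemma \ref{lemma: general case} rather than spelling out Minkowski's inequality.
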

	\begin{proof}
		  For the first assertion, we argue similarly to the proof of Lemma \ref{lemma: general case}, which gives
		\begin{multline*}
			\norm{X(t; \xi)}_{L^p(\Omega;\mathcal{V})}\leq \norm{S(t)\xi}_{L^p(\Omega;\mathcal{V})} + \int_0^t \norm{S(t-s)\xi_b\, b(\Xi X(s; \xi))}_{L^p(\Omega;\mathcal{V})}\,\d s
            \\ + c_p^{1/p}\left(\int_0^t \norm{S(t-s)\xi_\sigma\sigma_0}_{L_2(U,\mathcal{V})}^2\,\d s\right)^{1/2}
		\end{multline*}
		and, consequently,
		\[
		\norm{X(t; \xi)}_{L^p(\Omega;\mathcal{V})} \lesssim A + \|\xi\|_{L^p(\Omega; \mathcal{V})} + \int_0^t \rho_{\mathrm{lin}}(t-s)\norm{X(s;\xi)}_{L^p(\Omega;\mathcal{V})}\, \mathrm{d}s
		\]
		with $A = C_{b,\mathrm{lin}}\norm{S(\cdot)\xi_b}_{L^1(\R_+;L(H_b,\mathcal{V}))} + c_p^{1/p} C_{\sigma, \mathrm{lin}}\norm{S(\cdot)\xi_\sigma}_{L^2(\R_+;L_q(H_\sigma,\mathcal{V}))}$ and
        \[
            \rho_{\text{lin}}(t)=C_{b,\text{lin}}\norm{\Xi}_{L(\mathcal{V}_0,V)}\norm{S(t)\xi_b}_{L(H_b,\mathcal{V})}.
        \]
        Since again $\int_0^{\infty} \rho_{\mathrm{lin}}(t)\, \mathrm{d}t < 1$, we can argue as in Lemma \ref{lemma: general case}. For the second assertion, we obtain
		\begin{align*}
			&\norm{X(t; \xi)-X(t; \widetilde{\xi})}_{L^p(\Omega;\mathcal{V}_0)}
            \\ &\quad\leq \norm{S(t)(\xi-\widetilde{\xi})}_{L^p(\Omega;\mathcal{V}_0)} + \int_0^t \norm{S(t-s)\xi_b\, (b(\Xi X(s; \xi)) - b(\Xi X(s; \widetilde{\xi}))}_{L^p(\Omega;\mathcal{V}_0)}\,\d s
            \\ &\quad\lesssim \norm{S(t)(\xi-\widetilde{\xi})}_{L^p(\Omega;\mathcal{V}_0)} + \int_0^t \rho_{\text{add}}(t-s)\norm{X(s; \xi) - X(s; \widetilde{\xi})}_{L^p(\Omega;\mathcal{V}_0)}\,\d s.
		\end{align*}
		By assumption $\int_0^\infty\rho_{\text{add}}(t)\,\d t <\infty $ and we can argue as in Lemma \ref{lemma: general case}.
	\end{proof}

	In the next section, we prove that the functions $\mathcal{R}_{\mathrm{gen}}^{\varepsilon}, \mathcal{R}_{\mathrm{b=0}}^{\varepsilon}, \mathcal{R}_{\mathrm{add}}^{\varepsilon}$ provide an estimate on the rate of convergence towards the limiting distribution. From this perspective, the next lemma provides an explicit pointwise bound for such a convergence rate.

	\begin{lemma}\label{lemma: convergence rate bound}
		Suppose that assumptions \ref{assumption SEE} and \ref{assumption long-time} are satisfied. Then for each $\kappa \in (0,1)$ there exist a constant $C_{\kappa} > 0$ such that
		\begin{align*}
			\mathcal{R}_{\mathrm{gen}}^{p}(t) &\lesssim (1\vee t)^{-\lambda p} + (1\lor t)^{-\log\left(1/\norm{\rho_{\mathrm{gen}}^{(p)}}_{L^1(\R_+)}\right)} + C_{\kappa} (1\lor t)^{- \lambda(1 - \kappa)},
            \\ \mathcal{R}_{\mathrm{b=0}}^{p}(t) &\lesssim (1\vee t)^{-\lambda p} + (1\lor t)^{-\log\left(1/\norm{\rho_{\mathrm{b=0}}^{(p)}}_{L^1(\R_+)}\right)} + C_{\kappa}(1\lor t)^{-2\lambda (1-\kappa)},
            \\ \mathcal{R}_{\mathrm{add}}(t) &\lesssim (1\vee t)^{-\lambda} + (1\lor t)^{-\log\left(1/\norm{\rho_{\mathrm{add}}}_{L^1(\R_+)}\right)} + C_{\kappa} (1\lor t)^{-\lambda(1 - \kappa)}.
		\end{align*}
	\end{lemma}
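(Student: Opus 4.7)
The plan is to reduce each of the three inequalities to a pointwise estimate on the convolution $J_\bullet(t):=\int_0^t r_\bullet(t-s)(1\vee s)^{-\alpha}\,\mathrm{d}s$, since the first summand in the definition of $\mathcal{R}_\bullet(t)$ already matches the first term of the claim (here $\alpha=\lambda p$ for the first two bounds and $\alpha=\lambda$ for the last). The three summands on the right will come from: (i) a pointwise decay rate of $\rho_\bullet$, (ii) the smallness $q_\bullet:=\|\rho_\bullet\|_{L^1}<1$ guaranteed by the contraction hypotheses invoked in Lemmas~\ref{lemma: general case}--\ref{lemma contraction estimate}, and (iii) an interpolation between pointwise and $L^1$ bounds on iterated convolutions.

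First I would extract the pointwise decay of $\rho_\bullet$. Writing $S(t)\xi_b=(S(t-1)-S_\infty)S(1)\xi_b$ for $t\geq 2$ (valid because $S_\infty S(1)\xi_b=0$ by Assumption~\ref{assumption long-time}.(b)), invoking the polynomial rate from condition (c), and using $\|S(1)\xi_b\|_{L(H_b,\mathcal{V})}<\infty$ from Assumption~\ref{assumption SEE}, one gets $\|S(t)\xi_b\|_{L(H_b,\mathcal{V}_0)}\lesssim(1\vee t)^{-\lambda}$, and analogously for $\xi_\sigma$. Substituting into the explicit formulas for the kernels yields $\rho_{\mathrm{gen}},\rho_{\mathrm{add}}\lesssim (1\vee t)^{-\lambda}$ and $\rho_{\mathrm{b=0}}\lesssim (1\vee t)^{-2\lambda}$; denote the resulting exponent by $\gamma_\bullet\in\{\lambda,2\lambda\}$.

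Next I expand $r_\bullet=\sum_{n\geq 1}\rho_\bullet^{\ast n}$ (convergent in $L^1$ thanks to $q_\bullet<1$), so $J_\bullet(t)=\sum_{n\geq 1}(\rho_\bullet^{\ast n}\ast g)(t)$ with $g(s)=(1\vee s)^{-\alpha}$, and split the series at $N=\lceil\log t\rceil$ (for $t\geq e$). The uniform bound $(\rho_\bullet^{\ast n}\ast g)(t)\leq q_\bullet^n$ controls the tail as
\[
\sum_{n>N}(\rho_\bullet^{\ast n}\ast g)(t)\leq \frac{q_\bullet^{N+1}}{1-q_\bullet}\lesssim q_\bullet^{\log t}=(1\vee t)^{-\log(1/q_\bullet)},
\]
giving the middle term. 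For the head $n\leq N$, the one-step estimate obtained by splitting $[0,t]$ at $t/2$ shows that $\rho_\bullet\ast h$ inherits (up to a multiplicative constant) the decay $(1\vee t)^{-\gamma_\bullet}$ from any $h$ with the same decay, so inductively $(\rho_\bullet^{\ast n}\ast g)(t)\lesssim C_\bullet^n(1\vee t)^{-\gamma_\bullet}$. Interpolating with the uniform bound via $\min(x,y)\leq x^{\kappa}y^{1-\kappa}$ converts the head into a geometric series in $q_\bullet^{\kappa}C_\bullet^{1-\kappa}$, summable for $\kappa$ close enough to $1$, producing $C_\kappa(1\vee t)^{-\gamma_\bullet(1-\kappa)}$.

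The hard part will be controlling the iterated-convolution constant $C_\bullet$: it need not be below $1$, so a bare iteration yields geometric blowup in $n$; the interpolation with $q_\bullet^n$ tames it only if $\kappa$ satisfies $q_\bullet^{\kappa}C_\bullet^{1-\kappa}<1$, which restricts $\kappa$ to some range $(\kappa_0,1)\subset (0,1)$, with $C_\kappa$ blowing up as $\kappa\downarrow\kappa_0$. When $\gamma_\bullet\leq 1$ the single-step convolution also picks up sub-polynomial corrections from the tail integral $\int_0^{t/2}(1\vee s)^{-\alpha}\,\mathrm{d}s$, which is an additional source of the $\kappa$-loss absorbed by the statement allowing any rate strictly below $\gamma_\bullet$. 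A cleaner alternative that avoids per-term iteration is to split $J_\bullet$ directly at $t/2$, bound the near-$t$ portion by $(t/2)^{-\alpha}\|r_\bullet\|_{L^1}$, and estimate the remaining tail of $r_\bullet$ via the pigeonhole inequality $\int_T^\infty\rho_\bullet^{\ast n}(u)\,\mathrm{d}u\leq n\, q_\bullet^{n-1}\int_{T/n}^\infty\rho_\bullet(u)\,\mathrm{d}u$, combined with the choice $N\sim\log t$.
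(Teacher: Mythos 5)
Your overall architecture is close to the paper's: the paper also splits $\int_0^t r(t-s)(1\vee s)^{-\varepsilon}\,\mathrm ds$ at $t/2$, bounds the near part by $(1\vee t/2)^{-\varepsilon}\|r\|_{L^1(\R_+)}$, and controls $\int_{t/2}^\infty r$ by iterating $r=\rho+r\ast\rho$ roughly $\log t$ times (Lemma \ref{lemma convergence rate tail r}), which is the same mechanism as your ``cleaner alternative'' with the pigeonhole bound $\int_T^\infty\rho^{\ast n}\le n\,q^{n-1}\int_{T/n}^\infty\rho$. That alternative, once the tail of $\rho_\bullet$ is under control, does prove the lemma (in fact without any $\kappa$-loss). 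Your primary route, however, does not: the interpolation $\min\{q^n,\;C_\bullet^n(1\vee t)^{-\gamma_\bullet}\}\le (q^\kappa C_\bullet^{1-\kappa})^n(1\vee t)^{-\gamma_\bullet(1-\kappa)}$ is only summable for $\kappa$ in some range $(\kappa_0,1)$ depending on $C_\bullet$, so it yields decay exponents capped strictly below $\gamma_\bullet(1-\kappa_0)$, whereas the lemma asserts the bound for \emph{every} $\kappa\in(0,1)$; this full range is exactly what is consumed later (the LLN rate $\chi$ may be taken arbitrarily close to $\frac1p\min\{\log(1/\|\rho\|_{L^1(\R_+)}),\lambda\}$, and the CLT hypotheses such as $\lambda\gamma/\sqrt p>1$ need exponents arbitrarily close to $\lambda$). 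You acknowledge the restriction but do not remove it, so as stated this route proves a strictly weaker result.

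The second, more concrete gap affects both of your routes: you need a \emph{tail} estimate $\int_T^\infty\rho_\bullet(s)\,\mathrm ds\lesssim T^{-\gamma_\bullet}$ (it enters your one-step inheritance through $\int_{t/2}^t\rho(s)h(t-s)\,\mathrm ds$ and your pigeonhole bound through $\int_{T/n}^\infty\rho$), and the pointwise bound you derive does not supply it. Writing $S(t)\xi_b=(S(t-1)-S_\infty)S(1)\xi_b$ gives $\rho_\bullet(t)\lesssim(1\vee t)^{-\gamma_\bullet}$, but for $\gamma_\bullet\le1$ this yields no tail rate at all, and even for $\gamma_\bullet>1$ integrating loses one power. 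The paper's fix is to keep the integrable factor rather than freeze it at time $1$: as in \eqref{eq: S bound}, $\|S(t)\xi_a\|_{L(\cdot,\mathcal V_0)}\lesssim(1\vee t/2)^{-\lambda}\|S(t/2)\xi_a\|_{L(\cdot,\mathcal V)}$, so $\rho_\bullet(t)\lesssim(1\vee t/2)^{-\gamma_\bullet}\psi(t)$ with $\psi$ integrable by \eqref{eq:assumption long-timme - integrability}, whence $\int_T^\infty\rho_\bullet\lesssim T^{-\gamma_\bullet}$. With this correction, your pigeonhole alternative closes the proof; without it, neither route gets past the tail of $\rho_\bullet$.
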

	\begin{proof}
		Let $r \in \{ r_{\mathrm{gen}}, r_{\mathrm{b=0}}, r_{\mathrm{add}}\}$, $\varepsilon = \lambda p$ for the first two cases, and $\varepsilon = \lambda$ for the additive case. Then we obtain
		\begin{align*}
			\int_0^t r(t-s)(1 \vee s)^{-\varepsilon}\,\d s
			&= \int_0^t r(s)(1 \vee (t-s))^{-\varepsilon}\,\d s
			\\ &= \int_0^{t/2} r(s)(1 \vee (t-s))^{-\varepsilon}\,\d s + \int_{t/2}^t r(s)(1 \vee (t-s))^{-\varepsilon}\, \d s
			\\ &\leq (1 \vee (t/2))^{-\varepsilon}\left(\int_0^\infty r(s)\,\d s\right) +\int_{t/2}^\infty r(s)\,\d s.
		\end{align*}
        Hence an application of Lemma \ref{lemma convergence rate tail r} yields for $\kappa \in (0,1)$
        \begin{align*}
			\mathcal{R}^{\varepsilon}(t) &\lesssim (1 \vee t)^{-\varepsilon} + \int_{t/2}^{\infty}r(s)\, \d s
			\\ &\lesssim (1 \vee t)^{-\varepsilon} + t^{-\log\left(1/\norm{\rho}_{L^1(\R_+)}\right)} + \int_{\kappa t^{1+\log(1-\kappa)}}^\infty \rho(s)\,\d s.
		\end{align*}
        To estimate the last integral, let us first note that
        \begin{align*}
            \|S(t)\xi_b \|_{L(H_b, \mathcal{V}_0)}
            &= \|(S(t/2) - S_\infty)S(t/2)\xi_b\|_{L(H_b, \mathcal{V}_0)}
            \\ &\leq C(\mathcal{V},\mathcal{V}_0)(1 \vee (t/2))^{-\lambda}\|S(t/2)\xi_b\|_{L(H_b, \mathcal{V})}
        \end{align*}
		and similarly
        \begin{align}\label{eq: S bound}
            \|S(t)\xi_{\sigma}\|_{L_q(H_{\sigma}, \mathcal{V}_0)} \leq C(\mathcal{V}, \mathcal{V}_0)(1 \vee (t/2))^{-\lambda}\|S(t/2)\xi_{\sigma}\|_{L_q(H_{\sigma}, \mathcal{V})}.
        \end{align}
        Hence we obtain for $\rho \in \{\rho_{\mathrm{gen}}, \rho_{\mathrm{add}}\}$
        \begin{align*}
            \int_{\kappa t^{1+ \log(1-\kappa)}}^\infty \rho(s)\, \mathrm{d}s
            &\lesssim \kappa^{-\lambda} t^{-\lambda(1 + \log(1-\kappa))} \int_{0}^{\infty} \left(\|S(s)\xi_b\|_{L(H_b, \mathcal{V})} + \|S(s)\xi_{\sigma}\|_{L_q(H_{\sigma}, \mathcal{V})}^2 \right)\, \mathrm{d}s.
        \end{align*}
        For $\rho = \rho_{\mathrm{b=0}}$, we obtain from \eqref{eq: rho b zero} combined with \eqref{eq: S bound}
        \[
            \int_{\kappa t^{1+ \log(1-\kappa)}}^\infty \rho_{\mathrm{b=0}}(s)\, \mathrm{d}s
            \lesssim \kappa^{-2\lambda} t^{-2\lambda(1 + \log(1-\kappa))} \int_{0}^{\infty} \|S(s)\xi_{\sigma}\|_{L_q(H_{\sigma}, \mathcal{V})}^2 \, \mathrm{d}s.
        \]
        Since $1 + \log(1-\kappa) \in (0,1)$, the assertion is proved.
	\end{proof}

    Finally, let us remark that similar contraction estimates also hold for the case where $\mathcal{V}_0 = \mathcal{V}$. More precisely, assuming that either \eqref{eq:contraction_estimate_condi_multi} or \eqref{eq:contraction_estimate_condi_multi b zero} with $b = 0$ holds for $\mathcal{V} = \mathcal{V}_0$, we obtain
    \[
        \| X(t;\xi) - X(t;\widetilde{\xi})\|_{L^p(\Omega; \mathcal{V})}^p \lesssim \|S(t)(\xi - \widetilde{\xi})\|_{L^p(\Omega; \mathcal{V})}^p + \int_0^t r(t-s) \|S(s)(\xi - \widetilde{\xi})\|_{L^p(\Omega; \mathcal{V})}^p\, \mathrm{d}s
    \]
    where $r \in \{ r_{\mathrm{gen}}^{(p)}, r_{\mathrm{b=0}}^{(p)}\}$. Likewise, if \eqref{eq: contraction estimate additive} holds for $\mathcal{V} = \mathcal{V}$ and $\sigma \equiv \sigma_0$, then we obtain
    \[
        \| X(t;\xi) - X(t;\widetilde{\xi})\|_{L^p(\Omega; \mathcal{V})} \lesssim \|S(t)(\xi - \widetilde{\xi})\|_{L^p(\Omega; \mathcal{V})} + \int_0^t r(t-s) \|S(s)(\xi - \widetilde{\xi})\|_{L^p(\Omega; \mathcal{V})}\, \mathrm{d}s.
    \]
    In particular, if $S_\infty\xi = S_\infty\widetilde{\xi}$, then the right-hand sides converge to zero, but a rate of convergence is not available unless we study convergence on the larger space $\mathcal{V} \hookrightarrow \mathcal{V}_0$.

	\subsection{Limit distributions and invariant measures}

	Let $\mathcal{P}(\mathcal{V})$ be the convex space of all Borel probability measures over $\mathcal{V}$ and let $\mathcal{P}_p(\mathcal{V})$ be the subspace of all probability measures with finite $p$-th moment, i.e.
    \[
        m_p(\rho) \coloneqq \left(\int_{\mathcal{V}}\|\xi\|_{\mathcal{V}}^p\, \rho(\mathrm{d}\xi)\right)^{\frac{1}{p}} < \infty.
    \]
    Similarly, we introduce $\mathcal{P}_p(\mathcal{V}_0) \subset \mathcal{P}(\mathcal{V}_0)$. Note that the embedding $\mathcal{V} \hookrightarrow \mathcal{V}_0$ induces an embedding $\mathcal{P}_p(\mathcal{V}) \hookrightarrow \mathcal{P}_p(\mathcal{V}_0)$. The space $\mathcal{P}_p(\mathcal{V}_0)$ is a polish space when equipped with the $p$-Wasserstein distance
	\[
	\mathcal{W}_{p,\mathcal{V}_0}(\pi,\widetilde{\pi}) = \left(\inf_{\nu\in\mathcal{C}_0(\pi,\widetilde{\pi})}\int_{ \mathcal{V}_0 \times \mathcal{V}_0} \norm{x-y}_{\mathcal{V}_0}^p\,\nu(\d x,\d y)\right)^{\frac{1}{p}}
	\]
	where $\mathcal{C}_0(\pi,\widetilde{\pi})$ denotes the set of all couplings of $\pi$ and $\widetilde{\pi}$ on $\mathcal{V}_0 \times \mathcal{V}_0$. Likewise, let $\mathcal{C}(\rho, \widetilde{\rho})$ denote the collection of all couplings on $\mathcal{V} \times \mathcal{V}$, whenever $\rho, \widetilde{\rho} \in \mathcal{P}(\mathcal{V})$.

	Recall that $(P_{t})_{t \geq 0}$ denotes the transition semigroup of the process given by Corollary \ref{cor: time homogeneous Markov}. Denote by $p_{t}(\xi,\mathrm{d}y)$ its transition probability kernel on $\mathcal{V}$. Then the action of the transition semigroup on probability measures $\rho \in \mathcal{P}_p(\mathcal{V})$ is given by $P_{t}^* \rho(\d y) = \int_{\mathcal{V}} p_{t}(\xi, \mathrm{d}y)\, \rho(\mathrm{d}\xi)$ and because of previously established global moment bounds, it leaves $\mathcal{P}_p(\mathcal{V})$ invariant. Here $P_t^* \rho$ is the distribution of $X(t;\xi)$ where $\xi \in L^p(\Omega, \F_0, \P; \mathcal{V})$ satisfies $\rho \sim \xi$. Below, we need the following observation that the dynamics leaves the null-space of $S_{\infty}$ invariant.

	\begin{lemma}\label{lemma: projection}
		Suppose that Assumptions \ref{assumption SEE} and \ref{assumption long-time} are satisfied, and let $p \in (2,\infty)$ satisfy \eqref{eq: continuity}. Let $\xi \in L^p(\Omega, \F_0, \P; \mathcal{V})$, and $X(\cdot; \xi)$ be the corresponding unique solution of \eqref{eq: abstract Markov lift time inhomogeneous} with $s = 0$. Then $S_\infty X(\cdot; \xi) = S_\infty \xi$ holds a.s.~for each $t \geq 0$. In particular, for each $\rho \in \mathcal{P}_p(\mathcal{V})$ it holds that $P_t^* \rho \circ S_\infty^{-1} = \rho \circ S_\infty^{-1}$ for each $t \geq 0$.
	\end{lemma}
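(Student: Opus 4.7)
The plan is to apply the projection $S_\infty$ directly to the mild formulation of the Markovian lift and exploit the two structural facts about $S_\infty$ supplied by Assumption~\ref{assumption long-time}. Since $S(t)\longrightarrow S_\infty$ strongly on $\mathcal{V}$, the semigroup property yields for every $\eta\in\mathcal{V}$ and every $t\geq 0$ that
\[
    S_\infty S(t)\eta = \lim_{s\to\infty} S(s)S(t)\eta = \lim_{s\to\infty} S(s+t)\eta = S_\infty\eta.
\]
Combined with the annihilation conditions $S_\infty S(t)\xi_b = 0$ and $S_\infty S(t)\xi_\sigma = 0$ for $t>0$ from Assumption~\ref{assumption long-time}.(b), this suggests that $S_\infty$ kills all of the Volterra noise and drift contributions while leaving the initial condition essentially untouched.

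Concretely, I would start from \eqref{eq: abstract Markov lift time inhomogeneous} with $s=0$, and note that $S_\infty\in L(\mathcal{V})$ by Assumption~\ref{assumption long-time}.(b) together with the uniform boundedness principle. Because $S_\infty$ is a bounded linear operator on $\mathcal{V}$ and the Bochner integrand $S(\cdot-s)\xi_b\, b(\Xi X(s;\xi))$ is $\mathcal{V}$-valued, $S_\infty$ commutes with the Bochner integral; analogously, $S_\infty$ commutes with the $\mathcal{V}$-valued stochastic integral (see e.g.~\cite[Proposition 4.30]{DaPrato_Zabczyk_2014}). Applying $S_\infty$ to both sides therefore gives
\[
    S_\infty X(t;\xi) = S_\infty S(t)\xi + \int_0^t S_\infty S(t-s)\xi_b\, b(\Xi X(s;\xi))\, \mathrm{d}s + \int_0^t S_\infty S(t-s)\xi_\sigma\, \sigma(\Xi X(s;\xi))\, \mathrm{d}W_s.
\]
The first term equals $S_\infty\xi$ by the computation above (which uses $\xi\in\mathcal{V}$ a.s.), while the two integrals vanish a.s.\ since their integrands are zero for Lebesgue-a.e.\ $s\in[0,t)$ by Assumption~\ref{assumption long-time}.(b). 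This gives the pointwise identity $S_\infty X(t;\xi) = S_\infty\xi$ a.s.

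For the measure-theoretic consequence, take $\rho\in\mathcal{P}_p(\mathcal{V})$ and an $\mathcal{F}_0$-measurable $\xi\in L^p(\Omega,\P;\mathcal{V})$ with $\xi\sim\rho$. Since $X(t;\xi)\sim P_t^\ast\rho$, pushing forward by the continuous linear map $S_\infty\colon\mathcal{V}\longrightarrow\mathcal{V}$ yields $\law(S_\infty X(t;\xi)) = P_t^\ast\rho\circ S_\infty^{-1}$. Combining this with the a.s.\ identity $S_\infty X(t;\xi) = S_\infty\xi$ just established gives $P_t^\ast\rho\circ S_\infty^{-1} = \law(S_\infty\xi) = \rho\circ S_\infty^{-1}$, which is exactly the claim.

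I do not expect a real obstacle here: the key ingredients are already in Assumption~\ref{assumption long-time}.(b), and the only delicate point is the routine justification that the bounded operator $S_\infty\in L(\mathcal{V})$ passes inside both the Bochner and the stochastic integral. Should one wish to avoid invoking commutation of $S_\infty$ with the Itô integral directly, an equivalent route is to take an approximating sequence $S(t_n)$ with $t_n\to\infty$ and pass to the limit inside the integrals using dominated convergence together with the uniform moment bound \eqref{eq: uniform norm bound} of Lemma~\ref{lemma: general case}; both routes yield the same conclusion.
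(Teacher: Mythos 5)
Your proposal is correct and follows essentially the same route as the paper: apply the bounded operator $S_\infty$ to the mild formulation, pull it through the Bochner and stochastic integrals, use the annihilation conditions $S_\infty S(t)\xi_b = 0$ and $S_\infty S(t)\xi_\sigma = 0$ from Assumption~\ref{assumption long-time}.(b), and conclude via $S_\infty S(t)\xi = S_\infty\xi$. Your explicit justification of this last identity through the semigroup property and strong convergence, and your pushforward argument for the statement on $P_t^*\rho\circ S_\infty^{-1}$, merely spell out details the paper leaves implicit.
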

	\begin{proof}
		Firstly, since $S_\infty \in L(\mathcal{V})$, we can compute $S_\infty X(\cdot; \xi)$ by pulling the projection operator inside the integrals in \eqref{eq: abstract Markov lift time inhomogeneous}. Then using $S_\infty S(t)\xi_b = 0$ and $S_\infty S(t)\xi_{\sigma} = 0$, the latter gives $S_\infty X(t; \xi) = S_\infty S(t)\xi$. Since $S_\infty$ is, by definition, the projection operator onto the fixed space of the semigroup, we get $S_\infty S(t)\xi = S_\infty \xi$ which proves the assertion.
	\end{proof}

    Since \eqref{eq: abstract mild formulation Markovian lift} has a probabilistically strong, analytically mild solution, we have the freedom to choose the filtration $(\mathcal{F}_t)_{t \geq 0}$ such that $W$ is an $(\mathcal{F}_t)_{t \geq 0}$-cylindrical Wiener process. Thus, by enlargement of $\mathcal{F}_0$ if necessary, let us suppose that $\mathcal{F}_0$ is rich enough such that for each $\rho \in \mathcal{P}_p(\mathcal{V})$ there exists $\xi \in L^p(\Omega, \mathcal{F}_0, \P; \mathcal{V})$ with $\xi \sim \rho$.

    The following is our main result on the existence and characterisation of limit distributions of the process $X(t;\xi)$ obtained from \eqref{eq: abstract Markov lift time inhomogeneous} with $s = 0$.

	\begin{theorem}\label{theorem_limit_distribution}
		Suppose that Assumptions \ref{assumption SEE} and \ref{assumption long-time} are satisfied. Let $p \in (2,\infty)$ such that \eqref{eq: continuity} holds. Then the following assertions hold:
		\begin{enumerate}
			\item (general case) If \eqref{eq:bounded_moment_condi_multi} and \eqref{eq:contraction_estimate_condi_multi} hold, then for each $\rho \in \mathcal{P}_p(\mathcal{V})$, there exists a unique probability measure $\pi_{\rho} \in \mathcal{P}_p(\mathcal{V})$ such that
			\begin{equation}\label{eq: convergence Wasserstein}
				\mathcal{W}_{p,\mathcal{V}_0}( P_t^* \rho,\pi_{\rho}) \lesssim \left(1 + m_p(\rho) \right) \left(\mathcal{R}_{\mathrm{gen}}^{p}(t)\right)^{1/p}.
			\end{equation}

            \item (no drift) If $b = 0$ and $\xi_b = 0$, and \eqref{eq:bounded_moment_condi_multi b zero} and \eqref{eq:contraction_estimate_condi_multi b zero} hold, then for each $\rho \in \mathcal{P}_p(\mathcal{V})$ there exists a unique probability measure $\pi_{\rho} \in \mathcal{P}_p(\mathcal{V})$ such that \eqref{eq: convergence Wasserstein} holds with $\mathcal{R}_{\mathrm{gen}}^{p}$ replaced by $\mathcal{R}_{\mathrm{b=0}}^{p}$.

			\item (additive noise) If $\sigma \equiv \sigma_0$ does not depend on $u \in H$, and
			\begin{equation}\label{eq:small_nonlinearity}
				\max\{C_{b, \mathrm{lip}} \norm{\Xi}_{L(\mathcal{V}_0,V)} , C_{b,\mathrm{lin}}\norm{\Xi}_{L(\mathcal{V},V)} \} \int_0^{\infty}\| S(t)\xi_b\|_{L(H_b, \mathcal{V})}\, \mathrm{d}t < 1,
			\end{equation}
			then for each $\rho \in \mathcal{P}_p(\mathcal{V})$, there exists a unique $\pi_{\rho} \in \mathcal{P}_p(\mathcal{V})$ such that
			\begin{equation*}\label{eq:lim_distrib_rate}
				\mathcal{W}_{p,\mathcal{V}_0}( P_t^* \rho, \pi_{\rho})\lesssim
				\left(1+ m_p(\rho) \right) \mathcal{R}_{\mathrm{add}}(t).
			\end{equation*}
		\end{enumerate}
		In all cases the limit distribution $\pi_{\rho}$ satisfies the disintegration property
        \begin{align}\label{eq: limit distribution decomposition}
            \pi_{\rho} = \int_{\mathcal{V}} \pi(\xi, \cdot) \rho(\mathrm{d}\xi) \ \text{ where }\ \pi(\xi,\cdot) = \pi_{\delta_{\xi}}.
        \end{align}
        Moreover, for given $\rho, \widetilde{\rho} \in \mathcal{P}_p(\mathcal{V})$ and limit distributions $\pi_{\rho},\pi_{\widetilde{\rho}}\in\mathcal{P}_p(\mathcal{V})$ it holds that
		\begin{align}\label{eq: continuity pi}
		\mathcal{W}_{p, \mathcal{V}_0}(\pi_{\rho}, \pi_{\widetilde{\rho}}) \lesssim \inf_{H \in \mathcal{C}(\rho, \widetilde{\rho})} \left(\int_{\mathcal{V} \times \mathcal{V}} \| S_\infty\xi - S_\infty \widetilde{\xi}\|_{\mathcal{V}_0}^p \, H(\mathrm{d}\xi, \mathrm{d}\widetilde{\xi}) \right)^{1/p}.
		\end{align}
	\end{theorem}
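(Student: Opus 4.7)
The plan is to construct the limit distribution $\pi_\rho$ as the Wasserstein limit of $P_t^*\rho$ as $t\to\infty$, using a Cauchy sequence argument powered by the contraction estimates of Lemmas~\ref{lemma: general case}, \ref{lemma: general case b zero}, and \ref{lemma contraction estimate}, combined with the invariance of $S_\infty$ under the dynamics from Lemma~\ref{lemma: projection}. The three cases differ only in which contraction estimate (and hence which rate function $\mathcal{R}^p$) we invoke, so I would organise the proof so that after setting up the general machinery in one case, the other two cases follow by the same scheme with the obvious replacements.

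First, fix $\xi \in L^p(\Omega,\F_0,\P;\mathcal{V})$ and, for $s\geq 0$, apply the time-homogeneous Markov property (Corollary~\ref{cor: time homogeneous Markov}) to write $P_{t+s}^*\delta_\xi = P_t^*(P_s^*\delta_\xi)$. Coupling via the same driving Wiener process, one gets
\[
\mathcal{W}_{p,\mathcal{V}_0}^p(P_t^*\delta_\xi,\, P_{t+s}^*\delta_\xi) \leq \E\bigl[\,\|X(t;\xi) - X(t;X(s;\xi))\|_{\mathcal{V}_0}^p\,\bigr].
\]
Now apply \eqref{eq: uniform contraction estimate} with initial conditions $\xi$ and $\widetilde{\xi}=X(s;\xi)$: by Lemma~\ref{lemma: projection}, $S_\infty X(s;\xi) = S_\infty \xi$ almost surely, so the memory term $\|S_\infty\xi - S_\infty\widetilde{\xi}\|_{L^p(\Omega;\mathcal{V}_0)}$ vanishes, while the uniform moment bound \eqref{eq: uniform norm bound} controls $\|\xi - X(s;\xi)\|_{L^p(\Omega;\mathcal{V})}$ uniformly in $s$. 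This yields
\[
\mathcal{W}_{p,\mathcal{V}_0}^p(P_t^*\delta_\xi,\,P_{t+s}^*\delta_\xi) \lesssim (1+\|\xi\|_{\mathcal{V}}^p)\, \mathcal{R}_{\mathrm{gen}}^p(t)
\]
uniformly in $s\geq 0$; Lemma~\ref{lemma: convergence rate bound} gives $\mathcal{R}_{\mathrm{gen}}^p(t)\to 0$, so $(P_t^*\delta_\xi)_t$ is Cauchy in the complete metric space $(\mathcal{P}_p(\mathcal{V}_0),\mathcal{W}_{p,\mathcal{V}_0})$ and converges to some $\pi(\xi,\cdot)$. The uniform bound \eqref{eq: uniform norm bound} and lower semicontinuity of $y\mapsto \|y\|_{\mathcal{V}}$ on $\mathcal{V}_0$ (using the characterisation $\mathcal{V} = \{y\in\mathcal{V}_0 : \|y\|_{\mathcal{V}}<\infty\}$ from Assumption~\ref{assumption long-time}(c)) show $\pi(\xi,\cdot)\in\mathcal{P}_p(\mathcal{V})$. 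Sending $s\to\infty$ in the above inequality gives \eqref{eq: convergence Wasserstein} for $\rho=\delta_\xi$. For general $\rho\in\mathcal{P}_p(\mathcal{V})$, define $\pi_\rho = \int \pi(\xi,\cdot)\,\rho(\mathrm{d}\xi)$; linearity of $P_t^*$ together with convexity of $\mathcal{W}_{p,\mathcal{V}_0}^p$ (obtained by gluing couplings) yields \eqref{eq: convergence Wasserstein} in general, as well as the disintegration identity \eqref{eq: limit distribution decomposition}.

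For the continuity estimate \eqref{eq: continuity pi}, let $H\in\mathcal{C}(\rho,\widetilde{\rho})$ and take $(\xi,\widetilde{\xi})\sim H$, $\F_0$-measurable, independent of $W$. Driving both processes by the same Wiener process, the contraction estimate combined with Fubini gives
\[
\mathcal{W}_{p,\mathcal{V}_0}^p(P_t^*\rho,\,P_t^*\widetilde{\rho}) \;\lesssim\; \mathcal{R}_{\mathrm{gen}}^p(t)\!\int_{\mathcal{V}\times\mathcal{V}}\!\!\|\xi-\widetilde{\xi}\|_{\mathcal{V}}^p\, H(\mathrm{d}\xi,\mathrm{d}\widetilde{\xi}) + \int_{\mathcal{V}\times\mathcal{V}}\!\!\|S_\infty\xi - S_\infty\widetilde{\xi}\|_{\mathcal{V}_0}^p\, H(\mathrm{d}\xi,\mathrm{d}\widetilde{\xi}).
\]
Passing to the limit $t\to\infty$ and using joint lower semicontinuity of $\mathcal{W}_{p,\mathcal{V}_0}$ together with $P_t^*\rho\to\pi_\rho$ and $P_t^*\widetilde{\rho}\to\pi_{\widetilde{\rho}}$, the first term vanishes; taking the infimum over $H$ yields \eqref{eq: continuity pi}. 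Uniqueness of $\pi_\rho$ follows because any other limit point of $(P_t^*\rho)_t$ in $\mathcal{W}_{p,\mathcal{V}_0}$ must coincide with $\pi_\rho$. The cases (ii) and (iii) are handled identically, swapping in Lemma~\ref{lemma: general case b zero} and Lemma~\ref{lemma contraction estimate} respectively; note that in case (iii) the single condition \eqref{eq:small_nonlinearity} simultaneously implies both the linear-growth and Lipschitz hypotheses of Lemma~\ref{lemma contraction estimate}.

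The main obstacle I expect is ensuring that the limit $\pi_\rho$, obtained as a limit in the coarser space $\mathcal{V}_0$, actually lives in $\mathcal{P}_p(\mathcal{V})$: this is where the characterisation $\mathcal{V}=\{y\in\mathcal{V}_0:\|y\|_{\mathcal{V}}<\infty\}$ from Assumption~\ref{assumption long-time}(c) becomes essential, combined with a lower semicontinuity argument for the $\mathcal{V}$-norm. A secondary technical point is the bookkeeping for couplings: one must enlarge $\F_0$ (as noted in the paper just before the theorem) to realise arbitrary initial couplings $H\in\mathcal{C}(\rho,\widetilde{\rho})$ simultaneously with the Wiener process, and one must justify the interchange of the coupling limit with the Wasserstein supremum/infimum, which is standard via the gluing lemma.
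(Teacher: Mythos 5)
Your proposal is correct and follows essentially the same scheme as the paper: a Wasserstein contraction estimate whose memory term is annihilated by the $S_\infty$-invariance of the dynamics, a Cauchy argument in $(\mathcal{P}_p(\mathcal{V}_0),\mathcal{W}_{p,\mathcal{V}_0})$, lower semicontinuity of $\|\cdot\|_{\mathcal{V}}$ on $\mathcal{V}_0$ to place the limit in $\mathcal{P}_p(\mathcal{V})$, and passage to the limit in the two-measure contraction for \eqref{eq: continuity pi}. The genuine differences are in implementation. For the Cauchy step you use the dynamical (synchronous) coupling $(\xi, X(s;\xi))$ together with the almost-sure identity $S_\infty X(s;\xi)=S_\infty\xi$ from Lemma~\ref{lemma: projection}, whereas the paper works at the level of measures: it first proves a contraction for arbitrary couplings $H\in\mathcal{C}(\rho,\widetilde\rho)$ via convexity of the Wasserstein distance (so only deterministic initial data enter Lemma~\ref{lemma: general case}), and then constructs an explicit coupling of $(\rho,P_\tau^*\rho)$ by disintegration with respect to $S_\infty^{-1}$ that kills the memory term. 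Your variant is slicker but needs one small justification you only gesture at: $X(s;\xi)$ is $\F_s$-, not $\F_0$-measurable, so \eqref{eq: uniform contraction estimate} must be applied to the system restarted at time $s$ (time-homogeneity, Corollary~\ref{cor: time homogeneous Markov}), or the pair $(\xi,X(s;\xi))$ must be re-realised at time $0$ using the richness of $\F_0$; either fix is routine. Secondly, for general $\rho$ you define $\pi_\rho$ as the mixture $\int\pi(\xi,\cdot)\,\rho(\mathrm{d}\xi)$ and transfer the rate by convexity, while the paper proves Cauchyness of $P_t^*\rho$ directly and derives \eqref{eq: limit distribution decomposition} afterwards; your route additionally requires measurability of $\xi\mapsto\pi(\xi,\cdot)$, which follows from the Lipschitz-type bound \eqref{eq: continuity pi} applied to Dirac measures, so nothing is lost. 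Your remarks on case (iii) (the single condition \eqref{eq:small_nonlinearity} covering both constants in Lemma~\ref{lemma contraction estimate}) and on uniqueness are accurate.
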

	\begin{proof}
        \textit{Step 1.} Let $\rho, \widetilde{\rho} \in \mathcal{P}_p(\mathcal{V})$. We first prove a contraction estimate for $(P_t^* \rho, P_t^* \widetilde{\rho})$ in the Wasserstein distance. Let $H \in \mathcal{C}(\rho, \widetilde{\rho})$ be arbitrary. Using the convexity of the Wasserstein distance and then Lemma \ref{lemma: general case} yields
		\begin{align*}
			\mathcal{W}_{p,\mathcal{V}_0}( P^*_t \rho, P^*_t \widetilde{\rho})^p
            &\leq \int_{\mathcal{V} \times \mathcal{V}} \mathcal{W}_{p, \mathcal{V}_0}(P_t^* \delta_{\xi}, P_t^* \delta_{\widetilde{\xi}})^p \, H(\mathrm{d}\xi, \mathrm{d}\widetilde{\xi})
            \\ &\leq \int_{\mathcal{V} \times \mathcal{V}} \norm{X(t;\xi) - X(t; \widetilde{\xi})}_{L^p(\Omega;\mathcal{V}_0)}^p \, H(\mathrm{d}\xi, \mathrm{d}\widetilde{\xi})
            \\ &\lesssim \int_{\mathcal{V} \times \mathcal{V}} \left( \norm{\xi- \widetilde{\xi}}_{\mathcal{V}}^p \mathcal{R}_{\mathrm{gen}}^{p}(t) + \| S_\infty \xi - S_\infty \widetilde{\xi}\|_{\mathcal{V}_0}^p \right)\, H(\mathrm{d}\xi, \mathrm{d}\widetilde{\xi})
            \\ &\lesssim \left( m_p(\rho)^p + m_p(\widetilde{\rho})^p \right) \mathcal{R}_{\mathrm{gen}}^{p}(t) + \int_{\mathcal{V} \times \mathcal{V}} \| S_\infty \xi - S_\infty \widetilde{\xi}\|_{\mathcal{V}_0}^p \, H(\mathrm{d}\xi, \mathrm{d}\widetilde{\xi})
		\end{align*}
        where the last inequality is satisfied since $H$ is a coupling of $\rho, \widetilde{\rho}$. Secondly, let us show that if $\rho \circ S_\infty^{-1} = \widetilde{\rho} \circ S_\infty^{-1}$, then there exists $H \in \mathcal{C}(\rho, \widetilde{\rho})$ such that
        \[
        \int_{\mathcal{V} \times \mathcal{V}} \| S_\infty \xi - S_\infty \widetilde{\xi}\|_{\mathcal{V}_0}^p \, H(\mathrm{d}\xi, \mathrm{d}\widetilde{\xi}) = 0.
        \]
        By disintegration let us write $\rho(\mathrm{d}\xi) = \rho(\xi', \mathrm{d}\xi) (\rho \circ S_\infty^{-1})(\mathrm{d}\xi')$ and $\widetilde{\rho}(\mathrm{d}\xi) = \widetilde{\rho}(\xi', \mathrm{d}\xi) (\widetilde{\rho}\circ S_\infty^{-1})(\mathrm{d}\xi') = \widetilde{\rho} (\xi', \mathrm{d}\xi) (\rho \circ S_\infty^{-1})(\mathrm{d}\xi')$. Define
        \[
            H(A \times B) = \int_{\mathcal{V} \times \mathcal{V}} \int_{V \times V}\1_{A}(\xi) \1_{B}(\widetilde{\xi}) \rho(\xi', \mathrm{d}\xi) \widetilde{\rho}(\widetilde{\xi}', \mathrm{d}\widetilde{\xi}) \widetilde{H}(\mathrm{d}\xi', \mathrm{d}\widetilde{\xi}')
        \]
        where $\widetilde{H}$ is a probability measure on $V \times V$ given by $\widetilde{H}(A' \times B') = \rho \circ S_\infty^{-1}(A' \cap B')$. For this choice of coupling, we find
        \begin{align*}
            \int_{\mathcal{V} \times \mathcal{V}} \| S_\infty \xi - S_\infty \widetilde{\xi}\|_{\mathcal{V}_0}^p \, H(\mathrm{d}\xi, \mathrm{d}\widetilde{\xi})
            &= \int_{\mathcal{V} \times \mathcal{V}} \int_{V \times V} \| S_\infty \xi - S_\infty \widetilde{\xi}\|_{\mathcal{V}_0}^p \rho(\xi', \mathrm{d}\xi) \widetilde{\rho}(\widetilde{\xi}', \mathrm{d}\widetilde{\xi}) \widetilde{H}(\mathrm{d}\xi', \mathrm{d}\widetilde{\xi}')
            \\ &= \int_{V \times V} \| \xi' - \widetilde{\xi}'\|_{\mathcal{V}_0} \widetilde{H}(\mathrm{d}\xi', \mathrm{d}\widetilde{\xi}')
            \\ &= 0
        \end{align*}
        since $\rho(\xi', \mathrm{d}\xi)$ is supported on $\{y  :  S_\infty y = \xi'\}$, $\widetilde{\rho}(\widetilde{\xi}', \mathrm{d}\widetilde{\xi})$ is supported on $\{y \ : \ Py = \widetilde{\xi}' \}$, and $\widetilde{H}$ is by definition supported on the diagonal.

        \textit{Step 2.} Let us now show that for each $\rho \in \mathcal{P}_p(\mathcal{V})$, $(P^*_t\rho)_{t \geq 0}$ is a Cauchy sequence in $\mathcal{P}_p(\mathcal{V}_0)$. Let $t, \tau \geq 0$. Then $P_t^* \rho \circ S_\infty^{-1} = \rho \circ S_\infty^{-1} = P_{t+\tau}^* \rho \circ S_\infty^{-1}$ by Lemma \ref{lemma: projection}. Hence, we obtain from step 1
        \begin{align*}
            \mathcal{W}_{p,\mathcal{V}_0}( P^*_t \rho, P^*_{t+\tau}\rho)
            &=  \mathcal{W}_{p,\mathcal{V}_0}( P^*_t \rho, P^*_{t} P_{\tau}^*\rho)
            \\ &\lesssim \left( m_p(\rho) + m_p(P_{\tau}^*\rho) \right) \left(\mathcal{R}_{\mathrm{gen}}^{p}(t)\right)^{1/p}
            \lesssim \left( 1 + m_p(\rho) \right) \left(\mathcal{R}_{\mathrm{gen}}^{p}(t) \right)^{1/p}
        \end{align*}
        where the last inequality holds uniformly in $\tau$ and follows from the uniform moment bounds provided in Lemma \ref{lemma: general case}. Since $\mathcal{R}_{\mathrm{gen}}^{p}(t) \to 0$ as $t\to \infty$ by Lemma \ref{lemma: convergence rate bound}, we conclude $\mathcal{W}_{p,\mathcal{V}_0}( P^*_t \rho, P^*_{t+\tau}\rho)\longrightarrow0$ as $t\longrightarrow0$ uniformly in $\tau$. Consequently, $(P^*_t\rho )_{t\geq0}$ is a Cauchy sequence in $\mathcal{P}_p(\mathcal{V}_0)$ with respect to the $p$-th Wasserstein distance. Hence, it has a limit denoted by $\pi_{\rho}\in\mathcal{P}_p(\mathcal{V}_0)$. Furthermore, we obtain
		\begin{align*}
			\mathcal{W}_{p,\mathcal{V}_0}( P_t^*\rho, \pi_{\rho})
			\leq \limsup_{\tau\to\infty}\mathcal{W}_{p,\mathcal{V}_0}( P_t^* \rho, P^*_{t+\tau}\rho )
			\lesssim \left( 1 + m_p(\rho) \right) \left(\mathcal{R}_{\mathrm{gen}}^{p}(t)\right)^{1/p}
		\end{align*}
		which proves the desired convergence rate. The disintegration property \eqref{eq: limit distribution decomposition} follows from the weak convergence $p_t(\xi, \cdot) = P_t^* \delta_{\xi} \Longrightarrow \pi(\xi, \cdot)$ on $\mathcal{V}_0$ and
        \begin{align*}
            \int_{\mathcal{V}_0}f(y) \left( \int_{\mathcal{V}_0} \pi(\xi, \mathrm{d}y) \rho(\mathrm{d}\xi) \right)
            &=  \int_{\mathcal{V}_0} \int_{\mathcal{V}_0}f(y) \pi(\xi, \mathrm{d}y) \rho(\mathrm{d}\xi)
            \\ &= \lim_{t \to \infty} \int_{\mathcal{V}_0} \int_{\mathcal{V}_0}f(y) p_t(\xi, \mathrm{d}y) \rho(\mathrm{d}\xi)
            \\ &= \lim_{t \to \infty} \int_{\mathcal{V}_0}f(y) (P_t^* \rho)(\mathrm{d}y)
            = \int_{\mathcal{V}_0}f(y) \pi_{\rho}(\mathrm{d}y)
        \end{align*}
        where $f \in C_b(\mathcal{V}_0)$.

       Note that $\|\cdot\|_{\mathcal{V}}^{p}\colon\mathcal{V}_0\longrightarrow[0,+\infty]$ is lower semi-continuous and bounded from below. Using the Portmanteau theorem and Lemma \ref{lemma: general case}, we have
		\begin{align}\label{eq: moment bound limit distribution}
		\int_{\mathcal{V}_{0}}\|y\|_{\mathcal{V}}^{p}\,\pi_{\rho}(\d y)
		\leq \liminf_{t\to\infty} \E\left[\|X(t;\xi)\|_{\mathcal{V}}^p\right]
		\lesssim 1+\E\left[\|\xi\|_{\mathcal{V}}^p\right]<\infty
		\end{align}
        where $\xi \in L^p(\Omega, \F_0, \P; \mathcal{V})$ is such that $\xi \sim \rho$. Consequently, since $\mathcal{V} = \{y \in \mathcal{V}_0  :  \|y\|_{\mathcal{V}} < \infty\}$, we conclude $\pi_{\rho}(\mathcal{V}_0\setminus\mathcal{V})=0$ and hence $\pi_{\rho}\in\mathcal{P}_p(\mathcal{V})$.

        Finally, let $\pi_{\rho},\pi_{\widetilde{\rho}}$ be the limit distributions for $\rho,\widetilde{\rho}\in \mathcal{P}_p(\mathcal{V})$. Then
		\begin{align*}
			\mathcal{W}_{p, \mathcal{V}_0}(\pi_{\rho},\pi_{\widetilde{\rho}})
			\leq \mathcal{W}_{p,\mathcal{V}_0}(\pi_{\rho}, P_t^* \rho) + \mathcal{W}_{p, \mathcal{V}_0}( P_t^* \rho, P_t^* \widetilde{\rho}) + \mathcal{W}_{p,\mathcal{V}_0}( P_t^* \widetilde{\rho},\pi_{\widetilde{\rho}}).
		\end{align*}
		Let $H$ be any coupling of $(\rho, \widetilde{\rho})$ supported on $\mathcal{V} \times \mathcal{V}$. Then, by passing to the limit $t\longrightarrow \infty$ and using step 1, we find
		\begin{align*}
			\mathcal{W}_{p,\mathcal{V}_0}(\pi_{\rho},\pi_{\widetilde{\rho}})
			\leq \limsup_{t\to\infty}\, \mathcal{W}_{p,\mathcal{V}_0}( P_t^* \rho, P_t^* \widetilde{\rho})
			\lesssim \left(\int_{\mathcal{V} \times \mathcal{V}} \| S_\infty\xi - S_\infty \widetilde{\xi}\|_{\mathcal{V}_0}^p \, H(\mathrm{d}\xi, \mathrm{d}\widetilde{\xi})\right)^{1/p}.
		\end{align*}
		Taking the infimum over all $H$ proves all assertions in the general case (i).

		For the case (ii), we may use the same argument but now with an application of Lemma \ref{lemma: general case b zero} instead. Finally, for the case of additive noise, we may proceed as above with the only difference that we may use Lemma \ref{lemma contraction estimate} instead of Lemma \ref{lemma: general case}.
	\end{proof}

    Recall that $\pi \in \mathcal{P}(\mathcal{V})$ is called \textit{invariant measure}, if $P_t^* \pi = \pi$ holds for each $t \geq 0$. In all three cases, the Feller property implies that for each limit distribution $\pi_{\rho}$ we can associate a stationary process with the corresponding Markovian lift which is, therefore, an invariant measure, see \cite[Proposition 11.2]{DaPrato_Zabczyk_2014} and \cite[Proposition 11.5]{DaPrato_Zabczyk_2014}.

	\begin{corollary}\label{remark: stationary process}
		Suppose the same conditions as in Theorem \ref{theorem_limit_distribution} are satisfied. Then for each $\rho \in \mathcal{P}_p(\mathcal{V})$, and each $\xi \in L^p(\Omega, \F_0, \P; \mathcal{V})$ with $\xi \sim \pi_{\rho}$ the process $X(\cdot; \xi)$ is stationary. In particular, also the stochastic Volterra process $u(\cdot; G) = \Xi X(\cdot; \xi)$ with $G = \Xi S(\cdot)\xi$ is stationary.
	\end{corollary}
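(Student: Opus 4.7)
The plan is to reduce the corollary to the fact that $\pi_\rho$ is an invariant measure of the Markov semigroup $(P_t)_{t\geq 0}$ constructed in Corollary \ref{cor: time homogeneous Markov}; stationarity of the lift $X(\cdot;\xi)$ then follows from the standard procedure for Markov processes started from an invariant initial law, and stationarity of the Volterra process is inherited by composition with $\Xi$.

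To show $P_t^*\pi_\rho = \pi_\rho$ for fixed $t \geq 0$, the natural starting point is the triangle inequality
\[
\mathcal{W}_{p,\mathcal{V}_0}(P_t^*\pi_\rho,\, \pi_\rho)\;\leq\; \mathcal{W}_{p,\mathcal{V}_0}(P_t^*\pi_\rho,\, P_{t+s}^*\rho) \;+\; \mathcal{W}_{p,\mathcal{V}_0}(P_{t+s}^*\rho,\, \pi_\rho),
\]
where the second summand vanishes as $s \to \infty$ by Theorem \ref{theorem_limit_distribution}. For the first, I would rewrite $P_{t+s}^*\rho = P_t^*(P_s^*\rho)$ and apply the contraction estimate of Lemma \ref{lemma: general case} (or its analogues Lemmas \ref{lemma: general case b zero} and \ref{lemma contraction estimate} in the no-drift and additive cases) to the pair $(\pi_\rho, P_s^*\rho)$. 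The crucial structural input is that $\rho$, $P_s^*\rho$, and $\pi_\rho$ all share the same $S_\infty$-pushforward: Lemma \ref{lemma: projection} gives this for the first two, while the disintegration \eqref{eq: limit distribution decomposition} together with the fact that $S_\infty X(t;\xi) = S_\infty \xi$ a.s.~propagates it to the limit. This rigidity lets the coupling construction from step 1 of the proof of Theorem \ref{theorem_limit_distribution} eliminate the $S_\infty$-obstruction in the contraction bound, leaving only the decaying factor $\mathcal{R}_{\mathrm{gen}}^p(t)$ multiplied by uniformly bounded $\mathcal{V}$-moments; combined with the $C_b(\mathcal{V})$-Feller property of $(P_t)$ and the Cauchy property of $(P_s^*\rho)_{s\geq 0}$, this yields invariance in the spirit of \cite[Proposition 11.2]{DaPrato_Zabczyk_2014}.

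Once invariance is in hand, the remaining steps are essentially formal. Choosing $\xi \in L^p(\Omega, \F_0, \P; \mathcal{V})$ with law $\pi_\rho$, which is admissible by the enlargement of $\F_0$ assumed in Section \ref{sec:limit_dist_inv_meas}, yields $X(t;\xi) \sim \pi_\rho$ for every $t \geq 0$; the time-homogeneous Markov property from Corollary \ref{cor: time homogeneous Markov} then upgrades this to shift-invariance of all finite-dimensional distributions of $X(\cdot;\xi)$, as in \cite[Proposition 11.5]{DaPrato_Zabczyk_2014}. Stationarity of $u(\cdot;G) = \Xi X(\cdot;\xi)$ is inherited by pointwise application of the deterministic bounded linear map $\Xi \in L(\mathcal{V}_0, V)$, and $G(t) = \Xi S(t)\xi$ records that $\xi$ plays the role of the (now random) initial datum in the Volterra formulation. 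The delicate point is invariance: since the Feller property of $(P_t)$ lives on $\mathcal{V}$ while convergence $P_s^*\rho \to \pi_\rho$ is established only in $\mathcal{W}_{p,\mathcal{V}_0}$, commutation of $P_t^*$ with this limit is not automatic, and it is the rigidity of the common $S_\infty$-marginal that is needed to close the argument.
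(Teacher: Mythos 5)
Your high-level reduction — invariance of $\pi_{\rho}$, then stationarity of the lift started from an invariant law, then pushforward by $\Xi$ — is exactly the paper's route, which disposes of the corollary by invoking the $C_b(\mathcal{V})$-Feller property together with \cite[Propositions 11.2 and 11.5]{DaPrato_Zabczyk_2014}. The difficulty is the step you yourself flag in your last sentence: your quantitative substitute for that Feller argument does not close it. In your triangle inequality you keep $t$ fixed and let $s \to \infty$, and you bound the first summand by the contraction estimate of Lemma \ref{lemma: general case} applied to the pair $(\pi_{\rho}, P_s^*\rho)$ with a coupling matching the common $S_\infty$-marginals. But that estimate gives $\mathcal{W}_{p,\mathcal{V}_0}(P_t^*\pi_{\rho}, P_t^*P_s^*\rho) \lesssim \left(1+m_p(\rho)\right)\left(\mathcal{R}^{p}_{\mathrm{gen}}(t)\right)^{1/p}$ \emph{uniformly in $s$}: the matched $S_\infty$-marginals remove the non-decaying coupling term, but what remains decays in $t$, not in $s$. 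Hence the scheme only yields $\mathcal{W}_{p,\mathcal{V}_0}(P_t^*\pi_{\rho}, \pi_{\rho}) \lesssim \left(\mathcal{R}^{p}_{\mathrm{gen}}(t)\right)^{1/p}$, i.e.\ $P_t^*\pi_{\rho} \to \pi_{\rho}$ as $t \to \infty$, which is strictly weaker than the identity $P_t^*\pi_{\rho} = \pi_{\rho}$ for every fixed $t$ that stationarity requires (already the one-dimensional marginals of $X(\cdot;\xi)$ need it). The appeal to the $C_b(\mathcal{V})$-Feller property ``in the spirit of'' \cite[Proposition 11.2]{DaPrato_Zabczyk_2014} is precisely the missing commutation of $P_t^*$ with the $s\to\infty$ limit, and the $S_\infty$-rigidity supplies no continuity of $P_t^*$ along a sequence converging only in $\mathcal{W}_{p,\mathcal{V}_0}$; so the invariance step remains open in your write-up.

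To close it one has to do what the citation encodes: show $\int P_t f\, \mathrm{d}(P_s^*\rho) \to \int P_t f\, \mathrm{d}\pi_{\rho}$ for a measure-determining class of $f$, which needs either weak convergence of $P_s^*\rho$ to $\pi_{\rho}$ in the topology of $\mathcal{V}$ (where Corollary \ref{cor: Markov} gives continuity of $P_tf$), or enough $\mathcal{V}_0$-regularity of $P_t f$ — neither of which follows from Lemma \ref{lemma: general case}. You correctly spotted a subtlety that the paper handles only by citation, but the fix you propose addresses a different term of the estimate. The remaining parts of your argument — realising $\xi \sim \pi_{\rho}$ via the enlarged $\F_0$, upgrading $X(t;\xi)\sim\pi_{\rho}$ for all $t$ to shift-invariance of the finite-dimensional laws through the time-homogeneous Markov property of Corollary \ref{cor: time homogeneous Markov}, and transferring stationarity to $u(\cdot;G)=\Xi X(\cdot;\xi)$ by the deterministic bounded operator $\Xi$ — are correct and coincide with the paper's.
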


    It follows from Corollary \ref{remark: stationary process} that each invariant measure $\pi \in \mathcal{P}_{p}(\mathcal{V})$ is the limit distribution of the stationary process $X(\cdot; \xi)$ where $\xi \in L^p(\Omega, \F_0, \P; \mathcal{V})$ is such that $\mathcal{L}(\xi) = \pi$. Hence, the space of all invariant measures in $\mathcal{P}_p(\mathcal{V})$ coincides with the space of all limit distributions.

    Let $\pi(\xi, \cdot) \in \mathcal{P}_p(\mathcal{V})$ with $\xi \in \mathcal{V}$ be the limit distribution with initial state $\rho = \delta_{\xi}$. Define the transition operator $\Pi\colon B(\mathcal{V}) \longrightarrow B(\mathcal{V})$ by
    \[
        (\Pi f)(\xi) = \int_{\mathcal{V}} f(z)\, \pi(\xi, \mathrm{d}z), \qquad \xi \in \mathcal{V},
    \]
    where $B(\mathcal{V})$ denotes the space of bounded measurable functions $f\colon \mathcal{V} \longrightarrow \R$. Denote by $\mathrm{Lip}(\mathcal{V})$ the space of Lipschitz continuous bounded functions on $\mathcal{V}$. Since $\mathcal{V} \hookrightarrow \mathcal{V}_0$, we get $\mathrm{Lip}(\mathcal{V}_0) \hookrightarrow \mathrm{Lip}(\mathcal{V})$ and $C_b(\mathcal{V}_0) \hookrightarrow C_b(\mathcal{V})$. Below, we provide another characterisation of invariant measures in terms of the operator $\Pi$.

    \begin{corollary}\label{corollary Pi operator}
        Suppose that the same conditions as in Theorem \ref{theorem_limit_distribution} are satisfied. Then $\Pi f(\xi) = \Pi f (S_{\infty}\xi)$ holds for all $f \in B(\mathcal{V})$ and $\xi \in \mathcal{V}$. Moreover, for each $f \in C_b(\mathcal{V}_0)$
        \[
            \lim_{t \to \infty} (P_tf)(\xi) = (\Pi f)(\xi), \qquad \xi \in \mathcal{V},
        \]
        and, in particular, $\Pi P_t = \Pi = \Pi^2$ holds for each $t \geq 0$. Finally, $\rho \in \mathcal{P}(\mathcal{V})$ is an invariant measure if and only if
        \begin{align}\label{eq: Pi invariant measure}
            \int_{\mathcal{V}} \Pi f(x)\, \rho(\mathrm{d}x) = \int_{\mathcal{V}}f(x)\, \rho(\mathrm{d}x), \qquad \forall f \in C_b(\mathcal{V}).
        \end{align}
    \end{corollary}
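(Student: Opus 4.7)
The plan is to prove the three assertions in order, each using material already established.

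For the first identity, I would apply the continuity bound \eqref{eq: continuity pi} of Theorem \ref{theorem_limit_distribution} to $\rho = \delta_\xi$ and $\widetilde{\rho} = \delta_{S_\infty \xi}$. The only admissible coupling is $H = \delta_{(\xi, S_\infty \xi)}$, and since $S_\infty$ is a projection we have $S_\infty(S_\infty \xi) = S_\infty \xi$, so the right-hand side vanishes. Hence $\pi(\xi,\cdot) = \pi(S_\infty \xi,\cdot)$ as probability measures, which gives the identity for every $f \in B(\mathcal{V})$.

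For the convergence $(P_t f)(\xi) \to (\Pi f)(\xi)$ with $f \in C_b(\mathcal{V}_0)$, I would invoke $\mathcal{W}_{p,\mathcal{V}_0}(P_t^* \delta_\xi, \pi(\xi,\cdot)) \to 0$ from Theorem \ref{theorem_limit_distribution}, which implies weak convergence on $\mathcal{V}_0$. For the semigroup identity $\Pi P_t = \Pi$, I would use Corollary \ref{remark: stationary process} to conclude that each $\pi(\xi,\cdot)$ is invariant under the Markov semigroup, so for any bounded measurable $f$,
\[
(\Pi P_t f)(\xi) = \int_{\mathcal{V}} (P_t f)(y)\, \pi(\xi,\mathrm{d}y) = \int_{\mathcal{V}} f\, \mathrm{d}(P_t^*\pi(\xi,\cdot)) = (\Pi f)(\xi).
\]
For $\Pi^2 = \Pi$, I would first observe that Lemma \ref{lemma: projection} yields $P_t^*\delta_\xi \circ S_\infty^{-1} = \delta_{S_\infty \xi}$ for every $t$; passing to the weak limit through the continuous map $S_\infty$ gives $\pi(\xi,\cdot) \circ S_\infty^{-1} = \delta_{S_\infty \xi}$, so $\pi(\xi,\cdot)$ is supported in $\{y : S_\infty y = S_\infty \xi\}$. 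Combined with the first assertion,
\[
(\Pi^2 f)(\xi) = \int_{\mathcal{V}} (\Pi f)(S_\infty y)\, \pi(\xi,\mathrm{d}y) = (\Pi f)(S_\infty \xi) = (\Pi f)(\xi).
\]

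For the characterisation of invariant measures, the backward implication should follow cleanly from the $C_b$-Feller property (Corollary \ref{cor: Markov}): for $f \in C_b(\mathcal{V})$ one has $P_t f \in C_b(\mathcal{V})$, so testing \eqref{eq: Pi invariant measure} against $P_t f$ and using $\Pi P_t = \Pi$ yields $\int P_t f\, \mathrm{d}\rho = \int \Pi P_t f\, \mathrm{d}\rho = \int \Pi f\, \mathrm{d}\rho = \int f\, \mathrm{d}\rho$, and therefore $P_t^*\rho = \rho$. For the forward direction, when $\rho \in \mathcal{P}_p(\mathcal{V})$, the Wasserstein convergence $P_t^*\rho \to \pi_\rho$ together with $P_t^*\rho = \rho$ forces $\rho = \pi_\rho$, and the disintegration \eqref{eq: limit distribution decomposition} then immediately produces \eqref{eq: Pi invariant measure}. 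The point I anticipate needing the most care is extending the forward implication to arbitrary $\rho \in \mathcal{P}(\mathcal{V})$ without a finite $p$-th moment: here I would either apply dominated convergence to $f \in C_b(\mathcal{V}_0) \subset C_b(\mathcal{V})$ (since $P_t f \to \Pi f$ pointwise and $|P_t f| \leq \|f\|_\infty$) and then bootstrap to all of $C_b(\mathcal{V})$ using the $S_\infty$-invariance in Lemma \ref{lemma: projection}, or reduce to the moment-bounded case by truncation through the projection $S_\infty$.
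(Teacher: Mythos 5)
Your overall architecture matches the paper's, but two steps deserve comment. For the identity $\Pi f(\xi)=\Pi f(S_\infty\xi)$ your route is genuinely different and arguably cleaner: applying \eqref{eq: continuity pi} to $\rho=\delta_\xi$, $\widetilde\rho=\delta_{S_\infty\xi}$ with the unique coupling $\delta_{(\xi,S_\infty\xi)}$ and $S_\infty^2=S_\infty$ gives $\mathcal{W}_{p,\mathcal{V}_0}(\pi(\xi,\cdot),\pi(S_\infty\xi,\cdot))=0$, hence equality of the two limit measures, and therefore the identity for \emph{all} bounded measurable $f$ at once (modulo the standard remark that $\mathcal{B}(\mathcal{V})$ is the trace of $\mathcal{B}(\mathcal{V}_0)$, so equality as measures on $\mathcal{V}_0$ implies equality on $\mathcal{V}$). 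The paper instead proves the Lipschitz estimate $|\Pi f(\xi)-\Pi f(\widetilde\xi)|\lesssim\|f\|_{\mathrm{Lip}}\|S_\infty\xi-S_\infty\widetilde\xi\|_{\mathcal{V}_0}$ for $f\in\mathrm{Lip}(\mathcal{V}_0)$ and then extends by bounded pointwise density; your argument buys the conclusion for $B(\mathcal{V})$ directly and records the useful fact $\pi(\xi,\cdot)=\pi(S_\infty\xi,\cdot)$. The steps on $P_tf\to\Pi f$, on $\Pi P_t=\Pi$ via Corollary \ref{remark: stationary process}, and on the invariance characterisation (backward direction via the Feller property and $\Pi P_t=\Pi$; forward direction via dominated convergence for $f\in C_b(\mathcal{V}_0)$ and a density/measure-determining bootstrap, which is exactly how the paper handles general $\rho\in\mathcal{P}(\mathcal{V})$ without moments) coincide with the paper's proof.

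The one genuine weak point is your derivation of $\Pi^2=\Pi$. You pass the identity $P_t^*\delta_\xi\circ S_\infty^{-1}=\delta_{S_\infty\xi}$ to the limit ``through the continuous map $S_\infty$'', but the weak convergence $P_t^*\delta_\xi\Rightarrow\pi(\xi,\cdot)$ is with respect to the $\mathcal{V}_0$-topology, while $S_\infty$ is only assumed bounded on $\mathcal{V}$ (and from $\mathcal{V}$ to $\mathcal{V}_0$); it is not given as a continuous map on $\mathcal{V}_0$, so the continuous mapping theorem does not apply as stated. The support property $\pi(\xi,\{y:S_\infty y=S_\infty\xi\})=1$ is true (the paper uses it later for the LLN and CLT), but it needs an extra argument, e.g.\ that $\{y\in\mathcal{V}:S_\infty y=a\}\cap\{\|y\|_{\mathcal{V}}\leq R\}$ is closed in $\mathcal{V}_0$ (via weak compactness of $\mathcal{V}$-balls and weak continuity of $S_\infty$) combined with the uniform moment bound \eqref{eq: moment bound limit distribution}. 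A simpler repair, entirely inside your own proof, is the paper's route: you already have $\Pi P_tf=\Pi f$ and $P_tf\to\Pi f$ pointwise and boundedly for $f\in C_b(\mathcal{V}_0)$, so dominated convergence under $\pi(\xi,\cdot)$ gives $\Pi^2f=\Pi f$ on $C_b(\mathcal{V}_0)$, which then extends to $B(\mathcal{V}_0)$ by bounded pointwise approximation, no support property needed.
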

    \begin{proof}
        Let $\xi, \widetilde{\xi} \in \mathcal{V}$ and let $H$ be the optimal coupling of $(\pi(\xi, \cdot), \pi(\widetilde{\xi}, \cdot))$ with respect to $\mathcal{W}_{1, \mathcal{V}_0}$. Then we obtain for each $f \in \mathrm{Lip}(\mathcal{V}_0)$ from \eqref{eq: continuity pi} the bound
        \begin{align*}
            |\Pi f(\xi) - \Pi f(\widetilde{\xi})| &= \left| \int_{\mathcal{V} \times \mathcal{V}} (f(x) - f(y))\, H(\mathrm{d}x, \mathrm{d}y) \right| \notag
            \\ &\leq \|f \|_{\mathrm{Lip}} \int_{\mathcal{V} \times \mathcal{V}} \|x - y\|_{\mathcal{V}_0}\, H(\mathrm{d}x, \mathrm{d}y) \notag
            \\ &= \|f\|_{\mathrm{Lip}} \mathcal{W}_{1, \mathcal{V}_0}(\pi(\xi, \cdot), \pi(\widetilde{\xi}, \cdot))
            \lesssim \|f\|_{\mathrm{Lip}} \| S_{\infty}\xi - S_{\infty} \widetilde{\xi}\|_{\mathcal{V}_0}
        \end{align*}
        which shows that $\Pi\colon \mathrm{Lip}(\mathcal{V}_0) \longrightarrow \mathrm{Lip}(\mathcal{V})$, and $\Pi f(\xi) = \Pi f(S_{\infty}\xi)$ for $\xi \in \mathcal{V}$. Since $\mathrm{Lip}(\mathcal{V}_0) \subset B(\mathcal{V}_0)$ is dense with respect to bounded pointwise convergence, by approximation, the identity $\Pi f(\xi) = \Pi f(S_{\infty}\xi)$ extends onto $f \in B(\mathcal{V}_0) \hookrightarrow B(\mathcal{V})$.

        Since convergence in the Wasserstein distance, implies weak convergence, it follows that $\lim_{t \to \infty} (P_tf)(\xi) = (\Pi f)(\xi)$ holds for $f \in C_b(\mathcal{V}_0)$ and $\xi \in \mathcal{V}$. Let $x \in \mathcal{V}$ and $f \in B(\mathcal{V})$. Since $\pi(x, \mathrm{d}y)$ is by definition a limit distribution and hence an invariant measure, we get
        \[
            \Pi P_tf(x) = \int_{\mathcal{V}} P_tf(y)\, \pi(x, \mathrm{d}y) = \int_{\mathcal{V}} f(y)\, \pi(x, \mathrm{d}y) = \Pi f(x).
        \]
        If $f \in \mathrm{Lip}(\mathcal{V}_0)$, taking the limit $t \to \infty$ in $\Pi P_t f = \Pi f$ gives $\Pi^2 f(x) = \Pi f(x)$. Since $\mathrm{Lip}(\mathcal{V}_0) \hookrightarrow B(\mathcal{V}_0)$ densely with respect to bounded pointwise convergence, by approximation, this identity extends onto all functions $f \in B(\mathcal{V}_0) \hookrightarrow B(\mathcal{V})$.

        Let $\rho \in \mathcal{P}(\mathcal{V})$ be an invariant measure. Then $\int_{\mathcal{V}}P_tf(x)\, \rho(\mathrm{d}x) = \int_{\mathcal{V}}f(x)\, \rho(\mathrm{d}x)$ holds for all $t \geq 0$ and $f \in \mathrm{Lip}(\mathcal{V}_0) \hookrightarrow C_b(\mathcal{V})$. Hence, taking the limit $t \to \infty$ proves \eqref{eq: Pi invariant measure} for $f \in \mathrm{Lip}(\mathcal{V}_0)$. By approximation, \eqref{eq: Pi invariant measure} also holds for each $f \in C_b(\mathcal{V})$. To prove the converse direction, let $\pi$ satisfy \eqref{eq: Pi invariant measure}. Let $t \geq 0$ and $f \in C_b(\mathcal{V})$. Then $P_tf \in C_b(\mathcal{V})$, and hence \eqref{eq: Pi invariant measure} gives
        \begin{align*}
            \int_{\mathcal{V}} P_tf(x)\, \rho(\mathrm{d}x)
            = \int_{\mathcal{V}} \Pi P_t f(x)\, \rho(\mathrm{d}x)
            = \int_{\mathcal{V}} \Pi f(x)\, \rho(\mathrm{d}x) = \int_{\mathcal{V}} f(x)\, \rho(\mathrm{d}x)
        \end{align*}
        which shows that $\rho$ is an invariant measure.

    \end{proof}

    A few remarks are in place. Firstly, it follows from $\Pi^2 = \Pi$, that the collection of limit distributions $\pi(x, \mathrm{d}y)$ satisfies
    \[
        \pi(x, \mathrm{d}z) = \int_{\mathcal{V}} \pi(y, \mathrm{d}z) \pi(x, \mathrm{d}y)
    \]
    while \eqref{eq: Pi invariant measure} is equivalent to $\int_{\mathcal{V}} \pi(x, \mathrm{d}y) \rho(\mathrm{d}x) = \rho(\mathrm{d}y)$. Denote by $\Pi^*$ the adjoint operator acting on probability measures. Then, according to \eqref{eq: Pi invariant measure}, invariant measures are the fixed points of $\Pi^*$, i.e.~$\Pi^* \rho = \rho$. Moreover, $\Pi^*$ maps onto invariant measures in the sense that, for any choice $\rho \in \mathcal{P}(\mathcal{V})$, $\Pi^* \rho$ is an invariant measure. For Markov transition semigroups with a unique invariant measure, $\Pi f$ is constant, whence the limit of $P_t$ as $t \to \infty$ has one-dimensional range. For Markovian lifts of stochastic Volterra processes, invariant measures only depend on the range of $S_{\infty}$, while they are uniquely determined on $\mathrm{ran}(S_{\infty})^{\perp}$. Finally, let us remark that, by standard approximation methods, $\Pi$ can be extended to a large class of continuous polynomially bounded functions as introduced in the next section.

    \section{Limit Theorems}\label{sec:limit_theorems}

	\subsection{Law of Large Numbers}

    In this section, we derive the law of large numbers, including a convergence rate. First, we formulate and prove a general result beyond the specific structure of Markovian lifts that is of independent interest. Afterwards, we derive the desired law of large numbers for the Markovian lift studied in Section~\ref{sec:limit_dist_inv_meas} as a special case.

    \begin{theorem}\label{thm: LLN abstract}
       Let $Z \hookrightarrow Z_0$ be separable Hilbert spaces, and let $(X_t)_{t\geq0}$ be a $Z$-valued Markov process with transition probability kernel $p_t(\xi,\d x)$, and $C_b$-Feller transition semigroup $(P_t)_{t \geq 0}$. Suppose that for some $p \in[1,\infty)$ there exists a constant $C_p > 0$ such that
       \begin{equation}\label{eq:LLN 2+eps}
       \int_{Z}\|y\|_Z^{2p}\, p_t(x, \mathrm{d}y) \leq C_p(1 + \|x\|_Z^{2p}), \qquad \forall t \geq 0,
       \end{equation}
       $X$ admits a unique invariant measure $\pi\in\mathcal{P}_{2p}(Z)$, and there exists $\lambda > 0$ such that
       \begin{equation*}\label{eq:appendix_rate}
            \mathcal{W}_{1,Z_0}(p_t(\xi\,\cdot),\pi)\lesssim (1 + \|\xi\|_Z)(1\lor t)^{-\lambda},\quad  t\geq0,\ \xi\in Z.
       \end{equation*}
        Fix $\gamma \in (0,1]$ and let $f\colon Z_0 \longrightarrow \R$ satisfy for some constant $C_f > 0$
        \[
            |f(x) - f(y)| \leq C_f (1 + \| x\|_{Z_0}^p + \|y\|_{Z_0}^p)^{1-\gamma}\|x-y\|_{Z_0}^{\gamma}, \qquad x,y \in Z_0.
        \]
        Then the process $X$ satisfies the Law of Large Numbers in the mean-square sense, i.e.
        \begin{multline*}
             \E\left[\left|\frac{1}{T}\int_0^T f(X_t)\,\d t -\pi(f)\right|^{2}\right] \\  \lesssim \left( 1 + \E[\|X_0\|_Z^{2p}] + \int_{Z}\|y\|_{Z}^{2p}\, \pi(\mathrm{d}y) \right) T^{-(1\land \gamma \lambda)}(\log(T))^{\1_{\{\gamma \lambda=1\}}}
        \end{multline*}
        holds, where $\pi(f) = \int_{Z}f(x)\, \pi(\mathrm{d}x)$.
    \end{theorem}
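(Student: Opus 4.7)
The plan is to run the classical covariance expansion. By Fubini and symmetry in $(s,t)$,
\begin{align*}
\E\left[\left|\frac{1}{T}\int_0^T f(X_t)\,\d t -\pi(f)\right|^{2}\right]
= \frac{2}{T^2}\int_0^T\int_0^t \E\bigl[(f(X_t)-\pi(f))(f(X_s)-\pi(f))\bigr]\,\d s\,\d t,
\end{align*}
and conditioning on $\mathcal{F}_s$ reduces each inner expectation, via the Markov property, to $\E[(f(X_s)-\pi(f))(P_{t-s}f(X_s)-\pi(f))]$. The heart of the argument will be a pointwise estimate on $P_uf(x)-\pi(f)$ that trades the Wasserstein rate for the H\"older growth of $f$ while keeping the growth in $\|x\|_Z$ bounded by $p$, so that the assumed $2p$-th moment bound suffices.

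To obtain this estimate, I would write $P_uf(x)-\pi(f)=\int (f(y)-f(\bar y))\,\d\nu(y,\bar y)$ for any coupling $\nu$ of $p_u(x,\cdot)$ and $\pi$. Inserting the H\"older-growth assumption on $f$ and applying H\"older's inequality in $\nu$ with exponents $\frac{1}{1-\gamma},\frac{1}{\gamma}$ gives
\[
|P_uf(x)-\pi(f)|\le C_f\left(\int (1+\|y\|_{Z_0}^p+\|\bar y\|_{Z_0}^p)\,\d\nu\right)^{1-\gamma}\left(\int\|y-\bar y\|_{Z_0}\,\d\nu\right)^{\gamma}.
\]
The first factor depends only on marginals and is bounded, via $\|\cdot\|_{Z_0}\lesssim\|\cdot\|_Z$ and \eqref{eq:LLN 2+eps}, by $(1+\|x\|_Z^p+\pi(\|\cdot\|_Z^p))^{1-\gamma}$; infimising over $\nu$ in the second factor yields $\mathcal{W}_{1,Z_0}(p_u(x,\cdot),\pi)^{\gamma}$. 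Combining with the assumed Wasserstein rate and the elementary inequalities $(a+b)^{\gamma}\le a^{\gamma}+b^{\gamma}$ and $\|x\|_Z^{\gamma}\le 1+\|x\|_Z^p$ (valid for $p\ge 1$, $\gamma\in(0,1]$), one deduces
\[
|P_uf(x)-\pi(f)|\lesssim (1+\|x\|_Z^p)(1\lor u)^{-\gamma\lambda},
\]
where the hidden constant depends on $\pi(\|\cdot\|_Z^p)$, which is finite because $\pi\in\mathcal{P}_{2p}(Z)$.

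Applying the H\"older condition at $(x,0)$ also gives the plain growth $|f(x)|\lesssim 1+\|x\|_Z^{p-(p-1)\gamma}\le 1+\|x\|_Z^p$, and hence $|f(X_s)-\pi(f)|\lesssim 1+\|X_s\|_Z^p$. Multiplying the two pointwise bounds and taking expectations, the $2p$-th moment estimate \eqref{eq:LLN 2+eps} yields
\[
\bigl|\E\bigl[(f(X_t)-\pi(f))(f(X_s)-\pi(f))\bigr]\bigr|\lesssim \bigl(1+\E[\|X_0\|_Z^{2p}]+\pi(\|\cdot\|_Z^{2p})\bigr)(1\lor(t-s))^{-\gamma\lambda}.
\]

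It then remains to evaluate $\frac{1}{T^2}\int_0^T\int_0^t (1\lor(t-s))^{-\gamma\lambda}\,\d s\,\d t$. Substituting $u=t-s$ and splitting at $u=1$, the inner integral is bounded, up to a constant, by $1$ when $\gamma\lambda>1$, by $1+\log t$ when $\gamma\lambda=1$, and by $t^{1-\gamma\lambda}$ when $\gamma\lambda<1$. The subsequent outer integration and division by $T^2$ produces in each case exactly the announced rate $T^{-(1\land\gamma\lambda)}(\log T)^{\1_{\{\gamma\lambda=1\}}}$. I expect the main technical subtlety to be the H\"older interpolation step: a cruder approach (for instance a pure Cauchy--Schwarz on the squared covariance together with the triangle inequality) would force a $4p$-th moment assumption, whereas the present interpolation keeps the growth at order $p$, exactly matching what \eqref{eq:LLN 2+eps} provides. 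The remaining steps are routine bookkeeping across the three regimes of $\gamma\lambda$.
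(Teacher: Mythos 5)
Your proposal is correct and follows essentially the same route as the paper: the key step — the pointwise bound $|P_uf(x)-\pi(f)|\lesssim\bigl(1+\|x\|_Z^p+\int_Z\|y\|_Z^p\,\pi(\mathrm{d}y)\bigr)(1\lor u)^{-\gamma\lambda}$, obtained from a coupling of $p_u(x,\cdot)$ and $\pi$, H\"older interpolation with exponents $\tfrac{1}{1-\gamma},\tfrac{1}{\gamma}$, the Wasserstein rate and the moment bound \eqref{eq:LLN 2+eps} — is exactly the paper's first estimate, and the integration of $(1\lor(t-s))^{-\gamma\lambda}$ over the triangle with the three regimes of $\gamma\lambda$ is identical. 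Your bookkeeping is in fact slightly cleaner: centering at $\pi(f)$ and expanding the square reduces everything to the single term $\E[(f(X_s)-\pi(f))(P_{t-s}f(X_s)-\pi(f))]$, whereas the paper centres at $\E[f(X_t)]$ and must separately control the bias term $\bigl(\tfrac1T\int_0^T\E[f(X_t)]\,\mathrm{d}t-\pi(f)\bigr)^2$ and the product-of-means contribution, which of course yield the same rate.
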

    \begin{proof}
        Let us first prove a pointwise bound on $|\E[f(X_t)] - \pi(f)|$. Fix $x_0 \in Z$, and let $G_{x_0}$ be the optimal coupling of $(\delta_{x_0}, \pi)$ with respect to $\mathcal{W}_{1, Z_0}$. Then using the \emph{Kantorovich duality} \cite[Theorem 5.10]{villani2016optimal} with $c(x,y)=\norm{x-y}_{Z_0}^{\gamma}(1+\norm{x}_{Z_0}^p+\norm{y}_{Z_0}^p)^{1-\gamma}$, the convexity of the Wasserstein distance, and finally H\"older's inequality, we find
        \begin{align}\notag
            &\ \left| \int_{Z} f(y) p_t(x_0,\mathrm{d}y) - \int_Z f(y) \pi(\mathrm{d}y) \right|
            \\\notag &\quad \leq C_f \int_{Z_0\times Z_0}\norm{x-y}_{Z_0}^{\gamma}\left(1+\norm{x}_{Z_0}^p+\norm{y}_{Z_0}^p\right)^{1-\gamma}\,G_{x_0}(\d x,\d y)
            \\ \notag &\quad \lesssim \left( \int_{Z_0 \times Z_0} (1 + \|x\|_{Z_0}^p + \|y\|_{Z_0}^p)\, G_{x_0}(\mathrm{d}x, \mathrm{dy}) \right)^{1-\gamma} \left(\int_{Z_0\times Z_0}\norm{x-y}_{Z_0}\, G_{x_0}(\d x, \d y)\right)^{\gamma}
            \\ &\quad \lesssim \left( 1 + \|x_0\|_{Z}^p + \int_{Z}\|y\|_{Z}^p\, \pi(\mathrm{d}y)\right)^{1-\gamma} \left( 1 + \|x_0\|_Z \right)^{ \gamma}  (1\lor t)^{-\lambda \gamma} \notag
            \\ &\quad \lesssim \left( 1 + \|x_0\|_{Z}^p + \int_{Z}\|y\|_{Z}^p\, \pi(\mathrm{d}y)\right) (1\lor t)^{-\lambda \gamma}\label{eq: rate of convergence cesaro}
        \end{align}
        where we have used $Z \hookrightarrow Z_0$ and extended $\pi$ onto $Z_0$ by $\pi(Z_0 \backslash Z) = 0$. For random initial conditions $X_0 \sim \rho$, we obtain the bound
        \begin{align}\label{eq: 3}
            &|\E[f(X_t)] -\pi(f)| \notag
            \\ &\leq \int_{Z} \left| \int_{Z} f(y) p_t(x_0,\mathrm{d}y) - \int_Z f(y) \pi(\mathrm{d}y) \right| \rho(\mathrm{d}x_0) \notag
            \\ &\lesssim (1\lor t)^{-\lambda \gamma} \cdot \int_{Z} \left( 1 + \|x_0\|_{Z}^p + \int_{Z}\|y\|_{Z}^p\, \pi(\mathrm{d}y)\right)\, \rho(\mathrm{d}x_0)   \notag
            \\ &= (1\lor t)^{-\lambda \gamma} \cdot \left( 1 + \E[\|X_0\|_Z^p] + \int_{Z}\|y\|_{Z}^p\, \pi(\mathrm{d}y) \right).
        \end{align}

         Next, we prove a pointwise bound on $|\E[f(X_t)f(X_s)]-\pi(f)\pi(f)|$. For $0\leq s<t$, we use the Markov property, previous bound, and $|f(x)| \lesssim 1 + \|x\|_{Z_0}^p \lesssim 1 + \|x\|_Z^p$, to find
        \begin{align*}
            &|\E[f(X_t)f(X_s)]-\pi(f)\pi(f)|\\
            &\quad\leq \left| \E\left[f(X_s)\left(P_{t-s}f(X_s)-\pi(f)\right)\right]  \right| + |\pi(f)| \left|\E[f(X_s)]-\pi(f)\right|\\
            &\quad\lesssim \left| \E\left[f(X_s)\left(P_{t-s}f(X_s)-\pi(f)\right)\right]  \right|
            \\ &\qquad + (1\lor s)^{-\lambda \gamma} \cdot \left( 1 + \E[\|X_0\|_Z^p] + \int_{Z}\|y\|_{Z}^p\, \pi(\mathrm{d}y) \right) \left( 1 + \int_Z \|y\|_Z^p\, \pi(\mathrm{d}y) \right).
        \end{align*}
        To bound the first term, we use \eqref{eq: rate of convergence cesaro} for $P_{t-s}f(X_s) - \pi(f)$ combined with a repeated use of the H\"older inequality to find
        \begin{align*}
            &\ \left| \E\left[f(X_s)\left(P_{t-s}f(X_s)-\pi(f)\right)\right]  \right|
            \\ &\quad\lesssim (1\lor (t-s))^{-\lambda \gamma} \E\left[ ( 1+ \|X_s\|_Z^p) \left( 1 + \|X_{s}\|_{Z}^p + \int_{Z}\|y\|_{Z}^p\, \pi(\mathrm{d}y)\right)\right]
            \\ &\quad\lesssim (1\lor (t-s))^{-\lambda \gamma} \left( 1 + \sup_{\tau \geq 0}\E[ \|X_{\tau}\|_{Z}^{2p}] + \left(\int_{Z}\|y\|_{Z}^p\, \pi(\mathrm{d}y)\right)^2 \right)
            \\ &\quad\lesssim (1\lor (t-s))^{-\lambda \gamma} \left( 1 + \E[ \|X_{0}\|_{Z}^{2p}] + \left(\int_{Z}\|y\|_{Z}^p\, \pi(\mathrm{d}y)\right)^2 \right)
        \end{align*}
        where the last inequality follows from $\sup_{\tau \geq 0}\E[\|X_{\tau}\|_Z^{2p} ] \lesssim 1 + \E[\|X_0\|_Z^{2p}] < \infty$ due to \eqref{eq:LLN 2+eps}. Hence, we have shown that
        \begin{align}\label{eq: rate g}
            &\ |\E[f(X_t)f(X_s)]-\pi(f)\pi(f)| \\
            &\quad \lesssim (1\lor s)^{-\lambda \gamma} \cdot \left( 1 + \E[\|X_0\|_Z^p] + \int_{Z}\|y\|_{Z}^p\, \pi(\mathrm{d}y) \right) \left( 1 + \int_Z \|y\|_Z^p\, \pi(\mathrm{d}y) \right) \notag
            \\
            &\qquad + (1\lor (t-s))^{-\lambda \gamma} \left( 1 + \E[ \|X_{0}\|_{Z}^{2p}] + \left(\int_{Z}\|y\|_{Z}^p\, \pi(\mathrm{d}y)\right)^2 \right) \notag
            \\ &\quad \lesssim \left( (1\lor s)^{-\lambda \gamma} + (1\lor (t-s))^{-\lambda \gamma} \right) \cdot \left( 1 + \E[\|X_0\|_Z^{2p}] + \int_{Z}\|y\|_{Z}^{2p}\, \pi(\mathrm{d}y) \right).  \notag
        \end{align}
        We are now prepared to prove the assertion. Observe that
        \[
        \frac{1}{T}\int_0^T f(X_t)\,\d t - \pi(f) = \frac{1}{T}\int_0^T \left( f(X_t)-\E[f(X_t)] \right)\d t + \frac{1}{T}\int_0^T\E[f(X_t)]\,\d t -\pi(f)
        \]
        and thus
        \begin{align*}
            \left(\frac{1}{T}\int_0^T f(X_t)\,\d t-\pi(f)\right)^2 &= \left(\frac{1}{T}\int_0^T \left( f(X_t)-\E[f(X_t)] \right)\d t\right)^2
            \\ &\quad+ 2\left(\frac{1}{T}\int_0^T \left(f(X_t)-\E[f(X_t)] \right)\d t\right)\left(\frac{1}{T}\int_0^T\E[f(X_t)]\,\d t -\pi(f)\right)\\
            &\quad+\left(\frac{1}{T}\int_0^T\E[f(X_t)]\,\d t -\pi(f)\right)^2.
        \end{align*}
        By taking expectations and noting that the second term vanishes, we arrive at
        \begin{align}\label{eq:LLN p2}
            \E\left[\left|\frac{1}{T}\int_0^T f(X_s)\,\d s - \pi(f)\right|^2\right] &= \frac{1}{T^2}\int_0^T\int_0^T \operatorname{Cov}(f(X_t),f(X_s))\,\d t\,\d s
            \\ &\qquad + \left(\frac{1}{T}\int_0^T \E[f(X_t)]\,\d t - \pi(f)\right)^2. \notag
        \end{align}
        Let us now bound the first integral in \eqref{eq:LLN p2}. Firstly, we write
        \begin{align}\label{eq: covariance decomposition}
            \frac{1}{T^2}\int_0^T\int_0^T \operatorname{Cov}(f(X_t),f(X_s))\,\d t\,\d s
            &= \frac{1}{T^2}\int_0^T\int_0^T \left(\E[f(X_s)f(X_t)]-\pi(f)^2\right)\,\d t\,\d s
            \\ &\qquad + \frac{1}{T^2}\int_0^T\int_0^T \left(\pi(f)^2 - \E[f(X_t)]\E[f(X_s)]\right)\,\d t\,\d s. \notag
        \end{align}
        Then, for the second term, using $|f(x)| \lesssim 1 + \|x\|_{Z_0}^p \lesssim 1 + \|x\|_Z^p$ and then \eqref{eq: 3}, we obtain
        \begin{align} \notag
            & \frac{1}{T^2}\int_0^T\int_0^T |\pi(f)\pi(f)-\E[f(X_s)]\E[f(X_t)]|\,\d s \,\d t
            \\ &\quad\leq \frac{1}{T^2}\int_0^T\int_0^T |\pi(f)||\pi(f)-\E[f(X_s)]|\,\d s\,\d t + \frac{1}{T^2}\int_0^T\int_0^T \E[|f(X_s)|]|\pi(f)-\E[f(X_t)]|\,\d s\,\d t \notag
            \\ &\quad\lesssim \left( 1 + \sup_{\tau \geq 0}\E[ \|X_{\tau}\|_{Z}^p] + \int_{Z}\|y\|_{Z}^p\, \pi(\mathrm{d}y)  \right)\frac{1}{T}\int_0^T |\pi(f)-\E[f(X_s)]|\,\d s  \notag
            \\ &\quad \lesssim \left( 1 + \E[\|X_{0}\|_{Z}^p] + \int_{Z}\|y\|_{Z}^p\, \pi(\mathrm{d}y)\right)^{2} T^{-(1\land\lambda \gamma)} \left(\log(T)\right)^{{\1_{\{\lambda \gamma=1\}}}}.\nonumber \label{eq: second covariance}
        \end{align}
        For the first term, we find using \eqref{eq: rate g},
        \begin{align*}
            &\ \frac{1}{T^2}\int_0^T\int_0^T \left(\E[f(X_s)f(X_t)]-\pi(f)^2\right)\d s\,\d t
            \\ &= \frac{2}{T^2} \int_0^T \int_0^t \left(\E[f(X_s)f(X_t)]-\pi(f)^2\right) \mathrm{d}s\, \mathrm{d}t
            \\ &\lesssim \left( 1 + \E[\|X_0\|_Z^{2p}] + \int_{Z}\|y\|_{Z}^{2p}\, \pi(\mathrm{d}y) \right) \cdot
            \frac{2}{T^2} \int_0^T \int_0^t \left( (1\lor s)^{-\lambda \gamma} + (1\lor (t-s))^{-\lambda \gamma} \right) \, \mathrm{d}s\, \mathrm{d}t
            \\ &\lesssim \left( 1 + \E[\|X_0\|_Z^{2p}] + \int_{Z}\|y\|_{Z}^{2p}\, \pi(\mathrm{d}y) \right) T^{-(1\land \gamma \lambda)}(\log(T))^{\1_{\{\gamma \lambda=1\}}}.
        \end{align*}
        Collecting all estimates, see \eqref{eq: rate of convergence cesaro} and the bound on \eqref{eq: covariance decomposition}, yields
        \begin{align*}
            &\ \E\left[\left|\frac{1}{T}\int_0^T f(X_s)\,\d s  - \pi(f)\right|^2\right]
            \\ &\quad \lesssim \left( 1 + \E[\|X_0\|_Z^p] + \int_{Z}\|y\|_{Z}^p\, \pi(\mathrm{d}y) \right)^{2} \left(\frac{1}{T}\int_0^T (1\lor t)^{-\lambda \gamma}\, \mathrm{d}t \right)^2
            \\ &\qquad + \left( 1 + \E[\|X_{0}\|_{Z}^p] + \int_{Z}\|y\|_{Z}^p\, \pi(\mathrm{d}y)\right)^{2} T^{-(1\land\lambda \gamma)} \left(\log(T)\right)^{{\1_{\{\lambda \gamma=1\}}}}
            \\ &\qquad + \left( 1 + \E[\|X_0\|_Z^{2p}] + \int_{Z}\|y\|_{Z}^{2p}\, \pi(\mathrm{d}y) \right) T^{-(1\land \gamma \lambda)}(\log(T))^{\1_{\{\gamma \lambda=1\}}}
            \\ &\quad \lesssim \left( 1 + \E[\|X_0\|_Z^{2p}] + \int_{Z}\|y\|_{Z}^{2p}\, \pi(\mathrm{d}y) \right) T^{-(1\land \gamma \lambda)}(\log(T))^{\1_{\{\gamma \lambda=1\}}}
        \end{align*}
        and hence proves the assertion.
    \end{proof}

    Below, we apply this result to our setting of Markovian lifts with possible multiple invariant measures as studied in Section~\ref{sec:limit_dist_inv_meas}. As a first step, define the class of admissible test functions as the weighted H\"older space $C^\gamma_q(\mathcal{V}_0)$ for $\gamma\in(0,1)$ and $q>0$ that consists of all functions $f\colon\mathcal{V}_0\longrightarrow\R$ such that
    \[
    \sup_{\xi,\eta\in\mathcal{V}_0,\xi\neq\eta} \frac{|f(\xi)-f(\eta)|}{\norm{\xi-\eta}_{\mathcal{V}_0}^{\gamma}(V_q(\xi) + V_q(\eta))^{1-\gamma}}<\infty.
    \]
    Here $V_q\colon \mathcal{V}_0 \longrightarrow [1,\infty ]$ denotes the weight function $V_q(\eta) = 1+\norm{\eta}_{\mathcal{V}_0}^q$.
    The following example illustrates how the limit theorems obtained in this section can be used to derive limit theorems for the corresponding stochastic Volterra process.
    \begin{example}
        Let $F\colon V \longrightarrow \R$ be such that
        \[
            |F(u) - F(v)| \leq C_F (1 + \|u\|_V^q + \|v\|_V^q)^{1-\gamma} \| u - v\|_V^{\gamma}, \qquad u,v\in V.
        \]
        Then $f\colon \mathcal{V}_0 \longrightarrow \R$ defined by $f(y) = F(\Xi y)$ satisfies $f \in C_{q}^{\gamma}(\mathcal{V}_0)$.
    \end{example}

    Recall that, for each $\xi \in \mathcal{V}$, there exists a limit distribution $\pi(\xi, \cdot) \in \mathcal{P}_p(\mathcal{V})$ given by Theorem \ref{theorem_limit_distribution}. Given $\xi \in L^p(\Omega, \F_0, \P; \mathcal{V})$, we show that the time averages $\frac{1}{T} \int_0^T f(X(t;\xi))\, \mathrm{d}t$ converge to the random variable
    \[
        (\Pi f)(\xi) = \int_{\mathcal{V}} f(y) \pi(\xi, \mathrm{d}y)
    \]
    obtained by pointwise evaluation of $\Pi f$ at the random variable $\xi$. Using Lemma \ref{lemma: convergence rate bound}, we have for each $\mathcal{R}\in\{(\mathcal{R}^{p}_{\text{gen}})^{1/p},(\mathcal{R}^{p}_{\text{b=0}})^{1/p},\mathcal{R}_{\text{add}}\}$ the bound
    \[
        \mathcal{R}(t) \lesssim (1 \vee t)^{- \chi}, \qquad t > 0
    \]
    where the convergence rate $\chi > 0$ satsifies
    \[
        \chi < \begin{cases}
            \frac{1}{p}\min\{\log(1/\norm{\rho_{\text{gen}}^{(p)}}_{L^1(\R_+)}),\lambda\}, & \text{if } \mathcal{R} = (\mathcal{R}^{p}_{\text{gen}})^{1/p}
            \\ \frac{1}{p}\min\{\log(1/\norm{\rho_{\text{b=0}}^{(p)}}_{L^1(\R_+)}),2\lambda\}, & \text{if } \mathcal{R} = (\mathcal{R}^{p}_{\text{b=0}})^{1/p}
            \\ \min\{\log(1/\norm{\rho_{\text{add}}}_{L^1(\R_+)}),\lambda\}, & \text{if } \mathcal{R} = \mathcal{R}_{\text{add}}.
        \end{cases}
    \]

    Then we obtain the following explicit convergence rates for the Law of Large Numbers.

    \begin{corollary}\label{theorem LLN lift}
        Suppose that Assumptions \ref{assumption SEE} and \ref{assumption long-time} are satisfied. Let $p \in (2,\infty)$ satisfy condition \eqref{eq: continuity}, and suppose that one of the following conditions holds:
		\begin{enumerate}
			\item (general case) Condition \eqref{eq:bounded_moment_condi_multi} is satisfied for $2p$ and \eqref{eq:contraction_estimate_condi_multi} for $p$.

            \item (no drift) If $b = 0$ and $\xi_b = 0$, then \eqref{eq:bounded_moment_condi_multi b zero} holds for $2p$ and \eqref{eq:contraction_estimate_condi_multi b zero} for $p$.

			\item (additive noise) If $\sigma \equiv \sigma_0$ does not depend on $u \in H$, then \eqref{eq:small_nonlinearity} holds.
		\end{enumerate}
		Let $\xi \in L^{2p}(\Omega, \F_0, \P; \mathcal{V})$, $\gamma \in (0,1]$ and $f \in C_{p}^{\gamma}(\mathcal{V}_0)$. Then
        \begin{align}\label{cor: LLN}
            \E\left[ \left| \frac{1}{T}\int_0^T f(X(t;\xi))\, \mathrm{d}t - (\Pi f)(\xi)\right|^{2}\right] \lesssim \left( 1 + \E[ \|\xi\|_{\mathcal{V}}^{2p}] \right)T^{- \min\{1, \chi \gamma\}}\left(\log(T)\right)^{{\1_{\{\chi
            \gamma=1 \}}}}.
        \end{align}
    \end{corollary}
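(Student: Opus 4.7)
The plan is to reduce to the abstract Law of Large Numbers Theorem~\ref{thm: LLN abstract} after fixing the initial condition and restricting the dynamics to the invariant affine fibre on which the limit distribution is unique. First, I would treat a deterministic starting point $\eta\in\mathcal{V}$. By Lemma~\ref{lemma: projection}, the trajectory $X(\cdot;\eta)$ lives a.s.\ in $\mathcal{V}_\eta\coloneqq\{y\in\mathcal{V}:S_\infty y=S_\infty\eta\}$, which is preserved by the transition semigroup. Hence $X(\cdot;\eta)$ is a Markov process on $\mathcal{V}_\eta$ for which, by Theorem~\ref{theorem_limit_distribution} together with the disintegration \eqref{eq: limit distribution decomposition} and the continuity bound \eqref{eq: continuity pi}, $\pi(\eta,\cdot)$ is the unique invariant measure in $\mathcal{P}_p(\mathcal{V}_\eta)$.

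With this reduction in place, I would apply Theorem~\ref{thm: LLN abstract} with $Z=\mathcal{V}_\eta\hookrightarrow \mathcal{V}_0=Z_0$ and $\pi=\pi(\eta,\cdot)$. The uniform moment bound \eqref{eq:LLN 2+eps} at exponent $2p$ is supplied by Lemma~\ref{lemma: general case}, Lemma~\ref{lemma: general case b zero}, or Lemma~\ref{lemma contraction estimate} according to which of the cases (1)--(3) applies, together with the moment estimate \eqref{eq: moment bound limit distribution} for $\pi(\eta,\cdot)$; this is precisely why the hypothesis on the $L^1$-norm must be stated at $2p$ (so that the moment bound holds for the exponent $2p$) while the contraction assumption is only needed at $p$. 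The Wasserstein rate is obtained from Theorem~\ref{theorem_limit_distribution} via $\mathcal{W}_{1,\mathcal{V}_0}\leq\mathcal{W}_{p,\mathcal{V}_0}$ combined with Lemma~\ref{lemma: convergence rate bound}, which converts $\mathcal{R}(t)^{1/p}$ into $(1\vee t)^{-\chi}$ for the $\chi$ specified before the statement. Finally, any $f\in C_p^\gamma(\mathcal{V}_0)$ satisfies the Hölder-with-weight assumption of Theorem~\ref{thm: LLN abstract} with $V_p(\xi)=1+\|\xi\|_{\mathcal{V}_0}^p$ and exponent $\gamma$. Theorem~\ref{thm: LLN abstract} then delivers, for $\eta\in\mathcal{V}$ deterministic, the bound
\[
\E\left[\left|\frac{1}{T}\int_0^T f(X(t;\eta))\,\mathrm{d}t-\Pi f(\eta)\right|^{2}\right]\lesssim \bigl(1+\|\eta\|_{\mathcal{V}}^{2p}\bigr)\,T^{-\min\{1,\chi\gamma\}}(\log T)^{\1_{\{\chi\gamma=1\}}}.
\]

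To pass to random $\xi\in L^{2p}(\Omega,\F_0,\P;\mathcal{V})$, I would condition on $\F_0$. Conditionally on $\F_0$, $\xi$ is deterministic and the increments $W_{t}-W_{0}$ are independent of $\F_0$, so the conditional law of $X(\cdot;\xi)$ coincides with the law of the Markovian lift started at the deterministic point $\xi(\omega)$; the Markov property of Corollary~\ref{cor: time homogeneous Markov} and the strong uniqueness from Theorem~\ref{label:existence_uniqueness_VSPDE} justify this disintegration. Applying the previous deterministic bound pointwise in $\omega$, taking conditional expectations and invoking the tower property, one arrives exactly at \eqref{cor: LLN}.

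The main obstacle is reconciling the multiplicity of invariant measures for the Markovian lift with the uniqueness hypothesis in Theorem~\ref{thm: LLN abstract}. This is precisely the role of Lemma~\ref{lemma: projection}: because $S_\infty X(t;\xi)$ is frozen at $S_\infty\xi$, on each affine fibre $\mathcal{V}_\xi$ there is only one admissible limit distribution, so the abstract result applies fibre-by-fibre, and the $\F_0$-measurable random variable $\Pi f(\xi)=\Pi f(S_\infty\xi)$ from Corollary~\ref{corollary Pi operator} plays the role of the constant $\pi(f)$ in Theorem~\ref{thm: LLN abstract}. The rest of the argument — verifying integrability of the polynomial rate to transfer it through the proof of Theorem~\ref{thm: LLN abstract}, checking that $f\in C_p^\gamma(\mathcal{V}_0)$ meets the required weighted Hölder bound, and extending the deterministic estimate to random initial data by conditioning — is routine and requires no new ingredients.
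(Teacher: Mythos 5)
Your proposal is correct and follows essentially the same route as the paper: first the deterministic case, restricting to the fibre $\{y:\ S_\infty y=S_\infty\xi\}$ where the limit distribution $\pi(\xi,\cdot)$ is unique and applying Theorem~\ref{thm: LLN abstract} (with the $2p$-moment bounds, the weighted H\"older hypothesis for $f\in C_p^\gamma(\mathcal{V}_0)$, and the Wasserstein rate via $\mathcal{W}_{1,\mathcal{V}_0}\leq\mathcal{W}_{p,\mathcal{V}_0}$ and Lemma~\ref{lemma: convergence rate bound}); then the random initial condition by disintegration/conditioning on $\F_0$ together with Corollary~\ref{corollary Pi operator} and the uniform moment estimates, exactly as in the paper's proof.
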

    \begin{proof}
        Let us first consider the case of deterministic $\xi \in \mathcal{V}$. Since $S_{\infty}X(t;\xi) = S_{\infty}\xi$ by Lemma \ref{lemma: projection}, it follows that $\pi(\xi, \mathcal{V}^a) = 1$ where $a = S_{\infty}\xi$ and $\mathcal{V}^a = \{ \eta \in \mathcal{V} \ : \ S_{\infty}\eta = a \}$. In particular, by \eqref{eq: continuity pi} this limit distribution is unique on $\mathcal{V}^a$, and satisfies for all $\eta\in \mathcal{V}^a$
		\begin{equation*}\label{eq:convergence_rate coupling dist}
			\mathcal{W}_{p,\mathcal{V}_0}( p_t(\eta, \cdot), \pi(\xi, \cdot))
            \lesssim \left(1+\norm{\eta}_{\mathcal{V}}\right)\mathcal{R}(t)\lesssim \left(1+\norm{\eta}_{\mathcal{V}}\right)(1\lor t)^{-\chi},\qquad t\geq0,
		\end{equation*}
        where $\mathcal{R}\in\{(\mathcal{R}^{p}_{\text{gen}})^{1/p},(\mathcal{R}^{p}_{\text{b=0}})^{1/p},\mathcal{R}_{\text{add}}\}$. Since $\mathcal{W}_{1,\mathcal{V}_0}( p_t(\eta, \cdot), \pi(\xi, \cdot)) \leq \mathcal{W}_{p,\mathcal{V}_0}( p_t(\eta, \cdot), \pi(\xi, \cdot))$, the assumptions of Theorem \ref{thm: LLN abstract} are satisfied, which gives the desired result. Let us now consider the general case of random initial conditions $\xi \in L^{2p}(\Omega, \F_0, \P; \mathcal{V})$.  By disintegration, we arrive for $\rho\sim\xi$ at
        \begin{align*}
            &\E\left[\left|\frac{1}{T}\int_0^T f(X(t;\xi))\,\d t - \Pi f(\xi)\right|^{2}\right]\\
            &\quad= \int_{\mathcal{V}} \E\left[\left|\frac{1}{T}\int_0^T f(X(t; x))\,\d t - \Pi f(x) \right|^{2}\right] \rho(\mathrm{d}x)\\
            &\quad\lesssim T^{-\min\{1,\chi \gamma\}}\left(\log(T)\right)^{{\1_{\{\chi \gamma=1\}}}}\int_{\mathcal{V}}  1+\sup_{t\geq0}\E[\norm{X(t;x)}_{\mathcal{V}_0}^{2p} ] + \Pi\norm{\cdot}_{\mathcal{V}_0}^{2p}(x) \,\rho(\d x)
            \\ &\quad\lesssim T^{-\min\{1,\chi \gamma\}}\left(\log(T)\right)^{{\1_{\{\chi \gamma=1\}}}} \left( 1 + \int_{\mathcal{V}} \norm{x}_{\mathcal{V}}^{2p}\,\rho(\d x) \right)
        \end{align*}
        where we have used Theorem \ref{thm: LLN abstract}, Lemma \ref{lemma: general case} and Corollary \ref{corollary Pi operator} so that $\E[\norm{X(t;x)}_{\mathcal{V}_0}^{2p}] \lesssim \E[\norm{X(t;x)}_{\mathcal{V}}^{2p}] \lesssim 1 + \|x\|_{\mathcal{V}}^{2p}$ and by Fatou's Lemma
        \[
            |\Pi \norm{\cdot}_{\mathcal{V}_0}^{2p}(x)| = \int_{\mathcal{V}} \|y\|_{\mathcal{V}_0}^{2p}\, \pi(x, \mathrm{d}y)
            \lesssim \sup_{t \geq 0} \int_{\mathcal{V}} \|y\|_{\mathcal{V}_0}^{2p}\, p_t(x, \mathrm{d}y) \lesssim 1 + \|x\|_{\mathcal{V}}^{2p}.
        \]
    \end{proof}

\subsection{Central limit theorem}

    In this section, we prove the central limit theorem for the process
    \[
        A_T(\xi) = \sqrt{T}\left(\frac{1}{T}\int_0^T f(X(t;\xi))\,\d t - (\Pi f)(\xi) \right)
    \]
    where $f \in C_{\sqrt{p}}^{\gamma}(\mathcal{V}_0)$ and $\xi \in L^p(\Omega, \F_0, \P; \mathcal{V})$. For this purpose, let us first show an auxiliary result that the space $C_{q}^{\gamma}(\mathcal{V}_0)$ is sufficiently large to approximate all other functions on $L^2$ as stated below.
	\begin{lemma}
		For any $q\geq1$ and $\gamma\in(0,1]$, the space $C^\gamma_q(\mathcal{V}_0)$ is dense in $L^2(\mathcal{V}_0,\nu)$, where $\nu$ is an arbitrary finite Borel-measure on $\mathcal{V}_0$.
	\end{lemma}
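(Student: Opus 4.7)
The plan is to reduce the claim to the well-known fact that bounded Lipschitz functions are dense in $L^2(\mathcal{V}_0,\nu)$ for any finite Borel measure $\nu$ on the separable Hilbert space $\mathcal{V}_0$. The key observation is that every bounded Lipschitz function already lies in $C^\gamma_q(\mathcal{V}_0)$, irrespective of $\gamma\in(0,1]$ and $q\geq1$, so that density of the smaller class $C^\gamma_q(\mathcal{V}_0)$ follows immediately.

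First, I would verify the inclusion. Let $f$ be bounded with $\|f\|_\infty\leq M$ and Lipschitz with constant $L$. Using that $V_q\geq 1$ and, for $q\geq 1$, $\|\xi\|_{\mathcal{V}_0}\leq 1+\|\xi\|_{\mathcal{V}_0}^{q}=V_q(\xi)$, I split into two regimes. If $\|\xi-\eta\|_{\mathcal{V}_0}\leq 1$, then
\[
|f(\xi)-f(\eta)|\leq L\|\xi-\eta\|_{\mathcal{V}_0}^{\gamma}\|\xi-\eta\|_{\mathcal{V}_0}^{1-\gamma}\leq L\|\xi-\eta\|_{\mathcal{V}_0}^{\gamma}\leq L\|\xi-\eta\|_{\mathcal{V}_0}^{\gamma}\bigl(V_q(\xi)+V_q(\eta)\bigr)^{1-\gamma};
\]
if instead $\|\xi-\eta\|_{\mathcal{V}_0}>1$, then $|f(\xi)-f(\eta)|\leq 2M\leq 2M\|\xi-\eta\|_{\mathcal{V}_0}^{\gamma}(V_q(\xi)+V_q(\eta))^{1-\gamma}$. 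Hence $f\in C^\gamma_q(\mathcal{V}_0)$.

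Second, I would establish density of bounded Lipschitz functions in $L^2(\mathcal{V}_0,\nu)$ by the usual three-step routine. Simple functions $\sum_i a_i \mathbbm{1}_{A_i}$ with $A_i\in\mathcal{B}(\mathcal{V}_0)$ are dense in $L^2(\mathcal{V}_0,\nu)$. Since $\nu$ is a finite Borel measure on the Polish space $\mathcal{V}_0$, it is inner regular on closed sets and outer regular on open sets; hence each indicator $\mathbbm{1}_A$ can be approximated in $L^2$ by $\mathbbm{1}_F$ with $F$ closed. Finally, for a closed set $F\subset\mathcal{V}_0$ the truncated distance functions
\[
\varphi_n(x)=\max\bigl(0,\,1-n\,\mathrm{dist}(x,F)\bigr),\qquad n\in\mathbb{N},
\]
are bounded (by $1$) and Lipschitz (with constant $n$), satisfy $\varphi_n(x)\to\mathbbm{1}_F(x)$ pointwise and $|\varphi_n|\leq 1\in L^2(\nu)$, so dominated convergence yields $\varphi_n\to\mathbbm{1}_F$ in $L^2(\mathcal{V}_0,\nu)$. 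Combining the three approximation steps produces, for any $g\in L^2(\mathcal{V}_0,\nu)$ and $\varepsilon>0$, a bounded Lipschitz function $h$ with $\|g-h\|_{L^2(\nu)}<\varepsilon$, which by the first step lies in $C^\gamma_q(\mathcal{V}_0)$.

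There is no real obstacle; the argument is purely measure-theoretic once the inclusion of bounded Lipschitz functions in $C^\gamma_q(\mathcal{V}_0)$ is observed. The only minor point requiring care is the verification of that inclusion for the full range $\gamma\in(0,1]$ and $q\geq 1$, which is handled by the case distinction above.
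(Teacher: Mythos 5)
Your proof is correct and follows essentially the same route as the paper: reduce, via regularity of the finite Borel measure on the metric space $\mathcal{V}_0$, to approximating indicators of closed sets, and approximate these by truncated distance functions together with dominated convergence. The only cosmetic difference is that the paper raises $(1-n\,\mathrm{dist}(\cdot,C))_+$ to the power $\gamma$ so that the approximants are directly $\gamma$-H\"older and hence in $C^\gamma_q(\mathcal{V}_0)$, whereas you keep the Lipschitz approximants and instead verify, via your small/large-increment case distinction, that every bounded Lipschitz function already lies in $C^\gamma_q(\mathcal{V}_0)$; both verifications are valid.
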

	\begin{proof}
		Recall that every finite Borel measure on a metric space is regular, c.f. \cite[Theorem 1.1] {billingsley}. Thus, $\nu$ is completely determined by the values it attains on closed sets. Consequently, it suffices to show that indicator functions on closed sets can be approximated by bounded $C^\gamma_q(\mathcal{V}_0)$-functions. Let $C\subseteq \mathcal{V}_0$ be a closed set and $f_n\colon \mathcal{V}_0 \longrightarrow \R$ be given by $f_n(y) = ((1-n\operatorname{dist}(y,C))_+)^{\gamma}$ with $y\in \mathcal{V}_{0}$ and $n\in\N$, where $\operatorname{dist}(y,C)=\inf_{c\in C}\norm{y-c}_{\mathcal{V}_0}$ and $f_+ = f\lor0$. Note that $y\longmapsto\operatorname{dist}(y,C)$ is Lipschitz continuous, so $f_n\in C^\gamma_q(\mathcal{V}_0)$. Moreover, $f_n$ satisfies $0\leq f_n\leq1$ and for every $y\in \mathcal{V}_0$ $\lim_{n\to\infty}f_n(y)=\1_C(y)$. Clearly, $1\in L^2(\mathcal{V}_0,\nu)$ and so by dominated convergence $\lim_{n\to\infty}\norm{f_n-\1_C}_{L^2(\mathcal{V}_0,\nu)}=0$.
	\end{proof}

    Recall that $\rho_{\mathrm{gen}} = \rho^{(p)}_{\mathrm{gen}}, \rho_{\mathrm{b=0}} = \rho_{\mathrm{b=0}}^{(p)}$ also depend on the choice of $p$.

	\begin{theorem}[Central Limit Theorem]\label{theorem CLT general}
		Suppose that Assumptions \ref{assumption SEE} and \ref{assumption long-time} are satisfied. Let $p\in (4,\infty)$ be such that
        \begin{equation*}\label{eq:clt p condition}
            \frac{1}{\sqrt{p}} + \rho < 1.
        \end{equation*}
        Let $\gamma \in (0,1]$ and assume one of the following cases:
        \begin{enumerate}
            \item (general noise) Inequality \eqref{eq:bounded_moment_condi_multi} holds for $p$ and $\sqrt{p}$, \eqref{eq:contraction_estimate_condi_multi} holds for $\sqrt{p}$, and
            \[
                \frac{\lambda \gamma}{\sqrt{p}} > 1 \ \text{ and } \
                \| \rho^{(\sqrt{p})}_{\mathrm{gen}}\|_{L^1(\R_+)} < \e{- \frac{\sqrt{p}}{\gamma} }.
            \]

            \item (no drift) Inequality \eqref{eq:bounded_moment_condi_multi b zero} holds for $p$ and $\sqrt{p}$, \eqref{eq:contraction_estimate_condi_multi b zero} holds for $\sqrt{p}$,
            \[
                \frac{2 \lambda \gamma}{\sqrt{p}} > 1 \ \text{ and } \
                \| \rho_{\mathrm{b=0}}^{(\sqrt{p})}\|_{L^1(\R_+)} < \e{- \frac{\sqrt{p}}{\gamma}}.
            \]

			\item (additive noise) The inequality \eqref{eq:small_nonlinearity} is satisfied and
            \begin{equation*}\label{eq: CLT add condition}
                \lambda \gamma > 1 \ \text{ and } \
                \| \rho_{\mathrm{add}}\|_{L^1(\R_+)} < \e{- \frac{1}{\gamma} }.
            \end{equation*}
        \end{enumerate}
        Let $\xi \in L^p(\Omega, \F_0, \P; \mathcal{V})$, $f\in C^{\gamma}_{\sqrt{p}}(\mathcal{V}_0)$, and $\widetilde{\xi} \in L^p(\Omega, \F_0, \P; \mathcal{V})$ be such that $\widetilde{\xi} \sim \pi_{\mathcal{L}(\xi)}$, then
        \begin{equation*}\label{eq: sigma definition}
		      \sigma(x)^2 \coloneqq 2 \int_0^\infty \E \left[ \left(f(X(t; \widetilde{\xi})) - (\Pi f)(x)\right)\left(f(\widetilde{\xi}) - (\Pi f)(x) \right) \right]\,\d t
	   \end{equation*}
       is for $\mathcal{L}(\xi)$-a.a. $x \in \mathcal{V}$ well-defined, nonnegative, and it holds that
        \begin{equation}\label{eq:CLT}
			\sqrt{T}\left(\frac{1}{T}\int_0^T f(X(t;\xi))\,\d t - (\Pi f)(\xi) \right) \Longrightarrow \sigma(\xi)Z,\quad T\longrightarrow\infty
		\end{equation}
        where $Z \sim \mathcal{N}(0, 1)$ is independent of $\xi$.
	\end{theorem}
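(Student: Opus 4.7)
The plan is to reduce the central limit theorem to the stationary case via a coupling argument, and then invoke a Kipnis--Varadhan / Gordin martingale CLT on each $S_\infty$-fiber.

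First, by enlarging $\F_0$ if necessary, I would realize a coupling $(\xi,\widetilde{\xi})$ such that $\widetilde{\xi}\sim\pi_{\mathcal{L}(\xi)}$ and $S_\infty\xi=S_\infty\widetilde{\xi}$ almost surely. Lemma \ref{lemma: projection} gives $\pi_{\mathcal{L}(\xi)}\circ S_\infty^{-1}=\mathcal{L}(\xi)\circ S_\infty^{-1}$, so the disintegration \eqref{eq: limit distribution decomposition} allows me to sample $\widetilde{\xi}$ from $\pi(S_\infty\xi,\cdot)$ conditionally on $S_\infty\xi$. Corollary \ref{remark: stationary process} then makes $X(\cdot;\widetilde{\xi})$ stationary, and Corollary \ref{corollary Pi operator} yields $(\Pi f)(\xi)=(\Pi f)(\widetilde{\xi})$. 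With this coupling at hand, I split $A_T(\xi)=A_T(\widetilde{\xi})+E_T$, where $E_T:=\frac{1}{\sqrt{T}}\int_0^T\bigl(f(X(t;\xi))-f(X(t;\widetilde{\xi}))\bigr)\,\d t$.

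Second, I would show $E_T\to 0$ in probability. Applying the weighted Hölder bound defining $C^\gamma_{\sqrt{p}}(\mathcal{V}_0)$, H\"older's inequality with exponents $a=\sqrt{p}/(\sqrt{p}-\gamma)$ and $b=\sqrt{p}/\gamma$, the uniform $L^p$-moment bound from Lemma \ref{lemma: general case} (or \ref{lemma: general case b zero}/\ref{lemma contraction estimate}), and the contraction estimate \eqref{eq: uniform contraction estimate} applied at the exponent $\sqrt{p}$ with $S_\infty\xi=S_\infty\widetilde{\xi}$ (so that the $S_\infty$-remainder drops), one obtains $\E[|f(X(t;\xi))-f(X(t;\widetilde{\xi}))|]\lesssim(1\vee t)^{-\chi\gamma}$, where Lemma \ref{lemma: convergence rate bound} combined with concavity of $s\mapsto s^{1/\sqrt{p}}$ gives $\chi\sim\min\{\lambda,\log(1/\|\rho\|_{L^1(\R_+)})/\sqrt{p}\}$ in the general case (resp.\ with $2\lambda$ in the no-drift case, or $\min\{\lambda,\log(1/\|\rho_{\mathrm{add}}\|)\}$ in the additive case). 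The hypotheses on $\lambda\gamma/\sqrt{p}$ (resp.\ $\lambda\gamma$) and $\|\rho\|_{L^1(\R_+)}$ precisely ensure $\chi\gamma>1$, whence $\E|E_T|\lesssim T^{-1/2}\int_0^T(1\vee t)^{-\chi\gamma}\,\d t\to 0$.

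Third, I would establish the CLT for the stationary piece $A_T(\widetilde{\xi})$. Conditioning on $S_\infty\widetilde{\xi}=x$, $\widetilde{\xi}$ has conditional law $\pi(x,\cdot)$ and $X(\cdot;\widetilde{\xi})$ is a stationary Markov process on the fiber $\{y:S_\infty y=x\}$ whose transition semigroup admits $\pi(x,\cdot)$ as its unique invariant measure, see \eqref{eq: continuity pi}. The polynomial mixing with exponent $\chi\gamma>1$ established in Step~2 makes the Poisson corrector $\phi_x:=\int_0^\infty(P_sf-(\Pi f)(x))\,\d s$ converge in $L^2(\pi(x,\cdot))$, giving a Gordin-type martingale decomposition $f(X_t)-(\Pi f)(x) = (\text{martingale increment of }\phi_x)+(\text{boundary term})$; the continuous-time martingale CLT then produces conditional convergence of $A_T(\widetilde{\xi})$ to $\sigma(x)Z$, with $\sigma(x)^2$ equal to the formula in the statement by Fubini applied to the martingale's quadratic variation. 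Unconditioning, together with $E_T\to 0$ from Step~2, yields $A_T(\xi)\Longrightarrow\sigma(\xi)Z$ with $Z\sim\mathcal{N}(0,1)$; independence of $Z$ from $\xi$ is built into the martingale CLT, since the limiting Gaussian increment is independent of the initial $\sigma$-algebra on which $\xi$ is measurable.

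The main obstacle is the interplay between (i) the multiplicity of invariant measures, which turns $\sigma^2$ into a genuinely random variable $\sigma(\xi)^2$ and forces a fiber-wise analysis with measurable selection of $x\mapsto\sigma(x)$, and (ii) the polynomial (rather than exponential) mixing rate, which necessitates the explicit exponent thresholds on $\lambda\gamma$ and $\|\rho\|_{L^1(\R_+)}$ in order to simultaneously guarantee $L^2(\pi(x,\cdot))$-convergence of the Poisson corrector and $L^1$-vanishing of the coupling error $E_T$. Matching these two thresholds while tracking the $\sqrt{p}$-factors induced by the H\"older weight $V_{\sqrt{p}}$ is the delicate point.
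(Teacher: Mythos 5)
Your overall architecture coincides with the paper's: couple the given initial condition with a stationary one on the same $S_\infty$-fiber, kill the difference term $E_T$ by the contraction estimate \eqref{eq: uniform contraction estimate} at exponent $\sqrt{p}$ together with the weighted H\"older property of $f$ and the integrability of the rate (this is exactly where the hypotheses $\lambda\gamma/\sqrt{p}>1$ and $\|\rho\|_{L^1(\R_+)}<\e{-\sqrt{p}/\gamma}$ enter, via Lemma~\ref{lemma: convergence rate bound}), and finally decondition over the law of $\xi$. The only genuine divergence is the stationary piece: the paper simply verifies the hypotheses of \cite[Theorem 5.3.4]{Kulik} (a CLT for stationary Markov processes under an integrable weighted-Wasserstein ergodicity rate, applied with H\"older exponent $\gamma/\sqrt{p}$ and weight $V_{\sqrt{p}}$), whereas you propose to re-derive that result fiber-wise by a Gordin/Kipnis--Varadhan corrector argument. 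Your route is self-contained and makes the variance formula transparent through the martingale quadratic variation, but it shifts onto you precisely the technical load that Kulik's theorem packages: $L^2(\pi(x,\cdot))$-convergence of $\phi_x$ (which needs the $2\sqrt{p}$-moments of $\pi(x,\cdot)$, available only because \eqref{eq:bounded_moment_condi_multi} is assumed at level $p>4$), the justification that $\phi_x$ solves the Poisson equation so that Dynkin's formula produces the martingale, and ergodicity of $\pi(x,\cdot)$ on the fiber (uniqueness in $\mathcal{P}_p$ from \eqref{eq: continuity pi} plus an ergodic-decomposition argument), all of which you flag but do not carry out. Two small corrections: your displayed rate $\chi\sim\min\{\lambda,\log(1/\|\rho\|_{L^1(\R_+)})/\sqrt{p}\}$ is too optimistic --- Lemma~\ref{lemma: convergence rate bound} raised to the power $\gamma/\sqrt{p}$ gives the exponent $\min\{\lambda\gamma(1-\kappa)/\sqrt{p},\,\gamma\log(1/\|\rho^{(\sqrt{p})}\|_{L^1(\R_+)})/\sqrt{p}\}$, so the $\lambda$-term also carries the factor $1/\sqrt{p}$; the stated hypotheses still yield the needed integrability, so nothing breaks, but the pointwise bound as you wrote it is not what the estimates deliver. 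Likewise, invoking ``independence built into the martingale CLT'' is a stable-convergence claim that is stronger than required: the simple conditioning identity $\E[F(A_T(\xi))]=\int_{\mathcal{V}}\E[F(A_T(x))]\,\mathcal{L}(\xi)(\mathrm{d}x)$ together with dominated convergence, as in the paper, already gives the mixture limit $\sigma(\xi)Z$ in \eqref{eq:CLT} without any extra justification.
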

	\begin{proof}
        (i) Let us again first consider the case of deterministic $\xi \in \mathcal{V}$. By Theorem \ref{theorem_limit_distribution} applied for $\sqrt{p}$ and \eqref{eq: moment bound limit distribution} applied for $p$, there exists the limit distribution $\pi(\xi,\cdot) \in \mathcal{P}_p(\mathcal{V})$ satisfying $\pi(\xi, \mathcal{V}^a) = 1$ with $a = S_\infty\xi$, and for all $\eta\in\mathcal{V}^a$
        \begin{align*}
		      \mathcal{W}_{\sqrt{p},\mathcal{V}_0}( P_t^* \delta_{\eta}, \pi(\xi,\cdot))^{\sqrt{p}}\lesssim \left(1 + \norm{\eta}_{\mathcal{V}}^{\sqrt{p}}\right)\mathcal{R}_{\text{gen}}^{\sqrt{p}}(t).
		\end{align*}
        Note that $V_{\sqrt{p}} \in L^{\sqrt{p}}(\mathcal{V},\pi(\xi, \cdot))$. Moreover, using Lemma \ref{lemma: convergence rate bound} we obtain for each $\kappa\in (0,1)$
		\begin{align*}
			\int_1^\infty \left(\mathcal{R}_{\text{gen}}^{\sqrt{p}}(t)\right)^{\gamma/\sqrt{p}}\,\d t \lesssim \int_1^\infty  \left( t^{-\gamma\lambda}
            + t^{- \frac{\gamma}{\sqrt{p}} \log(1/\|\rho^{(\sqrt{p})}_{\mathrm{gen}}\|_{L^1(\R_+)})} + t^{- \lambda \gamma ( 1- \kappa)/ \sqrt{p}} \right)\,\d t.
		\end{align*}
        By assumption, this integral is finite provided that $\kappa$ is small enough.

        Let $\widetilde{\xi}\in L^p(\Omega,\P;\mathcal{V}^a)$ be such that $\widetilde{\xi}\sim \pi(\xi, \cdot)$. Then by Corollary \ref{remark: stationary process}, $X(\cdot;\widetilde{\xi})$ is a stationary process with $X(t;\widetilde{\xi})\sim \pi(\xi, \cdot)$ for each $t \geq 0$. For any $f\in C^{\gamma}_{\sqrt{p}}(\mathcal{V}_0)$ we obtain
        \begin{align*}
            |f(\eta) - f(\xi)| &\lesssim (1 + \| \eta\|_{\mathcal{V}_0}^{\sqrt{p}} + \| \xi\|_{\mathcal{V}_0}^{\sqrt{p}})^{1 - \gamma_p \sqrt{p}} \| \eta - \xi\|_{\mathcal{V}_0}^{\gamma_p \sqrt{p}}
            \\ &\lesssim (1 + \| \eta\|_{\mathcal{V}_0}^{\sqrt{p}} + \| \xi\|_{\mathcal{V}_0}^{\sqrt{p}})^{1 - \gamma_p} \| \eta - \xi\|_{\mathcal{V}_0}^{\gamma_p \sqrt{p}}
        \end{align*}
        where $\gamma_p = \gamma / \sqrt{p}$. Hence, it follows from \cite[Theorem 5.3.4]{Kulik}, that the Central Limit Theorem holds for $X(\cdot;\widetilde{\xi})$ with $\gamma$ replaced by $\gamma_p$. In particular, we obtain, $\sigma(\xi)^2 < \infty$ and
		\begin{equation*}\label{eq:clt addi stat}
			\sqrt{T}\left(\frac{1}{T}\int_0^T f(X(t;\widetilde{\xi}))\,\d t - (\Pi f)(\xi) \right) \Longrightarrow \mathcal{N}(0,\sigma(\xi)^2) \sim \sigma(\xi)Z,\quad T\longrightarrow\infty.
		\end{equation*}
		For the general case, we note that
		\begin{multline*}
			\sqrt{T}\left(\frac{1}{T}\int_0^T f(X(t;\xi))\,\d t - (\Pi f)(\xi) \right)
            \\ = \sqrt{T}\left(\frac{1}{T}\int_0^T f(X(t;\xi)) - f(X(t;\widetilde{\xi}))\,\d t\right) + \sqrt{T}\left(\frac{1}{T}\int_0^T f(X(t;\widetilde{\xi}))\,\d t - (\Pi f)(\xi)\right).
		\end{multline*}
		As $T\longrightarrow\infty$, the second summand converges weakly to $\mathcal{N}(0,\sigma(\xi)^2)$. For the first one, let us note that the pair of processes $(X(\cdot;\xi),X(\cdot;\widetilde{\xi}))$ satisfies by Lemma \ref{lemma: general case}
		\begin{align*}
			\norm{X(t;\xi)-X(t;\widetilde{\xi})}_{L^{\sqrt{p}}(\Omega;\mathcal{V}_0)}^{\sqrt{p}} \lesssim \norm{\xi-\widetilde{\xi}}_{L^{\sqrt{p}}(\Omega;\mathcal{V})}^{\sqrt{p}} \mathcal{R}_{\mathrm{gen}}^{\sqrt{p}}(t)
		\end{align*}
        since $a = S_\infty\xi = S_\infty\widetilde{\xi}$. Hence, using the H\"older continuity of $f$, then H\"olders inequality, Lemma \ref{lemma: general case}, and finally Jensen's inequality we arrive at
		\begin{align}
		&\frac{1}{\sqrt{T}}\E\left[\int_0^T |f(X(t;\xi)-f(X(t;\widetilde{\xi}))|\,\d t\right]\nonumber
        \\ &\quad\lesssim \frac{1}{\sqrt{T}}\int_0^T \E\left[\norm{X(t;\xi)-X(t;\widetilde{\xi})}_{\mathcal{V}_0}^{\gamma_p \sqrt{p} } (1+\norm{\xi}_\mathcal{V}^{\sqrt{p}}+\norm{\widetilde{\xi}}^{\sqrt{p}}_\mathcal{V})^{1-\gamma_p}\right]\d t \nonumber
        \\ &\quad\leq \frac{1}{\sqrt{T}}\int_0^T \left(\E\left[\norm{X(t;\xi)-X(t;\widetilde{\xi})}_{\mathcal{V}_0}^{ \sqrt{p}}\right]\right)^{\gamma_p} \left(\E\left[1+\norm{\xi}_\mathcal{V}^{\sqrt{p}}+\norm{\widetilde{\xi}}^{\sqrt{p}}_\mathcal{V}\right]\right)^{1-\gamma_p}\,\d t \nonumber
        \\ &\quad\lesssim \frac{1}{\sqrt{T}}\int_0^T \left(\mathcal{R}_{\mathrm{gen}}^{\sqrt{p}}(t)\right)^{\gamma_p}\,\d t \nonumber
		\end{align}
		which tends to zero as $T\longrightarrow\infty$. Then, by Slutsky's theorem, the central limit theorem \eqref{eq:CLT} follows for deterministic $\xi \in \mathcal{V}$. Finally, let $\xi \in L^p(\Omega, \F_0, \P; \mathcal{V})$ with $\mathcal{L}(\xi)=\rho$. Let $F \in C_b(\R)$. Then by conditioning and the corresponding result for deterministic $\xi$, we obtain
        \begin{align*}
            \E[ F(A_T(\xi)) ]
            = \int_{\mathcal{V}} \E[F(A_T(x))]\, \rho(\mathrm{d}x)
            \longrightarrow \int_{\mathcal{V}} \E[F(\mathcal{N}(0,\sigma(x)^2))]\, \rho(\mathrm{d}x).
        \end{align*}

        In case (ii), the proof is identical to case (i) with the only difference that we need to replace $\mathcal{R}_{\mathrm{gen}}^{\varepsilon}$ by $\mathcal{R}_{\mathrm{b=0}}^{\varepsilon}$. Similarly, case (iii) is analogous to case (I), with the only difference that the rate of convergence provided by Theorem \ref{theorem_limit_distribution} can be improved to
        \[
            \mathcal{W}_{\sqrt{p},\mathcal{V}_0}( P_t^* \delta_{\eta}, \pi(\xi, f))^{\sqrt{p}} \lesssim \left(1+\norm{\eta}_{\mathcal{V}_0}^{\sqrt{p}}\right) \left(\mathcal{R}^\lambda_{\text{add}}(t)\right)^{\sqrt{p}}.
        \]
        Since $\int_0^\infty \left(\mathcal{R}^\lambda_{\text{add}}(t)\right)^{\gamma}<\infty$ due to $\gamma \lambda > 1$ and Lemma \ref{lemma: convergence rate bound}, the assertion follows by the same arguments as in the general case.
\end{proof}

Note that in the case of additive noise, the conditions are independent of $p$. Moreover, letting $p \nearrow 4$, we obtain for $\lambda$ in case (i) the asymptotic condition $\lambda \gamma > 2$, and for cases (ii) and (iii) the condition $\lambda \gamma > 1$. The latter essentially states that the central limit theorem is only valid if the normalised $L^2$-convergence rate in the Law of Large Numbers, see \eqref{cor: LLN}, satisfies $\min\{1, \chi \gamma\}/2 = 1/2$.


\section{Markovian lift on space of Laplace transforms}\label{section markovia lift cm}

\subsection{General framework}

In this section, we provide a Markovian lift based on the representation of the Volterra kernels in terms of their Bernstein measures. Let $V \hookrightarrow H$ be separable Hilbert spaces, and fix a reference Borel measure $\mu$ on $\R_+$. Here and below, we shall always assume that there exist $\delta_*, \eta_* \in \R$ such that
\begin{align}\label{eq: projection condition 1}
      \int_{(0,1]}x^{\delta_*}\,\mu(\d x)<\infty
            \ \text{ and } \
         \int_{(1,\infty)} x^{-\eta_*}\,\mu(\d x)<\infty.
\end{align}
Note that in condition \eqref{eq: projection condition 1}, we may always replace $\delta_\ast, \eta_\ast$ by a larger value. For particular applications, it is feasible to choose $\delta_*$ and $\eta_*$ as small as possible.

Define for $\delta, \eta \in \R$ a two-parameter scale of Hilbert spaces $\mathcal{H}_{\delta, \eta}$ consisting of equivalence classes of measurable functions $y\colon \R_+ \longrightarrow V$ with finite norm
\[
	\vertiii{ y}_{\delta, \eta}^2 = \int_{\R_+} \|y(x)\|_V^2 w_{\delta, \eta}(x)\, \mu(\mathrm{d}x) < \infty
\]
where $w_{\delta, \eta}\colon \R_+ \longrightarrow (0,\infty)$ denotes the weight function
\begin{align}\label{eq: weight function}
		w_{\delta, \eta}(x) = \1_{\{0\}}(x) + \1_{(0,1]}(x)x^{-\delta} + \1_{(1,\infty)}(x)x^{\eta}.
\end{align}
Hence, for $y \in \mathcal{H}_{\delta, \eta}$, the parameter $\delta$ controls the integrability of $y$ at the origin while $\eta$ captures its integrability at infinity. For stochastic Volterra processes, this translates to small time regularity $t \to 0$ captured by $\eta$, and large time decay $t \to \infty$ modelled by $\delta$. Finally, the artificially added term $y(0)$ represented by $\1_{\{0\}}(x)$ in \eqref{eq: weight function} allows us to include, e.g., constant functions $G(t) = G_0$ in \eqref{eq: mild formulation} provided that $\mu(\{0\}) > 0$.

By construction, $(\mathcal{H}_{\delta,\eta})_{\delta,\eta\in\R}$ is a two-parameter scale of Hilbert spaces such that $\mathcal{H}_{\delta,\eta'}\subset\mathcal{H}_{\delta,\eta}$ for $\eta<\eta'$ and $\mathcal{H}_{\delta',\eta}\subset\mathcal{H}_{\delta,\eta}$ for $\delta<\delta'$ densely. On each of the spaces $\mathcal{H}_{\delta,\eta}$, we define the strongly continuous semigroup $(S(t))_{t\geq0}$ of multiplication operators by
\begin{equation*}\label{eq: semigroup_definition}
	S(t)y(x) = \e{-tx}y(x),\qquad y\in\mathcal{H}_{\delta,\eta},\,x\in\R_+.
\end{equation*}
The generator of $(S(t))_{t \geq 0}$ on $\mathcal{H}_{\delta, \eta}$ is given by $\mathcal{A}y(x) = -xy(x)$ with maximal domain $D(\mathcal{A}) = \mathcal{H}_{\delta, \eta + 2} \subset \mathcal{H}_{\delta, \eta}$. The next lemma provides the basic properties of the Markovian lift with focus on Assumption \ref{assumption SEE}.

\begin{lemma}\label{lemma: CM SSE}
    Let $\mu$ be a Borel measure on $\R_+$ satisfying \eqref{eq: projection condition 1}. The following assertions hold:
    \begin{enumerate}
        \item[(a)] Let $\eta' < \eta$. Then $S(t) \in L(\mathcal{H}_{\delta, \eta'}, \mathcal{H}_{\delta, \eta})$ such that
        \[
            \|S(t)\|_{L(\mathcal{H}_{\delta, \eta'}, \mathcal{H}_{\delta, \eta})} \leq \max\{ \mu(\{0\}), 1, C(\eta - \eta')\}^{1/2}\left( 1 + t^{-(\eta - \eta')/2}\right)
        \]
        where the constant is given by $C(\varrho) = 2^{-\varrho} \varrho^{\varrho} \e{-\varrho}$.

        \item[(b)] For each $\delta \geq \delta_*$ and $\eta \geq \eta_*$, $\Xi\colon \mathcal{H}_{\delta, \eta} \longrightarrow V$ is a bounded linear operator, where
        \[
            \Xi y = \int_{\R_+} y(x)\, \mu(\mathrm{d}x).
        \]

        \item[(c)] Let $y \in \mathcal{H}_{\delta, \eta}$ for some $\delta, \eta \in \R$ with $\delta\geq \delta_\ast$. Then
        $G(t) = \int_{\R_+} \e{-xt}y(x)\, \mu(\mathrm{d}x)$ is for $t > 0$ well-defined, and satisfies
        \begin{equation}\label{eq: bound G}
            \|G(t)\|_{V} \lesssim \left( 1 + t^{-(\eta_* - \eta)_+/2} \right) \vertiii{y}_{\delta,\eta}
        \end{equation}
        where $x_+ = \max\{x, 0\}$. Moreover, let $0 < \theta \leq 1$, then for $s,t > 0$
        \[
            \|G(t) - G(s)\|_V \leq (t \wedge s)^{-(\eta_* + \theta - \eta)_+/2}\vertiii{y}_{\delta,\eta}|t-s|^{\theta}.
        \]
    \end{enumerate}
    In particular, Assumption \ref{assumption SEE} is satisfied for
    \begin{align}\label{assumption A completely monontone}
        \delta \geq \delta_*, \ \ \max \{\eta, \eta_*\} \leq \eta' < 1 + \eta, \ \ \mathcal{H} = \mathcal{H}_{\delta, \eta}, \ \ \mathcal{V} = \mathcal{H}_{\delta, \eta'}, \ \ \rho = \frac{(\eta' - \eta)_+}{2}.
    \end{align}
\end{lemma}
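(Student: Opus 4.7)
The plan is to verify parts (a), (b), (c) in order and then assemble them to check Assumption~\ref{assumption SEE} under the choice in \eqref{assumption A completely monontone}.

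For (a), I would decompose
\[
\vertiii{S(t)y}_{\delta,\eta}^2 = \mu(\{0\})\|y(0)\|_V^2 + \int_{(0,1]} e^{-2xt}\|y(x)\|_V^2\, x^{-\delta}\,\mu(\d x) + \int_{(1,\infty)} e^{-2xt}\|y(x)\|_V^2\, x^{\eta}\,\mu(\d x).
\]
The contributions at $x=0$ and $x\in(0,1]$ are bounded by $\vertiii{y}_{\delta,\eta'}^2$ via $e^{-2xt}\leq 1$. For $x>1$ I would factor $x^{\eta}e^{-2xt}=\bigl(x^{\eta-\eta'}e^{-2xt}\bigr)\cdot x^{\eta'}$ and compute $\sup_{x\geq 1}x^{\eta-\eta'}e^{-2xt}$ by elementary calculus: the critical point is $x^{*}=(\eta-\eta')/(2t)$, and comparing with the boundary value at $x=1$ yields the supremum $\max\{1,\,C(\eta-\eta')t^{-(\eta-\eta')}\}$ with $C(\varrho)=2^{-\varrho}\varrho^{\varrho}e^{-\varrho}$. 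Summing the three pieces and using $(1+t^{-(\eta-\eta')/2})^{2}\geq 1+t^{-(\eta-\eta')}$ yields the claimed constant.

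For (b), Cauchy--Schwarz with the factorisation $\|y(x)\|_V=[\|y(x)\|_V w_{\delta,\eta}(x)^{1/2}]\cdot[w_{\delta,\eta}(x)^{-1/2}]$ gives
\[
\|\Xi y\|_V \leq \int_{\R_+}\|y(x)\|_V\,\mu(\d x)\leq \vertiii{y}_{\delta,\eta}\,\Bigl(\mu(\{0\})+\int_{(0,1]}x^{\delta}\,\mu(\d x)+\int_{(1,\infty)}x^{-\eta}\,\mu(\d x)\Bigr)^{1/2},
\]
and the two nontrivial integrals are finite by \eqref{eq: projection condition 1} together with $\delta\geq\delta_{*}$ (so $x^{\delta}\leq x^{\delta_{*}}$ on $(0,1]$) and $\eta\geq\eta_{*}$ (so $x^{-\eta}\leq x^{-\eta_{*}}$ on $(1,\infty)$).

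For (c), I would first note $G(t)=\Xi S(t)y$. The pointwise bound \eqref{eq: bound G} follows by combining (a) and (b): if $\eta\geq\eta_{*}$, the embedding $\mathcal{H}_{\delta,\eta}\hookrightarrow\mathcal{H}_{\delta,\eta_{*}}$ and contractivity of $S(t)$ on $\mathcal{H}_{\delta,\eta_{*}}$ give $\|G(t)\|_{V}\lesssim\vertiii{y}_{\delta,\eta}$; otherwise I would use (a) with swapped indices to obtain $\|S(t)\|_{L(\mathcal{H}_{\delta,\eta},\mathcal{H}_{\delta,\eta_{*}})}\lesssim 1+t^{-(\eta_{*}-\eta)/2}$ and then apply (b). For the Hölder estimate I would interpolate the elementary bounds $|e^{-xt}-e^{-xs}|\leq 2e^{-x(t\wedge s)}$ and $|e^{-xt}-e^{-xs}|\leq x|t-s|e^{-x(t\wedge s)}$ to get
\[
|e^{-xt}-e^{-xs}|^{2}\leq C_{\theta}\,(x|t-s|)^{2\theta}e^{-2x(t\wedge s)},
\]
and then apply Cauchy--Schwarz exactly as in (b) to reduce matters to estimating
\[
\int_{\R_+}x^{2\theta}e^{-2x(t\wedge s)}w_{\delta,\eta}(x)^{-1}\,\mu(\d x),
\]
which I would split at $x=1$ and control by the same splitting arguments as in (a) and (b), producing the claimed singularity as $t\wedge s\to 0$.

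Finally, for the verification of Assumption~\ref{assumption SEE} under the choice \eqref{assumption A completely monontone}: separability of $\mathcal{H}_{\delta,\eta}$ and $\mathcal{H}_{\delta,\eta'}$ is standard from the weighted $L^{2}$ structure; $(S(t))_{t\geq 0}$ acts by multiplication by $e^{-xt}$ and is therefore contractive on every $\mathcal{H}_{\delta,\cdot}$ in play, while dominated convergence delivers strong continuity on both $\mathcal{H}$ and $\mathcal{V}$; since $\eta'\geq\eta$ we have $\mathcal{V}\hookrightarrow\mathcal{H}$, so $\mathcal{H}\cap\mathcal{V}=\mathcal{V}$, whence the density requirement and the semigroup-extension requirement are automatic; the boundedness $\Xi\in L(\mathcal{V},V)$ follows from (b) because $\eta'\geq\eta_{*}$; and the regularisation estimate \eqref{eq: semigroup regularization} with $\rho=(\eta'-\eta)/2\in[0,1/2)$ follows from (a) applied with indices swapped so that the target space has the larger parameter $\eta'$. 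The main obstacle throughout is the Hölder step in (c), where the interpolation of the semigroup difference and the subsequent balancing of the factor $x^{2\theta}$ against the weight $w_{\delta,\eta}^{-1}$ via Cauchy--Schwarz must be executed carefully to extract both the factor $|t-s|^{\theta}$ and a bound with only polynomial growth in $(t\wedge s)^{-1}$.
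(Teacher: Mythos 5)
Your proposal follows essentially the same route as the paper: the same three-piece decomposition of $\vertiii{S(t)y}_{\delta,\eta}^2$ with the elementary bound $\sup_{x\geq 0}x^{\varrho}\e{-2xt}=C(\varrho)t^{-\varrho}$ for (a), the same Cauchy--Schwarz estimate against $w_{\delta,\eta}^{-1}$ for (b), and for (c) the identification $G(t)=\Xi S(t)y$ together with the interpolation bound $|\e{-tx}-\e{-sx}|^{2}\lesssim x^{2\theta}\e{-2x(t\wedge s)}|t-s|^{2\theta}$, differing only in how Cauchy--Schwarz distributes the weight (you pair the exponential difference with $w_{\delta,\eta}^{-1}$, while the paper keeps it with $\|y(x)\|_{V}^{2}w_{\delta_{*},\eta_{*}}(x)$), which is immaterial. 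The one caveat concerns the H\"older step: carrying out your final reduction, the factor $x^{2\theta}$ produces the singularity $(t\wedge s)^{-(\eta_{*}+2\theta-\eta)_{+}/2}$ rather than the stated $(t\wedge s)^{-(\eta_{*}+\theta-\eta)_{+}/2}$, and the paper's own computation yields exactly the same exponent, so this is a slip in the statement (and in the last display of the paper's proof) rather than a defect of your argument --- just do not claim the stated exponent verbatim without noting this.
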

\begin{proof}
    Assertion (a) follows from the elementary inequality
    \begin{align}\label{eq: elementary inequality}
        x^\varrho\e{-2xt}\leq 2^{-\varrho}\varrho^\varrho\e{-\varrho}t^{-\varrho}\eqqcolon C(\varrho)t^{-\varrho}
    \end{align}
    where $\varrho>0$, $t>0$, $x\geq0$, and the bound
    \begin{align}
        \vertiii{S(t)y}_{\delta, \eta}^2
        &= \mu(\{0\})\|y(0)\|_{V}^2 + \int_{(0,1]} \e{-xt} \|y(x)\|_V^2 x^{-\delta}\, \mu(\mathrm{d}x)  \notag
        \\ &\qquad + \int_{(1,\infty)} x^{\eta - \eta'}\e{-2xt} \|y(x)\|_V^2 x^{\eta'}\, \mu(\mathrm{d}x) \notag
        \\ &\leq \max\{\mu(\{0\}),\ 1, \ C(\eta - \eta') \}\left( 1 + t^{-(\eta - \eta')}\right) \vertiii{y}_{\delta, \eta'}^2. \label{eq: 2}
    \end{align}
    The second assertion follows from the Cauchy-Schwarz inequality
    \begin{align*}
        \|\Xi y\|_V^2 = \int_{[0,\infty)} \|y(x)\|_V^2 w_{\delta, \eta}(x)^{1/2} \, \frac{\mu(\mathrm{d}x)}{w_{\delta,\eta}(x)^{1/2}}
        \leq \vertiii{y}_{\delta, \eta}^2 \int_{\R_+} \frac{\mu(\mathrm{d}x)}{w_{\delta, \eta}(x)} < \infty
    \end{align*}
    and assumption \eqref{eq: projection condition 1}. In particular, assertions (a) and (b) combined imply that Assumption \ref{assumption SEE} is satisfied for \eqref{assumption A completely monontone}.

    It remains to prove (c). Let $y\in\mathcal{H}_{\delta,\eta}$. From part (a) it is easily seen that $S(t)y\in\mathcal{H}_{\delta_\ast,\eta_\ast}$ for $t > 0$. In particular, $G$ is well-defined and \eqref{eq: bound G} follows from a combination of assertions (a) and (b). For the last inequality, we use
    \begin{align*}
        |\e{-tx}-\e{-sx}|^2
        = \e{- 2x (t\wedge s)} 1 \wedge |1-\e{-(t\vee s - t \wedge s)x}|^2
        \leq x^{2\theta} \e{- 2x (t\wedge s)}|t-s|^{2\theta}
    \end{align*}
    and proceed similarly to the proof of (a), to find
    \begin{align*}
        \norm{G(t)-G(s)}_V^2 &\leq \int_{\R_+}|\e{-tx}-\e{-sx}|^2\norm{y(x)}_V^2w_{\delta_*,\eta_\ast}(x)\,\mu(\d x)\left(\int_{\R_+}\frac{1}{w_{\delta_*,\eta_\ast}}\,\mu(\d x)\right)
        \\ &\lesssim |t-s|^{2\theta} \int_{\R_+} \e{-2(t \wedge s)x}x^{2\theta}\norm{y(x)}_V^2 w_{\delta_*,\eta_\ast}(x)\,\mu(\d x)
        \\ &\lesssim (t \wedge s)^{-(\eta_* + \theta - \eta)_+}\vertiii{y}_{\delta,\eta}^2|t-s|^{2\theta}.
    \end{align*}
    This proves all assertions.
\end{proof}

Below we proceed to verify Assumptions \ref{assumption long-time}.(b) and (c) under the structural condition~\eqref{eq: timehomogeneous}. Let $H_b, H_{\sigma}$ be separable Hilbert spaces such that $H \hookrightarrow H_b, H_{\sigma}$, and suppose that the Volterra kernels $E_b, E_{\sigma}$ have the representation
\begin{align}\label{eq: kernel representation completely monotone}
		E_b(t) = \int_{\R_+}\e{-xt}\xi_b(x)\,\mu(\d x) \ \text{ and }\
		E_{\sigma}(t) = \int_{\R_+}\e{-xt}\xi_\sigma(x)\,\mu(\d x)
\end{align}
where $\xi_b\colon \R_+ \longrightarrow L(H_b, V)$ and $\xi_{\sigma}\colon \R_+ \longrightarrow L_q(H_\sigma, V)$ with some $q\in[1,\infty]$ are strongly measurable such that both integrals are absolutely convergent for each $t > 0$. Remark that, if $V = H = H_b = H_{\sigma}$ and $\xi_b, \xi_{\sigma}$ are $\mu$-a.e.~symmetric and positive-semidefinite, then $E_b, E_{\sigma}$ are completely monotone in the sense of \cite{MR2798103}. However, we do not suppose this condition, i.e., also not necessarily completely monotone kernels are allowed.

For given $\xi_b, \xi_{\sigma}$, suppose there exist $\delta^*, \eta^* \in \R$ such that
\begin{align} \label{eq: kernels condition 1}
    \begin{cases}
    \quad \int_{(0,1]} \left(\norm{\xi_b(x)}_{L(H_b,V)}^2 + \norm{\xi_\sigma(x)}_{L_q(H_{\sigma}, V)}^2\right)x^{-\delta^*}\,\mu(\d x)<\infty,
    \\ \quad \int_{(1,\infty)}\left(\norm{\xi_b(x)}_{L(H_{b}, V)}^2+\norm{\xi_\sigma(x)}_{L_q(H_{\sigma},V)}^2\right)x^{\eta^*}\,\mu(\d x)<\infty.
    \end{cases}
\end{align}
As for $\delta_*, \eta_*$, also here we are typically interested in the largest possible choice for $\delta^*, \eta^*$. Let us define for $a\in\{b,\sigma\}$ the action of $\xi_a$ on $h\in H_a$ via $(\xi h)(x)=\xi(x)h$ where $x\geq0$. Then we obtain $\vertiii{\xi_b h}_{\delta^*,\eta^*}^2\leq \norm{h}_{H_b}^2\int_{\R_+}\norm{\xi_b(x)}_{L(H_b,V)}^2 w_{\delta^*,\eta^*}(x)\,\mu(\d x)$ and similarly $\vertiii{\xi_{\sigma} h}_{\delta^*,\eta^*}^2\leq \norm{h}_{H_{\sigma}}^2\int_{\R_+}\norm{\xi_{\sigma}(x)}_{L_q(H_{\sigma},V)}^2 w_{\delta^*,\eta^\ast}(x)\,\mu(\d x)$. This shows that under condition \eqref{eq: kernels condition 1} one has
\[
    \xi_b\in L(H_b,\mathcal{H}_{\delta^*,\eta^*}) \ \text{ and } \ \xi_{\sigma} \in L_q(H_{\sigma}, \mathcal{H}_{\delta^*, \eta^*}).
\]
Below, we summarise the properties of this lift with particular focus on Assumption \ref{assumption long-time}.

\begin{theorem}\label{theorem cm lift}
    Let $\mu$ be a Borel measure on $\R_+$ satisfying \eqref{eq: projection condition 1} and let $\xi_b, \xi_{\sigma}$ satisfy \eqref{eq: kernels condition 1}.
    \begin{enumerate}
        \item[(a)] For any $\delta, \eta \in \R$, it holds that
        $S(t) \longrightarrow S_\infty$ strongly on $\mathcal{H}_{\delta, \eta}$, where $S_\infty$ is given by
        \begin{equation*}\label{eq: sg limit}
        S_\infty\colon\mathcal{H}_{\delta,\eta}\longrightarrow\mathcal{H}_{\delta,\eta},\quad S_\infty y = \begin{cases}
        y(0)\1_{\{0\}}(\cdot), &\text{if }\mu(\{0\})>0\\
        0, &\text{if }\mu(\{0\})=0.
        \end{cases}
        \end{equation*}
        Moreover, if $\delta' < \delta$, then also
        \[
            \| S(t) - S_\infty \|_{L(\mathcal{H}_{\delta, \eta}, \mathcal{H}_{\delta', \eta})} \leq \max\{1,\ C(\delta - \delta')\}^{1/2} (1 \vee t)^{-\frac{\delta - \delta'}{2}}.
        \]

        \item[(b)] If $\mu(\{0\})>0$, then $\xi_b(0)=0$, $\xi_\sigma(0)=0$. Let $\delta^* - \delta_* > 2$, then Assumptions \ref{assumption long-time} (b) and (c) are satisfied for
        \begin{align}\label{eq: 1}
            \mathcal{H} = \mathcal{H}_{\delta, \eta^*}, \ \ \mathcal{V} = \mathcal{H}_{\delta, \eta}, \ \ \mathcal{V}_0 = \mathcal{H}_{\delta_*,\eta}, \ \, \lambda=\frac{\delta-\delta_*}{2}, \ \ \rho =  \frac{(\eta - \eta^*)_+}{2}
        \end{align}
        and any $\max\{\eta_*, \eta^*\} \leq \eta < 1 + \eta^*$, $\delta\in (\delta_*,\delta^*)$ such that $\delta^* - \delta > 2$.

        \item[(c)] Suppose that $b \equiv 0$ and $\xi_b \equiv 0$. If $\mu(\{0\})>0$, assume that $\xi_\sigma(0)=0$. If $\delta^* - \delta_* > 1$, then Assumptions \ref{assumption long-time} (b) and (c) hold with \eqref{eq: 1} where $\max\{\eta_*, \eta^*\} \leq \eta < 1 + \eta^*$ and $\delta \in (\delta_*, \delta^*)$ is such that $\delta^* - \delta > 1$.
    \end{enumerate}
\end{theorem}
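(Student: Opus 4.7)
The plan is to establish (a) first, since the strong convergence and the polynomial operator-norm rate supplied there are exactly the ingredients Assumption \ref{assumption long-time}(b),(c) require; (b) and (c) will then reduce to verifying the remaining integrability conditions and tracking how the exponent constraints arising from \eqref{eq: projection condition 1} and \eqref{eq: kernels condition 1} interact with the admissible range of $\delta,\eta$.

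For (a), I would start from the pointwise identity $(S(t)y)(x)=\e{-xt}y(x)$, note that $\e{-xt}\to\1_{\{0\}}(x)$ as $t\to\infty$, and apply dominated convergence with dominating integrand $\norm{y(x)}_V^2 w_{\delta,\eta}(x)$ to identify $S_\infty$ and obtain strong convergence on $\mathcal{H}_{\delta,\eta}$. For the quantitative bound I would split
\[
\vertiii{S(t)y - S_\infty y}_{\delta',\eta}^2 = \int_{(0,1]}\e{-2xt}\norm{y(x)}_V^2 x^{-\delta'}\,\mu(\d x) + \int_{(1,\infty)} \e{-2xt}\norm{y(x)}_V^2 x^\eta\,\mu(\d x)
\]
(the $x=0$ contribution vanishes), and on each piece convert one factor of the exponential into a power of $t$ via the elementary inequality \eqref{eq: elementary inequality} with $\varrho=\delta-\delta'$: on $(0,1]$, $\e{-2xt}x^{-\delta'}=(x^{\delta-\delta'}\e{-2xt})\,x^{-\delta}\leq C(\delta-\delta')t^{-(\delta-\delta')}x^{-\delta}$; on $(1,\infty)$, since $x>1$ gives $x^{-(\delta-\delta')}\leq 1$, we have $\e{-2xt}x^\eta\leq C(\delta-\delta')t^{-(\delta-\delta')}x^\eta$. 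For $t\leq 1$ the trivial bound $\vertiii{S(t)y-S_\infty y}_{\delta',\eta}\leq\vertiii{y}_{\delta,\eta}$ (using $\delta'\leq\delta$ and $x\leq 1$ on $(0,1]$) combines with the $t\geq 1$ estimate to yield $\max\{1,C(\delta-\delta')\}^{1/2}(1\vee t)^{-(\delta-\delta')/2}$.

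For (b) I would verify each clause of Assumption \ref{assumption long-time} in turn. The operator $S_\infty$ from (a) is a bounded projection on $\mathcal{V}=\mathcal{H}_{\delta,\eta}$, and the identities $S_\infty S(t)\xi_b=0$, $S_\infty S(t)\xi_\sigma=0$ reduce to evaluation at $x=0$, hence are forced by $\xi_b(0)=\xi_\sigma(0)=0$ when $\mu(\{0\})>0$ and trivial otherwise. For the integrability \eqref{eq:assumption long-timme - integrability} I start from
\[
\norm{S(t)\xi_b}_{L(H_b,\mathcal{V})}^2 \leq \int_{\R_+} \e{-2xt}\norm{\xi_b(x)}_{L(H_b,V)}^2 w_{\delta,\eta}(x)\,\mu(\d x)
\]
and split as above: on $(0,1]$, $\e{-2xt}x^{-\delta}\leq C(\delta^\ast-\delta)t^{-(\delta^\ast-\delta)}x^{-\delta^\ast}$ produces a bound $\lesssim 1\wedge t^{-(\delta^\ast-\delta)}$ against the finite constant in \eqref{eq: kernels condition 1}; on $(1,\infty)$, factorising $\e{-2xt}x^\eta=\e{-xt}(\e{-xt}x^{\eta-\eta^\ast})x^{\eta^\ast}$ and applying \eqref{eq: elementary inequality} to $\e{-xt}x^{\eta-\eta^\ast}$ gives $\lesssim t^{-(\eta-\eta^\ast)}\e{-t}$ against the other finite constant in \eqref{eq: kernels condition 1}. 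The resulting pointwise bound $\norm{S(t)\xi_b}_{L(H_b,\mathcal{V})}\lesssim 1+t^{-(\eta-\eta^\ast)/2}$ near $t=0$ and $\lesssim t^{-(\delta^\ast-\delta)/2}$ at infinity lies in $L^1(\R_+)$ precisely because $\eta-\eta^\ast<1$ and $\delta^\ast-\delta>2$. The same estimate applied to $\xi_\sigma$ shows $\norm{S(t)\xi_\sigma}_{L_q(H_\sigma,\mathcal{V})}^2\in L^1(\R_+)$ already under $\delta^\ast-\delta>1$. For Assumption \ref{assumption long-time}(c), the embedding $\mathcal{V}=\mathcal{H}_{\delta,\eta}\hookrightarrow\mathcal{H}_{\delta_\ast,\eta}=\mathcal{V}_0$ is dense because $\delta>\delta_\ast$, the characterisation $\mathcal{V}=\{y\in\mathcal{V}_0:\vertiii{y}_{\delta,\eta}<\infty\}$ is immediate from the weight definitions, the polynomial rate with $\lambda=(\delta-\delta_\ast)/2$ is exactly (a) applied with $\delta'=\delta_\ast$, and continuity of $\Xi$ on $\mathcal{V}_0$ is Lemma \ref{lemma: CM SSE}(b) since $\delta_\ast\geq\delta_\ast$ and $\eta\geq\eta_\ast$.

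Part (c) is a strict refinement: with $b=0$ and $\xi_b=0$, only $\norm{S(t)\xi_\sigma}^2_{L_q(H_\sigma,\mathcal{V})}\in L^1(\R_+)$ is required, and the argument above yields this already under $\delta^\ast-\delta>1$; everything else in (b),(c) of Assumption \ref{assumption long-time} is verified exactly as in the general case. The main obstacle throughout is the $(1,\infty)$ integral: the target weight $x^\eta$ with $\eta\geq\eta^\ast$ is heavier than \eqref{eq: kernels condition 1} directly controls, and the device of trading $x^{\eta-\eta^\ast}\e{-xt}$ for a $t^{-(\eta-\eta^\ast)}$ factor via \eqref{eq: elementary inequality} while retaining one $\e{-xt}$ to secure exponential decay at infinity must be balanced against the small-$t$ singularity it creates; the hypothesis $\eta<1+\eta^\ast$ is precisely what keeps that singularity integrable after taking the square root demanded by \eqref{eq:assumption long-timme - integrability}.
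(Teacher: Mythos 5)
Your proposal is correct, and for part (a) it is essentially the paper's argument (dominated convergence to identify $S_\infty$, then the splitting at $x=1$ together with the elementary inequality \eqref{eq: elementary inequality} and the trivial bound for $t\le 1$). For parts (b) and (c) you take a slightly different route: you estimate $\|S(t)\xi_b\|_{L(H_b,\mathcal V)}$ and $\|S(t)\xi_\sigma\|_{L_q(H_\sigma,\mathcal V)}$ directly by splitting the weighted integral and trading powers of $x$ for powers of $t^{-1}$ while retaining one factor $\e{-xt}\le\e{-t}$ on $(1,\infty)$, whereas the paper factorises $S(t)=(S(t/2)-S_\infty)S(t/2)$, applies the rate from (a) on the pair $(\mathcal H_{\delta^*,\eta},\mathcal H_{\delta,\eta})$, and controls $S(t/2)\xi_b$, $S(t/2)\xi_\sigma$ by the small-time regularisation estimate as in \eqref{eq: 2}. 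Both routes rest on the same two ingredients (the weight splitting and \eqref{eq: elementary inequality}) and produce identical pointwise bounds, with singularity $t^{-(\eta-\eta^*)/2}$ near $t=0$ and decay $t^{-(\delta^*-\delta)/2}$ at infinity, hence the same integrability thresholds $\delta^*-\delta>2$ (resp.\ $>1$ when $\xi_b\equiv0$); your version is marginally more self-contained, the paper's reuses (a) more systematically. Two small points of hygiene: in the integrability estimates you silently drop the atom at $x=0$ in $w_{\delta,\eta}$, which is legitimate only because $\xi_b(0)=\xi_\sigma(0)=0$ when $\mu(\{0\})>0$ (worth stating, since otherwise the bound would not decay in $t$); and your closing remark misattributes where $\eta<1+\eta^*$ bites — it is needed for the \emph{squared} norm $\|S(t)\xi_\sigma\|^2$ in \eqref{eq:assumption long-timme - integrability} (singularity $t^{-(\eta-\eta^*)}$), while the square-rooted drift term only needs $\eta-\eta^*<2$; your explicit estimates are nonetheless correct, so this is expository rather than a gap.
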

\begin{proof}
    (a) The strong convergence $S(t) \longrightarrow S_\infty$ on $\mathcal{H}_{\delta, \eta}$ follows from
    \[
        \vertiii{S(t)y - S_\infty y}_{\delta, \eta}^2 = \int_{(0,\infty)}\e{-2xt} \|y(x)\|_V^2 w_{\delta, \eta}(x)\, \mu(\mathrm{d}x)
    \]
    and an application of dominated convergence. For the second assertion, let $\delta' < \delta$. Then using \eqref{eq: elementary inequality}, we obtain for $y\in\mathcal{H}_{\delta,\eta}$
    \begin{align*}
        \vertiii{S(t)y - S_\infty y}_{\delta',\eta}^2 &= \int_{(0,\infty)} \e{-2xt}\|y(x)\|_V^2 w_{\delta', \eta}(x) \,  \mu(\mathrm{d}x)
        \\ &\leq C(\delta - \delta') t^{-(\delta - \delta')}\int_{(0,1]}\norm{y(x)}_V^2 x^{-\delta}\,\mu(\d x)
         + \e{-2t}\int_{(1,\infty)}\norm{y(x)}_V^2x^{\eta}\,\mu(\d x)
        \\ &\leq C(\delta - \delta')t^{-(\delta - \delta')} \vvvert y \vvvert_{\delta, \eta}^2
    \end{align*}
    where we have used $\e{-2t} \leq C(\delta - \delta') t^{-(\delta - \delta')}$ with $C$ defined in \eqref{eq: elementary inequality}. For $t \in [0,1]$ we may also use the trivial bound $\vertiii{S(t)y - S_\infty y}_{\delta',\eta} \leq \vertiii{y}_{\delta', \eta} \leq \vertiii{y}_{\delta, \eta}$. Combining both bounds proves the assertion.

    (b) Since $(S(t))_{t\geq0}$ is a contraction semigroup, its operator norm is uniformly bounded. By assumption $\xi_b(0) = 0$, $\xi_{\sigma}(0) = 0$ if $\mu(\{0\}) > 0$, we find $S_\infty S(t)\xi_b=0$ and $S_\infty S(t)\xi_\sigma=0$ for $t>0$. In particular, an application of (a) gives
    \begin{align}\label{eq: estimate S(t) xi_b}
    \norm{S(t)\xi_b}_{L(H_b,\mathcal{H}_{\delta,\eta})}
    &\leq \norm{S(t/2) - S_\infty}_{L(\mathcal{H}_{\delta^*,\eta},\mathcal{H}_{\delta,\eta})}\norm{S(t/2)\xi_b}_{L(H_b,\mathcal{H}_{\delta^*,\eta})}
    \\ &\leq 2\max\{\mu(\{0\}),\ 1, \ C(\eta - \eta^*) \} C(\delta^* - \delta)(1\lor t)^{-\frac{\delta^*-\delta}{2}} \norm{\xi_b}_{L(H_b,\mathcal{H}_{\delta^*,\eta^*})} \notag
    \end{align}
    where we have used
    \[
        \norm{S(t/2)\xi_b}_{L(H_b,\mathcal{H}_{\delta^*,\eta})} \leq \max\{\mu(\{0\}),\ 1, \ C(\eta - \eta^*) \} ( 1 + (1 \wedge t)^{- \frac{\eta - \eta^*}{2}})\norm{\xi_b}_{L(H_b,\mathcal{H}_{\delta^*,\eta^*})},
    \]
    which follows similarly to \eqref{eq: 2} when taking into account $S_{\infty}S(t/2)\xi_b = 0$ so that the first term vanishes. Similarly we prove
    \begin{multline*}\label{eq: estimate S(t) xi_sigma}
        \norm{S(t)\xi_\sigma}_{L_q(H_\sigma,\mathcal{H}_{\delta,\eta})}
        \\   \leq 2\max\{\mu(\{0\}),\ 1, \ C(\eta - \eta^*) \} C(\delta^* - \delta)(1\lor t)^{-\frac{\delta^*-\delta}{2}} \norm{\xi_{\sigma}}_{L_q(H_{\sigma},\mathcal{H}_{\delta^*,\eta^*})}.
    \end{multline*}
    This proves \eqref{eq:assumption long-timme - integrability} for the general case.

    (c) When $b = 0$ and $\xi_b = 0$, then we only need that $\norm{S(\cdot)\xi_\sigma}_{L_q(H_\sigma,\mathcal{H}_{\delta,\eta})}^2$ is integrable, whence $\delta^* - \delta > 1$ is sufficient, which is possible whenever $\delta^* - \delta_* > 1$.
\end{proof}

This theorem allows us to study the Markovian lift \eqref{eq: abstract mild formulation Markovian lift} for various stochastic Volterra equations \eqref{eq: mild formulation}, provided that the kernels admit the representation \eqref{eq: kernel representation completely monotone}, i.e.~$E_b(t) = \Xi S(t)\xi_b$ and $E_{\sigma}(t) = \Xi S(t)\xi_{\sigma}$, such that \eqref{eq: kernels condition 1} holds. In such a case the class of admissible $G$ is given by all functions of the form $G(t) = \Xi S(t)\xi$, i.e.
\[
    G(t) = \int_{\R_+} \e{-xt}\xi(x)\, \mu(\mathrm{d}x), \qquad t > 0,
\]
where $\xi \in L^p(\Omega, \F_0, \P; \mathcal{V})$ with $\mathcal{V} = \mathcal{H}_{\delta^*, \eta}$ and $\max\{\eta_*, \eta^*\} \leq \eta < 1 + \eta^*$. Remark that, if $\mu(\{0\}) = 0$, then $S_\infty = 0$, and hence limit distributions are necessarily unique. We will see that for the mild formulation \eqref{VSPDE}, this is typically the case whenever the underlying Volterra kernels $k,h$ are not integrable at $t = \infty$.

Concerning condition \eqref{eq: kernels condition 1}, the following remark illustrates how we may obtain new Volterra kernels via regularisation in short or long time.
\begin{remark}\label{remark: regularization}
    Let $E(t) = \int_{\R_+} \e{-tx}\xi(x)\, \mu(\mathrm{d}x)$ satisfy \eqref{eq: kernels condition 1}, and define for $\varepsilon, \lambda > 0$
    \[
        E^{\varepsilon}(t) \coloneqq E(t+\varepsilon) \ \text{ and } \ E^{\lambda}(t) \coloneqq \e{-\lambda t}E(t).
    \]
    Then $E^{\varepsilon}(t) = \Xi S(t)\xi^{\varepsilon}$ with $\xi^{\varepsilon}(x) = \e{-\varepsilon x}\xi(x)$ satisfies \eqref{eq: kernels condition 1} for any choice of $\eta^* \in \R$, and $E^{\lambda}(t) = \Xi S(t)\xi^{\lambda}$ with $\xi^{\lambda}(x) = \1_{(\lambda,\infty)}(x)\xi(x - \lambda)$ satisfies \eqref{eq: kernels condition 1} for any choice of $\delta^* \in \R$.
\end{remark}

Next, let us observe that for completely monotone Volterra kernels, we may always obtain the representation \eqref{eq: kernel representation completely monotone}.

\begin{remark}
    Suppose that $E_b(t) = \int_{\R_+} \e{-xt}\,\nu_b(\mathrm{d}x)$ and $E_{\sigma}(t) = \int_{\R_+}\e{-xt}\,\nu_{\sigma}(\mathrm{d}x)$ are scalar-valued and completely monotone kernels with Bernstein measures $\nu_b, \nu_{\sigma}$. Let $\widetilde{E}_{b} \in L(H_b, V)$, $\widetilde{E}_{\sigma} \in L_q(H_{\sigma}, V)$, and define $\mu(\mathrm{d}x) = \nu_b(\mathrm{d}x) + \nu_{\sigma}(\mathrm{d}x)$. Then $E_b, E_{\sigma}$ have representation \eqref{eq: kernel representation completely monotone} with
    \[
        \xi_b(x) = \widetilde{E}_b \frac{\nu_b(\mathrm{d}x)}{\mu(\mathrm{d}x)} \ \text{ and } \
        \xi_{\sigma}(x) = \widetilde{E}_{\sigma}\frac{\nu_{\sigma}(\mathrm{d}x)}{\mu(\mathrm{d}x)}.
    \]
\end{remark}

While the above remark guarantees that we may always find a reference measure $\mu$, in many cases, one may take the Lebesgue measure $\mu(\mathrm{d}x) = \mathrm{d}x$. In such a case, \eqref{eq: projection condition 1} is satisfied for any $\eta_* > 1$ and $\delta_* > -1$. Moreover, it is clear that in the above remark $\widetilde{E}_{b} \in L(H_b, V)$, $\widetilde{E}_{\sigma} \in L_q(H_{\sigma}, V)$ may also depend on $x$. Let us illustrate this with two particular examples of kernels where Assumption \ref{assumption SEE} is satisfied.

\begin{example}\label{example: fractional}
    Let $k(t) = \int_{\R_+} \e{-xt}\xi(x)\, \mathrm{d}x$ where $\xi$ is specified below. Then $\mu(\mathrm{d}x) = \mathrm{d}x$ and \eqref{eq: projection condition 1} is satisfied for any $\eta_* > 1$ and $\delta_* > -1$.
    \begin{enumerate}
        \item[(a)] Take the fractional Riemann-Liouville kernel $k_{\alpha}(t) = \frac{t^{\alpha - 1}}{\Gamma(\alpha)}$ with $\alpha \in (0,1)$. Then
        \[
            \xi(x) = \frac{x^{-\alpha}}{\Gamma(\alpha)\Gamma(1-\alpha)}
        \]
        and we may choose any $\eta^* < -1 + 2\alpha$ and $\delta^* < 1 - 2\alpha$.

        \item[(b)] Take the $\log$-kernel $k(t) = \log(1 + 1/t)$. Then
        \[
            \xi(x) = \frac{1 - \e{-x}}{x},
        \]
        and we may choose $\eta^* < 1$ and $\delta^* < 1$.
    \end{enumerate}
    In both cases, choosing $k_b(t) = k(t) \widetilde{E}_b$ and $k_{\sigma}(t) = h(t)\widetilde{E}_{\sigma}$, with $k,h$ given as in (a) or (b), it is clear that Assumption \ref{assumption SEE} is satisfied. However, Assumption \ref{assumption long-time} is not satisfied in case (a) since $\delta^* - \delta_* < 1$, while in case (b) we may choose $\delta^*, \delta_*$ such that $\delta^* - \delta_* \in (1,2)$, whence Assumptions \ref{assumption long-time} (b) and (c) are satisfied when $b \equiv 0$.
\end{example}

For Assumption \ref{assumption long-time}, integrability of the Volterra kernels is essential and could, e.g., be achieved by the regularisation procedure given in Remark \ref{remark: regularization}.

\begin{example}
    Suppose that the reference measure is given by
    \[
        \mu(\mathrm{d}x) = \sum_{n=0}^{\infty} c_n \delta_{\lambda_n}(\mathrm{d}x)
    \]
    where $c_n, \lambda_n \geq 0$, $(\lambda_n)_{n \geq 1}$ is increasing such that $\mu$ is locally finite. Then each $E(t) = \Xi S(t)\xi$ is of the form
    \[
        E(t) = \sum_{n=0}^{\infty}c_n \e{-\lambda_n t}\xi(\lambda_n).
    \]
    Condition \eqref{eq: projection condition 1} is satisfied whenever
    \[
        \sum_{n=0}^{\infty} \left(\1_{\{\lambda_n \leq 1\}}c_n \lambda_n^{\delta_*} + \1_{\{\lambda_n > 1\}}c_n \lambda_n^{-\eta_*} \right) < \infty
    \]
    while condition \eqref{eq: kernels condition 1} holds, provided that
    \[
        \sum_{n=0}^{\infty}\left( \1_{\{\lambda_n \leq 1\}} c_n^2 \| \xi(\lambda_n)\|^2 \lambda_n^{-\delta^*} + \1_{\{\lambda_n > 1\}}c_n^2 \|\xi(\lambda_n)\|^2 \lambda_n^{\eta^*} \right) < \infty
    \]
    where the norm $\|\xi(\lambda_n)\|$ depends on the spaces $E$ is acting on.
\end{example}

For such examples, the Markovian lift can be written as an infinite system of stochastic equations. The latter arises in the study of (finite-dimensional) Markovian approximations.

\subsection{Fractional Volterra kernels in the mild formulation}\label{sec:frac_kernels_mild}

In this section, we discuss the particular case where the stochastic Volterra equation \eqref{VSPDE}, and the corresponding Markovian lift is carried out for \eqref{eq: mild formulation} with resolvent operators given by \eqref{eq: resolvent equation}. Below, we focus on the case of fractional Volterra kernels under the assumption that $(A,D(A))$ admits an orthonormal basis $(e_n^H)_{n \in \N}$ of eigenvectors such that
\begin{align}\label{eq: A diagonal}
        Ae_n^H = - \theta_n e_n^H, \qquad n \geq 1,
\end{align}
where the sequence of eigenvalues $(\theta_n)_{n \geq 1} \subset (0,\infty)$ increases to infinity. Let $W$ be a Gaussian process with covariance operator $Q = \sum_{n=1}^{\infty} \lambda_n (e_n^H \otimes e_n^H)$, where $(\lambda_n)_{n \geq 1}$ denotes the corresponding sequence of eigenvalues, and $W$ has for $x \in H$ representation
\begin{align}\label{eq: W}
    W_t(x) = \sum_{n = 1}^{\infty}\sqrt{\lambda_n} \beta_n(t)\langle x, e_n^H\rangle_{H}\, e_n^H
\end{align}
where $(\beta_n)_{n \geq 1}$ is a sequence of independent one-dimensional Brownian motions. Remark that, if  $(\lambda_n)_{n \geq 1}$ is summable, then $Q$ is trace-class and hence $W$ is a genuine $Q$-Wiener process. However, if $\lambda = 1$ then $W$ is a cylindrical Wiener process. Let us consider the stochastic Volterra equation, for simplicity, with additive noise, given by
\begin{equation*}\label{eq: cm general eq add}
    u(t) = g(t) + \int_0^t \frac{(t-s)^{\alpha - 1}}{\Gamma(\alpha)}\left( Au(s) + b(u(s))\right)\, \d s + \int_0^t \frac{(t-s)^{\beta - 1}}{\Gamma(\beta)}\, \d W_s
\end{equation*}
where $b\colon H \longrightarrow H_b$ is Lipschitz continuous, $\alpha \in (0,1)$ and $1/2 < \beta < 1 + \alpha$, and we implicitly assume that all integrals are well-defined. To study this equation in its mild formulation, let us define the family of operators $E^{\alpha, \beta}$ determined as the unique solution of \eqref{eq: resolvent equation} with $k(t) = t^{\alpha-1}/\Gamma(\alpha)$ and $h(t) = t^{\beta - 1}/\Gamma(\beta)$, i.e.
\begin{equation*}\label{fractional resolvent equation}
            E^{\alpha, \beta}(t)x = \frac{t^{\beta-1}}{\Gamma(\beta)}x + A \int_0^t \frac{(t-s)^{\alpha-1}}{\Gamma(\alpha)}E^{\alpha, \beta}(s)x\, \mathrm{d}s, \qquad x \in D(A).
\end{equation*}
The next remark provides an explicit formula for $E^{\alpha, \beta}$ and shows that  \eqref{eq: kernel representation completely monotone} is satisfied.

\begin{remark}\label{remark resolvents frac}
For $n \geq 1$, let $e_n(\cdot; \alpha, \beta)$ be the unique solution of the one-dimensional Volterra equation
\[
    e_n(t;\alpha, \beta) = \frac{t^{\beta - 1}}{\Gamma(\beta)} - \theta_n \int_0^t \frac{(t-s)^{\alpha - 1}}{\Gamma(\alpha)}e_n(s; \alpha, \beta)\, \mathrm{d}s.
\]
Taking Laplace transforms, one can verify that the unique solution is given by
\begin{equation*}\label{eq: 1dim fractional resolvent}
    e_n(t;\alpha, \beta) = t^{\beta - 1}E_{\alpha,\beta}(-\theta_n t^{\alpha})
\end{equation*}
where $E_{\alpha,\beta}$ denotes the two parameter Mittag-Leffler function. Furthermore, by an application of \cite[Lemma 2.1]{MR1920979}, we find $e_n(t;\alpha, \beta) = \int_{\R_+}\e{-xt}\xi_{\alpha,\beta}(x; \theta_n)\, \mathrm{d}x$ whenever $\alpha \in (0,1)$ and $\beta \in (0,\alpha + 1)$. Since $(A, D(A))$ satisfies \eqref{eq: A diagonal}, it follows that
\begin{align*}
           E^{\alpha, \beta}(t) = \int_{\R_+} \e{-xt} \xi^{\alpha,\beta}(x)\, \mathrm{d}x = \sum_{n=1}^{\infty}e_n(t; \alpha, \beta) (e_n^H \otimes e_n^H),
\end{align*}
where $\xi^{\alpha,\beta}(x) = \sum_{n=1}^{\infty} \xi_{\alpha,\beta}(x;\theta_n) (e_n^H \otimes e_n^H)$ and
\begin{align}\label{eq: xi mun}
            \xi_{\alpha,\beta}(x; \theta_n) = \frac{1}{\pi} \frac{x^{2\alpha-\beta}\sin(\beta\pi)- \theta_n x^{\alpha-\beta}\sin((\alpha-\beta)\pi)}{\theta_n^2 + 2\theta_n \cos(\pi\alpha)x^\alpha+x^{2\alpha}}.
\end{align}
\end{remark}

Hence, setting $E_b = E^{\alpha, \alpha}$, $E_{\sigma} = E^{\alpha, \beta}$, and writing $W = Q^{1/2}\widetilde{W}$ with a cylindrical Wiener process $\widetilde{W}$ on $H$, we obtain the desired mild formulation \eqref{eq: mild formulation}, i.e.
\begin{align}\label{eq: mild formulation additive noise}
        u(t;G) &=  G(t) + \int_0^t E^{\alpha,\alpha}(t-s)b(u(s;G))\,\d s + \int_0^t E^{\alpha,\beta}(t-s) Q^{\frac{1}{2}}\,\d \widetilde{W}_s,
        \\ \notag G(t) &= g(t) + A \int_0^t E^{\alpha, \alpha}(t-s)g(s)\, \d s.
\end{align}
Next, we formulate the corresponding Markovian lift and verify our main assumptions \ref{assumption SEE} and \ref{assumption long-time} in the scale of weighted Hilbert spaces
\begin{equation}\label{eq:H-kappa space}
        H^{\varkappa} = \left\{ h \in H \ : \ \sum_{n=1}^{\infty}\theta_n^{2\varkappa}\ |\langle h, e_n^H \rangle_H|^2 < \infty \right\}, \qquad \varkappa \in \R,
\end{equation}
with inner product $\langle h,h'\rangle_\varkappa =\sum_{n=1}^\infty \theta^{2\varkappa} \langle h,e_n^H\rangle_H\langle h',e_n^H\rangle_H$ and induced norm $\|\cdot\|_\varkappa$. Note that $H^\varkappa\subset H^{\varkappa'}$ for $\varkappa'<\varkappa$. Recall that \eqref{eq: kernels condition 1} depends on the choice of $q$. Below we focus on the cases $q \in \{2,\infty\}$ for which $\xi^{\alpha,\beta} \colon\R_+ \longrightarrow L_q(H^{\varkappa}, H^{\gamma})$.

\begin{lemma}\label{theorem cm frac spaces}
    Suppose that \eqref{eq: A diagonal} holds. Let $\alpha \in (\frac12, 1)$, $\frac{1}{2} < \beta < \frac{1}{2} + \alpha$, and take $\delta^*, \eta^* \in \R$ such that $\delta^* < 1+2\alpha-2\beta\1_{\{\alpha\neq\beta\}}$ and $\eta^* < 2\beta - 1$. Then we obtain
    \begin{align*}
            \int_{\R_+}  \| \xi^{\alpha, \beta}(x)\|_{L(H)}^2 w_{\delta^*, \eta^*}(x)\, \mathrm{d}x
            &\leq \frac{\left( \frac{|\sin(\beta \pi)|}{\pi \sin(\alpha \pi)^2} \theta_1^{-2} + \frac{|\sin((\alpha - \beta)\pi)|}{\pi \sin(\alpha \pi)^2 } \theta_1^{-1}\right)^2}{1 + 2\alpha - 2\beta \1_{\{\alpha \neq \beta\}} - \delta^*}
        \\ &\qquad + \frac{\left( \frac{|\sin(\beta \pi)|}{ \pi \sin(\alpha \pi)^2} +  \frac{|\sin((\alpha-\beta)\pi)|}{2\pi (1+ \cos(\alpha \pi))}  \right)^2}{2\beta - 1 - \eta^*}
    \end{align*}
    and if $\gamma, \varkappa \in \R$, then also
        \begin{align*}
            \int_{\R_+} \| &\xi^{\alpha, \beta}(x)\|_{L_2(H^{\varkappa}, H^{\gamma})}^2 w_{\delta^*, \eta^*}(x)\, \mathrm{d}x
            \\ &\leq \left( \frac{|\sin(\beta \pi)|}{\pi \sin(\alpha \pi)^2} \theta_1^{-1} + \frac{|\sin((\alpha - \beta)\pi)|}{\pi \sin(\alpha \pi)^2 } \right)^2  \sum_{n=1}^{\infty}\frac{\theta_n^{2(\gamma - \varkappa)-2}}{1 + 2\alpha - 2\beta \1_{\{\alpha \neq \beta\}} - \delta^*}
            \\ &\qquad + \left( \frac{|\sin(\beta \pi)|}{\pi \sin(\alpha \pi)^2} \theta_1^{-2} + \frac{|\sin((\alpha-\beta) \pi)|}{\pi \sin(\alpha \pi)^2} \right)^2 \sum_{n=1}^{\infty} \frac{\theta_n^{2(\gamma-\varkappa)+\frac{-2\beta+\eta^\ast+1}{\alpha}}+\theta_n^{2(\gamma-\varkappa)-2}}{1 + 2\alpha - 2\beta\1_{\{\alpha \neq \beta\}} + \eta^*}
            \\ &\qquad +  \frac{\left(\frac{|\sin(\beta \pi)|}{\pi \sin(\alpha \pi)^2} + \frac{|\sin((\alpha-\beta)\pi)|}{2\pi (1 + \cos(\alpha \pi))}\right)^2}{2\beta - 1 - \eta^*} \sum_{n=1}^{\infty}\theta_n^{2(\gamma - \varkappa) + \frac{- 2\beta+\eta^\ast+1}{\alpha}}.
        \end{align*}
\end{lemma}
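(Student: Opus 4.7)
My starting point is the identity \eqref{eq: xi mun}, which expresses $\xi_{\alpha,\beta}(x;\theta_n)$ as a rational function of $x^{\alpha}$ and $\theta_n$. The key estimates come from two elementary lower bounds on the denominator: $\sin^2(\pi\alpha)\max\{\theta_n^2,x^{2\alpha}\}$ (obtained by writing the denominator as either $(\theta_n+x^\alpha\cos(\pi\alpha))^2+x^{2\alpha}\sin^2(\pi\alpha)$ or $(x^\alpha+\theta_n\cos(\pi\alpha))^2+\theta_n^2\sin^2(\pi\alpha)$) together with $2(1+\cos(\pi\alpha))\theta_nx^\alpha$ (from $\theta_n^2+x^{2\alpha}\geq 2\theta_n x^\alpha$). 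Combined with the triangle inequality on the numerator, the first bound gives, whenever $x^\alpha\leq\theta_n$, the pointwise estimate
\[
|\xi_{\alpha,\beta}(x;\theta_n)|\;\leq\;\tfrac{|\sin(\beta\pi)|}{\pi\sin^2(\pi\alpha)}\,\theta_n^{-2}x^{2\alpha-\beta}+\tfrac{|\sin((\alpha-\beta)\pi)|}{\pi\sin^2(\pi\alpha)}\,\theta_n^{-1}x^{\alpha-\beta},
\]
while for $x^\alpha\geq\theta_n$ applying $\sin^2(\pi\alpha)x^{2\alpha}$ to the first numerator term and $2(1+\cos(\pi\alpha))\theta_nx^\alpha$ to the second produces the uniform-in-$n$ tail estimate $|\xi_{\alpha,\beta}(x;\theta_n)|\leq\left(\tfrac{|\sin(\beta\pi)|}{\pi\sin^2(\pi\alpha)}+\tfrac{|\sin((\alpha-\beta)\pi)|}{2\pi(1+\cos(\pi\alpha))}\right)x^{-\beta}$.

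I would then split the integration domain as $\R_+=(0,1]\cup(1,\theta_n^{1/\alpha}]\cup(\theta_n^{1/\alpha},\infty)$, interpreting the middle region as empty when $\theta_n<1$. On $(0,1]$ the first estimate applies; bounding $x^{2\alpha-\beta}\leq x^{\alpha-\beta}$ (trivial for $\alpha\neq\beta$, degenerating to equality for $\alpha=\beta$, whence the indicator $\1_{\{\alpha\neq\beta\}}$ in the exponent), together with $\theta_n^{-1}\leq\theta_1^{-1}$ and $\theta_n^{-2}\leq\theta_1^{-2}$, and squaring, yields a common factor $(c_1\theta_1^{-2}+c_2\theta_1^{-1})^2x^{2(\alpha-\beta\1_{\{\alpha\neq\beta\}})}$ with $c_1=\tfrac{|\sin(\beta\pi)|}{\pi\sin^2(\pi\alpha)}$, $c_2=\tfrac{|\sin((\alpha-\beta)\pi)|}{\pi\sin^2(\pi\alpha)}$; integrating against $x^{-\delta^*}$ gives $1/(1+2\alpha-2\beta\1_{\{\alpha\neq\beta\}}-\delta^*)$ and recovers the first contribution in both estimates. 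On $(1,\theta_n^{1/\alpha}]$ the same pointwise estimate applies, but one further uses $\theta_n^{-1}x^\alpha\leq 1$ to rewrite $\theta_n^{-2}x^{2\alpha-\beta}=\theta_n^{-1}(\theta_n^{-1}x^\alpha)x^{\alpha-\beta}\leq\theta_n^{-1}x^{\alpha-\beta}$, collapsing both summands onto the $x^{\alpha-\beta}$-scale; integrating $x^{2(\alpha-\beta\1_{\{\alpha\neq\beta\}})+\eta^*}$ from $1$ to $\theta_n^{1/\alpha}$ and evaluating the primitive at both endpoints produces the two $\theta_n$-powers $\theta_n^{(-2\beta+\eta^*+1)/\alpha}$ (upper endpoint) and $\theta_n^{-2}$ (lower endpoint). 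On the tail $(\theta_n^{1/\alpha},\infty)$ the uniform estimate gives $\int_{\theta_n^{1/\alpha}}^\infty x^{-2\beta+\eta^*}\,\d x=\theta_n^{(-2\beta+\eta^*+1)/\alpha}/(2\beta-1-\eta^*)$, convergence being ensured by $\eta^*<2\beta-1$.

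For the operator-norm estimate there is no additional $\theta_n$-weight and one takes $\sup_n$ before integrating; since $\theta_n\geq\theta_1$, the middle region contributes nothing new and only the contributions from $(0,1]$ and the uniform tail on $(1,\infty)$ survive, producing the two summands of the first displayed inequality. For the Hilbert--Schmidt estimate, the weight $\theta_n^{2(\gamma-\varkappa)}$ is $x$-independent and factors out of each integral, so termwise summation over $n$ of the three pieces above yields the three Hilbert--Schmidt sums stated. The main technical obstacle is the middle-region bookkeeping: after squaring the pointwise bound, one must arrange that the two $\theta_n$-powers at the endpoints of $[1,\theta_n^{1/\alpha}]$ share a common squared prefactor of the form $(c_1\theta_1^{-2}+c_2)^2$; this dictates applying $\theta_n^{-1}x^\alpha\leq 1$ before any bound involving $\theta_1$, so that only the $\theta_n^{-2}$-coefficient in the first summand is converted into $\theta_1^{-2}$ and the $\theta_n^{-1}$-coefficient in the second summand is absorbed into the common $x^{\alpha-\beta}$-scale unchanged.
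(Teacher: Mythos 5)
Your two pointwise bounds on $\xi_{\alpha,\beta}(x;\theta_n)$ come from exactly the same denominator estimates as the paper's proof, and your direct splitting of $\R_+$ at $x=\theta_n^{1/\alpha}$ is computationally identical to what the paper does via the scaling identity $\xi_{\alpha,\beta}(x;\theta_n)=\theta_n^{-\beta/\alpha}\xi_{\alpha,\beta}(x\theta_n^{-1/\alpha};1)$ followed by the substitution $y=x\theta_n^{-1/\alpha}$ and a split at $y=1$; so structurally this is the paper's argument. One remark on the operator-norm part: the uniform tail bound is in fact unconditional (both lower bounds on the denominator, $\sin(\alpha\pi)^2x^{2\alpha}$ and $2(1+\cos(\alpha\pi))\theta_nx^{\alpha}$, hold for all $x,\theta_n$), and that — not "$\theta_n\geq\theta_1$" — is what lets you use it on all of $(1,\infty)$, including the points with $x^{\alpha}<\theta_n$.

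There are, however, two bookkeeping slips that mean the argument as written does not deliver the stated right-hand sides. Write $c_1=\frac{|\sin(\beta\pi)|}{\pi\sin(\alpha\pi)^2}$, $c_2=\frac{|\sin((\alpha-\beta)\pi)|}{\pi\sin(\alpha\pi)^2}$. (i) On $(0,1]$ you convert both $\theta_n^{-2}$ and $\theta_n^{-1}$ into $\theta_1^{-2},\theta_1^{-1}$ and claim this "recovers the first contribution in both estimates". For the operator norm it does; for the Hilbert--Schmidt bound it destroys the factor $\theta_n^{-2}$ that must stay inside the sum: the statement's first HS term is $(c_1\theta_1^{-1}+c_2)^2\sum_n\theta_n^{2(\gamma-\varkappa)-2}/(1+2\alpha-2\beta\1_{\{\alpha\neq\beta\}}-\delta^*)$, whereas your bound gives $(c_1\theta_1^{-2}+c_2\theta_1^{-1})^2\sum_n\theta_n^{2(\gamma-\varkappa)}$, which does not imply the statement and is not even finite in the intended applications (e.g.\ $\gamma=\varkappa$). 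The fix is the paper's: factor out one power $\theta_n^{-1}$ first and only bound the remaining $\theta_n^{-1}\leq\theta_1^{-1}$. (ii) In the middle region your collapse $\theta_n^{-2}x^{2\alpha-\beta}\leq\theta_n^{-1}x^{\alpha-\beta}$ discards the $x^{\alpha}$ power; when $\alpha=\beta$ (where only this summand survives, since $c_2=0$) the integrand becomes $x^{\eta^*}$ and you obtain the denominator $1+\eta^*$ instead of the stated $1+2\alpha+\eta^*$. The paper avoids this by keeping $x^{2\alpha-\beta}$, squaring, integrating term by term, and only then passing to the smallest denominator $1+2\alpha-2\beta\1_{\{\alpha\neq\beta\}}+\eta^*$; for $\alpha\neq\beta$ your collapse is fine. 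Finally, your stated goal of arranging the middle-region prefactor as $(c_1\theta_1^{-2}+c_2)^2$ is not achieved by the manipulations you describe — they give $(c_1+c_2)^2$ — but this is also exactly what the paper's own proof obtains, so the $\theta_1^{-2}$ in that displayed constant is an inconsistency of the paper's statement rather than something your derivation should be expected to produce.
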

\begin{proof}
    Define $f(\theta) = \theta^2 + 2\theta \cos(\alpha \pi) + x^{2\alpha}$ with $x$ fixed. Firstly, when $x \in (0,1]$, we obtain from $f(\theta_n) = (\cos(\alpha \pi)\theta_n + x^{\alpha})^2 + \sin(\alpha \pi)^2 \theta_n^2 \geq \sin(\alpha \pi)^2 \theta_n^2$ the bound
    \begin{align}\label{eq: 4}
        |\xi_{\alpha, \beta}(x; \theta_n)| &\leq  \frac{|\sin(\beta \pi)| \theta_n^{-2} x^{2\alpha - \beta} + |\sin((\alpha - \beta)\pi)| \theta_n^{-1} x^{\alpha - \beta}}{\pi \sin(\alpha \pi)^2}.
    \end{align}
    When $x > 1$, we obtain
    \begin{align}\notag
        |\xi_{\alpha, \beta}(x; \theta_n)| &\leq \frac{1}{\pi} \frac{x^{2\alpha-\beta}|\sin(\beta\pi)|}{\theta_n^2 + 2\theta_n \cos(\pi\alpha)x^\alpha+x^{2\alpha}} + \frac{1}{\pi} \frac{\theta_n x^{\alpha-\beta}|\sin((\alpha-\beta)\pi)|}{\theta_n^2 + 2\theta_n \cos(\pi\alpha)x^\alpha+x^{2\alpha}}
        \\ &\leq \frac{|\sin(\beta \pi)|}{\pi \sin(\alpha \pi)^2} x^{-\beta} + \frac{|\sin((\alpha - \beta)\pi)|}{2\pi (1 + \cos(\alpha \pi))}x^{-\beta} \label{eq: 5}
    \end{align}
    where the first term follows from $f(\theta) \geq x^{2\alpha} \sin(\alpha \pi)^2$, while the second term can be obtained by maximising $g(\theta) = \frac{\theta}{f(\theta)}$ at $\theta = x^{\alpha}$.

    To prove the desired inequality with respect to the operator norm, we use \eqref{eq: 4} and \eqref{eq: 5} to find
    \begin{align*}
        &\ \int_0^1 \|\xi^{\alpha,\beta}(x)\|_{L(H)}^2 x^{-\delta^*}\, \mathrm{d}x + \int_1^\infty \|\xi_{\sigma}(x)\|_{L(H)}^2 x^{\eta^*}\, \mathrm{d}x
        \\ &= \int_0^{1} \sup_{n \geq 1}|\xi_{\alpha,\beta}(x; \theta_n)|^2 x^{-\delta^*}\, \mathrm{d}x + \int_1^{\infty} \sup_{n \geq 1} |\xi_{\alpha, \beta}(x; \theta_n)|^2 x^{\eta^*}\, \mathrm{d}x
        \\ &\leq \int_0^{1} \left(\frac{|\sin(\beta \pi)|}{\pi \sin(\alpha \pi)^2} \theta_1^{-2} x^{2\alpha - \beta} + \frac{|\sin((\alpha - \beta)\pi)|}{\pi \sin(\alpha \pi)^2 } \theta_1^{-1} x^{\alpha - \beta} \right)^2 x^{-\delta^*}\, \mathrm{d}x
        \\ &\qquad + \int_1^{\infty} \left( \frac{|\sin(\beta \pi)|}{ \pi \sin(\alpha \pi)^2} +  \frac{|\sin((\alpha-\beta)\pi)|}{2\pi (1+ \cos(\alpha \pi))}  \right)^2x^{-2\beta + \eta^*}\, \mathrm{d}x
        \\ &= \left(\frac{\sin(\beta \pi)}{\pi \sin(\alpha \pi)^2}\right)^2 \frac{\theta_1^{-4}}{1 + 4\alpha - 2\beta - \delta^*} + \frac{|\sin(\beta \pi)| |\sin((\alpha-\beta)\pi)|}{\pi^2 \sin(\alpha \pi)^4}  \frac{2\theta_1^{-3}}{1 + 3\alpha - 2\beta - \delta^*}
        \\ &\qquad + \left(\frac{\sin((\alpha - \beta)\pi)}{\pi \sin(\alpha \pi)^2 }\right)^2 \frac{\theta_1^{-2}}{1 + 2\alpha - 2\beta \1_{\{\alpha \neq \beta\}} - \delta^*}
        \\ &\qquad + \left( \frac{|\sin(\beta \pi)|}{ \pi \sin(\alpha \pi)^2} +  \frac{|\sin((\alpha-\beta)\pi)|}{2\pi (1+ \cos(\alpha \pi))}  \right)^2 \frac{1}{2\beta - 1 - \eta^*}
        \\ &\leq \left( \frac{|\sin(\beta \pi)|}{\pi \sin(\alpha \pi)^2} \theta_1^{-2} + \frac{|\sin((\alpha - \beta)\pi)|}{\pi \sin(\alpha \pi)^2 } \theta_1^{-1}\right)^2 \frac{1}{1 + 2\alpha - 2\beta\1_{\{\alpha \neq \beta\}} - \delta^*}
        \\ &\qquad + \left( \frac{|\sin(\beta \pi)|}{ \pi \sin(\alpha \pi)^2} +  \frac{|\sin((\alpha-\beta)\pi)|}{2\pi (1+ \cos(\alpha \pi))}  \right)^2 \frac{1}{2\beta - 1 - \eta^*}.
    \end{align*}
    Remark that this inequality also remains valid when $\alpha = \beta$ and $\delta^* = 1 + \alpha$ since then $\sin((\alpha - \beta)\pi) = 0$ so that the additional cross-terms actually vanish. This proves the first assertion.

    Next, we prove the inequality for the Hilbert-Schmidt norm. Since $(e_n^{H^{\varkappa}})_{n\geq1}=(\theta_n^{-\varkappa}e_n^H)_{n\geq1}$ is an orthonormal basis of $H^{\varkappa}$, we obtain
    \begin{multline*}
        \int_0^{\infty}\norm{\xi^{\alpha,\beta}(x)}_{L_2(H^{\varkappa},H^{\gamma})}^2 w_{\delta^\ast, \eta^\ast}(x)\, \mathrm{d}x\\
        = \sum_{n=1}^\infty \theta_n^{2(\gamma-\varkappa)} \int_{0}^{1} |\xi_{\alpha,\beta}(x;\theta_n)|^2 x^{-\delta^\ast}\, \mathrm{d}x + \sum_{n=1}^\infty \theta_n^{2(\gamma-\varkappa)} \int_{1}^{\infty} |\xi_{\alpha,\beta}(x;\theta_n)|^2 x^{\eta^\ast}\, \mathrm{d}x. \label{eq:xi L2 estimate}
    \end{multline*}
    It remains to bound the last two integrals. For small $x$ we obtain from \eqref{eq: 4} with a similar computation to above, for each $\delta^\ast< [1+2\alpha]\1_{\{\alpha=\beta\}} + [1 + 2(\alpha - \beta)]\1_{\{\alpha\neq\beta\}}$,
    \begin{align*}
         \int_0^1 |\xi_{\alpha,\beta}(x;\theta_n)|^2 x^{-\delta^\ast}\,\d x
        &\leq \int_0^1 \left(\frac{|\sin(\beta \pi)|}{\pi \sin(\alpha \pi)^2} \theta_n^{-2} x^{2\alpha - \beta} + \frac{|\sin((\alpha - \beta)\pi)|}{\pi \sin(\alpha \pi)^2 } \theta_n^{-1} x^{\alpha - \beta} \right)^2\, x^{- \delta^*} \d x
        \\ &\leq \left( \frac{|\sin(\beta \pi)|}{\pi \sin(\alpha \pi)^2} \theta_n^{-2} + \frac{|\sin((\alpha - \beta)\pi)|}{\pi \sin(\alpha \pi)^2 } \theta_n^{-1}\right)^2 \frac{1}{1 + 2\alpha - 2\beta \1_{\{\alpha \neq \beta\}} - \delta^*}
        \\ &\leq \left( \frac{|\sin(\beta \pi)|}{\pi \sin(\alpha \pi)^2} \theta_1^{-1} + \frac{|\sin((\alpha - \beta)\pi)|}{\pi \sin(\alpha \pi)^2 } \right)^2 \frac{\theta_n^{-2}}{1 + 2\alpha - 2\beta \1_{\{\alpha \neq \beta\}} - \delta^*}.
    \end{align*}
    For the second integral, the second part of inequality \eqref{eq: 5} does not give the correct asymptotics with respect to $\theta_n$. Thus, let us first note that $\xi_{\alpha,\beta}(x; \theta_n)$ satisfies the scaling property $\xi_{\alpha,\beta}(x; \theta_n) = \theta_n^{-\beta / \alpha }\xi_{\alpha,\beta}\left(x\theta_n^{-1/\alpha};1\right)$. Then we obtain
    \begin{align*}
        \int_1^\infty |\xi_{\alpha,\beta}(x;\theta_n)|^2x^{\eta^\ast}\,\d x
        &= \int_1^\infty \theta_n^{- 2\beta/\alpha}|\xi_{\alpha,\beta}\left(x\theta_n^{-1/\alpha};1\right)|^2 x^{\eta^\ast}\,\d x
        \\ &= \theta_n^{\frac{-2\beta+\eta^\ast+1}{\alpha}}\left( \int_{\theta_n^{-1/\alpha}}^1 |\xi_{\alpha,\beta}(y;1)|^2y^{\eta^\ast}\,\d y + \int_1^\infty |\xi_{\alpha,\beta}(y;1)|^2 y^{\eta^\ast}\,\d y   \right).
    \end{align*}
    For the first term, we obtain
    \begin{align*}
       & \theta_n^{\frac{-2\beta+\eta^\ast+1}{\alpha}} \int_{\theta_n^{-1/\alpha}}^1 |\xi_{\alpha,\beta}(y;1)|^2y^{\eta^\ast}\,\d y
        \\ &\quad\leq \theta_n^{\frac{-2\beta+\eta^\ast+1}{\alpha}} \int_{\theta_n^{-1/\alpha}}^1 \left| \frac{\sin(\beta \pi)  x^{2\alpha - \beta} + |\sin((\alpha - \beta)\pi)| x^{\alpha - \beta}}{\pi \sin(\alpha \pi)^2}\right|^2 y^{\eta^\ast} \, \d y
        \\ &\quad= \left| \frac{\sin(\beta \pi)}{\pi \sin(\alpha \pi)^2} \right|^2 \theta_n^{\frac{-2\beta+\eta^\ast+1}{\alpha}} \frac{1 - \theta_n^{-(1+4\alpha - 2\beta + \eta^*)/\alpha}}{1+4\alpha - 2\beta + \eta^*}
        \\ &\qquad + 2\frac{\sin(\beta \pi)\sin((\alpha - \beta)\pi)}{\pi^2 \sin(\alpha \pi)^4}\theta_n^{\frac{-2\beta+\eta^\ast+1}{\alpha}} \frac{1 - \theta_n^{-(1 + 3\alpha - 2\beta + \eta^*)/\alpha}}{1 + 3\alpha - 2\beta + \eta^*}
        \\ &\qquad + \left| \frac{\sin((\alpha-\beta) \pi)}{\pi \sin(\alpha \pi)^2}\right|^2 \theta_n^{\frac{-2\beta+\eta^\ast+1}{\alpha}} \frac{1 - \theta_n^{- (1+2\alpha - 2\beta + \eta^*)/\alpha}}{1 + 2\alpha - 2\beta\1_{\{\alpha \neq \beta\}} + \eta^*}
        \\ &\quad\leq \left| \frac{\sin(\beta \pi)}{\pi \sin(\alpha \pi)^2} \right|^2 \frac{\theta_n^{\frac{-2\beta+\eta^\ast+1}{\alpha}}+\theta_n^{-4}}{1+4\alpha - 2\beta + \eta^*}
        \\ &\qquad + 2\frac{|\sin(\beta \pi)\sin((\alpha - \beta)\pi)|}{\pi^2 \sin(\alpha \pi)^4}\frac{\theta_n^{\frac{-2\beta+\eta^\ast+1}{\alpha}}+\theta_n^{-3}}{1 + 3\alpha - 2\beta + \eta^*}
        \\ &\qquad + \left| \frac{\sin((\alpha-\beta) \pi)}{\pi \sin(\alpha \pi)^2}\right|^2 \frac{\theta_n^{\frac{-2\beta+\eta^\ast+1}{\alpha}}+\theta_n^{-2}}{1 + 2\alpha - 2\beta\1_{\{\alpha \neq \beta\}} + \eta^*}
        \\ &\quad\leq \left( \frac{|\sin(\beta \pi)|}{\pi \sin(\alpha \pi)^2}  + \frac{|\sin((\alpha-\beta) \pi)|}{\pi \sin(\alpha \pi)^2} \right)^2 \frac{\theta_n^{\frac{-2\beta+\eta^\ast+1}{\alpha}}+\theta_n^{-2}}{1 + 2\alpha - 2\beta\1_{\{\alpha \neq \beta\}} + \eta^*}.
    \end{align*}
    Likewise, we obtain for the second term
    \begin{align*}
        \theta_n^{\frac{-2\beta+\eta^\ast+1}{\alpha}} \int_1^\infty |\xi_{\alpha,\beta}(y;1)|^2y^{\eta^\ast}\,\d y
        &\leq  \theta_n^{\frac{-2\beta+\eta^\ast+1}{\alpha}} \frac{\left(\frac{|\sin(\beta \pi)|}{\pi \sin(\alpha \pi)^2} + \frac{|\sin((\alpha-\beta)\pi)|}{2\pi (1 + \cos(\alpha \pi))}\right)^2}{2\beta - 1 - \eta^*}.
    \end{align*}
\end{proof}

We are now prepared to study \eqref{eq: mild formulation additive noise} in terms of the corresponding Markovian lift. First, we consider the case where $W$ is a $Q$-Wiener process on $H$ such that $Q$ is trace-class. In such a case, let us take
\[
    H = V = H_b = H_{\sigma}
\]
and denote by $\mathcal{H}_{\delta, \eta}$ the corresponding scale of Hilbert spaces defined in Section 5.1 with reference measure $\mu(\d x) = \d x$. Recall that $S(t)y(x) = \e{-tx}y(x)$ and that $\Xi y = \int_{\R_+} y(x)\, \d x$. In this setting, let us suppose that $G(t) = g(t) + A\int_0^t E^{\alpha, \alpha}(t-s)g(s)\, \d s$ appearing in \eqref{eq: mild formulation additive noise} is of the form
\[
    G(t) = \int_{\R_+} \e{-xt}\xi(x)\, \d x = \Xi S(t)\xi, \qquad t > 0,
\]
where $\xi \in L^p(\Omega,\F_0,\P;\mathcal{H}_{\delta, \eta})$ and $\delta, \eta$ are specified below. We use $\mathcal{G}_p(\delta,\eta)$ to denote the collection of all such admissible driving forces $G$. The following example illustrates a possible choice for $g$ covered by our assumptions.
\begin{example}
    Suppose that $g(t) = \frac{ t^{\gamma - 1}}{\Gamma(\gamma)}g_0$ for some $g_0 \in H$ and $\gamma \geq 0$. Then
    \[
       G(t) = \sum_{n=1}^{\infty} t^{\gamma - 1}E_{\alpha, \gamma}(-\theta_n t^{\alpha}) \langle g_0, e_n^H \rangle_H\, e_n^H.
    \]
    In particular, we obtain $G(t) = \Xi S(t)\xi_g$ with
    \begin{align*}
        \xi_g = \sum_{n=1}^{\infty} \xi_{\alpha,\gamma}(\cdot; \theta_n) \langle g_0, e_n^H \rangle_H\, e_n^H.
    \end{align*}
\end{example}

More generally, given the nature of the Markovian lift, it is also feasible to study the initial conditions directly of the form $G(t) = \Xi S(t)\xi$ and think about $\xi$ being the initial condition. In this setting, the corresponding Markovian lift of \eqref{eq: mild formulation additive noise} takes the form
\begin{align}\label{eq: mild formulation additive noise lift}
    X_t = S(t)\xi + \int_0^t S(t-s)\xi^{\alpha, \alpha}\, b(\Xi X_s)\, \d s
    + \int_0^t S(t-s)\xi^{\alpha, \beta}\, Q^{\frac{1}{2}}\, \d \widetilde{W}_s
\end{align}
for some $\xi$ to be specified below. The next theorem summarises our results of Sections \ref{sec:limit_dist_inv_meas} and \ref{sec:limit_theorems} applied to this particular Markovian lift. Results for $u(\cdot; G)$ may then be obtained through the relation $\Xi X_t = u(\cdot; G)$.

\begin{theorem}\label{theorem  cm lift trace class}
Suppose that $W$ is a $Q$-Wiener process on $H$ with $Q$ trace-class, that $b\colon H \longrightarrow H$ is Lipschitz continuous with constant $C_{b, \mathrm{lip}}$ and linear growth constant $C_{b, \mathrm{lin}} = \sup_{x\in H}\frac{\|b(x)\|_H}{1 + \|x\|_H}$, $\alpha \in (\frac12,1)$, and $\beta \in (\frac{1}{2}, \frac12 +\alpha)$. Let $p\in(2,\infty)$ and $\varepsilon_0, \varepsilon_1 > 0$ satisfy
\[
    \frac{2}{p} < \varepsilon_0 < \frac{2\beta-1}{3} \ \text{ and } \ 0 < \varepsilon_1 < \frac{2}{3}\left(\alpha - \beta \1_{\{\alpha \neq \beta\}}+\frac12 \1_{\{b\equiv0\}}\right),
\]
and define $\delta, \eta$ by
\begin{equation*}\label{eq: delta eta condition}
    \delta = 2\alpha - 2\beta \1_{\{\alpha \neq \beta\}} - 1 + \1_{\{b\equiv0\}} - 2\varepsilon_1 \ \text{ and }\ \eta = 2\beta - 2\varepsilon_0.
\end{equation*}
Then for each $\xi \in L^p(\Omega, \F_0, \P; \mathcal{H}_{\delta, \eta})$ there exists a unique solution of \eqref{eq: mild formulation additive noise lift} in $\mathcal{H}_{\delta, \eta}$. In particular, setting $G = \Xi S(\cdot)\xi \in \mathcal{G}_p(\delta, \eta)$, $u(\cdot; G) = \Xi X$ is the unique solution of \eqref{eq: mild formulation additive noise}. If $b \neq 0$, suppose additionally that we may choose $\varepsilon_0, \varepsilon_1 > 0$ such that
\begin{align*}
    \max \left\{ C_{b, \mathrm{lip}},\ C_{b, \mathrm{lin}}\right\} K_0(\alpha, \beta, \varepsilon_0, \varepsilon_1) K_1(\alpha, \varepsilon_0, \varepsilon_1) < 1
\end{align*}
with constants $K_0 = K_0(\alpha, \beta, \varepsilon_0, \varepsilon_1)$ and $K_1 = K_1(\alpha, \varepsilon_0, \varepsilon_1)$ given by
\begin{align*}
    K_0(\alpha, \beta, \varepsilon_0, \varepsilon_1) &\coloneqq \left( \frac{1}{2\alpha - 2\beta \1_{\{\alpha \neq \beta\}} - 2\varepsilon_1} + \frac{1}{2\beta - 1 - 2\varepsilon_0}\right)^{1/2},
    \\  K_1(\alpha, \varepsilon_0, \varepsilon_1) &\coloneqq\frac{2}{\pi \sin(\alpha \pi)} \bigg( \theta_1^{-2} \varepsilon_1^{-\frac{1}{2}} + \varepsilon_0^{-\frac{1}{2}}\bigg) \left(1+\frac{2}{\varepsilon_1} \right) \left(\frac{2 + \varepsilon_1}{2 \mathrm{e}}\right)^{2 + \varepsilon_1}.
\end{align*}
Then, the following assertions hold:
\begin{enumerate}
    \item[(a)] Equation \eqref{eq: mild formulation additive noise lift} admits a unique limiting distribution $\pi \in \mathcal{P}_p(\mathcal{H}_{\delta, \eta})$ with respect to the Wasserstein $p$-distance. This limit distribution is also the unique invariant measure.
    \item[(b)] Let $\widetilde{\xi} \sim \pi$ and set $\widetilde{G} = \Xi S(\cdot)\widetilde{\xi}$. Then $u(\cdot,\widetilde{G})$ is the unique stationary process corresponding to \eqref{eq: mild formulation additive noise}.
    \item[(c)] If $p\in(4,\infty)$, the Law of Large Numbers holds in the mean-square sense with rate of convergence
    \[
    \vartheta < \frac{1}{2} \begin{cases}
        \min\left\{1,\ \frac{1}{2} + \alpha - \beta \1_{ \{\alpha \neq \beta\}}\right\}, & b = 0,
        \\ \min\left\{1,\ \alpha - \beta \1_{ \{\alpha \neq \beta\}},\ \log\left(C_{b, \mathrm{lip}}^{-1} K_0^{-1} K_1^{-1}\right)\right\}, & b \neq 0.
    \end{cases}
    \]
\end{enumerate}
\end{theorem}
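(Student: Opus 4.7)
My plan is to recognise this theorem as a direct application of the abstract framework from Sections~\ref{sec:limit_dist_inv_meas} and~\ref{sec:limit_theorems} combined with the representation-theoretic lift of Section~\ref{section markovia lift cm}, specialised to the fractional resolvent $E^{\alpha,\beta}$. First I would fix the reference measure $\mu(\d x) = \d x$ on $\R_+$ (which satisfies \eqref{eq: projection condition 1} for any $\delta_\ast > -1$, $\eta_\ast > 1$), set $V = H = H_b = H_\sigma$, and identify the lift kernels as $\xi_b = \xi^{\alpha,\alpha}$ and $\xi_\sigma = \xi^{\alpha,\beta} Q^{1/2}$. Since $Q$ is trace-class, $Q^{1/2}$ is Hilbert--Schmidt, so $\|\xi^{\alpha,\beta}(x)Q^{1/2}\|_{L_2(H)} \leq \|\xi^{\alpha,\beta}(x)\|_{L(H)}\|Q^{1/2}\|_{L_2(H)}$, and the operator-norm bound from Lemma~\ref{theorem cm frac spaces} shows \eqref{eq: kernels condition 1} holds for any $\delta^\ast < 1 + 2\alpha - 2\beta\1_{\{\alpha\neq\beta\}}$ and $\eta^\ast < 2\beta - 1$. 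I then pick $\delta^\ast, \eta^\ast$ just below these bounds and the ambient spaces $\mathcal{H} = \mathcal{H}_{\delta,\eta^\ast}$, $\mathcal{V} = \mathcal{H}_{\delta,\eta}$, $\mathcal{V}_0 = \mathcal{H}_{\delta_\ast,\eta}$ as in \eqref{eq: 1}, with $\delta,\eta$ specified in the theorem statement.

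Next I would verify the three abstract hypotheses. Lemma~\ref{lemma: CM SSE} gives Assumption~\ref{assumption SEE} with $\rho = (\eta - \eta^\ast)/2$; the constraint $\rho + 1/p < 1/2$ reduces, after substituting $\eta = 2\beta - 2\varepsilon_0$ and $\eta^\ast$ just below $2\beta - 1$, to a condition on $\varepsilon_0$ that is implied by the stated $\varepsilon_0 > 2/p$. Assumption~\ref{assumption Lipschitz} is immediate since $b$ is Lipschitz into $H = H_b$ and $\sigma \equiv Q^{1/2}$ is constant. For Assumption~\ref{assumption long-time}, part~(a) follows from Lipschitz continuity of $b$, while parts~(b)--(c) come from Theorem~\ref{theorem cm lift} with $\lambda = (\delta - \delta_\ast)/2$: in the general case we need $\delta^\ast - \delta > 2$ (satisfied because $\delta = 2\alpha - 2\beta\1_{\{\alpha\neq\beta\}} - 1 - 2\varepsilon_1$ gives $\delta^\ast - \delta > 2 + 2\varepsilon_1$), and in the no-drift case only $\delta^\ast - \delta > 1$ is required (satisfied because $\mathbf{1}_{\{b \equiv 0\}}=1$ shifts $\delta$ by $1$). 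Since $\mu(\{0\}) = 0$ forces $S_\infty = 0$, every limit distribution will be unique.

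The core calculation is the smallness condition \eqref{eq:small_nonlinearity} in Theorem~\ref{theorem_limit_distribution}(iii), namely
\[
    \max\{C_{b,\mathrm{lip}}\|\Xi\|_{L(\mathcal{V}_0,V)}, C_{b,\mathrm{lin}}\|\Xi\|_{L(\mathcal{V},V)}\} \int_0^\infty \|S(t)\xi^{\alpha,\alpha}\|_{L(H,\mathcal{V})}\,\d t < 1.
\]
I would bound the integrand by factoring through the intermediate space $\mathcal{H}_{\delta^\ast,\eta^\ast}$ in which $\xi^{\alpha,\alpha}$ is bounded: writing $\|S(t)\xi^{\alpha,\alpha}\|_{L(H,\mathcal{V})} \leq \|S(t/2)\|_{L(\mathcal{H}_{\delta^\ast,\eta^\ast},\mathcal{H}_{\delta,\eta})} \|\xi^{\alpha,\alpha}\|_{L(H,\mathcal{H}_{\delta^\ast,\eta^\ast})}$ and applying both Lemma~\ref{lemma: CM SSE}(a) (regularisation in $\eta$ via $C(\eta-\eta^\ast)^{1/2}(1+(t/2)^{-\rho})$) and Theorem~\ref{theorem cm lift}(a) (decay in $\delta$ via $C(\delta^\ast-\delta)^{1/2}(1\vee(t/2))^{-(\delta^\ast-\delta)/2}$). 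The spatial factor yields $K_0^2$ directly from Lemma~\ref{theorem cm frac spaces} applied with $\beta$ replaced by $\alpha$ (so the cross terms vanish when $\alpha=\beta$), while the time integral $\int_0^\infty (1+t^{-\rho})(1\vee t)^{-(\delta^\ast-\delta)/2}\,\d t$ --- with the constants $C(\cdot)$ collected --- produces $K_1$. Parts (a) and (b) then follow from Theorem~\ref{theorem_limit_distribution}(iii) and Corollary~\ref{remark: stationary process} respectively, with uniqueness from $S_\infty = 0$.

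For (c) I would apply Corollary~\ref{theorem LLN lift}(iii) with $\gamma = 1$: the admissible $\chi$ satisfies $\chi < \min\{\log(1/\|\rho_{\mathrm{add}}\|_{L^1}),\lambda\}$, with $\|\rho_{\mathrm{add}}\|_{L^1} \leq C_{b,\mathrm{lip}} K_0 K_1$ by the previous step, and $\lambda < \alpha - \beta\1_{\{\alpha\neq\beta\}}$ (respectively $< 1/2 + \alpha - \beta\1_{\{\alpha\neq\beta\}}$ in the no-drift case where $\rho_{\mathrm{add}}\equiv 0$) obtained by sending $\delta_\ast \to -1^+$. Since the $L^2$ bound \eqref{cor: LLN} carries a rate $T^{-\min\{1,\chi\}}$ in the squared quantity, taking square roots delivers the stated $\vartheta < \frac12 \min\{1,\chi\}$. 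The principal technical obstacle is the bookkeeping in the factorisation estimate producing $K_0 K_1$: one must track the constants from \eqref{eq: elementary inequality}, Lemma~\ref{theorem cm frac spaces}, and the pointwise estimate for $S(t) - S_\infty$ simultaneously, optimising over $\varepsilon_0,\varepsilon_1$ so that the integrals converge (which pins down the upper bounds $\varepsilon_0 < (2\beta-1)/3$ and $\varepsilon_1 < \tfrac{2}{3}(\alpha - \beta\1_{\{\alpha\neq\beta\}} + \tfrac12 \1_{\{b\equiv0\}})$) while keeping the explicit constant sharp enough to satisfy the smallness hypothesis.
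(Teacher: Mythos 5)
Your proposal follows essentially the same route as the paper's proof: the Lebesgue-measure lift of Section~\ref{section markovia lift cm} with $\eta_*=1+\varepsilon_0$, $\eta^*=2\beta-1-\varepsilon_0$, $\delta_*=-1+\varepsilon_1$, $\delta^*=1+2\alpha-2\beta\1_{\{\alpha\neq\beta\}}-\varepsilon_1$, verification of Assumptions~\ref{assumption SEE} and~\ref{assumption long-time} via Lemma~\ref{lemma: CM SSE} and Theorem~\ref{theorem cm lift} (your folding of $Q^{1/2}$ into $\xi_\sigma$ via the Schatten--H\"older bound is equivalent to the paper's choice $q=\infty$ with constant $\sigma_0=Q^{1/2}$), the smallness condition \eqref{eq:small_nonlinearity} via factorisation through $\mathcal{H}_{\delta^*,\eta^*}$, uniqueness from $S_\infty=0$, and then Theorem~\ref{theorem_limit_distribution}.(c), Corollary~\ref{remark: stationary process} and Corollary~\ref{theorem LLN lift}. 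The only slip is a harmless mislabelling of where the constants come from: $K_0$ is the bound on $\|\Xi\|_{L(\mathcal{H}_{\delta,\eta},H)}=\bigl(\int_{\R_+}w_{\delta,\eta}(x)^{-1}\,\d x\bigr)^{1/2}$ (not a consequence of Lemma~\ref{theorem cm frac spaces}), while the Lemma~\ref{theorem cm frac spaces} bound on $\|\xi^{\alpha,\alpha}\|_{L(H,\mathcal{H}_{\delta^*,\eta^*})}$ enters $K_1$ together with the time integral $1+2/\varepsilon_1$ and the factor $C(2+\varepsilon_1)$.
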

\begin{proof}
    Denote by $\mathcal{H}_{\delta, \eta}$ the scale of Hilbert spaces defined in Section \ref{section markovia lift cm} with $\mu(\d x) = \d x$ as introduced above. Hence we may take any $\eta_* > 1$ and $\delta_* > -1$. It follows from Theorem \ref{theorem cm lift} combined with the first inequality in Lemma \ref{theorem cm frac spaces} and representation \eqref{eq: xi mun}, that Assumptions \ref{assumption SEE} and \ref{assumption long-time}.(b) and (c) are satisfied for $q = \infty$, $q' = 1$ since $Q$ is trace-class, and
    \[
        \mathcal{H} = \mathcal{H}_{\delta, \eta^*},\ \ \mathcal{V} = \mathcal{H}_{\delta, \eta}, \ \ \mathcal{V}_0 = \mathcal{H}_{\delta_*, \eta},\ \ \lambda = \frac{\delta - \delta_*}{2}, \ \ \rho = \frac{(\eta - \eta^*)_+}{2}
    \]
    where $\max\{\eta_*, \eta^*\} \leq \eta < 1 + \eta^*$, $\delta \in (\delta_*, \delta^*)$, such that $\delta^* - \delta > 2-\1_{\{b\equiv0\}}$. In view of Lemma \ref{theorem cm frac spaces} remark that $\delta^*, \eta^*$ neccessarily satisfy $\delta^* < 1 + 2\alpha - 2\beta \1_{\{ \alpha \neq \beta \} }$ and $\eta^* < 2\beta - 1$. Thus, let us take $\eta_* = 1 + \varepsilon_0$, $\eta^* = 2\beta - 1 - \varepsilon_0$, $\delta_* = -1 + \varepsilon_1$, and $\delta^* = 1 + 2\alpha - 2\beta \1_{\{\alpha \neq \beta\}} - \varepsilon_1$. Then $\delta = \delta^* -2 + \1_{\{b\equiv0\}} - \varepsilon_1$, and the above conditions are satisfied with
    \[
        \rho = \frac{(1 - \varepsilon_0)_+}{2} \in \left[0, 1/2 \right) \ \text{ and } \ \lambda = \alpha - \beta \1_{\{\alpha \neq \beta\}} + \frac12 \1_{\{b\equiv0\}} - \frac{3\varepsilon_1}{2} > 0.
    \]
    Moreover, since $\varepsilon_0 > \frac2p$, we also obtain $\rho + \frac{1}{p} < \frac{1}{2}$, see \eqref{eq: continuity}. Finally, by assumption $b$ is Lipschitz continuous with constant $C_{b, \mathrm{lip}}$, thus also Assumption \ref{assumption long-time}.(a) is satisfied. The existence and uniqueness of solutions follow from Theorem \ref{label:existence_uniqueness_VSPDE}.

    Concerning limit distributions, our assertions follow from Theorem \ref{theorem_limit_distribution}.(c) provided that \eqref{eq:small_nonlinearity} is satisfied. To verify the latter, following the proof of Lemma \ref{lemma: CM SSE}.(b) we get
    \begin{align*}
        \| \Xi \|_{L(\mathcal{H}_{\delta, \eta}, H)} \leq \left(\int_{\R_+} \frac{\d x}{w_{\delta,\eta}(x)} \right)^{1/2}
        &= \left( \frac{1}{1 + \delta} + \frac{1}{\eta - 1}\right)^{1/2}
        \\ &= \left( \frac{1}{2\alpha - 2\beta \1_{\{\alpha \neq \beta\}} - 2\varepsilon_1} + \frac{1}{2\beta - 1 - 2\varepsilon_0}\right)^{1/2}.
    \end{align*}
    Similarly, we also obtain for $\mathcal{V}_0 = \mathcal{H}_{\delta_*, \eta}$
    \[
        \| \Xi \|_{L(\mathcal{H}_{\delta_*, \eta}, H)} \leq \left( \frac{1}{1 + \delta_*} + \frac{1}{\eta - 1}\right)^{1/2} = \left( \frac{1}{2 + 2\alpha - 2\beta \1_{\{\alpha \neq \beta\}} - \varepsilon_1} + \frac{1}{2\beta - 1 - 2\varepsilon_0}\right)^{1/2}
    \]
    Moreover, if $b\neq0$, using \eqref{eq: estimate S(t) xi_b} we obtain for our particular choice of $H_b = H$ and $\mathcal{V} = \mathcal{H}_{\delta, \eta}$
     \begin{align*}
        \int_0^{\infty}\| S(t)\xi^{\alpha, \alpha}\|_{L(H_b, \mathcal{V})}\, \mathrm{d}t
        &\leq 2 \max\{ 1,\ C(\eta - \eta^*)\} C(\delta^* - \delta) \| \xi^{\alpha,\alpha}\|_{L(H, \mathcal{H}_{\delta^*, \eta^*})}\int_0^{\infty} (1 \vee t)^{-\frac{\delta^* - \delta}{2}}\, \d t
    \end{align*}
    For the integral, we obtain
    \[
        \int_0^{\infty} (1 \vee t)^{-\frac{\delta^* - \delta}{2}}\, \d t = 1 + \frac{1}{\frac{\delta^* - \delta}{2} - 1}
        = \frac{\delta^* - \delta}{\delta^* - \delta - 2}
        = 1 + \frac{2}{\varepsilon_1}.
    \]
    For the remaining terms, noting that $C(x) = 2^{-x} x^x \e{-x}$ for $x \geq 0$ is strictly decreasing on $[0,2]$, we get $C(\eta - \eta^*) \leq C(0) = 1$ since $\eta - \eta^* \leq 1$ and $C(\delta^* - \delta) = C(2 + \varepsilon_1)$.      Finally, using Lemma \ref{theorem cm frac spaces}, we get
    \begin{align*}
        \| \xi^{\alpha, \alpha}\|_{L(H, \mathcal{H}_{\delta^*, \eta^*})} &\leq \frac{1}{\pi \sin(\alpha \pi)} \left(\frac{ \theta_1^{-2}}{\sqrt{1 + 2\alpha - \delta^*}}
        + \frac{ 1 }{\sqrt{2\alpha - 1 - \eta^*}} \right)
        \\ &=  \frac{1}{\pi \sin(\alpha \pi)} \left(\frac{ \theta_1^{-2}}{\sqrt{ \varepsilon_1}}
        + \frac{ 1 }{\sqrt{\varepsilon_0}} \right).
    \end{align*}
     Hence \eqref{eq:small_nonlinearity} is satisfied by assumption. The assertion about the limit distribution now follows from Theorem \ref{theorem_limit_distribution}.(c), where uniqueness of the limit distribution follows from $S_{\infty} = 0$ due to $\mu(\{0\}) = 0$. The Law of Large Numbers, including the convergence rate,  is a consequence of Corollary \ref{theorem LLN lift}. For the convergence rate, notice that $\rho_{\textrm{add}}=C_{b,\textrm{lip}}\norm{\Xi}_{L(H_{\delta_\ast,\eta},H)}\norm{S(\cdot)\xi^{\alpha,\alpha}}_{L(H,\mathcal{H}_{\delta_\ast,\eta})}$ and so
     \[
     \norm{\rho_{\textrm{add}}}_{L^1(\R_+)}\leq C_{b,\textrm{lip}}\left( \frac{1}{2\alpha - 2\beta \1_{\{\alpha \neq \beta\}} - 2\varepsilon_1} + \frac{1}{2\beta - 1 - 2\varepsilon_0}\right)^{1/2}K_1.
     \]
     Thus, the monotonicity of $\log(\cdot)$ and the upper bound
     $
     \vartheta < \frac{1}{2}\min\{1,\lambda,\log(1/\norm{\rho_{\textrm{add}}}_{L^1(\R_+))}\}
     $
     yield the desired convergence rate.
\end{proof}

Below we continue with the case where the covariance operator of Gaussian noise $W$ is given by
\begin{align}\label{eq: covariance operator}
    Q = (-A)^{-\gamma},\qquad \gamma \in \R.
\end{align}
Remark that $\gamma = 0$ contains the case where $W$ is a cylindrical Wiener process. Below, we obtain a similar result to Theorem \ref{theorem  cm lift trace class} under an additional summability condition. Finally, let us take
$V = H = H_b$ and $H_\sigma = H^{\gamma}$.
\begin{theorem}
    Let $W$ be a Wiener process with covariance operator \eqref{eq: covariance operator}. Suppose that the assumptions of Theorem \ref{theorem  cm lift trace class} are satisfied, and that additionally
    \begin{equation}\label{eq: theorem cm L2 summability}
        \sum_{n=1}^\infty \theta_n^{-2\gamma - \frac{\varepsilon_0}{\alpha}}<\infty.
    \end{equation}
    Then the assertions (a) -- (c) of Theorem \ref{theorem  cm lift trace class} hold.
\end{theorem}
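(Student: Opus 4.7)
The plan is to mirror the proof of Theorem~\ref{theorem  cm lift trace class} step by step, replacing the operator-norm bound on $\xi^{\alpha,\beta}$ by the Hilbert--Schmidt bound available under the summability condition~\eqref{eq: theorem cm L2 summability}. Set $H_{\sigma}=H^{\gamma}$, take $q=2$, $q'=\infty$, and treat the constant diffusion coefficient as $\sigma_{0}=\mathrm{id}_{H^{\gamma}}$, so that the stochastic integral in \eqref{eq: mild formulation additive noise lift} becomes $\int_{0}^{t} S(t-s)\xi^{\alpha,\beta}Q^{1/2}\,\mathrm{d}\widetilde{W}_{s}$ with $\xi^{\alpha,\beta}Q^{1/2}$ now viewed as a Hilbert--Schmidt operator from the cylindrical space $U=H$ into $\mathcal{H}_{\delta^{\ast},\eta^{\ast}}$. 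Equivalently, identifying $H^{\gamma}$ with $Q^{1/2}(H)$ via the canonical isometry, we may use the Hilbert--Schmidt norm $\|\xi^{\alpha,\beta}(x)\|_{L_{2}(H^{\gamma},H)}$.

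Choose $\delta_{\ast},\delta^{\ast},\eta_{\ast},\eta^{\ast}$ and the resulting $\delta,\eta,\lambda,\rho$ exactly as in the proof of Theorem~\ref{theorem  cm lift trace class}, i.e.~$\eta_{\ast}=1+\varepsilon_{0}$, $\eta^{\ast}=2\beta-1-\varepsilon_{0}$, $\delta_{\ast}=-1+\varepsilon_{1}$, $\delta^{\ast}=1+2\alpha-2\beta\mathbbm{1}_{\{\alpha\neq\beta\}}-\varepsilon_{1}$. Then Assumption~\ref{assumption SEE} and Theorem~\ref{theorem cm lift}.(a) are unaffected, since the semigroup, projection $\Xi$ and drift kernel $\xi^{\alpha,\alpha}$ are untouched. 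What needs reverification is the integrability \eqref{eq:assumption long-timme - integrability}, specifically of the noise piece $\|S(\cdot)\xi^{\alpha,\beta}\|_{L_{2}(H^{\gamma},\mathcal{V})}^{2}$. Applying the second inequality of Lemma~\ref{theorem cm frac spaces} with $\varkappa=\gamma$ and $\gamma_{\mathrm{Lem.}}=0$, together with the splitting \eqref{eq: estimate S(t) xi_b}, we reduce this to finiteness of the three series $\sum_{n}\theta_{n}^{-2\gamma-2}$ and $\sum_{n}\theta_{n}^{-2\gamma+(-2\beta+\eta^{\ast}+1)/\alpha}=\sum_{n}\theta_{n}^{-2\gamma-\varepsilon_{0}/\alpha}$. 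Since the sequence $(\theta_{n})$ is increasing to infinity and $\varepsilon_{0}/\alpha<2$ for any admissible choice of $\varepsilon_{0}$, the second series dominates the first, so \eqref{eq: theorem cm L2 summability} is precisely the needed summability.

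With this integrability in hand, Assumption~\ref{assumption long-time} is satisfied with the same $\mathcal{V}=\mathcal{H}_{\delta,\eta}$, $\mathcal{V}_{0}=\mathcal{H}_{\delta_{\ast},\eta}$, $\lambda=(\delta-\delta_{\ast})/2$, $\rho=(\eta-\eta^{\ast})_{+}/2$ as before, and Theorem~\ref{label:existence_uniqueness_VSPDE} yields a unique solution of~\eqref{eq: mild formulation additive noise lift} in $L^{p}(\Omega;C([0,T];\mathcal{H}_{\delta,\eta}))$. The key smallness hypothesis for Theorem~\ref{theorem_limit_distribution}.(c) only concerns the drift kernel $\xi^{\alpha,\alpha}$ and the Lipschitz constant of $b$, so it is identical to the one imposed in Theorem~\ref{theorem  cm lift trace class}; in particular the constants $K_{0},K_{1}$ remain valid. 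Applying Theorem~\ref{theorem_limit_distribution}.(c) (together with $S_{\infty}=0$ because $\mu(\{0\})=0$) gives assertion~(a); Corollary~\ref{remark: stationary process} gives the stationary process in~(b); and Corollary~\ref{theorem LLN lift} yields the Law of Large Numbers in~(c) with the very same convergence rate
\[
   \vartheta<\tfrac{1}{2}\min\bigl\{1,\lambda,\log(1/\|\rho_{\mathrm{add}}\|_{L^{1}(\R_{+})})\bigr\}
\]
as in Theorem~\ref{theorem  cm lift trace class}, since $\|\rho_{\mathrm{add}}\|_{L^{1}(\R_{+})}$ depends only on the drift side.

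The only delicate point, and the main place where the argument departs from the trace-class case, is step two above: matching the indices in Lemma~\ref{theorem cm frac spaces} with the covariance exponent $\gamma$ of $Q=(-A)^{-\gamma}$, and observing that the exponent $(-2\beta+\eta^{\ast}+1)/\alpha$ collapses to $-\varepsilon_{0}/\alpha$ with our choice of $\eta^{\ast}$, so that~\eqref{eq: theorem cm L2 summability} is exactly the necessary and sufficient summability hypothesis. Everything else is a transcription of the earlier proof.
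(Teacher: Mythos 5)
Your proof follows the paper's argument essentially verbatim: the paper likewise reduces everything to re-verifying the Hilbert--Schmidt integrability condition \eqref{eq: kernels condition 1} with $q=2$ via the second inequality of Lemma \ref{theorem cm frac spaces} (applied with $\varkappa=\gamma$ and target exponent $0$), where the exponent $(-2\beta+\eta^{\ast}+1)/\alpha$ collapses to $-\varepsilon_0/\alpha$, so that \eqref{eq: theorem cm L2 summability} — together with $\varepsilon_0<2\alpha$, which makes the $\sum_n\theta_n^{-2\gamma-2}$ contribution harmless — is exactly the needed hypothesis, all other conditions being inherited unchanged from Theorem \ref{theorem  cm lift trace class}. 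One cosmetic slip: the canonical isometry identifies $Q^{1/2}(H)$ with $H^{\gamma/2}$, not $H^{\gamma}$; this does not affect your verification, since you (like the paper) work throughout with the norm $\|\xi^{\alpha,\beta}(x)\|_{L_2(H^{\gamma},H)}$ dictated by the choice $H_{\sigma}=H^{\gamma}$ in \eqref{eq: kernels condition 1}.
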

\begin{proof}
    In light of Theorem \ref{theorem  cm lift trace class}, it suffices to verify that the assumptions of Theorem \ref{theorem cm lift}, with $q=q'=2$, are satisfied. In particular, it remains to show \eqref{eq: kernels condition 1} is satisfied. Indeed, by Lemma \ref{theorem cm frac spaces}, we find that for the chosen $\eta^\ast,\delta^\ast$
    \[
    \int_{\R_{+}}\norm{\xi^{\alpha,\beta}(x)}_{L_2(H^{\gamma},H)}w_{\delta^\ast,\eta^\ast}(x)\,\d x \lesssim \sum_{n=1}^\infty\theta_n^{-2\gamma -\frac{\varepsilon_0}{\alpha}} + \sum_{n=1}^\infty\theta_n^{-2\gamma - 2}
    \]
    where the right-hand side is finite provided that \eqref{eq: theorem cm L2 summability} holds as $\varepsilon_0<2\alpha$.
\end{proof}

To illustrate this result, let us consider the Dirichlet Laplace operator for $(A, D(A))$, in the following example.

\begin{example}
     Let $\mathcal{O} \subset \R^d$ be a bounded domain with $C^1$-boundary, and set $H = L^2(\mathcal{O})$. Then $(A,D(A)) = (\Delta, H_0^1(\mathcal{O}) \cap H^2(\mathcal{O}))$ is diagonalisable with an orthonormal basis $(e_n^H)_{n \geq 1}$ and sequence of eigenvalues $(\theta_n)_{n \geq 1}$  Without loss of generality, we suppose that the latter are increasing to infinity. By Weyl's law, we find for their asymptotics
    \[
        \theta_n \sim c(d, \mathcal{O})n^{2/d}, \qquad n \to \infty,
    \]
    where $c(d, \mathcal{O}) > 0$ denotes some constant. Hence, the summability condition \eqref{eq: theorem cm L2 summability} becomes $\sum_{n=1}^\infty n^{\frac{-4}{d}\gamma-\frac2d\frac{\varepsilon_0}{\alpha}}<\infty$ and is satisfied whenever
    \[
    2\alpha\left(\frac{d}{4} - \gamma\right) < \varepsilon_0.
    \]
\end{example}

Finally, notice that for fractional kernels, the convergence rate in the Law of Large Numbers is too small to obtain the Central Limit Theorem. The next remark outlines that for fractional gamma kernels, the optimal rate of convergence, and hence also the Central Limit Theorem, can be obtained.

\begin{remark}
 For given $\lambda > 0$, let us consider the Volterra kernels
\[
    k(t) = \frac{t^{\alpha - 1}}{\Gamma(\alpha)}\e{-\lambda t} \ \text{ and } \
    h(t) = \frac{t^{\beta - 1}}{\Gamma(\beta)}\e{- \lambda t}.
\]
Then $E^{\alpha, \beta}$ needs to be replaced by
\[
    E^{\alpha, \beta, \lambda}(t) = \int_{\R_+} \e{-xt}\xi^{\alpha, \beta, \lambda}(x)\, \d x
\]
where $\xi^{\alpha, \beta,\lambda}(x) = \sum_{n=1}^{\infty}\xi_{\alpha,\beta,\lambda}(x; \theta_n) (e_n^H \otimes e_n^H)$ and
\[
    \xi_{\alpha,\beta, \lambda}(x; \theta_n) = \1_{(\lambda, \infty)}(x)\xi_{\alpha, \beta}(x-\lambda; \theta_n).
\]
Hence, we may obtain similar bounds to Lemma \ref{theorem cm frac spaces} with the only difference that $\delta^*$ can be now chosen arbitrarily large, see also Remark \ref{remark: regularization}. The latter is sufficient to verify the conditions of Theorem \ref{theorem CLT general} and hence derive a Central Limit Theorem.
\end{remark}

\begin{remark}
    Remark that in all examples above, we may also choose $\mu(\d x) = \delta_0(\d x) + \d x$ as a reference measure. The latter necessarily gives $S_{\infty} \neq 0$, and hence limit distributions will be parameterised by $S_{\infty} \xi = \xi(0)$ where $\xi$ denotes the initial condition. For such a choice of lift based on $\mu$, invariant measures are not unique.
\end{remark}

\section{Markovian lift on weighted Sobolev space}\label{section HJM lift}

\subsection{General framework}\label{subsec:general_framework}

In this section, we provide a Markovian lift based on translations of the Volterra kernels. The latter covers, e.g., the fractional kernel $k(t) = t^{\alpha - 1}/ \Gamma(\alpha)$ in the full regime of parameters $\alpha \in (0,2)$ beyond the completely monotone case $\alpha \in (0,1)$ studied in Section \ref{section markovia lift cm}. Such a lift was, e.g., described in \cite{MR4503737} for Volterra kernels that have time regularity $W_{\mathrm{loc}}^{1,2}$ with integrable weak derivative as $t \to \infty$, see also \cite{MR3574705}. Abstract conditions that go beyond this case were also discussed in \cite{MR4181950} for the finite-dimensional setting. Below, we provide a modification of this lift that allows us to weaken both assumptions with a particular focus on polynomial rates of convergence.

Let $V \hookrightarrow H$ be separable Hilbert spaces, see \eqref{eq: inclusion}. For $\delta,\eta\in\R$ let us define the modified Filipovi\'c space $\mathcal{H}_{\delta,\eta}$ consisting of absolutely continuous functions $y\colon(0,\infty)\longrightarrow V$ with finite norm
	\[
	\vertiii{y}_{\delta,\eta}^2\coloneqq \norm{y(1)}_V^2 + \int_{0}^\infty \norm{y'(x)}_V^2 w_{\delta,\eta}(x)\,\d x
	\]
	where $y'$ denotes the weak derivative of $y$ and $w_{\delta, \eta} \colon(0,\infty)\longrightarrow(0,\infty)$ is the increasing weight function
	\[
	   w_{\delta,\eta}(x) = x^{\eta}\1_{(0,1]}(x) + x^{\delta} \1_{(1,\infty)}(x).
	\]
    Similarly to \cite[Section 3]{MR3574705}, one can show that $\mathcal{H}_{\delta, \eta}$ is a separable Hilbert space. Note that $(\mathcal{H}_{\delta,\eta})_{\delta,\eta\in\R}$ satisfies $\mathcal{H}_{\delta,\eta}\subset \mathcal{H}_{\delta',\eta}$ for $\delta'<\delta$ and $\mathcal{H}_{\delta,\eta'}\subset\mathcal{H}_{\delta,\eta}$ for $\eta'<\eta$. Also here, $\eta$ captures the time regularity, and $\delta$ the decay rate as $t \to \infty$. In this space, point evaluations and translations play a central role. Their properties are summarised in the next lemma.

    \begin{lemma}\label{lemma: HJM lift technical lemma}
        For $z \geq 0$, let $\Xi_zy = y(z)$ be the point evaluation, and let $(S(t))_{t\geq0}$ be the semigroup of shift operators on $\mathcal{H}_{\delta,\eta}$ given by
        \[
            S(t)y(x) = y(x+t),\quad y\in\mathcal{H}_{\delta,\eta},\ x\in\R_+.
        \]
        Then the following assertions hold:
        \begin{enumerate}
            \item[(i)] If $z \in (0,\infty)$, then $\Xi_z\colon \mathcal{H}_{\delta, \eta} \longrightarrow V$ is a bounded linear operator.
            \item[(ii)] If $z = 0$ and $\eta < 1$, then $\Xi_0\colon \mathcal{H}_{\delta, \eta} \longrightarrow V$ is a bounded linear operator given by
            \[
                \Xi_0y = y(0) \coloneqq y(1) - \int_0^1 y'(x)\, \mathrm{d}x.
            \]

            \item[(iii)] If $z = \infty$ and $\delta > 1$, then $\Xi_{\infty}\colon \mathcal{H}_{\delta, \eta} \longrightarrow V$ is a bounded linear operator given by
            \[
                \Xi_{\infty}y = y(\infty) \coloneqq y(1) + \int_1^{\infty} y'(x)\, \mathrm{d}x.
            \]

            \item[(iv)] $(S(t))_{t\geq0}$ is strongly continuous on $\mathcal{H}_{\delta, \eta}$ whenever $\delta, \eta \geq 0$. Moreover, let $\eta' \geq \eta$ and $\delta' \geq \delta$, then $S(t)\in L(\mathcal{H}_{\delta',\eta'},\mathcal{H}_{\delta,\eta})$ and for each $T > 0$ there exists $C(T) > 0$ such that
            \[
            \norm{S(t)}_{L(\mathcal{H}_{\delta',\eta'},\mathcal{H}_{\delta,\eta})}\lesssim C(T) \left(1 + t^{-(\eta'-\eta)/2}\right), \qquad t \in (0,T].
            \]
            In particular $C(T)$ can be chosen independently of $T$ whenever $\delta'>\eta$.

            \end{enumerate}
            In particular, Assumption \ref{assumption SEE} is satisfied for any choice of $0 \leq \delta\leq\delta'$ and $\eta \in [0,1)$ and $\eta \leq \eta' < 1 + \eta$ with
            \[
                \mathcal{H}=\mathcal{H}_{\delta',\eta'},\quad \mathcal{V}=\mathcal{H}_{\delta,\eta},\quad \rho=\frac{\eta'-\eta}{2}
            \]
            and bounded linear projection operator $\Xi \coloneqq \Xi_0\colon\mathcal{V}\longrightarrow V$.
    \end{lemma}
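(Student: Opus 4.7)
The plan is to handle (i)--(iii) uniformly via the Newton--Leibniz representation, then treat (iv) as the main technical content, and finally assemble the Assumption~\ref{assumption SEE} consequence.

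For parts (i)--(iii), the starting point is that any $y \in \mathcal{H}_{\delta,\eta}$ has a weak derivative $y'$, so for $z$ in the interior of $(0,\infty)$ we have the deterministic identity $y(z) = y(1) + \int_1^z y'(s)\, \mathrm{d}s$ (up to sign depending on whether $z \lessgtr 1$). Applying Cauchy--Schwarz after inserting the factor $w_{\delta,\eta}^{1/2}(s)\cdot w_{\delta,\eta}^{-1/2}(s)$ gives
\[
\|y(z) - y(1)\|_V \leq \vertiii{y}_{\delta,\eta} \left( \int_{z \wedge 1}^{z \vee 1} w_{\delta,\eta}(s)^{-1}\, \mathrm{d}s \right)^{1/2}.
\]
For $z \in (0,\infty)$ the weight $w_{\delta,\eta}^{-1}$ is bounded on any compact subinterval of $(0,\infty)$, proving (i). For (ii) the boundary case $z=0$ requires integrability at the origin of $x^{-\eta}$, i.e. $\eta < 1$; this also yields $\int_0^1 \|y'(s)\|_V\, \mathrm{d}s < \infty$, so $\lim_{x \downarrow 0} y(x)$ exists in $V$ and equals $y(1) - \int_0^1 y'\, \mathrm{d}s$. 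Part (iii) is symmetric, requiring $\int_1^\infty x^{-\delta}\, \mathrm{d}x < \infty$, i.e. $\delta > 1$.

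For (iv) I would use the explicit formulas $(S(t)y)'(x) = y'(x+t)$ and $(S(t)y)(1) = y(1+t)$, so that after the substitution $s = x+t$,
\[
\vertiii{S(t)y}_{\delta,\eta}^2 = \|y(1+t)\|_V^2 + \int_t^\infty \|y'(s)\|_V^2\, w_{\delta,\eta}(s-t)\, \mathrm{d}s.
\]
Strong continuity for $\delta,\eta \geq 0$ is standard: the weight is locally bounded on $(0,\infty)$, so translation is continuous in the weighted $L^2$ on $y'$ (approximation by compactly supported smooth functions), and $t \mapsto y(1+t)$ is continuous in $V$ by (i). The real work is the norm bound, which reduces to estimating the pointwise ratio $w_{\delta,\eta}(s-t)/w_{\delta',\eta'}(s)$ for $s > t$. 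I would split into three regions: (a) $s > 1+t$: both weights sit in their $\delta$-branch, and $(s-t)^\delta/s^{\delta'} \leq s^{\delta-\delta'} \leq 1$ by $\delta' \geq \delta$; (b) $s \in (t, 1+t]$ with $s \leq 1$: both in the $\eta$-branch, ratio $\leq s^{\eta-\eta'} \leq t^{-(\eta'-\eta)}$ using $s \geq t$; (c) $s \in (1, 1+t]$ (when $t > 0$): the crossover region where $w_{\delta,\eta}$ uses $\eta$ and $w_{\delta',\eta'}$ uses $\delta'$, giving ratio $\leq s^{\eta - \delta'}$ which is uniformly bounded precisely when $\delta' \geq \eta$. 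The boundary term $\|y(1+t)\|_V$ is handled separately by Cauchy--Schwarz on $\int_1^{1+t} w_{\delta',\eta'}^{-1}$. Collecting the three regions produces the claimed bound $\lesssim C(T)(1 + t^{-(\eta'-\eta)/2})$, and the constant becomes $T$-independent exactly in region (c), i.e.~under $\delta' > \eta$.

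For the final Assumption~\ref{assumption SEE} consequence, the monotonicity $w_{\delta,\eta} \leq w_{\delta',\eta'}$ yields the continuous embedding $\mathcal{H} = \mathcal{H}_{\delta',\eta'} \hookrightarrow \mathcal{H}_{\delta,\eta} = \mathcal{V}$, so $\mathcal{H} \cap \mathcal{V} = \mathcal{H}$; density of $\mathcal{H}$ in $\mathcal{V}$ follows from a standard truncation--mollification argument on $y'$ combined with approximation of the anchor value $y(1)$. Part (ii) applied with $\eta < 1$ gives boundedness of $\Xi = \Xi_0 \in L(\mathcal{V}, V)$, while the bound from (iv) with $\eta' < 1 + \eta$ delivers $\rho = (\eta'-\eta)/2 \in [0,1/2)$ in the regularisation condition \eqref{eq: semigroup regularization}. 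The principal obstacle throughout is the crossover region (c) in the norm bound, where the two weights live on different branches; identifying the sharp condition $\delta' \geq \eta$ there is what separates the $T$-dependent and $T$-independent regimes.
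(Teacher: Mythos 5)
Your treatment of (i)--(iii) and of the operator bound in (iv) follows essentially the paper's own proof: Newton--Leibniz plus Cauchy--Schwarz with the weight inserted for the point evaluations (with $\eta<1$ resp.\ $\delta>1$ entering exactly through the integrability of $w_{\delta,\eta}^{-1}$ at $0$ resp.\ $\infty$), and, after the substitution $s=x+t$, a pointwise comparison of $w_{\delta,\eta}(s-t)$ against $w_{\delta',\eta'}(s)$ split into the same regimes (far field, the near-origin part producing the factor $t^{-(\eta'-\eta)}$ for the squared norm, and the crossover interval where $\delta'\geq\eta$ is decisive), with the boundary term $\|y(1+t)\|_V$ handled through $\int_1^{1+t} w_{\delta',\eta'}^{-1}\,\mathrm{d}x$. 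Your strong-continuity sketch (uniform boundedness from the monotone weight plus density of nice derivatives) is also in line with the paper, which works with the dense class $\{y\in C^2(\R_+;V):y'\in C_c^1(\R_+;V)\}$ and an explicit estimate via $y''$.

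The final assembly, however, contains a genuine error: you claim $w_{\delta,\eta}\leq w_{\delta',\eta'}$ and deduce the embedding $\mathcal{H}=\mathcal{H}_{\delta',\eta'}\hookrightarrow\mathcal{H}_{\delta,\eta}=\mathcal{V}$, hence $\mathcal{H}\cap\mathcal{V}=\mathcal{H}$. This is false whenever $\eta'>\eta$: on $(0,1]$ one has $x^{\eta'}\leq x^{\eta}$, so the inequality between the weights \emph{reverses} near the origin, $\mathcal{V}$ carries the stronger norm there, and $\mathcal{H}\not\subset\mathcal{V}$ in general (take $y$ with $\norm{y'(x)}_V^2\sim x^{-1-\theta}$ near $0$ for some $\theta\in(\eta,\eta')$: it lies in $\mathcal{H}_{\delta',\eta'}$ but not in $\mathcal{H}_{\delta,\eta}$). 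Indeed, the entire point of the regularisation estimate \eqref{eq: semigroup regularization} with $\rho=(\eta'-\eta)/2>0$ is that $S(t)$ maps the larger space $\mathcal{H}$ into $\mathcal{V}$ only for $t>0$; if the embedding you assert held, no singular factor would be needed. The repair does not disturb the rest of your argument: Assumption~\ref{assumption SEE} only requires $\mathcal{H}\cap\mathcal{V}$ to be dense in $\mathcal{V}$ and the restricted semigroup to extend to a $C_0$-semigroup on $\mathcal{V}$. Here $\mathcal{H}\cap\mathcal{V}\supset\mathcal{H}_{\delta',\eta}$ contains the functions with compactly supported (smooth) derivative, which are dense in $\mathcal{H}_{\delta,\eta}$, and $(S(t))_{t\geq0}$ is already strongly continuous on $\mathcal{V}=\mathcal{H}_{\delta,\eta}$ by your part (iv) since $\delta,\eta\geq0$, so no separate extension argument is needed.
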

    \begin{proof}
        Suppose that $z \geq 0$. Then using $y(z) = y(1) + \int_{1}^{z}y'(x)\, \mathrm{d}x$, we obtain from the Cauchy-Schwarz inequality:
    \begin{align*}
        \|\Xi_z(y) \|_V = \|y(z)\|_V
        &\leq \|y(1)\|_V + \int_{\min\{1,z\}}^{\max\{1,z\}}\|y'(x)\|_V \, \mathrm{d}x
        \\ &\leq \|y(1)\|_V + \left( \int_{\min\{1,z\}}^{\max\{1,z\}} \|y'(x)\|_V^2\hspace{0.02cm} w_{\delta, \eta}(x)\, \mathrm{d}x \right)^{1/2}\left(\int_{\min\{1,z\}}^{\max\{1,z\}} \frac{\mathrm{d}x}{w_{\delta, \eta}(x)} \right)^{1/2}
        \\ &\leq \vertiii{y}_{\delta, \eta} \left( 1 + \left(\int_{\min\{1,z\}}^{\max\{1,z\}} \frac{\mathrm{d}x}{w_{\delta, \eta}(x)} \right)^{1/2} \right).
    \end{align*}
    The right-hand side is finite if $z > 0$, regardless of the choice of $\delta, \eta$. If $z = 0$, then the right-hand side is finite whenever $\eta < 1$. Finally, for $z = \infty$, the right-hand side is finite whenever $\delta > 1$.

    For the last assertion, let us first show that $S(t)$ is a bounded linear operator. Let $y \in \mathcal{H}_{\delta, \eta}$, then using $\| \Xi_{1+t}\|_{L(\mathcal{H}_{\delta, \eta}, V)} \leq 1 + \left(\int_1^{1+T} \frac{\mathrm{d}x}{w_{\delta, \eta}(x)}\right)^{1/2}$ for $t \in [0,T]$, we obtain
    \begin{align*}
        \vertiii{S(t)y}_{\delta, \eta}^2 &= \norm{\Xi_{1+t}y}_{V}^2 + \int_0^{\infty}\|y'(t+x)\|_V^2 w_{\delta, \eta}(x)\, \mathrm{d}x
        \\ &\leq \sup_{t \in [0,T]}\| \Xi_{1+t}\|_{L(\mathcal{H}_{\delta, \eta}, V)}^2 \vertiii{y}_{\delta, \eta}^2 + \int_t^{\infty}\|y'(x)\|_V^2 w_{\delta, \eta}(x-t)\, \mathrm{d}x
        \\ &\leq \sup_{t \in [0,T]}\| \Xi_{1+t}\|_{L(\mathcal{H}_{\delta, \eta}, V)}^2 \vertiii{y}_{\delta, \eta}^2 + \int_t^{\infty}\|y'(x)\|_V^2 w_{\delta, \eta}(x)\, \mathrm{d}x
        \\ &\leq \left( 1 + \sup_{t \in [0,T]}\| \Xi_{1+t}\|_{L(\mathcal{H}_{\delta, \eta}, V)}^2\right) \vertiii{y}_{\delta, \eta}^2
    \end{align*}
    where we have used that $w_{\delta, \eta}(x-t) \leq w_{\delta, \eta}(x)$ since $\eta, \delta \geq 0$. Thus, it suffices to verify the strong continuity
    \[
        \lim_{t \to 0}\vertiii{S(t)y - y}_{\delta, \eta} = 0, \qquad \forall y \in \mathcal{D},
    \]
    where $\mathcal{D} \subset \mathcal{H}_{\delta, \eta}$ is dense. Let us take
    \[
        \mathcal{D} = \left\{ y \in C^2(\R_+; V) \ : \ y' \in C_c^1(\R_+; V) \right\}.
    \]
    Similarly to \cite[Section 3]{MR3574705}, it can be shown that $\mathcal{D} \subset \mathcal{H}_{\delta, \eta}$ is dense. For $y \in \mathcal{D}$, let us write
    \begin{align*}
        \vertiii{S(t)y - y}_{\delta, \eta}^2 &= \| y(1+t) - y(1)\|_V^2 + \int_0^{\infty}\|y'(t+x) - y'(x)\|_V^2 w_{\delta, \eta}(x)\, \mathrm{d}x.
    \end{align*}
    For the first term, we obtain
    \begin{align*}
        \| y(1+t) - y(1)\|_V \leq \int_1^{1+t} \| y'(x)\|_V\, \mathrm{d}x \to 0
    \end{align*}
    by dominated convergence since $\int_1^{1+t} \| y'(x)\|_V\, \mathrm{d}x \leq \left(\int_{1}^{1+T} w_{\delta, \eta}(x)^{-1}\mathrm{d}x \right)^{1/2} \vertiii{y}_{\delta, \eta} < \infty$. For the second term, let us first note that
    $y'(t+x) - y'(x) = \int_x^{t+x}y''(\widetilde{x})\, \mathrm{d}\widetilde{x}$. Let $R > 0$ be large enough such that $y'(t+x) = y'(x) = 0$. Then we obtain for $t \in [0,1]$
    \begin{align*}
        \int_0^{\infty}\|y'(t+x) - y'(x)\|_V^2 w_{\delta, \eta}(x)\, \mathrm{d}x
        &\leq \int_0^{R}\left( \int_x^{t+x} \| y''(\widetilde{x})\|_V \, \mathrm{d}\widetilde{x} \right)^2 w_{\delta, \eta}(x)\, \mathrm{d}x
        \\ &\leq \sqrt{t} \int_0^{R} \int_x^{t+x} \| y''(\widetilde{x})\|_V^2 w_{\delta, \eta}(x)\, \, \mathrm{d}\widetilde{x} \mathrm{d}x
        \\ &\leq \sqrt{t} \left(\int_0^R w_{\delta,\eta}(x)\, \mathrm{d}x \right) \left( \int_0^{1 + R} \| y''(\widetilde{x})\|_V^2\, \mathrm{d}\widetilde{x} \right) \to 0.
    \end{align*}
    This proves the desired strong continuity.
    For the regularising property of the semigroup, we let $0 \leq \delta \leq \delta'$, and $0 \leq \eta \leq \eta'$. Then using
    \[
        \int_{1}^{1+t} \frac{\mathrm{d}x}{w_{\delta', \eta'}(x)} = \begin{cases} \ln(1+t), & \delta' = 1
        \\ \frac{(1+t)^{1 - \delta'} - 1 }{1 - \delta'}, & \delta' \neq 1
        \end{cases} < \infty,
    \]
    we find for $y\in\mathcal{H}_{\delta',\eta'}$, $t\in[0,T]$,
    \begin{align*}
        \vertiii{S(t)y}_{\delta, \eta}^2 &= \norm{\Xi_{1+t}y}_{V}^2 + \int_t^{\infty}\|y'(x)\|_V^2 w_{\delta, \eta}(x-t)\, \mathrm{d}x
        \\ &\lesssim \vertiii{y}_{\delta', \eta'}^2 \left( 1 + \int_{1}^{1+t} \frac{\mathrm{d}x}{w_{\delta', \eta'}(x)}  \right) + \int_t^{1 + t}\|y'(x)\|_V^2 (x-t)^{\eta}\, \mathrm{d}x
        \\ &\qquad \qquad \qquad \qquad  + \int_{1 + t}^{\infty} \|y'(x)\|_V^2 (x-t)^{\delta}\, \mathrm{d}x.
    \end{align*}

        Let us estimate the remaining two integrals. For the first integral, assume $t > 1$. Then using $(x-t)^{\eta} \leq x^{\eta} \leq x^{\delta'}$ when $\delta' \geq \eta$, and $(x-t)^{\eta} \leq x^{\eta} \leq (1+T)^{\eta-\delta'} x^{\delta'}$ for $\delta' < \eta$, we obtain
    \begin{align*}
        \int_t^{1 + t}\|y'(x)\|_V^2 (x-t)^{\eta}\, \mathrm{d}x
        &\leq (1+T)^{(\eta-\delta')_+} \int_{t}^{1 + t}\|y'(x)\|_V^2 x^{\delta'}\, \mathrm{d}x
        \leq (1+T)^{(\eta-\delta')_+}\vertiii{y}_{\delta', \eta'}^2.
    \end{align*}
    When $t \in (0,1]$, we may use $(x-t)^{\eta} \leq x^{\eta} = x^{\eta'}x^{-(\eta' - \eta)} \leq x^{\eta'}t^{-(\eta' - \eta)}$ to find
    \begin{align*}
        \int_t^{1 + t}\|y'(x)\|_V^2 (x-t)^{\eta}\, \mathrm{d}x
        &\leq \int_{t}^{1}\|y'(x)\|_V^2 x^{\eta}\, \mathrm{d}x +
        \int_{1}^{1 + t}\|y'(x)\|_V^2 x^{\eta}\, \mathrm{d}x
        \\ &\leq (1 \wedge t)^{-(\eta' - \eta)} \int_t^{1}\|y'(x)\|_V^2 x^{\eta'}\, \mathrm{d}x + 2^{(\eta-\delta')_+}\int_{1}^{1 + t}\|y'(x)\|_V^2 x^{\delta'}\, \mathrm{d}x
        \\ &\leq \left( 2^{(\eta-\delta')_+} + (1 \wedge t)^{-(\eta' - \eta)} \right)\vertiii{y}_{\delta', \eta'}^2.
    \end{align*}

    Finally, using $(x-t)^{\delta} \leq x^{\delta} \leq (1+t)^{-(\delta' - \delta)} x^{\delta'}$, the last term can be bounded by
    \begin{align*}
        \int_{1 + t}^{\infty} \|y'(x)\|_V^2 (x-t)^{\delta}\, \mathrm{d}x
        &\lesssim (1+t)^{-(\delta' - \delta)} \int_{1 + t}^{\infty} \|y'(x)\|_V^2 x^{\delta'}\, \mathrm{d}x
        \\ &\lesssim (1 \vee t)^{-(\delta' - \delta)}\vertiii{y}_{\delta', \eta'}^2.
    \end{align*}
    This proves the assertion with $\rho = (\eta' - \eta)_+/2$.
    \end{proof}

    Below we provide sufficient conditions on $E_b, E_{\sigma}$ such that Assumptions \ref{assumption SEE} and \ref{assumption long-time} are satisfied. Let $E_b\colon (0,\infty) \longrightarrow L(H_b, V)$ and $E_\sigma \colon (0,\infty) \longrightarrow L_q(H_{\sigma},V)$ be absolutely continuous such that there exist $\eta^*, \delta^* \in \R$ with
    \begin{align}\label{eq: lift condition 2}
    \begin{cases}
        \quad \int_1^\infty \left(\norm{E'_b(x)}_{L(H_b, V)}^2 + \norm{E'_\sigma(x)}_{L_q(H_{\sigma}, V)}^2\right) x^{\delta^*}\,\d x <\infty
        \\ \quad \int_0^1 \left(\norm{E'_b(x)}_{L(H_b, V)}^2 + \norm{E'_\sigma(x)}_{L_q(H_{\sigma}, V)}^2\right) x^{\eta^*}\,\d x <\infty.
        \end{cases}
    \end{align}
    As before, note that we may always replace $\delta^*$ by a smaller value, and $\eta^*$ by a larger value. Hence, we are interested in the largest choice for $\delta^*$, and the smallest possible choice for $\eta^*$. Let us define for $a\in\{b,\sigma\}$ the action of $E_a$ on $h\in H_a$ via $(E_ah)(x)=E_a(x)h$ where $x>0$. Then $E_b\in L(H_b,\mathcal{H}_{\delta^*,\eta^*})$ and $E_{\sigma} \in L_q(H_{\sigma}, \mathcal{H}_{\delta^*, \eta^*})$. In the following, we denote by $\iota_{\delta,\eta}$ the natural embedding from $V$ into $\mathcal{H}_{\delta,\eta}$ which we will omit when it is clear from the context.

    \begin{theorem}\label{theorem HJJM lift}
        Let $E_b,E_{\sigma}$ be absolutely continuous with \eqref{eq: lift condition 2} such that $\eta^* \in[0,2)$ and $\delta^* \geq 0$. Then the following assertions hold:
        \begin{enumerate}
            \item[(a)] For each $\eta \in [0,1)$ and $\delta > 1$, $S(t) \longrightarrow S_{\infty}\coloneqq\iota_{\delta,\eta}\Xi_\infty$, strongly on $\mathcal{H}_{\delta, \eta}$, and
            \[
                \|S(t) - S_{\infty}\|_{L(\mathcal{H}_{\delta, \eta}, \mathcal{H}_{\delta', \eta})} \leq \max\left\{ 1, \frac{1}{(\delta'-1)^{1/2}}\right\} (1 \vee t)^{- \frac{\delta - \delta'}{2}}
            \]
            holds for all $1 < \delta' < \delta$.

            \item[(b)] If $\delta^* > 3$, $\lim_{t\to\infty}E_b(t)=0$, and $\lim_{t\to\infty}E_\sigma(t)=0$, then Assumptions \ref{assumption long-time}.(b) and (c) are satisfied for any $\eta\in[0,1)$ such that $\eta\leq\eta^\ast<1+\eta$ and $\delta_* \in (1,\delta^*)$, $\delta \in (\delta_*, \delta^*)$ such that $\delta^* - \delta > 2$
            with
            \begin{equation}\label{eq:HJM lift parameters}
            \mathcal{H}=\mathcal{H}_{\delta,\eta^\ast},\quad \mathcal{V}=\mathcal{H}_{\delta,\eta},\quad \mathcal{V}_0 = \mathcal{H}_{\delta_\ast,\eta},\quad \lambda=\frac{\delta-\delta_\ast}{2},\quad \rho=\frac{\eta^\ast-\eta}{2}
            \end{equation}
            and projection operator $S_\infty$.

            \item[(c)] Suppose that $b\equiv0$ and $E_b\equiv0$. If $\delta^\ast>2$ and $\lim_{t\to\infty}E_\sigma(t)=0$, then Assumptions \ref{assumption long-time}.(b) and (c) are satisfied with \eqref{eq:HJM lift parameters} where $\eta\in[0,1)$ satisfies $\eta\leq\eta^\ast<1+\eta$ and $\delta_* \in (1,\delta^*)$, $\delta \in (\delta_*, \delta^*)$ satisfy $\delta^* - \delta > 1$.
        \end{enumerate}
    \end{theorem}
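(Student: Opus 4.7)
For part (a), my plan is to compute the norm $\vertiii{(S(t) - S_\infty)y}_{\delta',\eta}^2$ directly from the definition. Since $S_\infty y$ is the constant function $y(\infty)$, the difference $(S(t)y - S_\infty y)(x) = y(x+t) - y(\infty)$ has weak derivative $y'(x+t)$ and value $y(1+t) - y(\infty) = -\int_{1+t}^\infty y'(s)\,\mathrm{d}s$ at $x=1$. For the point-evaluation term, Cauchy--Schwarz against the weight $s^\delta$ gives $\|y(1+t)-y(\infty)\|_V^2 \leq (\delta-1)^{-1}(1+t)^{1-\delta}\vertiii{y}_{\delta,\eta}^2$, which is dominated by $(\delta'-1)^{-1}(1+t)^{-(\delta-\delta')}\vertiii{y}_{\delta,\eta}^2$ because $\delta-1 \geq \delta-\delta'$ (as $\delta' \geq 1$) and $\delta-1 \geq \delta'-1$. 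For the shift integral $\int_0^\infty \|y'(x+t)\|_V^2 w_{\delta',\eta}(x)\,\mathrm{d}x$, I substitute $y=x+t$ and split at $y=t+1$: on $(t,t+1]$ the weight $(y-t)^\eta \leq 1 \leq t^{-\delta} y^\delta$ for $t\geq 1$; on $(t+1,\infty)$ the bound $(y-t)^{\delta'} \leq y^{\delta'} = y^\delta \cdot y^{-(\delta-\delta')} \leq (1+t)^{-(\delta-\delta')} y^\delta$ suffices, and the integrability of $\|y'\|_V^2 y^\delta$ then produces the claimed bound. For $t \in [0,1]$ the desired constant bound follows from the uniform bound on $\|S(t)\|_{L(\mathcal{V})}$ given by Lemma~\ref{lemma: HJM lift technical lemma}(iv) plus the operator norm of $S_\infty$. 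Finally, strong convergence on all of $\mathcal{H}_{\delta,\eta}$ follows by dominated convergence, using $\delta>1$ to guarantee $y' \in L^1((1,\infty))$.

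For parts (b) and (c), the strategy is uniform and leverages part (a). The identities $S_\infty S(t)\xi_a = 0$ for $a \in \{b,\sigma\}$ reduce, via $S_\infty = \iota_{\delta,\eta}\Xi_\infty$, to $\lim_{z\to\infty}E_a(z+t) = E_a(\infty) = 0$, which is the standing hypothesis. For the integrability condition in Assumption~\ref{assumption long-time}.(b), I exploit $S_\infty \xi_a = 0$ to rewrite $S(t)\xi_a = (S(t)-S_\infty)\xi_a$, and bound $\|(S(t)-S_\infty)\xi_a\|_{L(H_a,\mathcal{V})}^2$ exactly as in part (a), now applied to the operator-valued function $E_a$ and using the weight $s^{\delta^*}$ from \eqref{eq: lift condition 2}. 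This produces
\[
    \|S(t)\xi_a\|_{L(H_a, \mathcal{V})}^2 \lesssim (1 \vee t)^{-(\delta^* - \delta)}\|\xi_a\|_{L(H_a, \mathcal{H}_{\delta^*, \eta^*})}^2,
\]
so the $L^1$ integrability of $\|S(t)\xi_b\|$ forces $\delta^*-\delta > 2$ while the $L^1$ integrability of $\|S(t)\xi_\sigma\|^2$ forces only $\delta^*-\delta > 1$. This disparity explains the relaxed hypothesis $\delta^* - \delta_* > 1$ in part (c), where $\xi_b$ is absent. The decay rate $\lambda = (\delta-\delta_*)/2$ in Assumption~\ref{assumption long-time}.(c) is then an immediate application of part (a) with $\delta' = \delta_* \in (1,\delta)$, and density $\mathcal{H}_{\delta,\eta} \hookrightarrow \mathcal{H}_{\delta_*,\eta}$ together with the continuous extension of $\Xi_0$ to $\mathcal{H}_{\delta_*,\eta}$ follow from the Hilbert-scale structure and Lemma~\ref{lemma: HJM lift technical lemma}(ii), which requires $\eta < 1$.

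The main technical difficulty lies in the shift integral $\int_t^\infty \|y'(y)\|_V^2 w_{\delta',\eta}(y-t)\,\mathrm{d}y$: the weight $w_{\delta',\eta}(y-t)$ changes its functional form at $y = t+1$, and the comparison of $(y-t)^\eta$ against $y^\delta$ on the short-range piece $(t,t+1]$, together with the comparison of $(y-t)^{\delta'}$ against $y^\delta$ on the long-range piece $(t+1,\infty)$, must be arranged so that the resulting constant depends only on $\delta'$ (in particular through $(\delta'-1)^{-1/2}$, which controls the boundedness of $\Xi_\infty$ on $\mathcal{H}_{\delta',\eta}$) and not on the large parameter $\delta$. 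A related subtlety is matching the short-time regime $t \in [0,1]$, where the operator norm of $S(t)-S_\infty$ is bounded via the triangle inequality with a constant that must coexist with the large-time decay rate $(1 \vee t)^{-(\delta-\delta')/2}$ under a single prefactor $\max\{1,(\delta'-1)^{-1/2}\}$.
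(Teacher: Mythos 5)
Part (a) of your proposal is essentially the paper's own argument: the same decomposition of $\|(S(t)-S_\infty)y\|_{\mathcal{H}_{\delta',\eta}}^2$ into the point-evaluation term and the shifted-derivative integral, the same Cauchy--Schwarz estimate for $y(1+t)-y(\infty)$, the same weight comparisons $(x-t)^{\eta}\lesssim t^{-(\delta-\delta')}x^{\delta}$ on $(t,t+1]$ and $(x-t)^{\delta'}\le (1+t)^{-(\delta-\delta')}x^{\delta}$ on $(t+1,\infty)$, and dominated convergence for the strong limit. Your only deviation is the regime $t\in[0,1]$, which you handle by the triangle inequality and the uniform bound on $\|S(t)\|_{L(\mathcal{V})}$; the paper instead uses the monotonicity $w_{\delta',\eta}(x-t)\le w_{\delta,\eta}(x)$, which is what keeps the prefactor in the stated form $\max\{1,(\delta'-1)^{-1/2}\}$, whereas your route yields a larger constant (also involving $\|\Xi_\infty\|_{L(\mathcal{H}_{\delta,\eta},V)}$). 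That is cosmetic, but worth tightening if you want the explicit constant of the theorem, which is reused quantitatively later in the paper.

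In (b)/(c) there is a genuine gap in the key display. The bound
\begin{equation*}
\|S(t)\xi_a\|_{L(H_a,\mathcal{V})}^2 \lesssim (1\vee t)^{-(\delta^*-\delta)}\,\|\xi_a\|_{L(H_a,\mathcal{H}_{\delta^*,\eta^*})}^2
\end{equation*}
does not follow ``exactly as in part (a)'' and is in fact false near $t=0$ whenever $\eta^*>\eta$, which is exactly the case the theorem permits ($\eta\le\eta^*<1+\eta$ with $\eta<1$, e.g.\ $\eta^*\in[1,2)$ for singular fractional kernels). Part (a) compares the target weight $w_{\delta',\eta}$ with a source weight carrying the \emph{same} short-time exponent $\eta$; condition \eqref{eq: lift condition 2} only puts $E_a$ in $\mathcal{H}_{\delta^*,\eta^*}$, and on the short-range piece $\int_t^{1+t}\|E_a'(x)\|_V^2 (x-t)^{\eta}\,\mathrm{d}x$ with small $t$ one has $(x-t)^{\eta}\gg x^{\eta^*}$ as $x\downarrow t$, so the correct estimate carries an extra factor of order $(1\wedge t)^{-(\eta^*-\eta)}$ that your display omits. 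The paper circumvents this by factorising $S(t)E_a=(S(t/2)-S_\infty)S(t/2)E_a$ (using $S_\infty S(t/2)E_a=0$), applying part (a) only to the first factor in $L(\mathcal{H}_{\delta^*,\eta},\mathcal{H}_{\delta,\eta})$, and absorbing the $\eta^*\to\eta$ regularisation into the second factor via Lemma~\ref{lemma: HJM lift technical lemma}(iv), which contributes a singular but integrable factor $1+t^{-(\eta^*-\eta)/2}$ (integrable since $\eta^*-\eta<1$). With this short-time correction inserted, your conclusions survive: \eqref{eq:assumption long-timme - integrability} holds because the $t\to\infty$ decay $(1\vee t)^{-(\delta^*-\delta)/2}$ is integrable for $\|S(t)\xi_b\|$ under $\delta^*-\delta>2$ and square-integrable for $\|S(t)\xi_\sigma\|$ under $\delta^*-\delta>1$, which is precisely the (b)/(c) dichotomy you identified. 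The remaining items — identifying $S_\infty S(t)\xi_a=0$ with $E_a(\infty)=0$, taking $\lambda=(\delta-\delta_*)/2$ from (a) with $\delta'=\delta_*$, and extending $\Xi_0$ to $\mathcal{V}_0=\mathcal{H}_{\delta_*,\eta}$ via Lemma~\ref{lemma: HJM lift technical lemma}(ii) — match the paper.
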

    \begin{proof}
        (a) For $y\in\mathcal{H}_{\delta,\eta}$, we find
        \[
            \vertiii{S(t)y-S_\infty}_{\delta,\eta}^2= \norm{y(1+t)-y(\infty)}_V^2 +\int_0^\infty \norm{y'(x+t)}_V^2 w_{\delta,\eta}(x)\,\d x.
        \]
        The first term satisfies
       \begin{equation*}\label{eq: P convergence shift}
           \norm{y(1+t)-y(\infty)}_V^2 = \norm*{\int_{1+t}^\infty y'(x)\,\d x}_V^2 \leq \left(\int_{1+t}^\infty \norm{y'(x)}_V^2 x^{\delta} \,\d x \right)\left(\int_1^\infty x^{-\delta}\,\d x\right).
       \end{equation*}
       For the second we use $(x-t)^{\eta} \leq x^{\delta}$ for $1 \leq t \leq x \leq 1 + t$ to find
       \begin{align*}
            \int_0^\infty \norm{y'(x+t)}_V^2 w_{\delta,\eta}(x)\,\d x
            &= \int_t^{1+t} \norm{y'(x)}_V^2 (x-t)^{\eta}\,\d x
            + \int_{1+t}^\infty \norm{y'(x)}_V^2 (x-t)^{\delta} \,\d x
            \\ &\leq 2\int_{t}^\infty \norm{y'(x)}_H^2 x^{\delta}\,\d x.
        \end{align*}
        By dominated convergence, the right-hand side tends to zero as $t\to\infty$. This proves $\lim_{t \to \infty}S(t) = S_\infty=\iota_{\delta,\eta}\Xi_\infty$ strongly on $\mathcal{H}_{\delta, \eta}$.

        Next, we derive the convergence rate bound \eqref{eq: rate S to P} on $\mathcal{V}_0 = \mathcal{H}_{\delta', \eta}$, $\mathcal{V} = \mathcal{H}_{\delta, \eta}$. Take $y\in\mathcal{H}_{\delta,\eta}$ and note that $\vertiii{S(t)y-\iota_{\delta,\eta}y(\infty)}_{\delta',\eta}^2 = \norm{y(1+t)-y(\infty)}_V^2 + \int_0^\infty \norm{y'(x+t)}_V^2 w_{\delta',\eta}(x)\,\d x$. For the first term, we obtain for $t \geq 0$
    \begin{align*}
        \norm{y(1+t)-y(\infty)}_V^2 &= \norm*{\int_{1+t}^\infty y'(x)\,\d x}_V^2
        \\ &\leq \left(\int_{1+t}^\infty \norm{y'(x)}_V^2 x^{\delta}x^{-(\delta-\delta')}\,\d x\right)\left( \int_{1}^\infty x^{-\delta'}\,\d x\right)
        \\ &\leq \frac{(1+t)^{-(\delta-\delta')}}{\delta' - 1}\left(\int_{1}^\infty \norm{y'(x)}_V^2 x^{\delta}\,\d x\right).
    \end{align*}
    For the second term we use $(x-t)^{\eta} \leq x^{\delta} x^{-(\delta - \eta)} \leq t^{-(\delta - \eta)}x^{\delta} \leq t^{-(\delta - \delta')}x^{\delta}$ when $1 \leq t \leq x \leq 1 + t$ to find
    \begin{align*}
        \int_0^1 \norm{y'(x+t)}_V^2 x^{\eta}\,\d x
        &= \int_t^{1+t} \norm{y'(x)}_V^2(x-t)^{\eta}\,\d x
        \\ &\leq t^{-(\delta - \delta')}\int_t^{1+t}\norm{y'(s)}_V^2 x^{\delta}\,\d x
        \leq t^{-(\delta - \delta')}\int_{1}^\infty \norm{y'(x)}_V^2 x^{\delta}\,\d x.
    \end{align*}
    Using $(x-t)^{\delta'} \leq x^{\delta'} \leq t^{-(\delta - \delta')} x^{\delta}$ for $x \geq t \geq 1$, we arrive at
    \begin{align*}
        \int_1^\infty \norm{y'(x+t)}_V^2 x^{\delta'}\,\d x
        = \int_{1+t}^\infty \norm{y'(x)}_V^2 (x-t)^{\delta'}\,\d x
        \leq t^{-(\delta - \delta')}\int_1^\infty \norm{y'(x)}_V^2 x^{\delta}\,\d x.
    \end{align*}
    Finally, when $t \in [0,1]$ we obtain from $w_{\delta',\eta}(x-t) \leq w_{\delta',\eta}(x) \leq w_{\delta,\eta}(x)$ the bound
    \begin{align*}
        \int_{0}^{\infty}\|y'(x+t)\|_V^2 w_{\delta',\eta}(x)\, \mathrm{d}x
        \leq \int_t^{\infty} \|y'(x)\|_V^2 w_{\delta',\eta}(x)\, \mathrm{d}x
        \leq \int_0^{\infty} \|y'(x)\|_V^2 w_{\delta,\eta}(x)\, \mathrm{d}x.
    \end{align*}
    Combining all inequalities proves the second inequality in assertion (a).

    (b) Using assertion (a), we obtain for $a\in\{b,\sigma\}$
    \begin{align}
        \norm{S(t)E_a}_{L(H_a,\mathcal{H}_{\delta,\eta})} &\leq \norm{S(t/2) - S_\infty }_{L(\mathcal{H}_{\delta^\ast,\eta},\mathcal{H}_{\delta,\eta})} \norm{S(t/2)E_a}_{L(H_a,\mathcal{H}_{\delta^\ast,\eta})} \nonumber
        \\ &\leq \max\left\{1,\frac{1}{(\delta-1)^{1/2}}\right\}   (1 \vee t)^{- \frac{\delta^* - \delta}{2}} \norm{S(t/2)E_a}_{L(H_a,\mathcal{H}_{\delta^\ast,\eta})},\label{eq: reg property HJM}
    \end{align}
     where the first inequality follows from $S_\infty S(t/2) E_a = 0$. By noting that $\eta^\ast<2<\delta^\ast$ and using Lemma \ref{lemma: HJM lift technical lemma}, we find for the last term
    \begin{align*}
        \norm{S(t/2)E_a}_{L(H_a,\mathcal{H}_{\delta^\ast,\eta})}\leq \norm{S(t/2)}_{L(\mathcal{H}_{\delta^\ast,\eta^\ast},\mathcal{H}_{\delta^\ast,\eta})}\norm{E_a}_{L(H_a,\mathcal{H}_{\delta^\ast,\eta^\ast})}\lesssim (1+(1\lor t)^{-\frac{\eta^\ast-\eta}{2}}).
    \end{align*}
     This proves that
    \[
        \| S(t)E_b \|_{L(H_b, \mathcal{H}_{\delta, \eta})},\ \| S(t)E_{\sigma} \|_{L_q(H_{\sigma}, \mathcal{H}_{\delta, \eta})} \lesssim (1 \vee t)^{- \frac{\delta^* - \delta}{2}}
    \]
    which yields \eqref{eq:assumption long-timme - integrability} provided that $\delta^\ast-\delta>2$.

    (c) When $b\equiv0$ and $E_b\equiv0$, then we only need that $\norm{S(\cdot)E_\sigma}_{L_q(H_\sigma,\mathcal{H}_{\delta,\eta})}^{2}$ is integrable, whence $\delta^\ast-\delta>1$ is sufficient.
\end{proof}

\begin{example}
    Let $E \in L(H_b, V)$ and $\widetilde{E} \in L_q(H_{\sigma}, V)$, and define for $\alpha, \beta \in (\frac{1}{2}, \frac{3}{2})$
    \[
        E_b(t) = \frac{t^{\alpha-1}}{\Gamma(\alpha)}\ E \ \text{ and } \
        E_{\sigma}(t) = \frac{t^{\beta - 1}}{\Gamma(\beta)}\ \widetilde{E}.
    \]
    Then \eqref{eq: lift condition 2} is satisfied for any choice $\delta^*, \eta^*$ such that
    \[
        \eta^* > 3-2 (\alpha \wedge \beta)  \ \text{ and } \ \delta^* < 3 - 2(\alpha \vee \beta).
    \]
    Consequently, by Lemma \ref{lemma: HJM lift technical lemma}, Assumption \ref{assumption SEE} is satisfied for
    \[
        \mathcal{H} = \mathcal{H}_{\delta, \eta^*}, \qquad \mathcal{V} = \mathcal{H}_{\delta, \eta}, \qquad \rho = (1-(\alpha\land\beta)+\varepsilon_0)_+
    \]
    where $\eta^* = 3 - 2(\alpha \wedge \beta) + \varepsilon_0$, $\eta=\eta^\ast-(\eta^\ast-1+\varepsilon_0)_+$, $\delta^* = 3 - 2(\alpha \wedge \beta) - \varepsilon_1$, with $\varepsilon_0 \in (0, (\alpha\land\beta)-\frac12)$, $\varepsilon_1\in(0,3-2(\alpha\lor\beta))$,  $\delta \in [0, \delta^*]$ are arbitrary. Remark that Assumption \ref{assumption long-time} does not hold.
\end{example}

\subsection{Fractional kernels in the mild formulation}

Similarly to Section \ref{sec:frac_kernels_mild}, let us consider the case where $(A,D(A))$ admits an orthonormal basis $(e_n^H)_{n\geq 1}$ of eigenvectors such that $Ae_n^H = - \theta_n e_n^H$, see \eqref{eq: A diagonal}, where $(\theta_n)_{n\geq 1}$ denotes the increasing sequence of nonnegative eigenvalues. Let $W$ be a Gaussian process given as in \eqref{eq: W}, and recall that we denotes its covariance operator by $Q = \sum_{n = 1}^{\infty} \lambda_n (e_n^H \otimes e_n^H)$. Below, we study the stochastic Volterra equation with fractional kernels and either additive or multiplicative noise of the form \eqref{VSPDE} with
\[
    k(t) = \frac{t^{\alpha - 1}}{\Gamma(\alpha)} \ \text{ and } \ h(t) = \frac{t^{\beta - 1}}{\Gamma(\beta)}
\]
where $\alpha, \beta > \frac{1}{2}$. For simplicity, we additionally set $b\equiv0$. The corresponding mild formulation takes the form
\begin{equation}\label{eq: HJM example mild frac}
    u(t;G) = G(t) +  \int_0^t E^{\alpha,\beta}(t-s)\sigma(u(s;G))\,\d W_s
\end{equation}
with $E_b\equiv0$ and $E_\sigma=E^{\alpha,\beta}$. Due to Remark \ref{remark resolvents frac}, $E^{\alpha,\beta}$ is given by
\[
    E^{\alpha,\beta}(t) = \sum_{n=1}^{\infty} e_n(t; \alpha,\beta) (e_n^H \otimes e_n^H),
\]
where $e_n(t; \alpha, \beta) = t^{\beta - 1}E_{\alpha, \beta}(- \theta_n t^{\alpha})$. Recall that $H^{\varkappa} = D((-A)^{\varkappa})$ denotes the fractional domain of $(A, D(A))$ defined in \eqref{eq:H-kappa space}. Below, we verify condition \eqref{eq: lift condition 2} with $q=2$.

\begin{lemma}\label{lemma HJM mild}
Suppose that \eqref{eq: A diagonal} holds. Let $\alpha\in(0,2)$, $\beta\in\R$ and $\gamma ,\varkappa \in \R$ be such that $\beta-\alpha\notin\mathbb{Z}\setminus\N$ and $\sum_{n=1}^\infty\theta_n^{2(\gamma-\varkappa)-4}\1_{\{\alpha=\beta\}} + \theta_n^{2(\gamma-\varkappa)-2}\1_{\{\alpha\neq\beta\}}<\infty$. Take $\delta^\ast,\eta^\ast\in\R$ such that
\[
\delta^\ast,\eta^\ast \in (3 - 2\beta, 3 - 2(\beta - \alpha)).
\]
Then $E^{\alpha, \beta}\colon (0, \infty) \longrightarrow L_2(H^{\varkappa}, H^{\gamma})$ is absolutely continuous and satisfies
\begin{align*}
    & \int_0^{\infty} \| (E^{\alpha, \beta})'(x)\|_{L_2(H^{\varkappa}, H^{\gamma})}^2 w_{\delta^*, \eta^*}(x)\, \d x
    \\ & \qquad \leq \left( \int_{0}^\infty |x^{\beta-2}E_{\alpha,\beta-1}(-x^\alpha)|^2 w_{\delta^\ast,\eta^\ast}(x) \,\d x\right)\sum_{n=1}^\infty \theta_n^{2(\varkappa - \gamma) + \frac{3-2\beta-\eta^\ast}{\alpha}}.
\end{align*}
\end{lemma}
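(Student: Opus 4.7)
The plan is to exploit the explicit spectral representation of $E^{\alpha,\beta}$ together with the standard differentiation identity for the two-parameter Mittag--Leffler function. Differentiating the series $E_{\alpha,\beta}(z) = \sum_{k\geq 0} z^k/\Gamma(\alpha k + \beta)$ termwise yields
\[
    \frac{\d}{\d t}\big[t^{\beta-1} E_{\alpha,\beta}(-\theta t^\alpha)\big] = t^{\beta-2} E_{\alpha,\beta-1}(-\theta t^\alpha),
\]
so each scalar resolvent $e_n(\cdot;\alpha,\beta)$ is smooth on $(0,\infty)$ with $(e_n)'(t;\alpha,\beta) = t^{\beta-2} E_{\alpha,\beta-1}(-\theta_n t^\alpha)$. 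I would then set formally $(E^{\alpha,\beta})'(x) = \sum_{n\geq 1} (e_n)'(x;\alpha,\beta)(e_n^H \otimes e_n^H)$; the absolute continuity of $E^{\alpha,\beta}$ as a map into $L_2(H^\varkappa,H^\gamma)$ will follow a posteriori from local integrability of $\|(E^{\alpha,\beta})'(\cdot)\|_{L_2(H^\varkappa,H^\gamma)}$ on $(0,\infty)$, which is a by-product of the weighted bound I am aiming to prove.

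Since $(\theta_n^{-\varkappa} e_n^H)_{n\geq 1}$ is an orthonormal basis of $H^\varkappa$ and the rank-one operators $(e_n^H\otimes e_n^H)_{n\geq 1}$ act diagonally, a direct calculation (identical to the one carried out for Lemma~\ref{theorem cm frac spaces}) gives, up to the sign convention in the Schatten index,
\[
    \|(E^{\alpha,\beta})'(x)\|_{L_2(H^\varkappa,H^\gamma)}^2 = \sum_{n=1}^\infty \theta_n^{2(\varkappa-\gamma)}\, |x^{\beta-2} E_{\alpha,\beta-1}(-\theta_n x^\alpha)|^2.
\]
Multiplying by $w_{\delta^*,\eta^*}(x)$, integrating and applying Tonelli, the problem reduces to the scalar estimate
\[
    \int_0^\infty |x^{\beta-2} E_{\alpha,\beta-1}(-\theta_n x^\alpha)|^2 w_{\delta^*,\eta^*}(x)\,\d x \leq \theta_n^{\frac{3-2\beta-\eta^*}{\alpha}} \int_0^\infty |y^{\beta-2} E_{\alpha,\beta-1}(-y^\alpha)|^2 w_{\delta^*,\eta^*}(y)\,\d y.
\]
I would establish this via the substitution $y = \theta_n^{1/\alpha} x$, which produces the overall factor $\theta_n^{(3-2\beta)/\alpha}$ and replaces $w_{\delta^*,\eta^*}(x)$ by $w_{\delta^*,\eta^*}(\theta_n^{-1/\alpha} y)$, followed by the pointwise comparison $w_{\delta^*,\eta^*}(\theta_n^{-1/\alpha} y) \leq \theta_n^{-\eta^*/\alpha} w_{\delta^*,\eta^*}(y)$, which supplies the remaining factor $\theta_n^{-\eta^*/\alpha}$. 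Summation over $n$ then produces the stated series.

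Finiteness of the reference integral follows from a short asymptotic check: near $y=0$ one has $E_{\alpha,\beta-1}(-y^\alpha)\to 1/\Gamma(\beta-1)$, so the integrand is of order $y^{\eta^*+2(\beta-2)}$ and integrable iff $\eta^* > 3-2\beta$, while near $y=\infty$ the Wiman asymptotic $E_{\alpha,\beta-1}(-y^\alpha) \sim -y^{-\alpha}/\Gamma(\beta-1-\alpha)$ (whose leading coefficient is nonzero precisely under the hypothesis $\beta-\alpha\notin\mathbb{Z}\setminus\mathbb{N}$) makes the integrand of order $y^{\delta^*+2(\beta-2)-2\alpha}$, integrable iff $\delta^* < 3-2(\beta-\alpha)$. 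These are exactly the interval conditions on $\delta^*,\eta^*$.

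The main obstacle is the weight-comparison estimate $w_{\delta^*,\eta^*}(\theta_n^{-1/\alpha} y) \leq \theta_n^{-\eta^*/\alpha} w_{\delta^*,\eta^*}(y)$. A case analysis distinguishing $y\leq 1$, $1 < y \leq \theta_n^{1/\alpha}$, and $y > \theta_n^{1/\alpha}$ confirms it under $\eta^* \leq \delta^*$ and $\theta_n \geq 1$; I may assume $\eta^*\leq\delta^*$ without loss of generality, since enlarging $\eta^*$ only weakens condition~\eqref{eq: lift condition 2}, and the at most finitely many eigenvalues with $\theta_n < 1$ (if any) contribute only bounded perturbations absorbable into the constant. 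All the remaining ingredients---the Mittag--Leffler derivative identity, the self-similar substitution, Fubini--Tonelli, and the asymptotic integrability check---are then standard.
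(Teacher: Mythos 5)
Your proposal is correct and follows essentially the same route as the paper's proof: termwise differentiation of the spectral series via $\tfrac{\d}{\d t}\bigl[t^{\beta-1}E_{\alpha,\beta}(-\theta t^{\alpha})\bigr]=t^{\beta-2}E_{\alpha,\beta-1}(-\theta t^{\alpha})$, the diagonal Hilbert--Schmidt computation with respect to the eigenbasis, the self-similar substitution $y=\theta_n^{1/\alpha}x$ extracting the factor $\theta_n^{(3-2\beta-\eta^{\ast})/\alpha}$, and the Poincar\'e asymptotics of $E_{\alpha,\beta-1}$ to check integrability on the stated parameter interval. The only differences are cosmetic: the paper performs the rescaling after splitting the integral at $x=1$ instead of using your global weight comparison (and, like your argument, its final combination tacitly uses $\eta^{\ast}\leq\delta^{\ast}$ and $\theta_n$ bounded below by $1$, which holds in the intended applications), while your exponent $\theta_n^{2(\varkappa-\gamma)}$ reproduces a sign typo in the lemma statement — the orthonormal-basis computation gives $\theta_n^{2(\gamma-\varkappa)}$, as in the paper's proof and in the lemma's summability hypothesis.
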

\begin{proof}
    Recall the Poincar\'e asymptotics of the Mittag-Leffler function for $\alpha\in(0,2)$ and $\beta\in\R$, $E_{\alpha,\alpha}(-x^\alpha)\sim\frac{\sin(\alpha,\pi)}{\alpha\pi\Gamma(\alpha)}x^{-2\alpha}$ and $E_{\alpha,\beta}(- x^\alpha)\sim -\frac{x^{-\alpha}}{\Gamma(\beta-\alpha)}$ as $x\to\infty$. Since $(e_n^{H^{\varkappa}})_{n\geq1}=(\theta_n^{-\varkappa} e_n^H)_{n\geq1}$ is an orthonormal basis of $H^{\varkappa}$, we obtain for each $t > 0$
    \begin{align*}
        \| E^{\alpha, \beta}(t)\|_{L_2(H^{\varkappa}, H^{\gamma})}^2
        &= \sum_{n=1}^{\infty}\theta_n^{2(\gamma - \varkappa)} \left| t^{\beta - 1}E_{\alpha, \beta}(- \theta_n t^{\alpha})\right|^2
        \\ &\lesssim t^{2\beta - 2 - 4\alpha}\1_{\{\alpha = \beta\}}\sum_{n=1}^{\infty}\theta_n^{2(\gamma - \varkappa) - 4}
        + t^{2\beta - 2 - 2\alpha}\1_{\{\alpha \neq \beta\}}\sum_{n=1}^{\infty}\theta_n^{2(\gamma - \varkappa) - 2}.
    \end{align*}
    This shows that $E^{\alpha, \beta}\colon (0,\infty) \longrightarrow L_2(H^{\varkappa}, H^{\gamma})$. Using the particular form, differentiation term by term yields
     \[
        \frac{\d}{\d x}E^{\alpha, \beta}(x) = \sum_{n=1}^\infty e_n'(x;\alpha,\beta)(e_n^H\otimes e_n^H)
    \]
    where $e_n'(x;\alpha,\beta)=x^{\beta-2}E_{\alpha,\beta-1}(-\theta_n x^{\alpha})$. For  $x \in [0,1]$ we obtain
    \begin{align*}
        \int_0^1 |x^{\beta-2}E_{\alpha,\beta-1}(-\theta_n x^\alpha)|^2x^{\eta^\ast}\,\d x \leq \theta_n^{\frac{3-2\beta-\eta^\ast}{\alpha}} \int_0^\infty |y^{\beta-2}E_{\alpha,\beta-1}(-y^\alpha)|^2y^{\eta^\ast}\,\d y.
    \end{align*}
    Using again the Poincaré asymptotics, we conclude that the above integral is finite for $\eta^\ast\in(3-2\beta,3-2(\beta-\alpha))$. Similarly, when $x > 1$ we obtain
    \begin{align*}
        \int_1^\infty |x^{\beta-2}E_{\alpha,\beta-1}(-\theta_nx^\alpha)|^2x^{\delta^\ast}\,\d x \leq \theta_n^{\frac{3-2\beta-\delta^\ast}{\alpha}} \int_{0}^\infty |y^{\beta-2}E_{\alpha,\beta-1}(-y^\alpha)|^2y^{\delta^\ast}\,\d y.
    \end{align*}
    Also here, the integral is finite when $\delta^* \in (3 - 2\beta, 3 - 2(\beta - \alpha))$. Thus, we obtain for $\delta^\ast,\eta^\ast$ as above
    \begin{align*}
        & \int_0^\infty \norm{ (E^{\alpha,\beta})'(x)}_{L_2(H^{\varkappa},H^\gamma)}w_{\delta^\ast,\eta^\ast}(x)\,\d x
        \\ &\qquad =  \sum_{n=1}^\infty \theta_{n}^{2(\gamma - \varkappa)}\int_0^\infty |x^{\beta-2}E_{\alpha,\beta-1}(-\theta_n x^\alpha)|^2w_{\delta^\ast,\eta^\ast}(x)\,\d x
        \\ &\qquad \leq \left(\int_{0}^\infty |x^{\beta-2}E_{\alpha,\beta-1}(-x^\alpha)|^2 w_{\delta^\ast,\eta^\ast}(x) \,\d x \right) \sum_{n=1}^\infty \theta_n^{2(\gamma - \varkappa) + \frac{3-2\beta-\eta^\ast}{\alpha}}.
    \end{align*}
\end{proof}

In the following, we work in the following setup:
\[
V=H=H_b \quad\text{and}\quad H_\sigma=H^\gamma,
\]
for $\gamma\in\R$ arbitrary but fixed, and denote by $\mathcal{H}_{\delta,\eta}$ the corresponding scales of Hilbert spaces defined in Subsection~\ref{subsec:general_framework}. Recall that $S(t)y(x)=y(t+x)$ and that $\Xi y= y(0)$. In this setting, let us suppose that $G(t)=g(t)+A\int_0^t E^{\alpha,\alpha}(t-s)g(s)\,\d s$ appearing in \eqref{eq: HJM example mild frac} is of the form $G=\xi$ for some $\xi\in L^p(\Omega,\F_0,\P;\mathcal{H}_{\delta,\eta})$ and $\delta,\eta$ are specified below. We use $\mathcal{G}_{p}(\delta,\eta)$ to denote the collection of all such admissible driving forces $G$.

\begin{theorem}\label{theorem frac HJM lift}
Suppose that $W$ is a cylindrical Wiener process with covariance operator $Q$. Suppose that $\sigma\colon H\longrightarrow L_2(Q^{\frac12}H,H^\gamma)$ is Lipschitz continuous with constant $C_{\sigma,\mathrm{lip}}$  and linear growth constant $C_{\sigma,\mathrm{lin}}=\sup_{x\in H}\frac{\norm{\sigma(x)}_{L_2(Q^{1/2}H,H^\gamma)}}{1+\norm{x}_H}$, $\alpha\in(0,2)$, $\beta\in(\frac{1}{2},\frac{1}{2}+\alpha)$.
    Let $p\in(2,\infty)$ and $\varepsilon_0,\varepsilon_1>0$ satisfy
    \[
   \frac2p <\varepsilon_0<1, \quad\text{and}\quad 0< \varepsilon_1 < \frac{1-2(\beta-\alpha)}{2},
    \]
    and suppose that $\sum_{n=1}^\infty \theta_n^{-2\gamma+\frac{1-2\beta+\varepsilon_0}{\alpha}}<\infty$. Define $\delta,\eta$ by
    \[
    \eta = 1-\varepsilon_0 \ \text{and} \ \delta =  2- 2(\beta-\alpha) -2\varepsilon_1.
    \]
    Then, for each $\xi\in L^p(\Omega,\F_0,\P;\mathcal{H}_{\delta,\eta})$, there exists a unique solution of \eqref{eq: HJM example mild frac} in $\mathcal{H}_{\delta,\eta}$. In particular, setting $G=\Xi S(\cdot)\xi\in\mathcal{G}_p(\delta,\eta)$, $u(\cdot;G)=\Xi X$ is the unique solution of \eqref{eq: mild formulation}. Suppose that additionally, for some $p\in(2,\infty)$ and $\varepsilon_0,\varepsilon_1 >0$,
    \begin{align}
        &4^{1-\frac1p}C_{\sigma,\mathrm{lin}}\left(1 + \sqrt{\frac{1}{\varepsilon_0}}\right)\max\left\{1,\frac{1}{\delta-1}\right\}^{\frac12}  K_0K_1 \left(\frac{2}{2-\varepsilon_0} + \frac{2}{\varepsilon_1}\right)^{1/2} c_p^{\frac1p}<1 \label{eq: bounded moments HJM lift ex}\\
        &2^{1-\frac{1}{p}}C_{\sigma,\mathrm{lip}} \left(1 + \sqrt{\frac{1}{\varepsilon_0}}\right) \max\left\{1,\frac{2}{1-2(\beta-\alpha)-2\varepsilon_1}\right\}^{\frac12}K_0K_1\left(\frac{2}{2 - \varepsilon_0} + \frac{4}{1 - 2(\beta - \alpha)}\right)^{1/2}   c_p^{\frac{1}{p}} < 1,\nonumber
    \end{align}
    where $c_p$ is explicitly given in \eqref{eq: constant BDG} and, the constants $K_0=K_0(\alpha,\beta,\varepsilon_0,\varepsilon_1,\gamma)$ and $K_1=K_1(\alpha,\beta,\varepsilon_1)$ are defined by
    \begin{align*}
        K_0(\alpha,\beta,\varepsilon_0,\varepsilon_1,\gamma)&\coloneqq\left(\|E^{\alpha,\beta}(1) \|_{L_2(H^\gamma, H)}^2 + \int_{0}^\infty |x^{\beta-2}E_{\alpha,\beta-1}(-x^\alpha)|^2 w_{3-2(\beta-\alpha)-\varepsilon_1,1-\varepsilon_0}(x) \,\d x\right)^{1/2}\\ &\qquad\cdot\left(\sum_{n=1}^\infty \theta_n^{-2\gamma + \frac{2(\varepsilon_0-\beta)}{\alpha}}\right)^{1/2},\\
        K_1(\alpha,\beta,\varepsilon_1)&\coloneqq \left(3+\frac{1}{2-2(\beta-\alpha)-\varepsilon_1}\right)^{1/2}.
    \end{align*}
    Then the following assertions hold:
    \begin{enumerate}
        \item[(a)] Equation \eqref{eq: HJM example mild frac} admits a limiting distribution $\pi_{G} \in\mathcal{P}_p(\mathcal{H}_{\delta,\eta})$ with respect to the Wasserstein $p$-distance. This limit distribution is an invariant measure, which is parameterised by $G(\infty)$.
        \item[(b)] Let $\widetilde{\xi}\sim\pi_G$ and set $\widetilde{G}=\Xi S(\cdot)\widetilde{\xi}$. Then $u(\cdot,\widetilde{G})$  is a stationary process corresponding to \eqref{eq: mild formulation}.
    \end{enumerate}
    If additionally \eqref{eq: bounded moments HJM lift ex} is satisfied for $2p$ instead of $p$ and $\xi\in L^{2p}(\Omega,\F_0,\P;\mathcal{H}_{\delta,\eta})$, the Law of Large Numbers holds in the mean-square sense with rate of convergence
    \[
    \vartheta < \frac12 \min\left\{1,\frac{1}{p}\log\left(1/\norm{\rho_{\mathrm{b=0}}^{(p)}}_{L^1(\R_+)}\right),\frac1p\left(\frac{1}{2}-(\beta-\alpha)-\varepsilon_1\right)\right\},
    \]
    where $\rho_{\mathrm{b=0}}^{(p)}$ is defined in \eqref{eq: rho b zero}.
\end{theorem}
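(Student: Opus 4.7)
My plan is to derive the theorem as a concrete instantiation of the abstract machinery of Sections~\ref{sec:markovian_lift}--\ref{sec:limit_theorems} in the Filipovi\'c-type lift of Section~\ref{section HJM lift}. The first step is to fix admissible weight parameters. I take $\mathcal{H} = \mathcal{H}_{\delta,\eta^\ast}$, $\mathcal{V} = \mathcal{H}_{\delta,\eta}$, $\mathcal{V}_0 = \mathcal{H}_{\delta_\ast,\eta}$ with $\eta = 1-\varepsilon_0$, $\delta = 2-2(\beta-\alpha)-2\varepsilon_1$, a choice $\delta_\ast \in (1,\delta)$, and $\delta^\ast = 3-2(\beta-\alpha)-\varepsilon_1$, and $\eta^\ast$ strictly below $2-\varepsilon_0$ but still in the admissible range of Lemma~\ref{lemma HJM mild}; the summability hypothesis on $(\theta_n)_{n\geq1}$ is precisely the one that makes $E^{\alpha,\beta}\in L_2(H^\gamma,\mathcal{H}_{\delta^\ast,\eta^\ast})$ when $\eta^\ast$ is near the upper end of the interval. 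Lemma~\ref{lemma: HJM lift technical lemma} then yields Assumption~\ref{assumption SEE} with $\rho=(\eta^\ast-\eta)/2$, and the assumption $\varepsilon_0 > 2/p$ translates into the strict inequality $\rho+1/p<1/2$ required by \eqref{eq: continuity}. Since $b\equiv0$, $E_b\equiv0$, $E_\sigma=E^{\alpha,\beta}$, and $E^{\alpha,\beta}(t)\to 0$ as $t\to\infty$ by the Mittag--Leffler asymptotics, part~(c) of Theorem~\ref{theorem HJJM lift} applies and gives Assumption~\ref{assumption long-time}.(b)--(c) with convergence rate $\lambda=(\delta-\delta_\ast)/2$. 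The Lipschitz and linear-growth hypotheses on $\sigma$ directly supply Assumption~\ref{assumption long-time}.(a) with $H_\sigma=H^\gamma$.

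With all abstract hypotheses verified, existence, uniqueness and the identification $u(\cdot;G)=\Xi X$ follow at once from Theorem~\ref{label:existence_uniqueness_VSPDE} and the discussion after it. For parts~(a) and (b), I apply the no-drift version of Theorem~\ref{theorem_limit_distribution} (case (ii)). The contraction condition \eqref{eq:contraction_estimate_condi_multi b zero} reduces to bounding $\|\Xi\|_{L(\mathcal{V}_0,V)}$ by $1+\sqrt{1/\varepsilon_0}$ via the proof of Lemma~\ref{lemma: HJM lift technical lemma}, and bounding the convolution norm $\|S(\cdot)E^{\alpha,\beta}\|_{L^2(\R_+;L_2(H^\gamma,\mathcal{V}_0))}$; the factorisation through $\mathcal{H}=\mathcal{H}_{\delta,\eta^\ast}$, as in the proof of Theorem~\ref{theorem HJJM lift}.(b), produces the product $K_0 K_1$ once the squared decay $(1\vee t)^{-(\delta^\ast-\delta_\ast)}$ is integrated explicitly, giving exactly the second displayed inequality in \eqref{eq: bounded moments HJM lift ex}. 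Similarly, \eqref{eq:bounded_moment_condi_multi b zero} for the bounded-moment bound on $\mathcal{V}$ collapses to the first displayed inequality in \eqref{eq: bounded moments HJM lift ex}. Theorem~\ref{theorem_limit_distribution} now yields a unique limit distribution $\pi_G\in\mathcal{P}_p(\mathcal{H}_{\delta,\eta})$; since $S_\infty \xi = \iota_{\delta,\eta} \xi(\infty)= \iota_{\delta,\eta}G(\infty)$ by Lemma~\ref{lemma: HJM lift technical lemma} applied to the shift semigroup, the parameterisation of $\pi_G$ through $G(\infty)$ follows from \eqref{eq: limit distribution decomposition}. Corollary~\ref{remark: stationary process} then provides the stationary version $u(\cdot;\widetilde{G})$.

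For the Law of Large Numbers with the stated rate, I invoke Corollary~\ref{theorem LLN lift}.(ii). The strengthening of \eqref{eq: bounded moments HJM lift ex} to exponent $2p$ is exactly the bounded-moment hypothesis \eqref{eq:bounded_moment_condi_multi b zero} required for $L^{2p}$ moment bounds via Lemma~\ref{lemma: general case b zero}. Lemma~\ref{lemma: convergence rate bound} turns the convergence rate $(\mathcal{R}_{\mathrm{b}=0}^p)^{1/p}$ into the minimum of three polynomial rates: the drift/Wasserstein rate $2\lambda/p=(\delta-\delta_\ast)/p$ (optimising $\delta_\ast\uparrow 1$ gives $(1/2-(\beta-\alpha)-\varepsilon_1)/p$), the Paley--Wiener rate $\log(1/\|\rho_{\mathrm{b}=0}^{(p)}\|_{L^1})/p$, and the uniform cap $1$; taking $\gamma=1$ in Corollary~\ref{theorem LLN lift} and halving produces exactly the displayed $\vartheta$.

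The main obstacle will be bookkeeping the explicit prefactors to land precisely on $K_0K_1$ together with the combinatorial constants appearing in \eqref{eq: bounded moments HJM lift ex}; in particular, carefully tracking the split $S(t)=(S(t/2)-S_\infty)\circ S(t/2)$ used in \eqref{eq: reg property HJM}, combined with the Mittag--Leffler Poincar\'e expansion underlying the integrals in Lemma~\ref{lemma HJM mild}, so that the weighted $L^2$-norm of $(E^{\alpha,\beta})'$ and the regularisation norm of the shift semigroup factor cleanly. The rest is routine application of the general theorems.
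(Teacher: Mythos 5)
Your proposal follows essentially the same route as the paper's proof: the same lift spaces $\mathcal{H}=\mathcal{H}_{\delta,\eta^\ast}$, $\mathcal{V}=\mathcal{H}_{\delta,\eta}$, $\mathcal{V}_0=\mathcal{H}_{\delta_\ast,\eta}$, verification of Assumptions \ref{assumption SEE} and \ref{assumption long-time} via Lemma~\ref{lemma: HJM lift technical lemma}, Theorem~\ref{theorem HJJM lift}(c) and Lemma~\ref{lemma HJM mild}, then Theorem~\ref{label:existence_uniqueness_VSPDE}, Theorem~\ref{theorem_limit_distribution}(ii), Corollary~\ref{remark: stationary process} and Corollary~\ref{theorem LLN lift}, with the explicit $\Xi$- and convolution-norm bounds producing $K_0K_1$. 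The only bookkeeping caveat is that the paper fixes $\eta^\ast=2-2\varepsilon_0$ (taking merely $\eta^\ast<2-\varepsilon_0$ does not by itself give $\rho+\tfrac1p<\tfrac12$ in \eqref{eq: continuity}) and $\delta_\ast=1+\tfrac{1-2(\beta-\alpha)-2\varepsilon_1}{2}$ rather than letting $\delta_\ast\downarrow1$, and it is precisely these choices that yield $2\lambda=\tfrac12-(\beta-\alpha)-\varepsilon_1$ and the exact constants appearing in \eqref{eq: bounded moments HJM lift ex}.
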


\begin{proof}
    Denote by $\mathcal{H}_{\delta,\eta}$ the scale of Hilbert spaces defined in Subsection~\ref{subsec:general_framework}. It follows from Theorem~\ref{theorem HJJM lift} (c) that Assumptions~\ref{assumption SEE} and~\ref{assumption long-time} (b) and (c) are satisfied for $q=q'=2$, and
    \[
    \mathcal{H} = \mathcal{H}_{\delta,\eta^\ast}, \ \mathcal{V} = \mathcal{H}_{\delta,\eta}, \ \mathcal{V}_0 = \mathcal{H}_{\delta_\ast,\eta}, \ \lambda=\frac{\delta-\delta_\ast}{2}, \ \rho=\frac{\eta^\ast-\eta}{2}
    \]
    where $\eta\in[0,1)$ satisfies $\eta\leq\eta^\ast<1+\eta$ and $\delta_\ast\in(1,\delta^\ast)$, $\delta\in(\delta_\ast,\delta^\ast)$  satisfy $\delta^\ast-\delta>1$. In light of Lemma \ref{lemma HJM mild}, remark that $\delta^\ast,\eta^\ast$ necessarily satisfy $\delta^\ast,\eta^\ast \in (3 - 2\beta, 3 - 2(\beta - \alpha))$. Thus, let us take $\eta^\ast=2-2\varepsilon_0$, $\delta^\ast = 3-2(\beta-\alpha)-\varepsilon_1$ and $\delta_\ast = 1 + \frac{1-2(\beta-\alpha)-2\varepsilon_1}{2}$. Then $\delta=\delta^\ast-1-\varepsilon_1$ and the above conditions are satisfied with
    \[
    \lambda= \frac{1}{4}-\frac{\beta-\alpha}{2}-\frac{\varepsilon_1}{2}>0\quad\text{and}\quad \rho = \frac{1}{2}-\frac{\varepsilon_0}{2}\in \left[0,\frac12 \right).
    \]
    Moreover, since $\varepsilon_0>\frac2p$, we also obtain $\rho+\frac{1}{p}<\frac{1}{2}$, see \eqref{eq: continuity}. Finally, by assumption $\sigma$ is Lipschitz continuous with constant $C_{b,\mathrm{lip}}$, thus also Assumption \ref{assumption long-time} (a) is satisfied. The existence and uniqueness of solutions follow from Theorem \ref{label:existence_uniqueness_VSPDE}. Concerning limit distributions, our assertion follows from Theorem \ref{theorem_limit_distribution} (b) provided that \eqref{eq:bounded_moment_condi_multi b zero} and  \eqref{eq:contraction_estimate_condi_multi b zero} are satisfied for a $p\in(2,\infty)$. To verify the latter, following Lemma \ref{lemma: HJM lift technical lemma} (b), we find
    \begin{align*}
        \norm{\Xi}_{L(\mathcal{H}_{\delta,\eta},V)} \leq 1 +\left(\int_0^1 \frac{\d x}{w_{\delta,\eta}(x)}\right)^{1/2}  = 1 + \sqrt{\frac{1}{1-\eta}}
    \end{align*}
    and analogously, $\norm{\Xi}_{L(\mathcal{H}_{\delta_\ast,\eta},V)}\leq 1+\sqrt{\frac{1}{1-\eta}}$. Moreover, using \eqref{eq: reg property HJM}, we obtain for our particular choice of $H_\sigma=H^\gamma$ and $\mathcal{V}=\mathcal{H}_{\delta,\eta}$

    \begin{multline}\label{eq: estimate L2 condition HJM}
        \int_0^\infty \norm{S(t)E^{\alpha,\beta}}_{L_2(H^\gamma,\mathcal{H}_{\delta,\eta})}^2\,\d t\\ \leq \max\left\{1,\frac{1}{\delta-1}\right\}\norm{E^{\alpha,\beta}}^2_{L_2(H^\gamma,\mathcal{H}_{\delta^\ast,\eta})}\int_0^\infty \left(1\lor  t/2\right)^{-(\delta^\ast-\delta)}\norm{S(t/2)}^2_{L(\mathcal{H}_{\delta^\ast,\eta^\ast},\mathcal{H}_{\delta^\ast,\eta})}\,\d t.
    \end{multline}
    In particular,
    \begin{align*}
        \norm{S(t/2)}^2_{L(\mathcal{H}_{\delta^\ast,\eta^\ast},\mathcal{H}_{\delta^\ast,\eta})}\leq  \left( 3+\int_1^\infty \frac{\d x}{w_{\delta^\ast,\eta}(x)} \right) (1\land t/2)^{-(\eta-\eta^\ast)}.
    \end{align*}
    Moreover, let $(e_n^{H^\gamma})_{n\geq1}$ be an orthonormal basis of $H^\gamma$ and remark that
    \begin{align*}
        \norm{E^{\alpha,\beta}}^2_{L_2(H^\gamma,\mathcal{H}_{\delta^\ast,\eta})} &= \sum_{n=1}^\infty \vertiii{E^{\alpha,\beta}e_n^{H^\gamma}}_{\delta^\ast,\eta}^2\\ &= \sum_{n=1}^\infty \norm{E^{\alpha,\beta}(1)e_n^{H^\gamma}}^2_H + \sum_{n=1}^\infty \int_0^\infty \norm{(E^{\alpha,\beta})'(x)e_n^{H^\gamma}}_H^2 w_{\delta^\ast,\eta}(x)\,\d x \\
        &= \| E^{\alpha,\beta}(1) \|_{L_2(H^\gamma, H)}^2
        + \int_0^\infty \| (E^{\alpha,\beta})'(x) \|_{L_2(H^\gamma, H)}^2
         w_{\delta^\ast,\eta}(x) \, \mathrm{d}x
    \end{align*}
    where the last integral can be estimated using Lemma \ref{lemma HJM mild}. Hence,  \eqref{eq:bounded_moment_condi_multi b zero} is satisfied by assumption.  By replacing $\delta$ by $\delta_\ast$ in \eqref{eq: estimate L2 condition HJM}, we also conclude that \eqref{eq:contraction_estimate_condi_multi b zero} is satisfied by assumption. The Law of Large Numbers, including the convergence rate, is a consequence of Corollary \ref{theorem LLN lift}. Finally, recall that $S_{\infty}G = G(\infty)$ which, depending on the choice of $G$, may be nontrivial as shown in the examples below. This implies the nonuniqueness of invariant measures.
\end{proof}

It is interesting to note that, by using the lift presented in this section, we fail to conclude the uniqueness of limiting and invariant distributions as $S_\infty\neq 0$. For the latter, the following example provides two elements in $\mathcal{H}_{\delta,\eta}$ with non-trivial limit.
\begin{example} Let $e\in V$ be such that $\norm{e}_V=1$ and let $\delta,\eta\in\R$.
    \begin{enumerate}
        \item[(a)] Define $G\colon (0,\infty)\longrightarrow V$ by $G\equiv e$. Then $G'\equiv0$ and hence $G\in\mathcal{H}_{\delta,\eta}$ with $G(\infty)=e\neq0$.
    \item[(b)] Let $p>\max\{1,\frac{\delta+1}{2}\}$ and define $G\colon (0,\infty)\longrightarrow  V$ by
    \[
    G(x) = \begin{cases}G(1), &\text{if }x\in(0,1] \\
    e-\left(\int_x^\infty t^{-p}\,\d t\right)e, & \text{if }x\in(1,\infty)
    \end{cases}
    \]
    where $G(1)= e-\left(\int_1^\infty t^{-p}\,\d t\right)e$. Then $\norm{G(1)}_V^2<\infty$, and  $G'(x)=x^{-p}e$ on $x\in(1,\infty)$ and $0$ elsewhere, so $\int_0^\infty \norm{G'(x)}_V^2 w_{\delta,\eta}(x)\,\d x = \int_1^\infty x^{-2p}x^\delta \,\d x <\infty$. Thus, $G\in \mathcal{H}_{\delta,\eta}$ and $G(\infty)=e\neq0$.
    \end{enumerate}
\end{example}

To illustrate this result, let us consider the case of the Dirichlet Laplacian operator for $(A,D(A))$.

\begin{example}
    Let $\mathcal{O} \subset \R^d$ be a bounded domain with $C^1$-boundary, and set $H = L^2(\mathcal{O})$. Then $(A,D(A)) = (\Delta, H_0^1(\mathcal{O}) \cap H^2(\mathcal{O}))$ is diagonalisable with an orthonormal basis $(e_n^H)_{n \geq 1}$ and sequence of eigenvalues $(\theta_n)_{n \geq 1}$  Without loss of generality, we suppose that the latter are increasing to infinity. By Weyl's law, we find for their asymptotics
    \[
        \theta_n \sim c(d, \mathcal{O})n^{2/d}, \qquad n \to \infty,
    \]
    where $c(d, \mathcal{O}) > 0$ denotes some constant. Hence, the summability condition in Theorem \eqref{theorem frac HJM lift} becomes $\sum_{n=1}^\infty n^{\frac{-4}{d}\varkappa+\frac{2}{d}\frac{1-2\beta+\varepsilon_0}{\alpha}}<\infty$ and is satisfied whenever
    \[
    2\alpha\left(\varkappa-\frac{1-2\beta}{2\alpha}-\frac{d}{4}\right) > \varepsilon_0.
    \]
\end{example}

Similarly to the situation outlined in section \ref{section markovia lift cm}, the convergence rate in the Law of Large Numbers is too small to obtain the Central Limit Theorem. The next remark outlines how the optimal rate of convergence can be obtained via exponential damping.

\begin{remark}
    For given $\lambda>0$, let us consider the Volterra kernels
    \[
    k(t) = \frac{t^{\alpha-1}}{\Gamma(\alpha)}\e{-\lambda t}\quad\text{and}\quad h(t)=\frac{t^{\beta-1}}{\Gamma(\beta)}\e{-\lambda t}.
    \]
    Then $E^{\alpha,\beta}$ needs to be replaced by
    \[
    E^{\alpha,\beta,\lambda}(t) = \sum_{n=1}^\infty e_n(t;\alpha,\beta,\lambda)(e^H_n\otimes e_n^H)
    \]
    where $e_n(t;\alpha,\beta,\lambda)=t^{\beta-1}E_{\alpha,\beta}(-\theta_n t^\alpha)\e{-\lambda t}$. Hence, we may derive similar bounds to Lemma \ref{lemma HJM mild} with the only difference that $\delta^\ast$ can be chosen arbitrarily large. The latter is sufficient to verify the conditions of Theorem \ref{theorem CLT general}  and hence derive a Central Limit Theorem.
\end{remark}


	\appendix

    \section{Proofs from Section 2}

    \subsection{Proof of Theorem \ref{label:existence_uniqueness_VSPDE}}

\begin{proof}
        Fix $\lambda<0$ and $s \in [0,T)$, and define the rescaled semigroup $S_\lambda(t)=\e{\lambda t}S(t)$. Suppose that $X(\cdot; s, \xi)\in L^p(\Omega,\P; C([s,T]; \mathcal{V}))$ is a solution of \eqref{eq: abstract Markov lift time inhomogeneous}. Then $X_\lambda(t; s, \xi)=\e{\lambda (t-s)}X(t; s, \xi)$ solves
		\begin{align}\label{eq:scaled H-SVE lift}
			X_\lambda(t; s, \xi) &= S_\lambda(t-s)\xi + \int_s^t S_\lambda(t-r)\xi_b^{\lambda}(r, \Xi X_\lambda(r; s, \xi))\,\d r
			\\ &\qquad \qquad \qquad + \int_s^t S_\lambda(t-r)\xi_\sigma^{\lambda}(r, \Xi X_\lambda(r; s, \xi))\,\d W_r, \notag
		\end{align}
		where $\xi_b^{\lambda}(r,u) = \e{\lambda (r-s)}\xi_b(r, \e{-\lambda (r-s)}u)$ and $\xi_{\sigma}^{\lambda}(r,u)=\e{\lambda (r-s)}\xi_\sigma(r, \e{-\lambda (r-s)}u)$. Conversely, let $X_\lambda(\cdot, s, \xi)$ be a solution of \eqref{eq:scaled H-SVE lift}, then $X(t; s, \xi)= \e{-\lambda (t-s)}X_\lambda(t; s, \xi)$ solves \eqref{eq: abstract Markov lift time inhomogeneous}. Therefore, it suffices to prove the existence and uniqueness of \eqref{eq:scaled H-SVE lift}.

		Let us define $\mathcal{T}_{\lambda}(X)(t) = S_{\lambda}(t - s)\xi + \mathcal{S}_{\lambda}(X)(t)$, where
		\begin{equation*}\label{eq: T lambda}
			\mathcal{S}_\lambda(X)(t) = \int_s^t S_\lambda(t-r)\xi_b^{\lambda}(r,\Xi X(r))\,\d r
			+ \int_s^t S_\lambda(t-r)\xi_\sigma^{\lambda}(r, \Xi X(r))\,\d W_r.
		\end{equation*}
		Then \eqref{eq:scaled H-SVE lift} reads as $X_\lambda(\cdot;\xi)=\mathcal{T}_\lambda(X_\lambda(\cdot;\xi))$. In the following, we show that $\mathcal{T}_\lambda$ is a contraction on $L^p(\Omega,\P;C([s,T];\mathcal{V}))$. Firstly, we obtain from the first inequality in Proposition \ref{proposition_properties_lift} the bound
        \begin{align*}
            &\ \left\| \int_s^{\cdot} S_\lambda(\cdot-r)\xi_b^{\lambda}(r,\Xi X(r))\,\d r \right\|_{L^p(\Omega;C([s,T];\mathcal{V}))}
            \\ &= \left\| \int_0^{\cdot-s} S_\lambda( \cdot - s - r)\xi_b^{\lambda}(r+s,\Xi X(r+s))\,\d r \right\|_{L^p(\Omega;C([s,T];\mathcal{V}))}
            \\ &= \left\| \int_0^{\cdot} S_\lambda( \cdot - r)\xi_b^{\lambda}(r+s,\Xi X(r+s))\,\d r \right\|_{L^p(\Omega;C([0,T-s];\mathcal{V}))}
            \\ &\leq \left( \int_0^{T-s} \|S_{\lambda}(r)\|_{L(\mathcal{H}, \mathcal{V})}^{\frac{p}{p-1}}\, \mathrm{d}r \right)^{p-1} \left\| \xi_b^{\lambda}(\cdot+s,\Xi X(\cdot+s))\right\|_{L^p(\Omega;L^p([0,T-s];\mathcal{V}))}
            \\ &\leq C(T, p, \rho, \xi_b) \left( 1 + \| \Xi\|_{L(\mathcal{V}, V)} \| X\|_{L^p(\Omega;C([s,T];\mathcal{V}))} \right)
        \end{align*}
        where we have used the Lipschitz continuity from Assumption \ref{assumption Lipschitz} and $C(T,p, \rho, \xi_b) > 0$ is some constant. Similarly, using the second inequality in Proposition \ref{proposition_properties_lift} and setting $W_r^s = W_{r+s} - W_s$, which is again a Wiener process, we find for $0 < \alpha < \frac{1}{2} - \rho$
        \begin{align*}
            &\ \left\| \int_s^{\cdot} S_\lambda( \cdot-r)\xi_\sigma^{\lambda}(r, \Xi X(r))\,\d W_r \right\|_{L^p(\Omega;C([s,T];\mathcal{V}))}
            \\ &\qquad = \left\| \int_0^{\cdot - s} S_\lambda(\cdot - r)\xi_\sigma^{\lambda}(r + s, \Xi X(r + s))\,\d W^s_r \right\|_{L^p(\Omega;C([0,T-s];\mathcal{V}))}
            \\ &\qquad \leq A \left( \int_0^{T-s} r^{-2\alpha} \| S_{\lambda}(r)\|_{L(\mathcal{H}, \mathcal{V})}^2\, \mathrm{d}r \right)^{\frac{1}{2}}
             \cdot \| \xi_\sigma^{\lambda}(\cdot + s, \Xi X(\cdot + s)) \|_{L^{\infty}([0,T-s]; L^p(\Omega; L_2(U, \mathcal{H})))}
            \\ &\qquad \leq C'(T, p, \rho, \alpha, \xi_{\sigma}) \left( 1 + \| \Xi\|_{L(\mathcal{V}, V)} \| X\|_{L^p(\Omega;C([s,T];\mathcal{V}))} \right)
        \end{align*}
        where $A = A(p, \rho, \alpha, T-s)$ and $C'(T, p, \rho, \alpha, \xi_{\sigma})$ is another constant. Since also $\| S_{\lambda}(\cdot - s)\xi \|_{L^p(\Omega; C([s,T]; \mathcal{V}))} \lesssim \| \xi \|_{L^p(\Omega; \mathcal{V})}$, we see that $\mathcal{T}_\lambda$ with fixed $\xi$ leaves $L^p(\Omega;C([s,T];\mathcal{V}))$ invariant.

        Now let $X,Y\in L^p(\Omega;C([s,T];\mathcal{V}))$ and define $\widetilde{\xi}_b(r; X) = \xi_b^{\lambda}(r+s,\Xi X(r+s))$, $\widetilde{\xi}_b(r; Y) = \xi_b^{\lambda}(r+s,\Xi Y(r+s))$, $\widetilde{\xi}_{\sigma}(r; X) = \xi_{\sigma}^{\lambda}(r+s,\Xi X(r+s))$, and $\widetilde{\xi}_{\sigma}(r; Y) = \xi_{\sigma}^{\lambda}(r+s,\Xi Y(r+s))$. Then we obtain from similar arguments to those above with $A = A(p, \rho, \alpha, T-s)$
		\begin{align*}
				&\ \norm{\mathcal{S}_\lambda(X)-\mathcal{S}_\lambda(Y)}_{L^p(\Omega;C([0,T];\mathcal{V}))}
                \\ &\leq \left( \int_0^{T-s} \|S_{\lambda}(r)\|_{L(\mathcal{H}, \mathcal{V})}^{\frac{p}{p-1}}\, \mathrm{d}r \right)^{p-1} \left\| \widetilde{\xi}_b(\cdot; X) - \widetilde{\xi}_b(\cdot; Y)\right\|_{L^p(\Omega;L^p([0,T-s];\mathcal{V}))}
                \\ &+ A \left( \int_0^{T-s} r^{-2\alpha} \| S_{\lambda}(r)\|_{L(\mathcal{H}, \mathcal{V})}^2\, \mathrm{d}r \right)^{\frac{1}{2}} \| \widetilde{\xi}_{\sigma}(r; X) - \widetilde{\xi}_{\sigma}(\cdot; Y) \|_{L^{\infty}([0,T-s]; L^p(\Omega; L_2(U, \mathcal{H})))}
                \\ &\leq C_{\mathrm{lip}}(T) \left( \int_0^{T-s} \|S_{\lambda}(r)\|_{L(\mathcal{H}, \mathcal{V})}^{\frac{p}{p-1}}\, \mathrm{d}r \right)^{p-1} \| \Xi\|_{L(\mathcal{V}, V)} \| X - Y \|_{L^p(\Omega; C([s,T]; \mathcal{V}))}
                \\ &\qquad + C_{\mathrm{lip}}(T) A \left( \int_0^{T-s} r^{-2\alpha} \| S_{\lambda}(r)\|_{L(\mathcal{H}, \mathcal{V})}^2\, \mathrm{d}r \right)^{\frac{1}{2}} \| \Xi\|_{L(\mathcal{V}, V)} \| X - Y \|_{L^p(\Omega; C([s,T]; \mathcal{V}))}.
		\end{align*}
		By Lebesgue's dominated convergence theorem, the constant on the right-hand side becomes arbitrarily small as $\lambda\to-\infty$. Hence, we can choose $\lambda$ sufficiently negative such that $\mathcal{T}_\lambda$ is a contraction.

        Let us denote by $X_\lambda(\cdot; s, \xi)$ the unique fixed point of $\mathcal{T}_{\lambda}$. Then it is the unique solution of \eqref{eq:scaled H-SVE lift}, and setting $X(t; s, \xi)=\e{-\lambda (t-s)}X_\lambda(t; s, \xi)$ we also obtain the unique solution of \eqref{eq: abstract Markov lift time inhomogeneous}. Finally, let $\xi,\widetilde{\xi}\in L^p(\Omega, \F_s, \P;\mathcal{V})$ and denote by $X_\lambda(\cdot;s, \xi),X_\lambda(\cdot; s, \widetilde{\xi})$ the corresponding solutions of \eqref{eq:scaled H-SVE lift}. Using the contraction property, we find
		\begin{align*}
			&\norm{X_\lambda(\cdot; s, \xi)-X_\lambda(\cdot; s, \widetilde{\xi})}_{L^p(\Omega;C([s,T];\mathcal{V}))}
			\\ &\quad \leq \norm{S_\lambda(\cdot - s)(\xi-\widetilde{\xi})}_{L^p(\Omega;C([s,T];\mathcal{V}))} + \norm{\mathcal{S}_\lambda( X_\lambda(\cdot; s, \xi)) - \mathcal{S}_\lambda(X_\lambda(\cdot; s, \widetilde{\xi}))}_{L^p(\Omega;C([s,T];\mathcal{V}))}
			\\ &\quad\leq \sup_{r \in [s,T]} \| S_{\lambda}(r-s)\|_{L(\mathcal{V})} \norm{\xi-\widetilde{\xi}}_{L^p(\Omega;\mathcal{V})} + C(\lambda) \norm{X_\lambda(\cdot; s, \xi)-X_\lambda(\cdot; s, \widetilde{\xi})}_{L^p(\Omega;C([0,T];\mathcal{V}))}
		\end{align*}
        where $C(\lambda) \in (0,1)$ denotes the Lipschitz constant of $\mathcal{S}_{\lambda}$. Inequality \eqref{eq:H-SVE lift initial dependence} then follows from
		\begin{align*}
			\norm{X(\cdot; s, \xi)-X(\cdot; s, \widetilde{\xi})}_{L^p(\Omega;C([s,T];\mathcal{V}))}
			&\leq \e{|\lambda|(T-s)} \norm{X_\lambda(\cdot; s, \xi)-X_\lambda(\cdot; s, \widetilde{\xi})}_{L^p(\Omega;C([s,T];\mathcal{V}))}
			\\ &\leq \frac{\e{|\lambda| (T-s)} \sup_{r \in [0,T-s]} \| S_{\lambda}(r)\|_{L(\mathcal{V})} }{1 - C(\lambda)} \norm{\xi-\widetilde{\xi}}_{L^p(\Omega;\mathcal{V})}
		\end{align*}
		which completes the proof.
	\end{proof}

    \subsection{Proof of Corollary \ref{cor: Markov}}

	\begin{proof}
        Firstly, it follows from \eqref{eq:H-SVE lift initial dependence}, that the map $\mathcal{V} \ni \xi \longmapsto X(t; s, \xi)\in L^p(\Omega,\P; \mathcal{V})$ is continuous for any $t\geq s$. Thus, by the dominated convergence theorem, for any $f\in C_b(\mathcal{V})$, $t\geq s$, and any sequence $(\xi_k)_{k\in\N}\subset \mathcal{V}$ such that $\lim_{k\to\infty}\xi_k=\xi\in \mathcal{V}$, we have $\lim_{k\to\infty} P_{s,t}f(\xi_k) = P_{s,t}f(\xi)$, i.e.~$(P_{s,t})_{t\geq s}$ has the $C_b$-Feller property.

		Let $s \geq 0$ and $\xi \in L^p(\Omega, \F_s, \P; \mathcal{V})$ with $p$ satisfying \eqref{eq: continuity}. Then by Theorem \ref{label:existence_uniqueness_VSPDE} the family of unique solutions $X(\cdot,s;\xi) \in L^p(\Omega, \P; C([s,\infty); \mathcal{V}))$ of \eqref{eq: abstract Markov lift time inhomogeneous} satisfies the flow property $X(t; r, X(r; s, \xi)) = X(t; s, \xi)$ $\P$-a.s.~for $0 \leq s \leq r \leq t$. Hence we obtain for every $f\in B_b(\mathcal{V})$
		\begin{align*}
			\E\left[ f(X(t; s, \xi)) \ | \ \F_r \right]
			&= \E\left[ f(X(t; r, X(r; s, \xi)) \ | \ \F_r \right].
		\end{align*}
		Below we consider $X(t; r, \eta)$ for $\F_r$-measurable initial conditions $\eta\in L^p(\Omega, \F_r, \P;\mathcal{V})$. Assume that $\eta$ is simple, i.e., it only takes a finite number of values, so that we can write $\eta=\sum_{j=1}^N\eta_j\1_{A_j}$ with $\eta_j\in\mathcal{V}$ and $A_j\in\F_r$ such that $A_j\cap A_i=\varnothing$ for $j\neq i$ and $\bigcup_{j=1}^N A_j=\Omega$. Then, one can show that $X(t; r, \eta) = \sum_{j=1}^N X(t,r;\eta_j)\1_{A_j}$. Consequently, since $X(t, r, \eta_j)$ is independent of $\F_r$ and the functions $\1_{A_j}$ are $\F_r$-measurable, we have
		\begin{align*}
			\E[f(X(t; r, \eta))\vert \F_r]
            &= \sum_{j=1}^N \E[f(X(t; r, \eta_j))\1_{A_j}\vert\F_r]
			\\ &= \sum_{j=1}^N \E[f(X(t; r, \eta_j))]\1_{A_j}
			= \E[f(X(t; r, z))]|_{z = \eta}
		\end{align*}
		for any $f\in C_b(\mathcal{V})$. If $\eta\in L^p(\Omega, \F_r, \P;\mathcal{V})$, we may approximate it by $\eta_k$ of the above simple form which yields combined with \eqref{eq:H-SVE lift initial dependence}, $f$ being bounded and continuous, and dominated convergence
		\begin{equation}\label{eq: inhom. markov}
			\E[f(X(t; r, \eta))\vert\F_r] = \E[f(X(t; r, z))]|_{z = \eta} = (P_{r,t}f)(\eta).
		\end{equation}
        This proves the assertion.
	\end{proof}

	\section{Convolution tail estimates}

	\begin{lemma}\label{lemma tail convolution}
		Let $T>0$ and $\lambda\in(0,1)$ be fixed and $0\leq f,g\in  L^1(\R_+)$. Then
		\[
		\int_T^\infty (f\ast g)(t)\,\d t \ \leq \ 2\norm{f}_{L^1(\R_+)}\int_{\lambda T}^\infty g(s)\,\d s + \norm{g}_{L^1(\R_+)}\int_{(1-\lambda)T}^\infty f(t)\,\d t.
		\]
	\end{lemma}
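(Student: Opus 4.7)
The plan is a direct change-of-variables argument via Tonelli's theorem, followed by a dyadic-style split of the inner integral. Writing out the convolution and swapping integrals, we would obtain
\[
      \int_T^\infty (f\ast g)(t)\,\d t
      = \int_0^\infty g(s) \int_{\max(s,T)}^{\infty} f(t-s)\,\d t\,\d s
      = \int_0^\infty g(s) \int_{(T-s)\vee 0}^{\infty} f(u)\,\d u\,\d s.
\]
Then I would split the outer integration at the threshold $\lambda T$, which is the natural cut corresponding to the two terms on the right-hand side of the claim.

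For the region $s \in [0, \lambda T)$, one has $T-s > (1-\lambda)T$, so
\[
      \int_0^{\lambda T} g(s)\int_{T-s}^{\infty} f(u)\,\d u\,\d s \le \left(\int_{(1-\lambda)T}^{\infty}f(u)\,\d u\right)\|g\|_{L^1(\R_+)},
\]
giving the second term of the bound. For the region $s \geq \lambda T$, I would bound the inner integral crudely by $\|f\|_{L^1(\R_+)}$, which yields
\[
      \int_{\lambda T}^{\infty} g(s) \int_{(T-s)\vee 0}^{\infty} f(u)\,\d u\,\d s \le \|f\|_{L^1(\R_+)}\int_{\lambda T}^{\infty} g(s)\,\d s,
\]
producing the first term (in fact with constant $1$ rather than $2$, so the stated inequality has slack built in). Adding the two estimates gives the claim.

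There is no real obstacle here; the only small point to watch is the interplay $(T-s)\vee 0$ versus $T-s$ when $s>T$, which is harmless since on $\{s \geq T\}$ the inner integral is simply $\|f\|_{L^1(\R_+)}$ and is already absorbed in the first bound via $\{s\geq \lambda T\}\supset\{s\geq T\}$. The factor $2$ in the statement appears to be included purely for convenience (perhaps to accommodate variants of the estimate used elsewhere), and a sharper version with constant $1$ in front of $\|f\|_{L^1}$ is equally available from this argument.
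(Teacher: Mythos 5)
Your proposal is correct and follows essentially the same route as the paper: a Tonelli swap of the convolution integral followed by splitting the $s$-range at $\lambda T$ and using monotonicity of $a \mapsto \int_a^\infty f$ together with the trivial bound $\|f\|_{L^1(\R_+)}$. Your single split at $\lambda T$ is slightly cleaner than the paper's split into $[0,\lambda T]$, $[\lambda T,T]$, $[T,\infty)$ (whose last two pieces each produce a term $\|f\|_{L^1(\R_+)}\int_{\lambda T}^\infty g$, hence the factor $2$), so you are right that the constant $1$ version holds and the stated factor $2$ is harmless slack.
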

	\begin{proof}
		Using Fubini's theorem and the substitution $t'=t-s$, we find
		\begin{align*}
			\int_T^\infty (f\ast g)(t)\,\d t &=\int_T^\infty \int_0^t f(t-s)g(s)\,\d s\,\d t \\
			&=\int_0^T \int_T^\infty f(t-s)g(s)\,\d t\,\d s + \int_T^\infty \int_s^\infty f(t-s)g(s)\,\d t\,\d s\\
			&= \int_0^T \left(\int_{T-s}^\infty f(t')\,\d t'\right)g(s)\,\d s + \int_T^\infty \left(\int_0^\infty f(t')\,\d t'\right)g(s)\,\d s \\
			&= I + \norm{f}_{L^1(\R_+)}\int_{T}^\infty g(s)\,\d s.
		\end{align*}
		Moreover,
		\begin{align*}
			I &= \int_0^{\lambda T} \left(\int_{T-s}^\infty f(t')\,\d t'\right)g(s)\,\d s + \int_{\lambda T}^T \left(\int_{T-s}^\infty f(t')\,\d t'\right)g(s)\,\d s\\
			&\leq \norm{g}_{L^1(\R_+)}\int_{(1-\lambda)T}^\infty f(t')\,\d t'  + \norm{f}_{L^1(\R_+)}\int_{\lambda T}^\infty g(s)\,\d s.
		\end{align*}
		Noting that $\lambda T< T$ and collecting all estimates yields the asserted.
	\end{proof}

	\begin{lemma}\label{lemma convergence rate tail r}
		Let $\rho\in L^1(\R_+)$ be a non-negative Volterra kernel satisfying $\norm{\rho}_{L^1(\R_+)}<1$ and let $r$ be the unique non-negative solution of the linear Volterra equation $r=\rho+r\ast\rho$. Then $r\in L^1(\R_+)$ and satisfies for every $T>0$ and $\kappa \in(0,1)$ fixed
		\begin{equation*}
			\int_T^\infty r(t)\,\d t \ \leq \ \norm{r}_{L^1(\R_+)} T^{-\log\left(1/\norm{\rho}_{L^1(\R_+)}\right)} + \left(\frac{1+2\norm{r}_{L^1(\R_+)}}{1-\norm{\rho}_{L^1(\R_+)}}\right)\int_{\kappa T^{1+\log(1-\kappa)}}^\infty \rho(t)\,\d t.
		\end{equation*}
	\end{lemma}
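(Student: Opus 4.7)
The plan is as follows. First, the integrability $r\in L^1(\R_+)$ is essentially automatic: iterating $r=\rho+r\ast\rho$ gives the Neumann series $r=\sum_{n\geq 1}\rho^{\ast n}$, whose $L^1$-norms sum to $a/(1-a)$ with $a=\norm{\rho}_{L^1(\R_+)}<1$, so $\norm{r}_{L^1(\R_+)}<\infty$. For the tail bound, I introduce the non-increasing tail functions $R(T)=\int_T^\infty r(t)\,\d t$ and $P(T)=\int_T^\infty \rho(t)\,\d t$, integrate the convolution equation $r=\rho+r\ast\rho$ over $[T,\infty)$, and apply Lemma~\ref{lemma tail convolution} with $f=r$, $g=\rho$, and $\lambda=\kappa$. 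Using the trivial estimate $P(T)\leq P(\kappa T)$ to absorb the stand-alone $P(T)$-term into the $P(\kappa T)$-term, this yields the key recursive inequality
\[
R(T) \leq (1+2\norm{r}_{L^1(\R_+)})\,P(\kappa T) + a\,R((1-\kappa)T).
\]

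Next, I iterate this inequality $N$ times to obtain
\[
R(T) \leq (1+2\norm{r}_{L^1(\R_+)})\sum_{n=0}^{N-1}a^n P(\kappa(1-\kappa)^n T) + a^N R((1-\kappa)^N T).
\]
Since $P$ is non-increasing and $\kappa(1-\kappa)^n T$ is decreasing in $n$, the quantity $P(\kappa(1-\kappa)^n T)$ is increasing in $n$, so I can factor out $P(\kappa(1-\kappa)^{N-1}T)$, sum the geometric series $\sum_{n=0}^{N-1}a^n\leq 1/(1-a)$, and use the trivial $R((1-\kappa)^N T)\leq\norm{r}_{L^1(\R_+)}$. This gives
\[
R(T) \leq \frac{1+2\norm{r}_{L^1(\R_+)}}{1-a}\,P(\kappa(1-\kappa)^{N-1}T) + a^N\norm{r}_{L^1(\R_+)}.
\]

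The remaining (and only subtle) step is to optimise $N$ so that the two exponents match the statement. For $T\geq 1$, I take $N=\lfloor 1+\log T\rfloor\geq 1$. Since $N-1\leq\log T$ and $\log(1-\kappa)<0$, one has $(1-\kappa)^{N-1}\geq (1-\kappa)^{\log T}=T^{\log(1-\kappa)}$, so $\kappa(1-\kappa)^{N-1}T\geq \kappa T^{1+\log(1-\kappa)}$ and therefore $P(\kappa(1-\kappa)^{N-1}T)\leq P(\kappa T^{1+\log(1-\kappa)})$. On the other hand, the floor satisfies $N>1+\log T-1=\log T$, and $\log a<0$ gives $a^N\leq a^{\log T}=T^{\log a}=T^{-\log(1/\norm{\rho}_{L^1(\R_+)})}$. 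Substituting both bounds yields the claimed inequality. For $0<T<1$, the statement is trivial because $T^{-\log(1/a)}\geq 1$ and $R(T)\leq\norm{r}_{L^1(\R_+)}$.

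The main (minor) obstacle is purely bookkeeping: one must pick $N$ carefully so that both the factor $a^N$ and the argument $\kappa(1-\kappa)^{N-1}T$ simultaneously acquire the exponents $-\log(1/\norm{\rho}_{L^1(\R_+)})$ and $1+\log(1-\kappa)$ in $T$, and verify that the unavoidable integer-rounding goes in the correct direction for both monotonicity arguments; the choice $N=\lfloor 1+\log T\rfloor$ is precisely what makes this work.
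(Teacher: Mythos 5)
Your proposal is correct and follows essentially the same route as the paper: integrate $r=\rho+r\ast\rho$ over $[T,\infty)$, apply Lemma~\ref{lemma tail convolution} with $f=r$, $g=\rho$, $\lambda=\kappa$, absorb the stand-alone tail of $\rho$, iterate $N$ times, and choose $N\approx\log T$ so that $a^N$ and $\kappa(1-\kappa)^{N-1}T$ produce the two stated exponents. Your explicit choice $N=\lfloor 1+\log T\rfloor$, the separate (trivial) treatment of $T<1$, and the Neumann-series justification of $r\in L^1(\R_+)$ are minor refinements of the same argument.
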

	\begin{proof}
		Using $r=\rho+r\ast\rho$ and Lemma \ref{lemma tail convolution} we find
		\begin{align*}
			\int_T^\infty r(t)\,\d t &= \int_T^\infty \rho(t)\,\d t + \int_T^\infty (r\ast\rho)(t)\,\d t \\
			&\leq \int_T^\infty \rho(t)\,\d t + 2\norm{r}_{L^1(\R_+)}\int_{\kappa T}^\infty\rho(t)\,\d t + \norm{\rho}_{L^1(\R_+)}\int_{(1-\kappa)T}^\infty r(t)\,\d t \\
			&\leq (1+2\norm{r}_{L^1(\R_+)})\int_{\kappa T}^\infty \rho(t)\,\d t+  \norm{\rho}_{L^1(\R_+)}\int_{(1-\kappa)T}^\infty r(t)\,\d t.
		\end{align*}
		By iterating $N$-times, $N\in\N$, we obtain
		\begin{align*}
			\int_T^\infty r(t)\,\d t &\leq \norm{\rho}_{L^1(\R_+)}^N \int_{(1-\kappa)^N T}^\infty r(t)\,\d t + (1+2\norm{r}_{L^1(\R_+)})\sum_{k=1}^{N-1}\norm{\rho}_{L^1(\R_+)}^k \int_{\kappa(1-\kappa)^{k-1}T}^\infty \rho(t)\,\d t \\
			&\leq \norm{r}_{L^1(\R_+)}\norm{\rho}_{L^1(\R_+)}^N + \frac{1+2\norm{r}_{L^1(\R_+)}}{1-\norm{\rho}_{L^1(\R_+)}}\int_{\kappa(1-\kappa)^{N-1} T}^\infty\rho(t)\,\d t.
		\end{align*}
		Suppose that $\log(T)\leq N\leq \log(T)+1$. Then
		\begin{align*}
			\int_T^\infty r(t)\,\d t &\leq \norm{r}_{L^1(\R_+)}\exp\left(N\log(\norm{\rho}_{L^1(\R_+)})\right) + \frac{1+2\norm{r}_{L^1(\R_+)}}{1-\norm{\rho}_{L^1(\R_+)}} \int_{\kappa(1-\kappa)^{N-1}}^\infty r(t)\,\d t \\
			&\leq \norm{r}_{L^1(\R_+)} T^{-\log(1/\norm{\rho}_{L^1(\R_+)})} + \frac{1+2\norm{r}_{L^1(\R_+)}}{1-\norm{\rho}_{L^1(\R_+)}} \int_{\kappa T^{1+\log(1-\kappa)}}^\infty \rho(t)\,\d t.
		\end{align*}
	\end{proof}


	\bibliographystyle{plainurl}
	\bibliography{literature}

\end{document}